\newcommand{\Tot}{\operatorname{{\bf Tot}}}
\newcommand{\cw}{\operatorname{cw}}
\newcommand{\Ext}{\operatorname{Ext}}
\newcommand{\rep}{\operatorname{rep}}
\newcommand{\Hom}{\operatorname{Hom}}
\newcommand{\Iso}{\operatorname{Iso}}
\newcommand{\End}{\operatorname{End}}
\renewcommand{\mod}{\operatorname{mod}\nolimits}
\newcommand{\add}{\operatorname{add}}
\newcommand{\proj}{\operatorname{proj}}
\newcommand{\forget}{\operatorname{for}}
\newcommand{\qis}{\operatorname{qis}}
\newcommand{\Fac}{\operatorname{Fac}}
\newcommand{\mt}{\operatorname{mt}}
\newtheorem{theorem}{Theorem}[section]
\newtheorem{corollary}[theorem]{Corollary}
\newtheorem{lemma}[theorem]{Lemma}
\newtheorem{proposition}[theorem]{Proposition}
\newtheorem{warning}[theorem]{Warning}
\theoremstyle{definition}
\newtheorem{definition}[theorem]{Definition}
\newtheorem{remark}[theorem]{Remark}
\title{Semi-derived Hall algebras and tilting invariance of Bridgeland--Hall algebras} 
\author{Mikhail Gorsky} 
\address{Universit\'e Paris Diderot -- Paris 7, UFR de
Math\'ematiques, Case 7012, Institut de Math\'ematiques de Jussieu -- Paris Rive Gauche, UMR 7586 du CNRS,
B\^at. Sophie Germain, 75205 Paris Cedex 13, France}
\address{Steklov Mathematical Institute,  
8 Gubkina Street, Moscow, Russia 119991.}
\email{gorskym@math.jussieu.fr} 
\begin{document}

\begin{abstract}
Inspired by recent work of Bridgeland, from the category $\mathcal{C}^b(\mathcal{E})$ of bounded complexes over an exact category $\mathcal{E}$ satisfying certain finiteness conditions, we construct an associative unital {\it ``semi-derived Hall algebra''} $\mathcal{SDH}(\mathcal{E}).$ This algebra is an object sitting, in some sense, between the usual Hall algebra $\mathcal{H}(\mathcal{C}^b(\mathcal{E}))$ and the Hall algebra of the bounded derived category $\mathcal{D}^b(\mathcal{E}),$ introduced by To\"{e}n and further generalized by Xiao and Xu. It has the structure of a free module over a suitably defined quantum torus of acyclic complexes, with a basis given by the isomorphism classes of objects in the bounded derived category $\mathcal{D}^b(\mathcal{E}).$ We prove the invariance of $\mathcal{SDH}(\mathcal{E})$ under derived equivalences induced by exact functors between exact categories. 

For $\mathcal{E}$ having enough projectives and such that each object has a finite projective resolution, we describe a similar construction for the category of $\mathbb{Z}/2-$graded complexes, with similar properties of associativity, freeness over the quantum torus and derived invariance. In particular, we obtain that this $\mathbb{Z}/2-$graded semi-derived Hall algebra is isomorphic to the two-periodic Hall algebra recently introduced by Bridgeland. We deduce that Bridgeland's Hall algebra is preserved under tilting.

When $\mathcal{E}$ is hereditary and has enough projectives, we show that the multiplication in $\mathcal{SDH}(\mathcal{E})$ is given by the same formula as the Ringel-Hall multiplication, and $\mathcal{SDH}(\mathcal{E})$ is isomorphic to a certain quotient of the classical Hall algebra $\mathcal{H}(\mathcal{C}^b(\mathcal{E}))$ localized at the classes of acyclic complexes. We also prove the same result in the $\mathbb{Z}/2-$graded case.
\end{abstract}

\keywords{Hall algebras; derived invariance; tilting; quantum groups}

\maketitle

\tableofcontents

\section{Introduction}
Hall algebras of abelian categories are associative algebras encoding extensions in these categories. They were introduced by Ringel in \cite{R1}. He developed ideas of Hall \cite{Hal} who worked with the abelian category of commutative finite $p-$groups, and even older work of Steinitz \cite{St}. Ringel proved that the twisted Hall algebra $\mathcal{H}_{tw}(\rep_k (Q))$ of the category of representations of a simply-laced Dynkin quiver over the finite field $k = \mathbb{F}_q$ is isomorphic to the nilpotent part $U_t(\mathfrak{n}^{+})$ of the universal enveloping algebra of the corresponding quantum group $\mathfrak{g}.$ Here $t = +\sqrt{q}.$ Later Green generalized this result in \cite{G}, constructing an embedding 
$$U_t(\mathfrak{n}_+) \hookrightarrow \mathcal{H}_{tw}(\rep_k (Q)),$$
for $Q$ an arbitrary acyclic quiver, where $\mathfrak{n}_+$ is the positive nilpotent part of the corresponding derived Kac-Moody algebra. Similarly, there is an extended version $\mathcal{H}^{e}_{tw}(\rep_k (Q))$ of the Hall algebra, which admits an embedding 
$$U_t(\mathfrak{b}^{+}) \hookrightarrow \mathcal{H}^{e}_{tw}(\rep_k (Q)),$$
where $\mathfrak{b}^{+}$ is the positive Borel subalgebra of the corresponding derived Kac-Moody algebra.

Hall algebras were generalized to the framework of exact categories (in sense of Quillen \cite{Q}), see \cite{Hu}. A typical example of an exact category is the subcategory $\mathcal{P}$ of projective objects in an abelian category. For the basics of exact categories, we refer to \cite{Buh} \cite[App.]{Kel1} \cite{Kel2}.

Another generalization was given by To\"{e}n \cite{T}, who constructed an associative unital algebra for each dg-category satisfying some finiteness conditions. This is called a {\it derived Hall algebra}. In \cite{XX1}, Xiao and Xu showed that To\"{e}n's construction works for all triangulated categories satisfying certain finiteness conditions. In both cases, the most important example is the derived category of a suitable exact category. 
Another approach to constructing Hall algebras of derived categories was proposed before by Kapranov \cite{Kap}, who considered a $\mathbb{Z}/2-$graded version of these categories; see also Cramer's \cite{C}. 

One important motivation for Kapranov's and To\"{e}n's work was the hope to extend the above picture so as to obtain a Hall algebra description of the {\it whole} quantum group $U_t(\mathfrak{g}).$ This hope was finally realized in Bridgeland's paper \cite{B}. Starting from the twisted Hall algebra $\mathcal{H}_{tw}(\mathcal{C}_{\mathbb{Z}/2}(\mathcal{P}_{\mathcal{A}}))$ of the category of $\mathbb{Z}/2-$graded complexes with projective components of a hereditary abelian category $\mathcal{A},$ he defined the associative algebra $\mathcal{DH}_{red}(\mathcal{A})$ as a quotient of the localization of $\mathcal{H}_{tw}(\mathcal{C}_{\mathbb{Z}/2}(\mathcal{P}_{\mathcal{A}}))$ with respect to all contractible complexes.
He constructed an embedding from $U_t(\mathfrak{g})$ into $\mathcal{DH}_{red}(\mathcal{A}),$ where $\mathcal{A}$ is the category of quiver representations; it is an isomorphism exactly in the Dynkin case, as in Ringel-Green's theorem. 
Thus, in this case, the algebra $\mathcal{DH}_{red}(\mathcal{A})$ is a reduced Drinfeld double. In fact, more generally, Bridgeland stated and Yanagida proved \cite{Y} that $\mathcal{DH}_{red}(\mathcal{A})$ is always the reduced Drinfeld double of the extended twisted Hall algebra $\mathcal{H}_{tw}^{e}(\mathcal{A}).$
The existence of a {\it generic} Bridgeland-Hall algebra for the category of modules over a finite-dimensional representation-finite hereditary algebra was recently shown by Chen and Deng \cite{CD}.

This article is motivated by the problem of showing that Bridgeland's construction is invariant under derived equivalences. As a first step towards this goal, we prove that Bridgeland--Hall algebras are preserved under tilting: if two algebras (of finite dimension and finite global dimension) are related by a tilting triple \cite{Bon}\cite{HR}, the Bridgeland--Hall algebras of their categories of finitely generated modules are canonically isomorphic (Corollary~\ref{z2tiltequiv}). 
We also prove a similar result for categories of coherent sheaves admitting a tilting bundle, but for a $\mathbb{Z}-$graded version of the Bridgeland--Hall algebra (Theorem~\ref{tiltequiv}). Our main tool in the proof is a new type of Hall algebra, the {\em semi-derived Hall algebra}.

Here is a more detailed description of the contents of this paper. Starting from the category of bounded complexes $\mathcal{C}^b(\mathcal{E})$ over an exact category satisfying certain conditions, we construct an associative unital algebra. In Section 3, we describe these conditions and prove that they hold in two important cases: either when $\mathcal{E}$ is the category of coherent sheaves on a smooth projective variety, or when $\mathcal{E}$ has enough projectives and each object has a finite projective resolution.
We start the construction in Section 4 by defining a quantum torus of acyclic complexes. In Section 5, we consider a module over this torus, generated by the classes of all complexes, factorized by certain relations. It turns out to be a free module with a basis indexed by the isomorphism classes of objects in the bounded derived category $\mathcal{D}^b(\mathcal{E}).$ We define a multiplicative structure on this module in a way slightly similar to the usual Hall product.  We verify that this multiplication is associative in Section 6.1. 
Since the resulting algebra remembers the quantum torus of acyclics as well as the structure of the derived category, we call it the {\it semi-derived Hall algebra}  $\mathcal{SDH}(\mathcal{E}).$

In Section 6, we prove the invariance of $\mathcal{SDH}(\mathcal{E})$ under derived equivalences induced by exact functors between exact categories. If $\mathcal{E}$ has enough projectives and finite projective dimension, we prove in Section 7 that $\mathcal{SDH}(\mathcal{E})$ is isomorphic to the $\mathbb{Z}-$graded version of Bridgeland's algebra. We construct the isomorphisms in both ways. If $\mathcal{E}$ is hereditary and has enough projectives, we show that the multiplication in $\mathcal{SDH}(\mathcal{E})$ is given by the same formula as the usual Ringel-Hall multiplication. From here it immediately follows that $\mathcal{SDH}(\mathcal{E})$ is isomorphic to a certain quotient of the classical Hall algebra $\mathcal{H}(\mathcal{C}^b(\mathcal{E}))$ localized at the classes of all acyclic complexes, cf. Corollary \ref{herquotloc}. 
In Section 8, we deduce that the semi-derived Hall algebra is invariant under tilting equivalences.

In Section 9, we describe a similar construction in the $\mathbb{Z}/2-$graded case. For the moment, we can do it only in the case when $\mathcal{E}$ has enough projectives and each object has a finite projective resolution. We have to start not from the entire category $\mathcal{C}_{\mathbb{Z}/2}(\mathcal{E}),$ but from a certain exact subcategory $\widetilde{\mathcal{E}}.$ We construct an associative unital algebra $\mathcal{SDH}_{\mathbb{Z}/2}(\mathcal{E})$ with a free module structure over the quantum torus of acyclic $\mathbb{Z}/2-$graded complexes, with a basis indexed by the quasi-isomorphism classes of objects in this exact subcategory. We prove its invariance under the same class of equivalences as in the $\mathbb{Z}-$graded case. In particular, this algebra with a twisted multiplication is isomorphic to Bridgeland's algebra and, therefore, for a hereditary abelian initial category $\mathcal{A},$ it realizes the Drinfeld double of  $\mathcal{H}^{e}_{tw}(\mathcal{A}).$ 
This fact combined with our result on derived invariance of the $\mathbb{Z}/2-$graded semi-derived Hall algebra yields a new proof of Cramer's theorem \cite[Theorem~1]{C} stating that derived equivalences between hereditary abelian categories give rise to isomorphisms between the Drinfeld doubles of their twisted and extended Hall algebras. 
For a hereditary initial category $\mathcal{E},$ we prove the same result as in the $\mathbb{Z}-$graded case: the multiplication in $\mathcal{SDH}_{\mathbb{Z}/2}(\mathcal{E})$ is given by the same formula as the usual Ringel-Hall multiplication, and $\mathcal{SDH}_{\mathbb{Z}/2}(\mathcal{E})$ is isomorphic to a certain quotient of the classical Hall algebra $\mathcal{H}(\widetilde{\mathcal{E}})$ localized at the classes of all acyclic $\mathbb{Z}/2-$graded complexes.
In Bridgeland's work, the images of the natural generators of the quantum group have a slightly mysterious form of products of the inverses of the classes of certain acyclic complexes by the classes of the minimal projective resolutions of the corresponding simples in $\rep_k (Q).$ Under the isomorphism with $\mathcal{SDH}_{\mathbb{Z}/2}(\mathcal{E}),$ they map to the shifted simples in $\rep_k (Q).$ This clarifies Bridgeland's construction as well as a result due to Sevenhant-Van den Bergh \cite{SV} and Xu-Yang \cite{XY} stating that Bernstein-Gelfand-Ponomarev reflection functors induce the braid group action on the quantum group first discovered by Lusztig \cite{L}. 

We expect that there are strong relations between our semi-derived Hall algebras and the algebras of Kapranov and To\"{e}n-Xiao-Xu.



This is an extended and edited version of my M.Sc.~Thesis at Universit\'{e} Paris Diderot. I am very grateful to my scientific adviser Prof. Bernhard Keller for posing me the problem and for his giant support. The work was supported by Fondation Sciences math\'{e}matiques de Paris and by R\'{e}seau de Recherche Doctoral en Math\'{e}matiques de l'\^{I}le de France.

\section{Preliminaries}


\subsection{Hall algebras}

Let $\mathcal{E}$ be an essentially small exact category, linear over a finite field $k.$ Assume that $\mathcal{E}$ has finite morphism and (first) extension spaces:
$$|\mbox{Hom}(A, B)| < \infty, \quad  |\Ext^1 (A, B)| < \infty, \quad \forall A, B \in \mathcal{E}.$$
Given objects $A, B, C \in \mathcal{A},$ define $\Ext^1 (A, C)_B \subset \Ext^1 (A, C)$ as the subset parameterizing extensions whose middle term is isomorphic to B. We define the Hall algebra $\mathcal{H(E)}$ to be the $\mathbb{Q}-$vector space whose basis is formed by the isomorphism classes $[A]$ of objects $A$ of $\mathcal{E},$
with the multiplication defined by
$$[A] \diamond [C] = \sum\limits_{B \in \mbox{\footnotesize{Iso}}(\mathcal{E})} \frac{|\Ext^1 (A, C)_B|}{|\mbox{Hom} (A, C)|} [B].$$

The following result was proved by Ringel \cite{R1} for $\mathcal{E}$ abelian, and later by Hubery \cite{Hu} for $\mathcal{E}$ exact. The definition of $\mathcal{H(E)}$ is also due to Ringel.

\begin{theorem}
The algebra $\mathcal{H(E)}$ is associative and unital. The unit is given by $[0]$, where $0$ is the zero object of $\mathcal{E}$.
\end{theorem}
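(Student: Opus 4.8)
The plan is to verify associativity by counting filtrations, following Ringel's original strategy but paying attention to the exact-category subtleties handled by Hubery. Concretely, fix objects $A, C, E$ and consider the two ways of bracketing the triple product $([A] \diamond [C]) \diamond [E]$ and $[A] \diamond ([C] \diamond [E])$. Expanding the definition, the coefficient of $[D]$ in $([A]\diamond[C])\diamond[E]$ is
\[
\sum_{B \in \Iso(\mathcal{E})} \frac{|\Ext^1(A,C)_B|}{|\Hom(A,C)|}\cdot\frac{|\Ext^1(B,E)_D|}{|\Hom(B,E)|},
\]
and similarly for the other bracketing with an intermediate object $F$ fitting into a conflation with sub $C$ and quotient $E$. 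The goal is to show both expressions equal a symmetric quantity counting, up to the appropriate normalization, the number of ``two-step filtrations'' of $D$ with successive quotients $A$, $C$, $E$ in that order.

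The key technical input is the following standard identity (valid in exact categories, see Hubery): for fixed $A, C, D$, the number of subobjects $C' \subseteq D$ (i.e.\ admissible monics $C' \rightarrowtail D$ up to the appropriate equivalence) with $C' \cong C$ and $D/C' \cong A$, weighted correctly, relates the ``geometric'' count of exact sequences to the Hall numbers $|\Ext^1(A,C)_D|$. Precisely, one uses that the number of conflations $C \rightarrowtail D \twoheadrightarrow A$ up to isomorphism of the short exact sequence (fixing the end terms) is $\frac{|\Ext^1(A,C)_D|\cdot|\Hom(A,C)|}{|\Aut(A)|\cdot|\Aut(C)|}\cdot|\Aut(D)|$, or rather the more convenient reformulation in terms of the number of admissible subobjects: the number of admissible subobjects $U$ of $D$ with $U \cong C$ and $D/U \cong A$ equals $\frac{|\Ext^1(A,C)_D|}{|\Hom(A,C)|}\cdot\frac{|\Aut(D)|}{|\Aut(C)|\,|\Aut(A)|}$. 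I would first establish (or cite) this lemma carefully, since everything reduces to it.

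With this in hand, the proof is a substitution-and-rearrangement argument. For the left bracketing, I rewrite the sum over $B$ as a sum over admissible subobjects $U \subseteq D$ with $D/U$ having a further quotient structure; that is, I interpret $|\Ext^1(B,E)_D|/|\Hom(B,E)|$ as counting admissible subobjects $V \subseteq D$ with $V \cong E$ and $D/V \cong B$, and then $|\Ext^1(A,C)_B|/|\Hom(A,C)|$ as counting admissible subobjects of $D/V \cong B$ with quotient $A$. By the correspondence between admissible subobjects of $D/V$ and admissible subobjects of $D$ containing $V$ (which holds in any exact category, via the obscure axiom / Noether isomorphism arguments), the composite counts flags $V \subseteq W \subseteq D$ with $V \cong E$, $W/V \cong C$, $D/W \cong A$ — a manifestly bracketing-independent quantity. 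The other bracketing yields exactly the same flag count by the symmetric manipulation with $F \cong W/V$ playing the role of the intermediate term. Matching the $\Aut$-factors on both sides gives the identity coefficient by coefficient.

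The unit statement is immediate: $\Ext^1(0, C)_B$ is nonempty only for $B \cong C$, in which case it is a single (split) extension, and $\Hom(0,C)$ is trivial, so $[0]\diamond[C] = [C]$, and symmetrically $[C]\diamond[0]=[C]$. The main obstacle is purely bookkeeping: in the exact-category setting one must be careful that ``subobject'' means admissible subobject and that the bijection between flags and iterated conflations genuinely holds — this is where one invokes Hubery's verification that Quillen's axioms suffice, rather than any feature special to abelian categories. Once the flag interpretation is nailed down, no further geometric input is needed and the computation is routine.
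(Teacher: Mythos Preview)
The paper does not actually prove this theorem: it is stated as a known result and attributed to Ringel \cite{R1} in the abelian case and to Hubery \cite{Hu} in the exact case, with the remark immediately following the theorem noting that the structure constants used here are equivalent to the classical Hall numbers counting admissible subobjects. Your proposal is precisely the standard Ringel--Hubery argument those references carry out --- rewriting the structure constants via Riedtmann's formula as counts of admissible subobjects, then identifying both bracketings of the triple product with a single count of two-step admissible filtrations --- so it is correct and aligns with what the paper cites.
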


\begin{remark}
The choice of the structure constants
$\frac{|\Ext^1_{\mathcal{E}}(A,B)_{C}|}{|\Hom_{\mathcal{E}}(A,B)|} $ is the one that was used by Bridgeland \cite{B}.
This choice is equivalent to that of the usual structure constants
$|\{ B' \subset C | B' \cong B, C/B' \cong A  \}|,$
called the {\it Hall numbers} and appearing in \cite{R1},\cite{Sch} and \cite{Hu}.
See \cite[\S2.3]{B} for the details.
\end{remark}

Assume that $\mathcal{E}$ is locally homologically finite and that all higher extension spaces are finite:
$$\forall A, B \in \mathcal{E} \quad \exists p_0: \quad \Ext^p (A, B) = 0, \quad \forall p > p_0;$$
$$|\Ext^p (A, B)| < \infty, \quad \forall p \geq 0, \quad \forall A, B \in \mathcal{E}.$$
For objects $A, B \in \mathcal{E}$, we define the Euler form 
$$\left\langle A, B \right\rangle := \prod_{i \in \mathbb{Z}} |\Ext^i_{\mathcal{E}}(A,B)|^{{(-1)}^i}.$$
It is well known and easy to check that this form descends to a bilinear form
on the Grothendieck group $K_0(\mathcal{E})$ of $\mathcal{E}$,
denoted by the same symbol:
$$\left\langle \cdot, \cdot \right\rangle K_0(\mathcal{E}) \times K_0(\mathcal{E}) \to \mathbb{Q}^{\times}.$$

The {\it twisted Hall algebra} $\mathcal{H}_{tw}(\mathcal{E})$ 
is the same vector space as $\mathcal{H}(\mathcal{E})$  
with the twisted multiplication
\begin{equation}
[A] * [B] := \sqrt{\left\langle A, B \right\rangle} \cdot [A] \diamond [B], \quad \forall  A, B \in \Iso(\mathcal{E}).
\end{equation}

The {\it twisted extended Hall algebra} $\mathcal{H}^{e}_{tw}(\mathcal{E})$ 
is defined as the extension of $\mathcal{H}_{tw}(\mathcal{E})$
obtained by adjoining symbols $K_\alpha$ for all classes $\alpha\in K_0(\mathcal{E}),$ 
and imposing relations
$$
K_\alpha * K_\beta = K_{\alpha+\beta}, \quad 
K_\alpha * [B]     = \sqrt{(\alpha, B)} \cdot [B] * K_\alpha,
$$
for $\alpha,\beta \in K_0(\mathcal{E})$ 
and $B \in \Iso(\mathcal{E})$.
Of course, we can construct $\mathcal{H}^{e}_{tw}(\mathcal{E})$ from $\mathcal{H} (\mathcal{E})$ in a different order: first adjoin symbols $K_\alpha$ and impose relations 
$$
K_\alpha \diamond K_\beta = \frac{1}{\sqrt{\left\langle \alpha, \beta \right\rangle}} K_{\alpha+\beta}, \quad 
K_\alpha \diamond [B] =  [B] \diamond K_\alpha,
$$
defining the {\it extended Hall algebra} $\mathcal{H}^{e}(\mathcal{E}),$ and then twist the multiplication by the Euler form.
Note that $\mathcal{H}^{e}(\mathcal{E})$ has by definition a module structure over the twisted group algebra of $K_0(\mathcal{E}),$ where the multiplication in the last one is twisted by the inverse of the Euler form.

There are famous results by Green and Xiao stating that if $\mathcal{A}$ is abelian and hereditary, then $\mathcal{H}^{e}_{tw}(\mathcal{A})$ admits a structure of a self-dual Hopf algebra (see {\cite[Sections~1.4-1.7]{Sch}} and references therein).

\subsection{$\mathbb{Z}/2-$graded complexes}

Let $\mathcal{C}_{\mathbb{Z}/2}(\mathcal{E})$ be the exact category 
of $\mathbb{Z}/2$-graded complexes over $\mathcal{E}$.  
Namely, an object $M$ of this category is a diagram with objects and morphisms in $\mathcal{E}$:
$$\xymatrix{M^0 \ar@<0.5ex>[r]^{d^0} & M^1 \ar@<0.5ex>[l]^{d^1}}, \quad d^1 \circ d^0 = d^0 \circ d^1 = 0.$$
All indices of components of $\mathbb{Z}/2-$graded objects will be understood modulo $2$.
A {\it morphism} $s: M \to N$ is a diagram
$$\xymatrix@R=0.6cm{
  *+++{M^0} \ar@{->}[d]_{s^0} \ar@<.5ex>[r]^{d^0} 
&     *+++{M^1} \ar@<.5ex>[l]^{d^1} \ar@{->}[d]^{s^1}
\\
  *+++{N^0}\ar@<.5ex>[r]^{d'^0}
&    *+++{N^1} \ar@<.5ex>[l]^{d'^1}}$$
with $s^{i+1} \circ d^{i} = d'^i\circ s^{i}$.
Two morphisms $s, t: M \to N$ are {\it homotopic} if there are morphisms $h^i: M^i\to N^{i+1}$ such that
$$t^i - s^i = d'^{i+1} \circ h^{i} + h^{i+1} \circ d^{i}.$$


Denote by $\mathcal{K}_{\mathbb{Z}/2}(\mathcal{E})$ the category 
obtained from $\mathcal{C}_{\mathbb{Z}/2}(\mathcal{E})$ 
by identifying homotopic morphisms.
Let us also denote by $\mathcal{D}_{\mathbb{Z}/2}(\mathcal{E})$ the $\mathbb{Z}/2-$graded derived category, i.e. the localization of $\mathcal{K}_{\mathbb{Z}/2}(\mathcal{E})$ with respect to all quasi-isomorphisms. 

The shift functor $\Sigma$ of complexes induces involutions
$$\mathcal{C}_{\mathbb{Z}/2}(\mathcal{E}) \stackrel{*}{\longleftrightarrow} \mathcal{C}_{\mathbb{Z}/2}(\mathcal{E}),\quad \mathcal{K}_{\mathbb{Z}/2}(\mathcal{E}) \stackrel{*}{\longleftrightarrow} \mathcal{K}_{\mathbb{Z}/2}(\mathcal{E}), \quad \mathcal{D}_{\mathbb{Z}/2}(\mathcal{E}) \stackrel{*}{\longleftrightarrow} \mathcal{D}_{\mathbb{Z}/2}(\mathcal{E}).$$
These involutions shift the grading and change the sign of the differential as follows:
$$\xymatrix{M^0 \ar@<0.5ex>[r]^{d^0} & M^1 \ar@<0.5ex>[l]^{d^1}}  \stackrel{*}{\longleftrightarrow} \xymatrix{M^1 \ar@<0.5ex>[r]^{-d^1} & M^0 \ar@<0.5ex>[l]^{-d^0}}.$$

We have an exact functor $$\pi: \mathcal{C}^b(\mathcal{E}) \to \mathcal{C}_{\mathbb{Z}/2}(\mathcal{E}),$$ sending a complex $(M^i)_{i \in \mathbb{Z}}$ to the $\mathbb{Z}/2-$graded complex 
\begin{equation} \label{impi}
\xymatrix{{\bigoplus\limits_{i \in \mathbb{Z}} M^{2i}} \ar@<0.5ex>[r] & {\bigoplus\limits_{i \in \mathbb{Z}} M^{2i+1}} \ar@<0.5ex>[l]}
\end{equation} 
with the naturally defined differentials. It is easy to check that 
$$\Hom_{\mathcal{C}_{\mathbb{Z}/2}(\mathcal{E})} (\pi(A), \pi(B)) = \bigoplus\limits_{i \in \mathbb{Z}} \Hom_{\mathcal{C}^b(\mathcal{E})} (A, \Sigma^{2i} B).$$
Note that we actually have a whole family of isomorphisms
\begin{equation} \label{piext}
\Ext^p_{\mathcal{C}_{\mathbb{Z}/2}(\mathcal{E})} (\pi(A), \pi(B)) = \bigoplus\limits_{i \in \mathbb{Z}} \Ext^p_{\mathcal{C}^b(\mathcal{E})} (A, \Sigma^{2i} B), 
\quad \forall p \geq 0.
\end{equation}

Similarly, we have exact functors
$$\mathcal{K}^b(\mathcal{E}) \to \mathcal{K}_{\mathbb{Z}/2}(\mathcal{E}), \quad \mathcal{D}^b(\mathcal{E}) \to \mathcal{D}_{\mathbb{Z}/2}(\mathcal{E}),$$
satisfying analogous isomorphisms. By abuse of notation, we will also denote them by $\pi.$ 



To each object $M \in\mathcal{E}$, we attach a pair of acyclic (in fact, even contractible) complexes
$$K_M := \xymatrix{M \ar@<0.5ex>[r]^{1} & M \ar@<0.5ex>[l]^{0}}, \qquad K_M^* := \xymatrix{M \ar@<0.5ex>[r]^{0} & M \ar@<0.5ex>[l]^{1}}.$$
Let $\mathcal{P}$ be the full subcategory of projective objects in $\mathcal{E}.$ The following fact was shown in \cite{B}.

\begin{lemma}[{\cite[Lemma~3.2]{B}}] \label{acprojdirsum}
Suppose that each object in $\mathcal{E}$ has a finite projective resolution. Then for any acyclic complex of projectives ${M} \in \mathcal{C}(\mathcal{P})$, there are objects $P,Q\in \mathcal{P}$, unique up to isomorphism, such that ${M} \cong {K_P} \oplus {K_Q}^*$. 
\end{lemma}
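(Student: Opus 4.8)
The plan is to extract, from the acyclicity of $M=\bigl(M^{0}\xrightarrow{\,d^{0}\,}M^{1}\xrightarrow{\,d^{1}\,}M^{0}\bigr)$, the two short exact sequences
\[
Z^{0}\rightarrowtail M^{0}\twoheadrightarrow Z^{1},\qquad Z^{1}\rightarrowtail M^{1}\twoheadrightarrow Z^{0}
\]
in $\mathcal{E}$, where $Z^{0}=\ker d^{0}=\operatorname{im}d^{1}$ and $Z^{1}=\ker d^{1}=\operatorname{im}d^{0}$; then to show that $Z^{0}$ and $Z^{1}$ are both projective, so that both sequences split; and finally to assemble chosen splittings into the asserted decomposition, with $P=Z^{1}$ and $Q=Z^{0}$.

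The heart of the matter --- and the only place the finiteness hypothesis is used --- is the projectivity of $Z^{0}$ and $Z^{1}$. First I would observe that $Z^{0}$ is projective if and only if $Z^{1}$ is: if $Z^{1}$ is projective the first sequence splits, exhibiting $Z^{0}$ as a direct summand of the projective object $M^{0}$ and hence projective (summands of projectives are projective, by additivity of $\Ext^{1}$), and symmetrically via the second sequence. Thus it suffices to rule out the case in which neither is projective. In that case $a:=\operatorname{pd}(Z^{0})$ and $b:=\operatorname{pd}(Z^{1})$ are finite by hypothesis and both $\geq 1$. Since $Z^{0}$ is the kernel of an admissible epimorphism onto $Z^{1}$ from the projective $M^{0}$, and $Z^{1}$ is not projective, dimension shifting in the exact category $\mathcal{E}$ (which has enough projectives) yields $b=a+1$; the symmetric argument applied to the second sequence yields $a=b+1$. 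This is impossible, so $Z^{0}$ and $Z^{1}$ are projective.

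With that established, both sequences split. Choosing a splitting of each gives identifications $M^{0}\cong Z^{1}\oplus Z^{0}$ and $M^{1}\cong Z^{1}\oplus Z^{0}$; since $Z^{0}=\ker d^{0}$ with $\operatorname{im}d^{0}=Z^{1}$ and $Z^{1}=\ker d^{1}$ with $\operatorname{im}d^{1}=Z^{0}$, a short computation shows that in these coordinates $d^{0}$ and $d^{1}$ take the block-diagonal forms $\operatorname{diag}(1_{Z^{1}},0)$ and $\operatorname{diag}(0,1_{Z^{0}})$, once the two isomorphisms coming from the splittings are absorbed. This complex is exactly $K_{Z^{1}}\oplus K_{Z^{0}}^{*}$, so one may take $P=Z^{1}$ and $Q=Z^{0}$. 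For uniqueness, note that whenever $M\cong K_{P}\oplus K_{Q}^{*}$ one has $\operatorname{im}(d^{0})\cong P$ and $\ker(d^{0})\cong Q$; since the image and kernel of $d^{0}$ are invariants of the isomorphism class of $M$, the objects $P$ and $Q$ are determined up to isomorphism.

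I expect the projectivity step to be the only real obstacle. Because $M$ has a two-term periodic shape, $Z^{0}$ and $Z^{1}$ play symmetric roles, and without a finiteness hypothesis the numerical relations between their projective dimensions are perfectly consistent --- indeed over a finite-dimensional algebra such as $k[x]/(x^{2})$ there are acyclic $\mathbb{Z}/2$-graded complexes of projectives not of the asserted form. Finite projective dimension is precisely what breaks this symmetry. The ingredients needed --- dimension shifting, Schanuel's lemma, additivity of $\Ext^{1}$ --- are all standard for exact categories with enough projectives, but should be cited with care.
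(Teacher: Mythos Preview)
Your argument is correct and is essentially the proof given in Bridgeland's paper \cite{B}, which the present paper simply cites (see the remark following the lemma) rather than reproducing. The key step---ruling out infinite dimension-shifting between the two cycle objects $Z^{0}$ and $Z^{1}$ via the contradiction $a=b-1$ and $b=a-1$---is exactly how Bridgeland exploits finite global dimension, and your adaptation to the hypothesis ``each object has a finite projective resolution'' (rather than a global bound) goes through unchanged since only the individual finiteness of $\operatorname{pd}(Z^{0})$ and $\operatorname{pd}(Z^{1})$ is used.
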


\begin{remark}
In \cite{B} this Lemma was proved only for $\mathcal{E}$ abelian and of finite global dimension; nonetheless, the same proof works in the generality stated above.
\end{remark}



\section{Finiteness conditions}
In this section, we will discuss the various conditions we impose on the exact category $\mathcal{E}.$ We shall always assume that
\begin{itemize}
\item[(C1)] $\mathcal{E}$ is essentially small, idempotent complete and linear over some ground field $k;$
\item[(C2)] For each pair of objects $A, B \in \mbox{Ob}(\mathcal{E})$ and for each $p > 0,$ we have
$$|\Ext^p (A, B)| < \infty; \quad \quad |\Hom (A, B)| < \infty;$$
\item[(C3)] For each pair of objects $A, B \in \mbox{Ob}(\mathcal{E}),$ there exists $N > 0$ such that for all $p > N,$ we have
$$\Ext^p (A, B) = 0.$$
\end{itemize}

The first part of the assumption (C1) and the finiteness of the morphism spaces are important to our naive approach to Hall algebras involving counting isomorphism classes. Assumption (C1) and $\Hom-$finiteness ensure that $\mathcal{E}$ is Krull-Schmidt. Moreover, it implies that all contractible complexes are acyclic, cf. \cite{Kel2}, \cite{Buh}. Finiteness of $\Ext^p (A, B)$ and the assumption (C3) are crucial for the multiplicative version of the Euler form that we use. These conditions concern the structure of our exact category $\mathcal{E}.$ Our construction is also based on a less natural assumption concerning the category $\mathcal{C}^b(\mathcal{E}),$ endowed with the component-wise exact structure: 

\begin{itemize}
\item[(C4)]
For each triple of bounded complexes $A, B, C \in \mathcal{C}^b(\mathcal{E}),$ there exists a pair of conflations

$$K_1 \rightarrowtail L_1 \stackrel{\qis}{\twoheadrightarrow} A, \quad K_2 \rightarrowtail L_2 \stackrel{\qis}{\twoheadrightarrow} B,$$

such that the following conditions hold:

\begin{itemize}
\item[(i)] 
\begin{equation} \label{extp}
\Ext^p_{\mathcal{C}^b(\mathcal{E})}(L_1, B) \cong \Ext^p_{\mathcal{D}^b(\mathcal{E})}(L_1, B),
\end{equation}

$$\quad \Ext^p_{\mathcal{C}^b(\mathcal{E})}(L_2, C) \cong \Ext^p_{\mathcal{D}^b(\mathcal{E})}(L_2, C), \quad \forall p > 0;$$

\item[(ii)]
$$\Ext^1_{\mathcal{C}^b(\mathcal{E})}(L_1, L_2) \overset\sim\to \Ext^1_{\mathcal{C}^b(\mathcal{E})}(L_1, B), \quad  \Ext^1(L_1, K_2) = 0;$$

\item[(iii)] for each conflation $$L_2 \rightarrowtail Y \twoheadrightarrow L_1$$ 
and all $p > 0,$ we have
$$\Ext^p_{\mathcal{C}^b(\mathcal{E})}(Y, C) \cong \Ext^p_{\mathcal{D}^b(\mathcal{E})}(Y, C).$$
\end{itemize}
\end{itemize}

We use the assumption (i) to construct the product in our algebra and to prove that it is well-defined. The whole condition (C4) is essential for proving the associativity of the product thus defined. It seems quite artificial, but it actucally holds in at least two natural types of examples. 

\begin{theorem} \label{condd}
The condition (C4) holds if $\mathcal{E}$ is of one of the following two types:
\begin{itemize}
\item[1)]$\mathcal{E}$ has enough projectives, and each object has a finite projective resolution;
\item[2)] $\mathcal{E} = \mbox{Coh}(X)$ is the category of coherent sheaves on a smooth projective variety $X.$ 
\end{itemize}
\end{theorem}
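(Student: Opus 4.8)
The plan is to prove both cases through one mechanism. Fix bounded complexes $A,B,C$. Call an object $P\in\mathcal{E}$ \emph{acyclic against} a finite family $\mathcal{G}_1,\dots,\mathcal{G}_m$ of objects of $\mathcal{E}$ if $\Ext^{>0}_{\mathcal{E}}(P,\mathcal{G}_k)=0$ for all $k$, and a bounded complex acyclic against the $\mathcal{G}_k$ if each component is. First I would establish a \emph{good resolution} statement: for any prescribed finite family $\mathcal{G}_1,\dots,\mathcal{G}_m$ there is a conflation $K_X\rightarrowtail L_X\stackrel{\qis}{\twoheadrightarrow}X$ in $\mathcal{C}^b(\mathcal{E})$ with $L_X$ a bounded complex acyclic against the $\mathcal{G}_k$ (in particular $K_X$ is acyclic). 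In case 1) one takes $L_X$ a bounded complex of projectives (projectives are acyclic against everything), produced by a Cartan--Eilenberg/horseshoe argument reducing, via the stupid-truncation conflations $\sigma_{>k}X\rightarrowtail X\twoheadrightarrow\sigma_{\le k}X$, to the case of a one-term complex, which is the hypothesis that every object has a finite projective resolution. In case 2) there are no projectives, but every coherent sheaf is a quotient of $\mathcal{O}_X(-n)^{\oplus r}$ for $n\gg0$, and such a bundle is acyclic against any fixed $\mathcal{G}_k$ once $n$ is large by Serre vanishing; one builds a left resolution of $X$ out of such bundles and applies a good truncation $\dim X+1$ steps below the support of $X$. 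The key point is that the resulting bottom syzygy term $W$ is locally free (Auslander--Buchsbaum--Serre) \emph{and} automatically acyclic against every sheaf: a dimension shift along the exact sequences $0\to W\to\mathcal{O}_X(-n)^{\oplus r}\to W'\to0$ identifies $\Ext^q_{\mathcal{E}}(W,\mathcal{G}_k)$ with $\Ext^{q+\dim X}_{\mathcal{E}}(-,\mathcal{G}_k)$, which vanishes for $q\ge1$ because $\Coh(X)$ has global dimension $\dim X$. The horseshoe step then assembles these for a general complex, provided one includes among the $\mathcal{G}_k$ also the components of $X$ (so that the horseshoe lift is available).

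The homological heart is the following lemma, to be applied three times. Suppose $L,M\in\mathcal{C}^b(\mathcal{E})$ with $\Ext^{q}_{\mathcal{E}}(L^i,M^j)=0$ for all $i,j$ and all $q>0$. Then: (a) the canonical comparison map identifies $\Ext^p_{\mathcal{C}^b(\mathcal{E})}(L,M)$ with $H^p\bigl(\Hom^{\bullet}_{\mathcal{E}}(L,M)\bigr)=\Hom_{\mathcal{K}(\mathcal{E})}(L,\Sigma^pM)$ for every $p\ge1$, since the vanishing forces every iterated conflation of $L$ by $M$ in $\mathcal{C}^b(\mathcal{E})$ to be componentwise split, hence classified by a homotopy class of chain maps $L\to\Sigma^pM$; and (b) if in addition $L$ is a bounded complex of projectives (case 1), or $\mathcal{E}=\Coh(X)$ and each $L^i$ is locally free (case 2), then $\Hom_{\mathcal{K}(\mathcal{E})}(L,\Sigma^pM)\overset\sim\to\Ext^p_{\mathcal{D}^b(\mathcal{E})}(L,M)$ — in case 1) because bounded complexes of projectives are homotopy projective, and in case 2) because, $L^i$ being locally free, $\mathcal{H}om^{\bullet}(L,M)$ computes $\mathrm{R}\mathcal{H}om(L,M)$ and the vanishing $H^{>0}(X,\mathcal{H}om(L^i,M^j))=\Ext^{>0}_{\mathcal{E}}(L^i,M^j)=0$ makes the hypercohomology spectral sequence for $\mathrm{R}\Gamma\,\mathcal{H}om^{\bullet}(L,M)$ degenerate. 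Part (a) is proved by d\'evissage on the number of nonzero components of $L$, using the stupid-truncation conflations and comparing the long exact sequences of $\Ext_{\mathcal{C}^b(\mathcal{E})}(-,M)$ with the corresponding computation of the $\Hom$-complex; the base case $L=L^0$ concentrated in one degree is a direct inspection of conflations of such a complex by $M$.

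Given the lemma, the three clauses of (C4) follow by bookkeeping, provided the good resolutions are built in the right order. First construct $K_2\rightarrowtail L_2\stackrel{\qis}{\twoheadrightarrow}B$ with $L_2$ acyclic against the components of $B$ and of $C$; this determines the acyclic complex $K_2$. Then construct $K_1\rightarrowtail L_1\stackrel{\qis}{\twoheadrightarrow}A$ with $L_1$ acyclic against the components of $B$, of $C$ and of $K_2$. Now clause (i) is the lemma applied to $(L_1,B)$ and $(L_2,C)$. For clause (ii), applying $\Ext^{\bullet}_{\mathcal{C}^b(\mathcal{E})}(L_1,-)$ to $K_2\rightarrowtail L_2\stackrel{\qis}{\twoheadrightarrow}B$ gives a long exact sequence in which, by the lemma, $\Ext^p_{\mathcal{C}^b(\mathcal{E})}(L_1,K_2)\cong\Hom_{\mathcal{D}^b(\mathcal{E})}(L_1,\Sigma^pK_2)=0$ for all $p$ because $K_2$ is acyclic; hence $\Ext^1(L_1,K_2)=0$ and the connecting maps force $\Ext^1_{\mathcal{C}^b(\mathcal{E})}(L_1,L_2)\overset\sim\to\Ext^1_{\mathcal{C}^b(\mathcal{E})}(L_1,B)$. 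For clause (iii), any conflation $L_2\rightarrowtail Y\twoheadrightarrow L_1$ in $\mathcal{C}^b(\mathcal{E})$ has components $Y^k$ sitting in conflations $L_2^k\rightarrowtail Y^k\twoheadrightarrow L_1^k$ of $\mathcal{E}$; since $L_1^k$ and $L_2^k$ are acyclic against the components of $C$ and are projective, resp.\ locally free, the long exact sequence shows $Y^k$ is too, uniformly in $Y$, so the lemma applies to $(Y,C)$ and yields $\Ext^p_{\mathcal{C}^b(\mathcal{E})}(Y,C)\cong\Ext^p_{\mathcal{D}^b(\mathcal{E})}(Y,C)$.

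The main obstacle is part (a) of the lemma: one is computing $\Ext$ in the \emph{exact} category $\mathcal{C}^b(\mathcal{E})$ with its componentwise exact structure, in which a complex of projectives is not a projective object, so there is nothing to resolve against; one must show directly that the componentwise-splitting hypothesis collapses every iterated conflation to a homotopy class of chain maps $L\to\Sigma^pM$, and that this identification is compatible with the connecting homomorphisms invoked in clauses (ii) and (iii). In case 2) the additional delicate point is the good resolution: one must go sufficiently deep past the support of $X$ so that the necessarily non-split vector-bundle syzygy at the bottom is acyclic against \emph{all} sheaves by the dimension-shifting argument, which rests on $\Coh(X)$ having finite global dimension $\dim X$; compatibility of these choices with the horseshoe assembly for general complexes is then routine.
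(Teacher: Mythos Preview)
Your approach is correct and follows the same architecture as the paper: isolate a key lemma to the effect that componentwise vanishing $\Ext^{>0}_{\mathcal{E}}(L^i,M^j)=0$ forces $\Ext^p_{\mathcal{C}^b(\mathcal{E})}(L,M)\cong\Ext^p_{\mathcal{D}^b(\mathcal{E})}(L,M)$, construct $L_1,L_2$ with suitably acyclic components, and read off clauses (i)--(iii). The differences are in execution. For the step $\Ext^p_{\mathcal{C}^b}(L,M)\cong\Hom_{\mathcal{K}^b}(L,\Sigma^pM)$ the paper does not d\'evisser on stupid truncations of $L$ as you do, but writes down an explicit $\Hom(-,M)$-acyclic resolution of $L$ in $\mathcal{C}^b(\mathcal{E})$ by cones $C(1_{\Sigma^{-i}L})$, using the adjunction between $\mathcal{C}^b(\mathcal{E})$ and bounded graded objects to see that these cones have vanishing higher $\Ext$ into $M$. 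For the step $\Hom_{\mathcal{K}^b}\to\Hom_{\mathcal{D}^b}$ you split into homotopy-projectivity (case~1) versus a hypercohomology spectral sequence for $\mathrm{R}\Gamma\,\mathcal{H}om^\bullet$ (case~2); the paper instead proves this uniformly (its Lemma~\ref{homderext}) under the bare hypothesis $\Ext^{>0}_{\mathcal{E}}(L^i,M^j)=0$, by d\'evissage on both arguments down to stalk complexes --- no projectivity or local-freeness of the $L^i$ is needed, so your spectral-sequence detour is avoidable. On the other hand, for bounded resolutions in case~2 the paper simply works with complexes of sums of line bundles $\mathcal{O}(-j)$, $j\ge N$, and defers the existence of bounded such deflation quasi-isomorphisms to \cite{Kel1} and \cite{Buh}; your syzygy-truncation argument, invoking finite global dimension of $\Coh(X)$, is more self-contained on this point.

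One slip to correct: you assert that the bottom syzygy $W$ is ``automatically acyclic against every sheaf'', which would make $W$ projective in $\Coh(X)$ and is false for $\dim X>0$. Your dimension-shift argument only yields $\Ext^{>0}(W,\mathcal{G}_k)=0$ for the \emph{prescribed} targets $\mathcal{G}_k$, since the shift requires $\Ext^{>0}(E_i,\mathcal{G}_k)=0$ at each intermediate step. That weaker conclusion is exactly what the rest of your proof uses, so nothing is lost.
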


\begin{proof}  
For any complex 
$$M^{\bullet}: \ldots \to  M^{n-1} \overset{d^{n-1}}\to M^n \overset{d^n}\to M^{n+1} \overset{d^{n+1}}\to \ldots$$
and any integer $n,$ we introduce the following associated truncated complexes:
$$\sigma_{\geq n} M^{\bullet}: \ldots \to 0 \to M^n \overset{d^n}\to M^{n+1} \overset{d^{n+1}}\to \ldots;$$
$$\sigma_{\leq n} M^{\bullet}:= M^{\bullet}/\sigma_{\geq n + 1} M^{\bullet}.$$



These are called ``stupid'' truncations, cf. the proof of lemma \ref{k0inj}. For $n$ being the degree of the maximal non-zero component of $M^{\bullet},$ we get the following conflation in $\mathcal{C}^b(\mathcal{E}):$
$$M^n [-n] \rightarrowtail M^{\bullet} \twoheadrightarrow \sigma_{\geq n - 1} M^{\bullet}.$$
These conflations give rise to the principle of (finite) {\it d\'{e}vissage} (the French word for ``unscrewing''):

\begin{lemma} \label{devis}
For an exact category $\mathcal{B},$ its derived category $\mathcal{D}^b(\mathcal{B})$ is generated (as a triangulated category) by $\mathcal{B}$ concentrated in degree $0.$ All objects of $\mathcal{C}^b(\mathcal{B})$ can be obtained as finitely iterated extensions of stalk complexes.
\end{lemma}

\begin{proposition} \label{comderext}
Suppose that for two bounded complexes $M, N \in \mathcal{C}^b(\mathcal{E}),$ all extensions between their components are trivial:
\begin{equation} \label{exttriv}
\Ext^p_{\mathcal{E}}(M^i, N^j) = 0, \quad \forall p > 0, \forall i, j \in \mathbb{Z}.
\end{equation}
Then the canonical map:
$$\Ext^p_{\mathcal{C}^b(\mathcal{E})} (M, N) \to \Ext^p_{\mathcal{D}^b(\mathcal{E})} (M, N)$$
is bijective for all $p > 0.$
\end{proposition}

\noindent {\it Proof of Proposition \ref{comderext}.\/}
It is a classical fact that we have a family of conflations

$$\Sigma^{-i} M \rightarrowtail C(1_{\Sigma^{-i} M}) \twoheadrightarrow \Sigma^{-(i-1)} M, \quad \quad i > 0,$$
where $C(f)$ is a mapping cone and $\Sigma$ is the shift functor. Thus, we have a complex

\begin{equation} \label{contrresproj}
R(M)^{\bullet} := \ldots \to C(1_{\Sigma^{-3} M}) \to C(1_{\Sigma^{-2} M}) \to C(1_{\Sigma^{-1} M})  \to 0,
\end{equation}
with a quasi-isomorphism $R(M)^{\bullet} \overset{\qis}\to M$ in $\mathcal{C}(\mathcal{C}^b(M)).$
Consider the category of bounded graded objects ${\bf gr^b}(\mathcal{E}),$ whose objects are the $\mathbb{Z}-$graded families of objects of $\mathcal{E}$ and morphisms are given component-wise. There exists a natural forgetful functor 
$$\mathcal{C}^b(\mathcal{E}) \overset{\forget}\to {\bf gr^b}(\mathcal{E}),$$
that is known to be exact and admits an exact left adjoint $\forget_{\lambda}:$
$$\mathcal{C}^b(\mathcal{E}) \overset{\forget_{\lambda}}\leftarrow {\bf gr^b}(\mathcal{E}).$$
It is known as well that 
$$\forget_{\lambda} \circ \forget(X) \overset\sim\to C(1_X),$$
hence we have the following isomorphisms:
\begin{equation} \label{extcones}
\Ext^p_{\mathcal{C}^b(\mathcal{E})}(C(1_X), Y)  \overset\sim\to \Ext^p_{\mathcal{C}^b(\mathcal{E})}(\forget_{\lambda} \circ \forget(X), Y) \overset\sim\to \Ext^p_{{\bf gr^b}(\mathcal{E})}(\forget(X), \forget(Y)).
\end{equation}
Clearly, by (\ref{exttriv}), 
$$\Ext^p_{{\bf gr^b}(\mathcal{E})}(\forget(\Sigma^{-i} M), \forget(N)) = 0,\quad\quad\quad \forall i,$$
hence
$$\Ext^p_{\mathcal{C}^b(\mathcal{E})}(C(1_{\forget(\Sigma^{-i} M}), N) = 0,\quad\quad\quad \forall i.$$

Therefore, the complex $R(M)^{\bullet}$ is actually a $\Hom(?, N)-$acyclic resolution of the complex $M$ in $\mathcal{C}^b(\mathcal{E})$ and can be used to compute $\Ext^p_{\mathcal{C}^b(\mathcal{E})}(M,N).$ Namely, these extensions are exactly the homologies of the complex $\Hom_{\mathcal{C}^b(\mathcal{E})}(R(M)^{\bullet},N),$ i.e. quotients of the set of morphisms $\Hom_{\mathcal{C}^b(\mathcal{E})}({\Sigma^{-(p)} M}, N)$ by the subset of morphisms which factor through $C(1_{\Sigma^{-p} M}).$
This last subset is known to be the subset of null-homotopic morphisms. Therefore, we obtain that extensions are exactly the morphisms in the homotopy category of bounded complexes ${\bf K}^b(\mathcal{E}):$

$$\Ext^p_{\mathcal{C}^b(\mathcal{E})}(M,N) = \Hom_{{\bf K}^b(\mathcal{E})}({\Sigma^{-(p)} M}, N).$$

\begin{lemma} \label{homderext}
Under the condition (\ref{exttriv}), we have a canonical bijection:
$$\Hom_{{\bf K}^b(\mathcal{E})}(M, N) = \Hom_{\mathcal{D}^b(\mathcal{E})}(M, N).$$
\end{lemma}

The lemma uses nothing but the d\'{e}vissage. The statement of the lemma holds, of course, for shifted complexes as well; thus, we get
$$\Ext^p_{\mathcal{C}^b(\mathcal{E})}(M,N) = \Hom_{\mathcal{D}^b(\mathcal{E})}({\Sigma^{-(p)} M}, N).$$
The right-hand side is nothing but $\Hom_{\mathcal{D}^b(\mathcal{E})}({M}, \Sigma^{p}  N),$ that is equal to $\Ext^p_{\mathcal{D}^b(\mathcal{E})}({M},  N).$
{{ $\Box$}\smallskip\par}

\noindent Recall that $\mathcal{P}$ denotes the full subcategory of the projective objects of $\mathcal{E}.$

\begin{corollary} \label{projext}
For all $P \in \mathcal{C}^b(\mathcal{P})$ and all $M \in \mathcal{C}^b(\mathcal{E}),$ we have a canonical bijection:
$$\Ext^p_{\mathcal{C}^b(\mathcal{E})} (P, M) = \Ext^p_{\mathcal{D}^b(\mathcal{E})} (P, M), \quad \forall p > 0.$$
In particular, bounded acyclic complexes of projectives are projective objects in the category $\mathcal{C}^b(\mathcal{P}).$
\end{corollary}

\begin{lemma} [{\cite[4.1,~Lemma,~b)]{Kel1}}]\label{projres}
Assume that $\mathcal{E}$ has enough projectives, and each object of $\mathcal{E}$ has a finite projective  resolution. Then for each $A \in \mathcal{C}^b(\mathcal{E}),$ there exists a deflation 
$$P \stackrel{\qis}{\twoheadrightarrow} A$$
that is a quasi-isomorphism, with $P \in \mathcal{C}^b(\mathcal{P}).$ 
\end{lemma}

Let us return to the proof of Theorem \ref{condd}. For part 1), by lemma \ref{projres}, in condition (C4) we can take deflation quasi-isomorphisms with $L_1, L_2 \in \mathcal{C}^b(\mathcal{P}).$ For those, conditions (i) and (ii) hold by Proposition \ref{projext}. For such $L_1, L_2,$ each $Y$ has projective components as well, thus Proposition \ref{projext} implies condition (iii).

Now, for part 2) we recall the next known facts, the first of which easily follows from Serre's theorem which connects the category $Coh(X)$ and the category of finitely generated modules over the homogeneous coordinate algebra of $X:$ 

\begin{lemma}
\begin{itemize}
\item[(a)] For any $F \in Coh(X), N \in \mathbb{N},$ there exists a deflation 
$E \twoheadrightarrow F,$
where $E$ is of the form
$$E = \bigoplus\limits_{i=1}^n \mathcal{O}(- a_i), \quad\quad\quad a_i > N, \forall i = 1, \ldots, n,$$
for some $n.$
\item[(b)] {\cite[Theorem~III.5.2.b)]{Har}}
For every $F \in Coh(X),$ there exists a nonnegative integer $n_0(F),$ such that for each $n > n_0(F),$ we have:
$$\Ext^p_{Coh(X)} (\mathcal{O}(- n), F)  = H^p(X, F(n)) = 0, \quad\quad\quad \forall p > 0.$$
\end{itemize}
\end{lemma}

Objects of the above form $E$ generate an additive category, namely the category $\mathcal{V}_N(X)$ of vector bundles over $X$ which are sums of line bundles $\mathcal{O}(-j), j \geq N.$



Let us return to the proof of part 2) of Theorem \ref{condd}. Similarly to Lemma \ref{projres} (cf. also \cite[Theorem~12.7]{Buh}), we can show that for each $N > 0$ and for each $A \in \mathcal{C}^b(Coh(X)),$ there exists a deflation quasi-isomorphism $B \stackrel{\qis}{\twoheadrightarrow} A,$ with $B \in \mathcal{C}^b(\mathcal{V}_N(X)).$ Now we take $N_2$ to be the maximum among the numbers $n_0(C^j),$ where $C^j$ are among the (finitely many!) non-zero components of $C,$ and find $L_2 \in \mathcal{V}_{N_2}(X).$ Similarly, we take $N_1$ to be the maximum among the numbers $n_0(B^j)$ and $n_0(L_2^j),$ where $B^j$ and $L_2^j$ are among the finitely many non-zero components of $B,$ respectively of $L_2.$ Now we take $L_1 \in \mathcal{V}_{N_1}(X).$ As in part 1), for such $L_1, L_2$ all the conditions (i)-(iii) hold, by Proposition \ref{comderext}.
\end{proof}


\section{Euler form and quantum tori}

Let $\mathcal{C}_{ac}^b(\mathcal{E})$ denote the category of bounded acyclic complexes over $\mathcal{E}.$

\begin{definition}
The {\it multiplicative Euler form} 

$$\left\langle \cdot, \cdot \right\rangle: K_0 (\mathcal{C}_{ac}^b(\mathcal{E})) \times  K_0 (\mathcal{C}^b(\mathcal{E})) \to \mathbb{Q}^{\ast}$$
is given by the alternating product: 
$$\left\langle K, A \right\rangle := \prod\limits_{p=0}^{+\infty} |\Ext^{p}_{\mathcal{C}^b(\mathcal{E})} (K, A)|^{(-1)^p}.$$
Using a similar product we define 
$$\left\langle \cdot, \cdot \right\rangle: K_0 (\mathcal{C}^b(\mathcal{E})) \times  K_0 (\mathcal{C}_{ac}^b(\mathcal{E})) \to \mathbb{Q}^{\ast}.$$
These two forms clearly coincide on $K_0 (\mathcal{C}_{ac}^b(\mathcal{E})) \times  K_0 (\mathcal{C}_{ac}^b(\mathcal{E})),$ so it is harmless to denote them by the same symbol.
\end{definition}

By the following lemma, this alternating product is well-defined, i.e. all but a finite number of factors equal 1. Therefore, by the five-lemma, it is bilinear. In fact, in our framework, one can define this form on $K_0(\mathcal{C}^b(\mathcal{E})) \times  K_0 (\mathcal{C}^b(\mathcal{E}));$ but in the ${\mathbb{Z}/2}-$graded case, this will not be true any more.

\begin{lemma} \label{homfin}
For each pair of objects $A, B \in \mathcal{C}^b(\mathcal{E}),$ the property of local homological finiteness holds, i.e. there exists $n_0 > 0$ such that for all $p > n_0,$ we have
$$\Ext^p_{\mathcal{C}^b(\mathcal{E})} (A, B) = 0.$$
\end{lemma}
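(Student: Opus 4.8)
The plan is to reduce the statement about $\Ext$-groups in $\mathcal{C}^b(\mathcal{E})$ to the corresponding finiteness statements at the level of the components, which are governed by the hypotheses (C2) and (C3) on $\mathcal{E}$ itself. First I would invoke the d\'{e}vissage principle of Lemma~\ref{devis}: both $A$ and $B$ can be realized as finitely iterated extensions of stalk complexes $\Sigma^{-i}(M^i)$ and $\Sigma^{-j}(N^j)$, where $M^i, N^j$ range over the (finitely many) non-zero components of $A$, respectively $B$. Since $\Ext^p_{\mathcal{C}^b(\mathcal{E})}(-,-)$ is (by the general theory of exact categories, using (C1) idempotent completeness to ensure a well-behaved $\Ext$ in the sense of Yoneda) a cohomological functor in each variable that sends conflations to long exact sequences, it suffices to prove the vanishing $\Ext^p_{\mathcal{C}^b(\mathcal{E})}(\Sigma^{-i}(M^i), \Sigma^{-j}(N^j)) = 0$ for $p$ large, uniformly over the finitely many pairs $(i,j)$; then one takes $n_0$ to be the maximum of the individual bounds (plus a correction accounting for the length of the d\'{e}vissage filtration, which is finite and depends only on $A$ and $B$).

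Next I would identify these $\Ext$-groups between stalk complexes with $\Ext$-groups in $\mathcal{E}$. The natural guess is that $\Ext^p_{\mathcal{C}^b(\mathcal{E})}(\Sigma^{-i}(M), \Sigma^{-j}(N))$ is built out of $\Ext^{p'}_{\mathcal{E}}(M, N)$ for various $p'$ in a bounded range around $p + i - j$; more precisely, a stalk complex $M$ in degree $i$ has a projective-type resolution in $\mathcal{C}^b(\mathcal{E})$ whose terms are the mapping cones $C(1_{\Sigma^{-k}M})$ appearing in the complex $R(M)^{\bullet}$ of~(\ref{contrresproj}), and computing $\Hom$ out of such a cone into a stalk complex reduces, via the adjunction $\forget_\lambda \dashv \forget$ and the identification~(\ref{extcones}), to computing $\Ext$ in the graded category ${\bf gr^b}(\mathcal{E})$, hence componentwise to $\Ext$-groups in $\mathcal{E}$ between $M$ and $N$. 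Alternatively, and perhaps more cleanly, I would argue directly: filter $B$ so its components are in distinct degrees and use that a complex concentrated in a single degree, say $N$ in degree $j$, admits a resolution in $\mathcal{C}^b(\mathcal{E})$ obtained by taking a projective resolution of $N$ in $\mathcal{E}$ (if enough projectives are available) or by the cone construction in general, so that $\Ext^p_{\mathcal{C}^b(\mathcal{E})}(A, \Sigma^{-j}(N))$ vanishes once $p$ exceeds a bound determined by the $\Ext$-dimension between the components of $A$ and $N$ in $\mathcal{E}$.

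At that point the conclusion is immediate from (C3): for each of the finitely many pairs $(M^i, N^j)$ of components there is some $N(i,j)$ with $\Ext^{p'}_{\mathcal{E}}(M^i, N^j) = 0$ for all $p' > N(i,j)$, and since the shift contributions and the d\'{e}vissage filtration lengths are bounded by quantities depending only on the (finite) supports of $A$ and $B$, one can choose a single $n_0$ working for all of them. Condition (C2) is what guarantees that the intermediate groups are at least finite (so the long exact sequences make sense set-theoretically, though for the vanishing statement only (C3) is really needed).

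The main obstacle I anticipate is making the reduction from $\Ext$ in $\mathcal{C}^b(\mathcal{E})$ to $\Ext$ in $\mathcal{E}$ fully rigorous without assuming enough projectives: in the general exact-category setting one does not have projective resolutions to work with, so one must rely on the cone-resolution $R(M)^{\bullet}$ of~(\ref{contrresproj}) and the forgetful/left-adjoint machinery already set up in the proof of Proposition~\ref{comderext}, being careful that this machinery computes $\Ext^p_{\mathcal{C}^b(\mathcal{E})}$ correctly in all degrees and that the resulting complex of $\Hom$-spaces eventually consists of zeros. A clean way around this is to note that the argument in the proof of Proposition~\ref{comderext} already shows $\Ext^p_{\mathcal{C}^b(\mathcal{E})}(M,N) = \Hom_{{\bf K}^b(\mathcal{E})}(\Sigma^{-p}M, N)$ \emph{whenever} the relevant componentwise $\Ext$-groups vanish, and for $p$ large enough those componentwise groups do vanish by (C3); so for $p \gg 0$ the group $\Ext^p_{\mathcal{C}^b(\mathcal{E})}(M,N)$ is a space of morphisms in the homotopy category between complexes whose supports are disjoint by a wide margin, which forces it to be zero. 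Then d\'{e}vissage propagates this to arbitrary $A$ and $B$.
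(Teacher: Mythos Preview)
Your d\'{e}vissage reduction to stalk complexes is exactly how the paper begins, and your instinct to route the computation through the cone resolution $R(M)^\bullet$ and the adjunction~(\ref{extcones}) is the right machinery. But the ``clean workaround'' at the end does not close the argument. Proposition~\ref{comderext} requires the componentwise vanishing $\Ext^q_{\mathcal{E}}(M^i, N^j) = 0$ for \emph{all} $q > 0$ (hypothesis~(\ref{exttriv})), and this hypothesis is independent of the index $p$ appearing in $\Ext^p_{\mathcal{C}^b(\mathcal{E})}$; the sentence ``for $p$ large enough those componentwise groups do vanish by (C3)'' conflates the two indices. Without~(\ref{exttriv}) the resolution $R(M)^\bullet$ is not $\Hom(?,N)$-acyclic, so you cannot read off $\Ext^p_{\mathcal{C}^b(\mathcal{E})}(M,N) = \Hom_{{\bf K}^b(\mathcal{E})}(\Sigma^{-p}M, N)$ directly, and the disjoint-support observation never gets off the ground.

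The paper's fix is small but decisive. After reducing to stalk complexes $A, B$, instead of invoking the entire resolution $R(A)^\bullet$ it uses just the single conflation $\Sigma^{-1}A \rightarrowtail C(1_{\Sigma^{-1}A}) \twoheadrightarrow A$ and the associated long exact sequence in $\Ext^\ast_{\mathcal{C}^b(\mathcal{E})}(-, B)$. By~(\ref{extcones}) the middle term satisfies $\Ext^p(C(1_{\Sigma^{-1}A}), B) \cong \Ext^p_{{\bf gr^b}(\mathcal{E})}(\Sigma^{-1}A, B)$, a componentwise $\Ext$ in $\mathcal{E}$, hence vanishing for large $p$ by (C3). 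The long exact sequence then shows that local homological finiteness holds for $(A, B)$ if and only if it holds for $(\Sigma^{-1}A, B)$. Iterating this shift brings $A$ into the same degree as $B$; for stalk complexes concentrated in the same degree the $\Ext$-groups in $\mathcal{C}^b(\mathcal{E})$ coincide with those in $\mathcal{E}$, and (C3) finishes directly. Your approach could be salvaged by a spectral-sequence argument (for stalk $M, N$ the $E_1$-page of the hyperext spectral sequence for $R(M)^\bullet$ is concentrated in essentially one column, bounded in the vertical direction by (C3)), but the paper's one-step-at-a-time long exact sequence is precisely the elementary substitute for that.
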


\begin{proof}
Using the five-lemma and the stupid truncations (d\'{e}vissage) for both arguments, we observe that it is enough to prove the local homological finiteness only for stalk complexes $A$ and $B.$ For each $A,$ we have a conflation
$$\Sigma^{-1} A \rightarrowtail C(1_{\Sigma^{-1} A}) \twoheadrightarrow A.$$
By condition (C3), the desired local homological finiteness holds for the graded objects. 
Thus, by (\ref{extcones}), it holds for $C(1_{\Sigma^{-1} A})$ and $B,$ hence it holds for $A, B$ if only if it holds for $\Sigma^{-1} A, B.$ By induction, we get that it is enough to prove the statement for stalk complexes $A, B$ both concentrated in the same degree. For those, the extensions are extensions in $\mathcal{E}$ of their non-zero components, and the condition (C3) implies the desired statement.
\end{proof}

Consider the set $\Iso(\mathcal{C}_{ac}^b(\mathcal{E}))$ of isomorphism classes $[K]$ of bounded acyclic complexes and its quotient by the following set of relations:  
$$\left\langle [K_2] = [K_1 \oplus K_3] | K_1 \rightarrowtail K_2 \twoheadrightarrow K_3 \quad \mbox{is a conflation} \right\rangle.$$ 
If we endow $\Iso(\mathcal{C}_{ac}^b(\mathcal{E}))$ with the addition given by direct sums, this quotient gives the Grothendieck monoid $M_0(\mathcal{C}_{ac}^b(\mathcal{E}))$ of the exact category $\mathcal{C}_{ac}^b(\mathcal{E}).$ We define the {\it quantum affine space of acyclic complexes} $\mathbb{A}_{ac}(\mathcal{E})$ as the $\mathbb{Q}-$vector space generated by elements of $M_0(\mathcal{C}_{ac}^b(\mathcal{E})).$ We endow it with the bilinear multiplication defined below.

\begin{definition}
For $K_1, K_2 \in \mathcal{C}_{ac}^b(\mathcal{E}),$ we define their product as
$$[K_1] \diamond [K_2] := \frac{1}{\left\langle K_1, K_2 \right\rangle} [K_1 \oplus K_2].$$
\end{definition}

By Lemma \ref{homfin}, this product is well-defined; moreover, it is clearly associative. We see that this ring has the class of the zero complex $[0]$ as the unit. Moreover, it is clear that the set of all elements of the form $[K]$ satisfies the Ore conditions. This means that we can make all of them invertible and consider the {\it quantum torus of acyclic complexes} $\mathbb{T}_{ac}(\mathcal{E}).$ It is generated by classes $[K]$ and their inverses $[K]^{-1}.$ Here are simple relations concerning the product.

\begin{lemma} \label{acinvcomm}
For $K_1, K_2 \in \mathcal{C}_{ac}^b(\mathcal{E}),$ we have 
$$[K_1]^{-1} \diamond [K_2]^{-1} = \left\langle K_2, K_1 \right\rangle [K_1 \oplus K_2]^{-1}; \quad [K_1]^{-1} \diamond [K_2] = \frac{\left\langle K_1, K_2 \right\rangle}{\left\langle K_2, K_1 \right\rangle} [K_2] \diamond [K_1]^{-1}.$$
\end{lemma}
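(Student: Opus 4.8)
The plan is to deduce both identities purely formally from the defining relation $[K_1]\diamond[K_2]=\langle K_1,K_2\rangle^{-1}[K_1\oplus K_2]$, the associativity already noted, and the bilinearity and nondegeneracy (valued in $\mathbb{Q}^\ast$) of the multiplicative Euler form; no further exact-category input is needed. First I would record the key computation that $[K]\diamond[K']$ and $[K']\diamond[K]$ both equal a nonzero scalar multiple of $[K\oplus K']$, since $K\oplus K'\cong K'\oplus K$; explicitly $[K]\diamond[K'] = \langle K',K\rangle\langle K,K'\rangle^{-1}\,[K']\diamond[K]$. This is the ``commutation up to a scalar'' that underlies the second identity, and it makes transparent that $\mathbb{T}_{ac}(\mathcal{E})$ is a twisted Laurent polynomial-type algebra.

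For the first identity, I would start from $[0]=[K_1\oplus K_2]\diamond[K_1\oplus K_2]^{-1}$ and rewrite $[K_1\oplus K_2]$ using the definition backwards: $[K_1\oplus K_2]=\langle K_1,K_2\rangle\,[K_1]\diamond[K_2]$. Hence $[0]=\langle K_1,K_2\rangle\,[K_1]\diamond[K_2]\diamond[K_1\oplus K_2]^{-1}$, and multiplying on the left by $[K_1]^{-1}$ then by $[K_2]^{-1}$ (legitimate since these are the Ore-localized inverses and the scalar $\langle K_1,K_2\rangle\in\mathbb{Q}^\ast$ is central) gives $[K_2]^{-1}\diamond[K_1]^{-1}=\langle K_1,K_2\rangle\,[K_1\oplus K_2]^{-1}$. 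Swapping the roles of $K_1$ and $K_2$ and using $K_1\oplus K_2\cong K_2\oplus K_1$ yields exactly $[K_1]^{-1}\diamond[K_2]^{-1}=\langle K_2,K_1\rangle\,[K_1\oplus K_2]^{-1}$, as claimed.

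For the second identity, I would combine the first identity with the defining relation: on one side $[K_1]^{-1}\diamond[K_2] = [K_1]^{-1}\diamond\bigl(\langle K_2, K_1\rangle^{-1}[K_2\oplus K_1]\diamond[K_1]^{-1}\bigr)$ after substituting $[K_2]=\langle K_2,K_1\rangle^{-1}[K_2\oplus K_1]\diamond[K_1]^{-1}$ (itself an instance of the definition read backwards, since $[K_2]\diamond[K_1]=\langle K_2,K_1\rangle^{-1}[K_2\oplus K_1]$ gives $[K_2]=\langle K_2,K_1\rangle^{-1}[K_2\oplus K_1]\diamond[K_1]^{-1}$). Then I apply the same substitution to $[K_2\oplus K_1]$ in terms of $[K_1]\diamond[K_2\oplus K_1\ominus K_1]$—or, more cleanly, just write $[K_1]^{-1}\diamond[K_2]=\langle K_1,K_2\rangle^{-1}[K_1\oplus K_2]^{-1}\!{}^{-1}$-style bookkeeping—tracking each scalar via bilinearity of $\langle\cdot,\cdot\rangle$, and collect the accumulated factor, which reduces to $\langle K_1,K_2\rangle/\langle K_2,K_1\rangle$ times $[K_2]\diamond[K_1]^{-1}$. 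I do not anticipate a genuine obstacle here: the only things to be careful about are that all the Euler-form values are invertible scalars in $\mathbb{Q}^\ast$ (so division is allowed) and that $K_i\oplus K_j$ is literally isomorphic to $K_j\oplus K_i$ in $\mathcal{C}_{ac}^b(\mathcal{E})$, so the two monomials agree in $M_0(\mathcal{C}_{ac}^b(\mathcal{E}))$; the ``hard part'', such as it is, is merely keeping the scalar bookkeeping straight, and the cleanest route is to verify both identities by multiplying through and checking against the defining relation rather than manipulating inverses directly.
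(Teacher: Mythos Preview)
Your proposal is correct and follows essentially the same route as the paper. The paper declares the first identity ``trivial'' and for the second derives both $[K_2]\diamond[K_1]^{-1}$ and $[K_1]^{-1}\diamond[K_2]$ as scalar multiples of the common expression $[K_1]^{-1}\diamond[K_1\oplus K_2]\diamond[K_1]^{-1}$, then compares the scalars; your substitution $[K_2]=\langle K_2,K_1\rangle^{-1}[K_2\oplus K_1]\diamond[K_1]^{-1}$ is exactly the same move, so the only difference is that your write-up for the second identity is a bit discursive where the paper is terse.
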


\begin{proof}
The first identity is trivial, let us prove the second one.
$$[K_1] \diamond [K_2] = \frac{1}{\left\langle K_1, K_2 \right\rangle} [K_1 \oplus K_2] \Rightarrow [K_2] = \frac{1}{\left\langle K_1, K_2 \right\rangle} [K_1]^{-1} \diamond [K_1 \oplus K_2] \Rightarrow$$
$$\Rightarrow [K_2] \diamond [K_1]^{-1} =  \frac{1}{\left\langle K_1, K_2 \right\rangle} [K_1]^{-1} \diamond [K_1 \oplus K_2] \diamond [K_1]^{-1}.$$
Similarly, we find that
$$[K_1]^{-1} \diamond [K_2] = \frac{1}{\left\langle K_2, K_1 \right\rangle} [K_1]^{-1} \diamond [K_1 \oplus K_2] \diamond [K_1]^{-1},$$
and the desired equality follows.
\end{proof}


Another way to define $\mathbb{T}_{ac}(\mathcal{E})$ is provided by the following simple statement.

\begin{lemma}
The quantum torus of acyclic complexes is isomorphic to the quantum torus of the Grothendieck group $K_0(\mathcal{C}^b_{ac}(\mathcal{E})),$ twisted by the inverse of the Euler form:
$$\mathbb{T}_{ac}(\mathcal{E}) \overset\sim\to \mathbb{T}(K_0(\mathcal{C}^b_{ac}(\mathcal{E})), {\left\langle \cdot, \cdot \right\rangle}^{-1}).$$
In other words, it is the $\mathbb{Q}-$group algebra of $K_0(\mathcal{C}^b_{ac}(\mathcal{E})),$ with the multiplication twisted by the inverse of the Euler form.
\end{lemma}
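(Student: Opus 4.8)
The plan is to exhibit an explicit algebra homomorphism $\Phi \colon \mathbb{T}(K_0(\mathcal{C}^b_{ac}(\mathcal{E})), {\left\langle \cdot, \cdot \right\rangle}^{-1}) \to \mathbb{T}_{ac}(\mathcal{E})$ and check it is an isomorphism. Recall that by construction $\mathbb{T}(K_0(\mathcal{C}^b_{ac}(\mathcal{E})), {\left\langle \cdot, \cdot \right\rangle}^{-1})$ is the $\mathbb{Q}$-vector space with basis $\{ t_\alpha \mid \alpha \in K_0(\mathcal{C}^b_{ac}(\mathcal{E})) \}$ and product $t_\alpha \cdot t_\beta = {\left\langle \alpha, \beta \right\rangle}^{-1} t_{\alpha + \beta}$. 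First I would send $t_{[K]} \mapsto [K]$ for an acyclic complex $K$, and more generally $t_{[K_1] - [K_2]} \mapsto [K_1] \diamond [K_2]^{-1}$; one must check this is independent of the chosen representative $[K_1] - [K_2]$ of a class in $K_0(\mathcal{C}^b_{ac}(\mathcal{E}))$, which follows because in $\mathbb{T}_{ac}(\mathcal{E})$ a conflation $K' \rightarrowtail K \twoheadrightarrow K''$ gives $[K] = \left\langle K', K'' \right\rangle^{-1}[K'\oplus K'']$ (the defining relation of $M_0$ after normalization), so classes of complexes related by conflations already coincide up to the Euler scalar, matching the defining relations of the twisted group algebra.

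Next I would verify that $\Phi$ is a ring homomorphism. On the group-algebra side the product of $t_{[K_1]}$ and $t_{[K_2]}$ is ${\left\langle [K_1], [K_2] \right\rangle}^{-1} t_{[K_1 \oplus K_2]}$, whose image is ${\left\langle K_1, K_2 \right\rangle}^{-1}[K_1 \oplus K_2]$; by the Definition of $\diamond$ this is exactly $[K_1] \diamond [K_2] = \Phi(t_{[K_1]}) \diamond \Phi(t_{[K_2]})$. For the general case, where one allows formal differences of classes, the identities of Lemma~\ref{acinvcomm} express $[K_1]^{-1} \diamond [K_2]$ and $[K_1]^{-1} \diamond [K_2]^{-1}$ again as Euler-form multiples of single basis elements $[K]^{\pm 1}$, which is precisely what the multiplication rule $t_\alpha \cdot t_\beta = \left\langle \alpha, \beta \right\rangle^{-1} t_{\alpha+\beta}$ predicts (note the asymmetry in Lemma~\ref{acinvcomm} matching the non-symmetry of $\left\langle \cdot, \cdot \right\rangle$). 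So multiplicativity reduces to a handful of cases already recorded above.

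Finally I would check bijectivity. Surjectivity is immediate since every generator $[K]^{\pm 1}$ of $\mathbb{T}_{ac}(\mathcal{E})$ lies in the image. For injectivity, one uses that $\mathbb{T}_{ac}(\mathcal{E})$ is, by construction, the localization at the Ore set $\{[K]\}$ of the algebra $\mathbb{A}_{ac}(\mathcal{E})$ spanned by $M_0(\mathcal{C}^b_{ac}(\mathcal{E}))$; and the natural map $M_0(\mathcal{C}^b_{ac}(\mathcal{E})) \to K_0(\mathcal{C}^b_{ac}(\mathcal{E}))$ is the universal group completion of the monoid, so the induced map on localized group algebras has no kernel beyond what is already accounted for. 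Concretely, a $\mathbb{Q}$-basis of $\mathbb{T}_{ac}(\mathcal{E})$ is given by a set of coset representatives of the image of $M_0$ inside $K_0(\mathcal{C}^b_{ac}(\mathcal{E}))$ together with their inverses, and $\Phi$ carries the standard basis $\{t_\alpha\}$ bijectively onto this basis. I expect the main obstacle to be the well-definedness verification in the first step: one must confront the possibility that the Grothendieck \emph{monoid} $M_0(\mathcal{C}^b_{ac}(\mathcal{E}))$ does not inject into the Grothendieck \emph{group} $K_0(\mathcal{C}^b_{ac}(\mathcal{E}))$, i.e. two acyclic complexes may have equal classes in $K_0$ without being stably equivalent in $M_0$; if so one should either cite the relevant injectivity (for instance Lemma~\ref{k0inj}, to which the excerpt alludes) or work directly with the localized monoid algebra and argue that $\Phi$ factors through the group completion anyway because the target is already a group algebra — a point worth stating carefully rather than glossing over.
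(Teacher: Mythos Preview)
Your approach is correct in spirit but more laborious than the paper's. The paper argues entirely via universal properties: since $\mathbb{A}_{ac}(\mathcal{E})$ is the twisted monoid algebra of $M_0(\mathcal{C}^b_{ac}(\mathcal{E}))$, it suffices to show that for any commutative monoid $M$ with group completion $G$, the localization $\mathbb{Q}[M][m^{-1}:m\in M]$ and the group algebra $\mathbb{Q}[G]$ represent the same functor. This is a two-line Yoneda computation: for any $\mathbb{Q}$-algebra $A$, algebra maps out of either object correspond to monoid maps $M\to A^{\times}$. The twist causes no trouble because the Euler form is bilinear on $K_0$ and hence compatible with both constructions.

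Your explicit construction of $\Phi$ would also work, and the concern you flag at the end --- whether $M_0\to K_0$ is injective --- is exactly the point where your route becomes awkward and the paper's route shines: the universal property argument is completely insensitive to this injectivity. Your suggested fallback (``argue that $\Phi$ factors through the group completion anyway because the target is already a group algebra'') \emph{is} the paper's proof, so you already have the right idea. Two small corrections: the defining relation in $M_0$ is $[K]=[K'\oplus K'']$ without any Euler factor (the factor appears only when you rewrite $[K'\oplus K'']$ as $\langle K',K''\rangle\,[K']\diamond[K'']$); and Lemma~\ref{k0inj} is irrelevant here --- it concerns the map $K_0(\mathcal{C}^b_{ac}(\mathcal{E}))\to K_0(\mathcal{C}^b(\mathcal{E}))$, not $M_0\to K_0$.
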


\begin{proof}
By the construction, $\mathbb{A}_{ac}(\mathcal{E})$ is the monoid algebra of $M_0(\mathcal{C}^b_{ac}(\mathcal{E})),$ with the multiplication, twisted by the Euler form. Therefore, for the proof it is enough to verify that the universal properties of the monoid algebra $\mathbb{Q}[M],$ localized at the elements of the monoid, and of the group algebra $\mathbb{Q}[G]$ of the group $G$, corresponding to this monoid $M$ by the Grothendieck construction, are the same. Indeed, for each $\mathbb{Q}-$algebra $A,$ we have
$$\Hom_{\mathbb{Q}-{\bf{Alg}}}(\mathbb{Q}[G], A) = \Hom_{\bf{Grp}}(G, A^\times) = \Hom_{\bf{Mon}}(M, A^\times);$$
$$\Hom_{\mathbb{Q}-{\bf{Alg}}}(\mathbb{Q}[M][[m]^{-1}, m \in M], A) = \left\{f \in \Hom_{\mathbb{Q}-{\bf{Alg}}}(\mathbb{Q}[M], A) | f(M) \subset A^\times\right\} = $$
\smallskip\smallskip\smallskip\smallskip$\qquad\qquad\qquad\qquad\qquad\qquad\qquad\quad = \Hom_{\bf{Mon}}(M, A^\times).$
\end{proof}

\section{Definition of the semi-derived Hall algebras}
\subsection{Module structure over the quantum torus}
Let us consider the vector space $\mathcal{M}_1(\mathcal{E})$ over $\mathbb{Q}$ whose basis is formed by the isomorphism  classes $[M],$ where $M \in \mathcal{C}^{b}(\mathcal{E}).$ On $\mathcal{M}_1(\mathcal{E}),$ let us define a multiplication by classes of acyclic complexes, generalizing the one from the previous section. Namely, for $K$ acyclic, $M$ arbitrary, we define their products as follows:
$$[K] \diamond [M] = \frac{1}{\left\langle K, M \right\rangle} [K \oplus M]; \quad [M] \diamond [K] = \frac{1}{\left\langle M, K \right\rangle} [K \oplus M].$$
We get a bimodule over $\mathbb{A}_{ac}(\mathcal{E}),$ let us call it $\mathcal{M}_1^{'}(\mathcal{E}).$ 

We quotient $M_1(\mathcal{E})$ by the set of relations
\begin{equation} \label{Zrelations}
\left\langle [L] = [K \oplus M] | K \rightarrowtail L \twoheadrightarrow M \quad \mbox{is a conflation}, K \in \mathcal{C}_{ac}^{b}(\mathcal{E}) \right\rangle,
\end{equation}
to obtain the space $\mathcal{M}_2(\mathcal{E}).$
We will denote classes after the factorization by the same symbols $[M].$ Of course, one gets the same vector space if one starts with the quotient of $\Iso(\mathcal{C}^{b}(\mathcal{E}))$ by this set of relations and then considers the vector space on this basis. It is easy to check that these relations respect the bimodule structure. Thus, $\mathcal{M}_2(\mathcal{E})$ has the induced bimodule structure, let us denote it by $\mathcal{M}_2^{'}(\mathcal{E}).$
By taking the tensor product with the quantum torus, we get a bimodule $\mathcal{M}(\mathcal{E}) := \mathbb{T}_{ac}(\mathcal{E}) \otimes_{\mathbb{A}_{ac}(\mathcal{E})} \mathcal{M}_2^{'}(\mathcal{E}) \cong  \mathcal{M}_2^{'}(\mathcal{E}) \otimes_{\mathbb{A}_{ac}(\mathcal{E})} \mathbb{T}_{ac}(\mathcal{E})$ over $\mathbb{T}_{ac}(\mathcal{E}).$

\subsection{Freeness over the quantum torus}
\begin{theorem} \label{zfree}
$\mathcal{M}(\mathcal{E})$ is free as a right module over $\mathbb{T}_{ac}(\mathcal{E}).$  Each choice of representatives of the quasi-isomorphism classes in $\mathcal{C}^b(\mathcal{E})$ yields a basis.
\end{theorem}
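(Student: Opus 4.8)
The plan is to compare $\mathcal{M}(\mathcal{E})$ with the ``obvious'' candidate. Write $\overline{(-)}$ for the canonical functor $\mathcal{C}^b(\mathcal{E})\to\mathcal{D}^b(\mathcal{E})$, fix for every class $c\in\Iso(\mathcal{D}^b(\mathcal{E}))$ a bounded complex $M_c$ with $\overline{M_c}=c$, and let $\mathcal{F}:=\bigoplus_{c} g_c\,\mathbb{T}_{ac}(\mathcal{E})$ be the free right $\mathbb{T}_{ac}(\mathcal{E})$-module on symbols $g_c$. Sending $g_c\mapsto[M_c]$ gives a morphism of right $\mathbb{T}_{ac}(\mathcal{E})$-modules $\Psi\colon\mathcal{F}\to\mathcal{M}(\mathcal{E})$, and the theorem amounts to showing that $\Psi$ is bijective. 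I would treat surjectivity (``spanning'') and injectivity (``linear independence'') separately: the first by reducing an arbitrary generator $[M]$ to some $[M_c]$ up to a unit of $\mathbb{T}_{ac}(\mathcal{E})$, the second by building an explicit left inverse of $\Psi$.

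For surjectivity, the two elementary moves are: (a) for acyclic $K$ and any $M$ the definition of the $\mathbb{A}_{ac}(\mathcal{E})$-action gives $[M\oplus K]=\langle M,K\rangle\,[M]\diamond[K]$, so — as $[K]$ is invertible in $\mathbb{T}_{ac}(\mathcal{E})$ — we get $[M\oplus K]\in[M]\cdot\mathbb{T}_{ac}(\mathcal{E})^{\times}$; and (b) by the defining relations \eqref{Zrelations}, the same holds for the middle term of any conflation $K\rightarrowtail L\twoheadrightarrow M$ with $K$ acyclic. Given $M$, Lemma~\ref{projres} (and its $\operatorname{Coh}(X)$-analogue used in the proof of Theorem~\ref{condd}, or more generally the resolutions furnished by (C4)) produces a deflation quasi-isomorphism $P\stackrel{\qis}{\twoheadrightarrow}M$; its kernel $K$ is acyclic because in the induced triangle $K\to P\to M\to\Sigma K$ the object $K$ becomes zero in $\mathcal{D}^b(\mathcal{E})$, so by (b) we obtain $[M]\in[P]\cdot\mathbb{T}_{ac}(\mathcal{E})^{\times}$. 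It remains to compare two such resolutions $P,P'$ with $\overline P=\overline{P'}$: they are homotopy equivalent (classical for complexes of projectives, and in general a consequence of Lemma~\ref{homderext} and part (i) of (C4), which make such complexes compute $\operatorname{Hom}$ in $\mathcal{D}^b(\mathcal{E})$), hence $P\oplus C\cong P'\oplus C'$ for contractible — thus, by (C1), acyclic — complexes $C,C'$, and move (a) applies. Iterating, $[M]\in[M_{\overline M}]\cdot\mathbb{T}_{ac}(\mathcal{E})^{\times}$ for every $M$, so $\Psi$ is onto.

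For injectivity I would produce a right $\mathbb{T}_{ac}(\mathcal{E})$-linear map $\Phi\colon\mathcal{M}(\mathcal{E})\to\mathcal{F}$ with $\Phi\circ\Psi=\operatorname{id}_{\mathcal{F}}$. Using $\mathcal{M}(\mathcal{E})\cong\mathcal{M}_2^{'}(\mathcal{E})\otimes_{\mathbb{A}_{ac}(\mathcal{E})}\mathbb{T}_{ac}(\mathcal{E})$ it suffices to give a right $\mathbb{A}_{ac}(\mathcal{E})$-linear map $\phi\colon\mathcal{M}_2^{'}(\mathcal{E})\to\mathcal{F}$ with $\phi([M_c])=g_c$, and I would look for it in the form $\phi([M])=g_{\overline M}\cdot\varepsilon(M)$ with a correction factor $\varepsilon(M)\in\mathbb{T}_{ac}(\mathcal{E})$. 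Right $\mathbb{A}_{ac}(\mathcal{E})$-linearity together with compatibility with \eqref{Zrelations} forces
$$\varepsilon(L)=\langle M,K\rangle\,\varepsilon(M)\,[K]\quad\text{for every conflation }K\rightarrowtail L\twoheadrightarrow M\text{ with }K\text{ acyclic,}$$
together with the normalization $\varepsilon(M_c)=1$. Such a $\varepsilon$ is dictated on all complexes by transporting along the chains of conflations produced in the spanning step; equivalently, one uses that $[M]-[M_{\overline M}]$ lies in the kernel of $K_0(\mathcal{C}^b(\mathcal{E}))\to K_0(\mathcal{D}^b(\mathcal{E}))$, which is spanned by the classes of acyclic complexes, to write $\varepsilon(M)$ in closed form in terms of this ``acyclic defect''. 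Granting that $\varepsilon$ is well defined, $\phi$ kills \eqref{Zrelations}, is right $\mathbb{A}_{ac}(\mathcal{E})$-linear, extends to $\Phi$, and $\Phi\circ\Psi=\operatorname{id}$ holds since $\varepsilon(M_c)=1$; hence $\Psi$ is injective and the theorem follows.

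The main obstacle is precisely the well-definedness of $\varepsilon$: one must check that the value dictated by the displayed identity is independent of the chosen chain of conflations, i.e. that the correction accumulated around any closed chain is trivial. This is the only genuinely non-formal point, and it is where the bilinearity of the multiplicative Euler form (Lemma~\ref{homfin} and the discussion following it), the commutation rules in $\mathbb{T}_{ac}(\mathcal{E})$ (Lemma~\ref{acinvcomm}), and the bookkeeping of acyclic classes in $K_0(\mathcal{C}^b_{ac}(\mathcal{E}))$ all enter; everything else is routine manipulation of conflations and Euler factors.
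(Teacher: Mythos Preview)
Your skeleton (surjectivity, then injectivity via a retraction) matches the paper's, but both halves have gaps.

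For surjectivity you take an unnecessary detour through resolutions and homotopy equivalence. The claim that two $(C4)$-resolutions $P,P'$ with $\overline P=\overline{P'}$ are homotopy equivalent is not justified in general: condition~(C4)(i) controls only $\Ext^p$ for $p>0$, and Lemma~\ref{homderext} requires $\Ext^p_{\mathcal E}(P^i,(P')^j)=0$ component-wise, which you do not arrange symmetrically (in the $\Coh(X)$ case, two complexes of twisted line bundles need not satisfy this in both directions, so you cannot conclude $\Hom_{K^b}=\Hom_{\mathcal D^b}$ both ways). The paper bypasses all of this: it simply uses that being isomorphic in $\mathcal D^b(\mathcal E)$ amounts to being connected by a zigzag of deflation quasi-isomorphisms, and each conflation $K\rightarrowtail M_{i-1}\twoheadrightarrow M_i$ with $K$ acyclic already yields $[M_i]\in\mathbb T_{ac}(\mathcal E)\diamond[M_{i-1}]$ directly from the defining relations. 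No resolutions, no homotopy theory.

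For injectivity you correctly isolate the crux --- the well-definedness of $\varepsilon$ --- but stop short of proving it. Your ``acyclic defect'' $[M]-[M_{\overline M}]$ lies in the image of $K_0(\mathcal C^b_{ac}(\mathcal E))\to K_0(\mathcal C^b(\mathcal E))$; to lift it \emph{uniquely} back to $K_0(\mathcal C^b_{ac}(\mathcal E))$ (and hence pin down the graded piece of $\mathbb T_{ac}(\mathcal E)$ in which $\varepsilon(M)$ must live) you need this map to be injective. That is exactly Lemma~\ref{k0inj}, which the paper proves via intelligent truncations of acyclic complexes and which is the only genuinely non-formal input. Once one has it, the paper's packaging is slicker than constructing $\varepsilon$ by hand: decompose $\mathcal M(\mathcal E)=\bigoplus_{\alpha}\mathcal M_\alpha(\mathcal E)$ over $\alpha\in\Iso(\mathcal D^b(\mathcal E))$, note that each $\mathcal M_\alpha$ is cyclic over $\mathbb T_{ac}(\mathcal E)$ by the surjectivity step, and observe that the composite
\[
\mathbb T_{ac}(\mathcal E)\longrightarrow\mathcal M_\alpha(\mathcal E)\longrightarrow\mathbb Q\bigl[K_0(\mathcal C^b(\mathcal E))\bigr]
\]
is injective precisely because $K_0(\mathcal C^b_{ac}(\mathcal E))\hookrightarrow K_0(\mathcal C^b(\mathcal E))$. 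Your retraction $\Phi$ is essentially the inverse of this composite, so the two approaches become equivalent once Lemma~\ref{k0inj} is in hand; but as written your proposal does not supply it, and the ``bookkeeping'' you defer to the last paragraph \emph{is} that lemma.
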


\begin{proof}
Assume that two complexes $M$ and $M'$ are quasi-isomorphic to each other. This means that there is a sequence of objects $M_0 = M, M_1, M_2,\ldots, M_n = M',$ such that for each $i = 1, 2, \ldots, n$ there is either a conflation 
$$K \rightarrowtail M_{i-1} \stackrel{\qis}{\twoheadrightarrow} M_i,$$
or a conflation
$$K \rightarrowtail M_i \stackrel{\qis}{\twoheadrightarrow} M_{i-1},$$
with $K$ acyclic. Therefore, we either have
$$[M_i] = [K \oplus M_{i-1}] = \left\langle K, M_{i-1} \right\rangle [K] \diamond [M_{i-1}],$$
or
$$[M_{i-1}] = [K \oplus M_i] = \left\langle K, M_i \right\rangle [K] \diamond [M_i] \Rightarrow [M_i] = \frac{1}{\left\langle K, M_i \right\rangle} [K]^{-1} \diamond [M_{i-1}].$$
It follows that $[M'] \in \mathbb{T}_{ac}(\mathcal{E}) \diamond [M].$ Therefore, the quasi-isomorphism classes of complexes generate $\mathcal{M}(\mathcal{E})$ over $\mathbb{T}_{ac}(\mathcal{E}).$ It remains to prove that they are independent over this quantum torus.


Since each relation in $\mathcal{M}_1(\mathcal{E})$ from the definition of the underlying vector space of $\mathcal{M(E)}$ identifies two elements in the same quasi-isomorphism class, one can decompose $\mathcal{M(E)}$ into the direct sum 
$$\mathcal{M(E)} = \bigoplus\limits_{\alpha \in \Iso(\mathcal{D}^b(\mathcal{E}))} \mathcal{M}_{\alpha} \mathcal{(E)},$$
where $\mathcal{M}_{\alpha} \mathcal{(E)}$ is the component containing the classes of all objects whose isomorphism class in $\mathcal{D}^b(\mathcal{E})$ is $\alpha.$ We claim that for each $\alpha,$ the $\mathbb{T}_{ac}-$submodule $\mathcal{M}_{\alpha}(\mathcal{E})$ is free of rank one. Let $M$ be an object of $\mathcal{C}^b(\mathcal{E}).$ By the above argument, the map
$$\mathbb{T}_{ac}(\mathcal{E}) \to \mathcal{M}_{[M]}(\mathcal{E}), \quad [K] \mapsto [K] \diamond [M]$$
is surjective. Since $\mathbb{T}_{ac}(\mathcal{E})$ is the (twisted) group algebra of $K_0(\mathcal{C}^b_{ac}(\mathcal{E})),$ the following Lemma \ref{k0inj} shows that its composition with the natural map 
$$\mathcal{M}_{[M]}(\mathcal{E}) \to \mathcal{M}(\mathcal{E}) \to \mathbb{Q}[K_0(\mathcal{C}^b(\mathcal{E}))]$$
is injective. Therefore, it is bijective. This completes the proof.
\end{proof}




\begin{lemma} \label{k0inj}
The natural map 
$$i: K_0(C_{ac}^b(\mathcal{E})) \rightarrow K_0 (C^b(\mathcal{E})), [M] \mapsto [M]$$
is injective.
\end{lemma}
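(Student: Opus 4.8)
The plan is to construct an explicit left inverse to the map $i$, or equivalently a well-defined homomorphism $K_0(\mathcal{C}^b(\mathcal{E})) \to K_0(\mathcal{C}^b_{ac}(\mathcal{E}))$ — better, a map out of $K_0(\mathcal{C}^b(\mathcal{E}))$ detecting enough information — and to feed it the d\'{e}vissage from Lemma \ref{devis}. First I would note that by stupid truncation every object of $\mathcal{C}^b(\mathcal{E})$ is a finite iterated extension of stalk complexes $M^n[-n]$, so $K_0(\mathcal{C}^b(\mathcal{E}))$ is generated by the classes $[M[-n]]$ for $M \in \mathcal{E}$, $n \in \mathbb{Z}$; in fact the assignment $[M[-n]] \mapsto (-1)^n [M]$ extends to an isomorphism $K_0(\mathcal{C}^b(\mathcal{E})) \overset{\sim}\to K_0(\mathcal{E})$, because a conflation of complexes splits into conflations in each degree and the Euler-characteristic-style alternating sum is additive. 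Under this identification the subgroup $K_0(\mathcal{C}^b_{ac}(\mathcal{E}))$ maps into... and here one has to be careful, since an acyclic complex need not be zero in $K_0(\mathcal{E})$ componentwise summed with signs — wait, in fact it does: an acyclic bounded complex $0 \to K^a \to \cdots \to K^b \to 0$ satisfies $\sum (-1)^i [K^i] = 0$ in $K_0(\mathcal{E})$ by splicing the short exact sequences $Z^i \rightarrowtail K^i \twoheadrightarrow Z^{i+1}$ (each of which is a conflation in $\mathcal{E}$ since $\mathcal{E}$ is idempotent complete, using (C1) so that images exist). So this map kills all of $K_0(\mathcal{C}^b_{ac}(\mathcal{E}))$ and is useless for injectivity of $i$ on its own — I need a finer invariant.

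The right finer invariant is to remember the whole graded family, not just its alternating sum: consider the forgetful functor $\forget\colon \mathcal{C}^b(\mathcal{E}) \to {\bf gr^b}(\mathcal{E})$ from the proof of Proposition \ref{comderext}, which is exact, hence induces $\forget_*\colon K_0(\mathcal{C}^b(\mathcal{E})) \to K_0({\bf gr^b}(\mathcal{E})) = \bigoplus_{n \in \mathbb{Z}} K_0(\mathcal{E})$. The composite $\forget_* \circ i$ sends the class of an acyclic complex $K$ to the tuple $([K^n])_n$ of component classes. Now I would define a retraction $r\colon K_0({\bf gr^b}(\mathcal{E})) \to K_0(\mathcal{C}^b_{ac}(\mathcal{E}))$, or rather observe directly that $\forget_* \circ i$ is injective: given an acyclic complex $K$ with $[K^n] = 0$ in $K_0(\mathcal{E})$ for every $n$, I must show $[K] = 0$ in $K_0(\mathcal{C}^b_{ac}(\mathcal{E}))$. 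For this I use the device from the very beginning of the proof of Theorem \ref{condd}: the conflation $K^n[-n] \rightarrowtail K \twoheadrightarrow \sigma_{\geq n-1} K$ where $n$ is the top degree — but the stalk $K^n[-n]$ is generally \emph{not} acyclic, so this conflation does not live in $\mathcal{C}^b_{ac}(\mathcal{E})$. The correct move is instead to use that $K$, being acyclic, is a finite iterated extension \emph{within the exact category $\mathcal{C}^b_{ac}(\mathcal{E})$} of the two-term acyclic complexes $(\,Z^{i} \hookrightarrow K^i\,)[-i]$ and $(\,K^i \twoheadrightarrow Z^{i+1}\,)[-i]$; more efficiently, one shows $K$ is an iterated extension in $\mathcal{C}^b_{ac}(\mathcal{E})$ of complexes of the form $C(1_{A[-i]})$ (cones on identities of stalks), one such for each "place" in $K$, and $[C(1_{A[-i]})] = [C(1_{A[-j]})]$-type relations together with additivity identify $[K]$ with $\sum_{i} (\pm)\, [\,C(1_{K^i[-i]})\,]$-combinations that depend only on the classes $[K^i] \in K_0(\mathcal{E})$.

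Concretely, the cleanest route: in $\mathcal{C}^b_{ac}(\mathcal{E})$ every object is, by d\'{e}vissage applied inside this exact subcategory, an iterated extension of contractible two-term complexes $A \overset{1}\to A$ placed in degrees $(i,i+1)$, and the class of such a two-term complex in $K_0(\mathcal{C}^b_{ac}(\mathcal{E}))$ depends only on $[A] \in K_0(\mathcal{E})$ and the parity of $i$ (shifting by $2$ and adding $C(1_{-})$-cancellations). Hence there is a surjection $\bigoplus_{n \in \mathbb{Z}/2} K_0(\mathcal{E}) \twoheadrightarrow K_0(\mathcal{C}^b_{ac}(\mathcal{E}))$, and composing with $\forget_* \circ i$ one checks it becomes the obvious diagonal-type inclusion $\bigoplus_{\mathbb{Z}/2} K_0(\mathcal{E}) \hookrightarrow \bigoplus_{\mathbb{Z}} K_0(\mathcal{E})$ — injective — which forces both the surjection above and $i$ to be isomorphisms onto their images, in particular $i$ injective. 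I expect the main obstacle to be exactly this bookkeeping: proving that $K$ acyclic is an iterated extension \emph{inside} $\mathcal{C}^b_{ac}(\mathcal{E})$ (not merely inside $\mathcal{C}^b(\mathcal{E})$) by contractible two-term pieces, and that the resulting presentation of $K_0(\mathcal{C}^b_{ac}(\mathcal{E}))$ matches up correctly with $\forget_*$ so that no cancellation of the graded-component data occurs; once that is set up, injectivity of the diagonal embedding $\bigoplus_{\mathbb{Z}/2} K_0(\mathcal{E}) \hookrightarrow \bigoplus_{\mathbb{Z}} K_0(\mathcal{E})$ is immediate.
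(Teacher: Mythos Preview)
Your overall strategy matches the paper's: identify $K_0(\mathcal{C}^b(\mathcal{E}))\cong\bigoplus_{\mathbb{Z}}K_0(\mathcal{E})$ via the forgetful functor, show that every bounded acyclic complex is an iterated extension \emph{inside} $\mathcal{C}^b_{ac}(\mathcal{E})$ of two-term contractible complexes $C_n(A):=\bigl(A\overset{1}{\to}A\bigr)$ sitting in degrees $(n,n+1)$, and then check that the resulting composite into $\bigoplus_{\mathbb{Z}}K_0(\mathcal{E})$ is injective. The paper calls the second step ``intelligent truncation'' and carries out the last step by looking at the highest nonzero component.

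However, there is a genuine error in your final paragraph. The claim that the class of $C_n(A)$ in $K_0(\mathcal{C}^b_{ac}(\mathcal{E}))$ depends only on $[A]$ and on the \emph{parity} of $n$ is false: there is no conflation in $\mathcal{C}^b_{ac}(\mathcal{E})$ producing a relation $[C_n(A)]=[C_{n+2}(A)]$, and the ``shifting by $2$ and adding $C(1_{-})$-cancellations'' you invoke does not exist. Indeed, under $\forget_*\circ i$ the class $[C_n(A)]$ maps to the element with $[A]$ in positions $n$ and $n+1$, so $[C_0(A)]$ and $[C_2(A)]$ have distinct images and hence are distinct already in $K_0(\mathcal{C}^b_{ac}(\mathcal{E}))$. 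Consequently your surjection $\bigoplus_{\mathbb{Z}/2}K_0(\mathcal{E})\twoheadrightarrow K_0(\mathcal{C}^b_{ac}(\mathcal{E}))$ does not exist, and the ``diagonal-type inclusion'' you describe is not well defined. (You may be anticipating the $\mathbb{Z}/2$-graded setting of Section~9, where a genuinely different phenomenon occurs; here, in the $\mathbb{Z}$-graded case, the index set stays $\mathbb{Z}$.)

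The fix is simply to replace $\mathbb{Z}/2$ by $\mathbb{Z}$: the d\'evissage by intelligent truncations gives a surjection
\[
h:\bigoplus_{n\in\mathbb{Z}}K_0(\mathcal{E})\twoheadrightarrow K_0(\mathcal{C}^b_{ac}(\mathcal{E})),\qquad [A]\text{ in slot }n\ \longmapsto\ [C_{n-1}(A)],
\]
and the composite $\forget_*\circ i\circ h:\bigoplus_{\mathbb{Z}}K_0(\mathcal{E})\to\bigoplus_{\mathbb{Z}}K_0(\mathcal{E})$ sends a finitely supported sequence $(a_n)_n$ to $(a_{n-1}+a_n)_n$. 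This is injective: if the image vanishes and $m$ is the largest index with $a_m\neq 0$, then the $m$th output component is $a_{m-1}+a_m=a_m\neq 0$, a contradiction. That is exactly the argument the paper gives.
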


\begin{proof}
Thanks to the d\'{e}vissage principle (using stupid truncations), $K_0 (C^b(\mathcal{E}))$ is spanned by stalk complexes. Moreover, it is isomorphic to the coproduct of $\mathbb{Z}$ copies of $K_0 (\mathcal{E}):$ the  following two homomorphisms $f, g$
$$f: K_0 (C^b(\mathcal{E})) \to \coprod\limits_{\mathbb{Z}} K_0 (\mathcal{E}), [\ldots \to M^{n-1} \to M^n \to M^{n+1} \to \ldots] \mapsto (\ldots, [M^{n-1}], [M^n], [M^{n+1}],\ldots),$$
$$g: \coprod\limits_{\mathbb{Z}} K_0 (\mathcal{E}) \to K_0 (\mathcal{C}^b(\mathcal{E})), (\ldots, [M^{n-1}], [M^n], [M^{n+1}],\ldots) \mapsto [\ldots \to M^{n-1} \to M^n \to M^{n+1} \to \ldots]$$
are clearly inverse to each other. 
For acyclic bounded complexes, we also have the so-called ``intelligent truncations''. Recall that a complex 
$$K = \ldots \to K^{n-1} \overset{d^{n-1}}{\to} K^n \overset{d^n}{\to} K^{n+1} \to \ldots$$
over an exact category $\mathcal{E}$ is acyclic if each of its differentials $d^n$ factors into a composition $K^n~\overset{\pi^n}{\to}~Z^n~\overset{i^n}{\to}~K^{n+1},$ where all $\pi^n$ are deflations, all $i^n$ are inflations, and for each $n,$ the sequence $Z^{n-1}~\overset{i^{n-1}}{\to}~K^n~\overset{\pi^n}{\to}~Z^n$ is a conflation in $\mathcal{E}.$ Assume that $K$ is bounded and $m$ is the degree of the rightmost non-zero component. Then $Z^k = 0,$ for $k \geq m,$ hence $i^{n-1}: Z^{n-1} \to K^n$ is an isomorphism. It follows that we have a conflation in $\mathcal{C}^b(\mathcal{E})$ and, more precisely, in $\mathcal{C}^b_{ac}(\mathcal{E}):$

\begin{equation} \label{intelligenttrunc}
\xymatrix@R=0.6cm{
*+++{\ldots} \ar@{->}[r] &*+++{K^{m-2}} \ar@{=}[d] \ar@{->>}[r]^{\pi^{m-2}} &*+++{Z^{m-2}} \ar@{>->}[d]_{i^{m-2}} \ar@{->>}[r] &*+++{0} \ar@{>->}[d] \ar@{->}[r] &*+++{0} \ar@{>->}[d] \ar@{->}[r] &*+++{\ldots}\\
*+++{\ldots} \ar@{->}[r] &*+++{K^{m-2}} \ar@{->>}[d] \ar@{->}[r]^{d^{m-2}} & *+++{K^{m-1}} \ar@{->>}[d]_{\pi^{m-1}} \ar@{->}[r]^{d^{m-1}} & *+++{K^m} \ar@{=}[d] \ar@{->}[r]  & *+++{0} \ar@{=}[d] \ar@{->}[r] & *+++{\ldots}\\
*+++{\ldots} \ar@{->}[r] & *+++{0} \ar@{->}[r] & *+++{Z^{m-1}} \ar@{=}[r]^{i^{m-1}} & *+++{K^m} \ar@{->}[r] & *+++{0} \ar@{->}[r] & *+++{\ldots}\\
}
\end{equation}

The top complex in the conflation (\ref{intelligenttrunc}) is denoted by $\tau_{<(m-1)},$ the bottom one is denoted by $\tau_{\geq (m-1)}.$ The intelligent truncations show, by induction, that any acyclic bounded complex is an iterated extension of complexes of the form 
$$\ldots \to 0 \to X = X \to 0 \to \ldots,$$
i.e. of cones of identities of stalk complexes. Thus, the Grothendieck group $K_0(\mathcal{C}^b_{ac}(\mathcal{E}))$ is also isomorphic to the coproduct of $\mathbb{Z}$ copies of $K_0 (\mathcal{E}):$ consider the homomorphism
$$h: \coprod\limits_{\mathbb{Z}} K_0 (\mathcal{E}) \to K_0 (\mathcal{C}^b_{ac}(\mathcal{E})),  \quad (\ldots, [0], [X], [0], \ldots) \mapsto \ldots \to 0 \to X = X \to 0 \to \ldots,$$ 
where in the argument, $[X]$ is in the $n$th component, in the image, $X$ is in $(n-1)$th and $n$th components. This homomorphism is clearly invertible.

Now to prove injectivity of $i,$ it is enough to prove the injectivity of the following composed map:
$$\coprod\limits_{\mathbb{Z}} K_0 (\mathcal{E}) \overset{h}{\overset{\sim}{\to}}  K_0 (\mathcal{C}^b_{ac}(\mathcal{E})) \overset{i}{\to} K_0 (\mathcal{C}^b(\mathcal{E})) \overset{f}{\overset{\sim}{\to}} \coprod\limits_{\mathbb{Z}} K_0 (\mathcal{E}).$$
We will do it by contradiction. Assume that the element  
$${\bf X} = (\ldots, \sum\limits_{i=1}^{k_n} x_i [X_i],  \ldots)$$
of $\coprod\limits_{\mathbb{Z}} K_0 (\mathcal{E})$  (the described term sits in $n-$th component), is nonzero and belongs to the kernel of $f \circ i \circ h.$ Consider the maximal $m$ such that the $m$th component of ${\bf X}$ is nonzero. Following the definitions of the homomorphisms $f, i, h,$ one observes that then the  $m$th components of ${\bf X}$ and of $f(i(h({\bf X})))$ coincide, thus the latter is also non-zero. Contradiction.
\end{proof}

\subsection{Multiplication}
Consider a pair of complexes $A, B \in \mathcal{C}^b(\mathcal{E}).$ For each class $\varepsilon$ in the extension group $\Ext^1_{\mathcal{C}^b(\mathcal{E})}(A, B)$ represented by a conflation $B \rightarrowtail E \twoheadrightarrow A,$ we denote by $\mt(\varepsilon)$ the isomorphism class of $E$ in $\mathcal{C}^b(\mathcal{E}).$ It is well-defined, i.e. it does not depend on the choice of the representative of $\varepsilon.$ We also consider its class $[\mt(\varepsilon)]$ in $\mathcal{M}(\mathcal{E}).$

\begin{definition}
We define a $\mathbb{Q}-$bilinear map 
$$\diamond : \mathcal{M}_1(\mathcal{E}) \times \mathcal{M}_1(\mathcal{E}) \to \mathcal{M}(\mathcal{E})$$ 
by the following rule:
\begin{equation} \label{diamonddef1}
[L] \diamond [M] = \frac{1}{\left\langle K, L \right\rangle} [K]^{-1} \diamond \sum\limits_{E \in \Iso (\mathcal{C}^b(\mathcal{E}))} (\frac{|\Ext^1_{\mathcal{C}^b(\mathcal{E})}(L', M)_E|}{|\Hom(L', M)|} [E]),
\end{equation}
or, equivalently, as
\begin{equation} \label{diamonddef2}
[L] \diamond [M] = \frac{1}{\left\langle K, L \right\rangle} [K]^{-1} \diamond \sum\limits_{\varepsilon' \in \Ext^1_{\mathcal{C}^b(\mathcal{E})}(L',M)} \frac{[\mt(\varepsilon')]}{|\Hom(L', M)|},
\end{equation}
where 
$K \rightarrowtail L' \stackrel{\qis}{\twoheadrightarrow} L$ 
is a conflation such that $L'$ satisfies
\begin{equation} \label{extpmult}
\Ext^p_{\mathcal{C}^b(\mathcal{E})} (L', M) = \Ext^p_{\mathcal{D}^b(\mathcal{E})} (L', M), \quad \forall p > 0.
\end{equation} 
\end{definition}

By part (i) of assumption (C4), at least one such $L'$ always exists. We want to prove that the definition is correct and compatible with the bimodule structure over $\mathbb{A}_{ac}(\mathcal{E})$ and that it descends to $\mathcal{M}(\mathcal{E}).$

\begin{proposition} \label{welldef}
The map $\diamond$ is well-defined, i.e. it does not depend on the choice of a conflation
$K \rightarrowtail L' \stackrel{\qis}{\twoheadrightarrow} L $
such that the isomorphism (\ref{extpmult}) holds.
\end{proposition}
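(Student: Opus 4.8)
The plan is to show that the right-hand side of \eqref{diamonddef1} is independent of the choice of conflation $K \rightarrowtail L' \stackrel{\qis}{\twoheadrightarrow} L$ satisfying \eqref{extpmult}. First I would reduce to comparing two such resolutions by a standard ``roof'' argument: given two conflations $K_1 \rightarrowtail L_1' \stackrel{\qis}{\twoheadrightarrow} L$ and $K_2 \rightarrowtail L_2' \stackrel{\qis}{\twoheadrightarrow} L$, both with the property \eqref{extpmult}, it suffices to produce a third conflation $K_3 \rightarrowtail L_3' \stackrel{\qis}{\twoheadrightarrow} L$ that dominates both, i.e.\ admits deflation quasi-isomorphisms $L_3' \stackrel{\qis}{\twoheadrightarrow} L_1'$ and $L_3' \stackrel{\qis}{\twoheadrightarrow} L_2'$ with acyclic kernels, again satisfying \eqref{extpmult}. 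One can build $L_3'$ as (a suitable subcomplex of) the pullback $L_1' \times_L L_2'$, or by applying part (i) of (C4) once more to $L_1' \oplus L_2'$ or to the relevant cone; the kernels of all the resulting deflations are acyclic because the original ones are. So the heart of the matter is the case of a single ``one-step'' comparison: a conflation $K' \rightarrowtail L'' \stackrel{\qis}{\twoheadrightarrow} L'$ with $K'$ acyclic, where both $L'$ and $L''$ satisfy \eqref{extpmult}, and I must check that $L'$ and $L''$ give the same element of $\mathcal{M}(\mathcal{E})$.

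The key computation is then to compare
$$\frac{1}{\left\langle K, L \right\rangle} [K]^{-1} \diamond \sum_{\varepsilon' \in \Ext^1_{\mathcal{C}^b(\mathcal{E})}(L',M)} \frac{[\mt(\varepsilon')]}{|\Hom(L', M)|}$$
with the analogous expression for $L''$. The tool is the long exact sequence in $\Ext^*_{\mathcal{C}^b(\mathcal{E})}(-, M)$ associated to the conflation $K' \rightarrowtail L'' \twoheadrightarrow L'$. Because $K'$ is acyclic and both $L', L''$ satisfy \eqref{extpmult}, and by Lemma~\ref{homfin} the relevant groups are finite, I would analyze the six-term (and higher) exact sequence
$$0 \to \Hom(L', M) \to \Hom(L'', M) \to \Hom(K', M) \to \Ext^1(L', M) \to \Ext^1(L'', M) \to \Ext^1(K', M) \to \cdots,$$
extracting a relation between the cardinalities of the $\Ext^1$-groups and their subsets $\Ext^1(-,M)_E$, and matching the ``middle term'' assignment $\mt$ on each side. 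Concretely, a class in $\Ext^1(L'',M)$ and a class in $\Ext^1(L',M)$ that map to each other produce conflations whose middle terms differ by the acyclic complex $K'$ up to the relations \eqref{Zrelations} defining $\mathcal{M}_2(\mathcal{E})$, while the factors $|\Hom(-,M)|$ and the Euler-form prefactors $\left\langle K, L\right\rangle$ versus $\left\langle K', \cdot\right\rangle$-corrections are exactly what is needed to make the two sums agree after one acts by $[K']^{-1}$; here the bimodule identities over $\mathbb{A}_{ac}(\mathcal{E})$ and Lemma~\ref{acinvcomm} are used to move the acyclic classes past everything. The additivity of the Euler form (Lemma~\ref{homfin}) on the conflation $K' \rightarrowtail L'' \twoheadrightarrow L'$ gives $\left\langle K, L''\right\rangle$-type factors in terms of $\left\langle K, L'\right\rangle$ and $\left\langle K, K'\right\rangle$, closing the bookkeeping.

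The main obstacle I anticipate is the fibered/roof step: arranging a common refinement $L_3'$ of two given resolutions that \emph{still} satisfies the crucial property \eqref{extpmult}, since that property is not obviously stable under pullback. I expect one must invoke part (i) of (C4) again for an auxiliary complex (e.g.\ the kernel or the total complex of the comparison), using that quasi-isomorphisms with acyclic kernel behave well under the functors $\Ext^*_{\mathcal{C}^b(\mathcal{E})}(-,M)$ versus $\Ext^*_{\mathcal{D}^b(\mathcal{E})}(-,M)$ — the acyclic part contributes nothing derived-categorically. Once the common refinement exists, the remaining work is the finite-group counting in the long exact sequence, which is routine but requires care to keep track of the subsets $\Ext^1(-,M)_E$ and the normalizing $|\Hom|$-denominators; I would organize it so that each $E$-graded piece is checked separately, reducing to a bijection between fibers of the maps in the long exact sequence.
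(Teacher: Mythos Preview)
Your proposal is correct and follows essentially the same route as the paper's proof: reduce via pullback plus a fresh application of (C4)(i) to a ``one-step'' comparison $K' \rightarrowtail L'' \stackrel{\qis}{\twoheadrightarrow} L'$ with both $L',L''$ satisfying \eqref{extpmult}, then match the two sums using the induced map on $\Ext^1(-,M)$ together with the identification of middle terms modulo the acyclic $K'$, and close up with Euler-form bookkeeping. One sharpening worth noting: since both $L'$ and $L''$ satisfy \eqref{extpmult} and are quasi-isomorphic, the map $\Ext^1_{\mathcal{C}^b(\mathcal{E})}(L',M)\to\Ext^1_{\mathcal{C}^b(\mathcal{E})}(L'',M)$ is actually a \emph{bijection} (both sides equal $\Ext^1_{\mathcal{D}^b(\mathcal{E})}(L,M)$), so you get an honest pairing of extension classes rather than merely a cardinality relation from the long exact sequence; the paper uses this directly, and it simplifies the counting you describe.
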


\begin{proof}
Suppose we have two conflations whose deflations are quasi-isomorphisms
$$K_1 \rightarrowtail L_{1}' \stackrel{\qis}{\twoheadrightarrow} L, K \rightarrowtail L' \stackrel{\qis}{\twoheadrightarrow} L,$$
and isomorphism (\ref{extpmult}) holds for both $L'$ and $L_{1}'.$ Then for the pullback $L_{2}'$ of the two deflations arising in these conflations, we also have deflations which are quasi-isomorphisms
$L_{2}' \stackrel{\qis}{\twoheadrightarrow} L', L_{2}' \stackrel{\qis}{\twoheadrightarrow} L_{1}'.$
By assumption (C4), there exists a deflation quasi-isomorphism
$L'' \stackrel{\qis}{\twoheadrightarrow} L_{2}',$
with $L''$ satisfying the condition (\ref{extpmult}) for the extensions by $M.$ Since the set of deflations is closed under composition, we have a deflation 
$L'' \stackrel{\qis}{\twoheadrightarrow} L.$
Completing all these deflations to conflations and using the axiom of the exact category concerning push-outs, we can obtain the following commutative diagram with $H, N, K$ acyclic:
\begin{equation} \label{diag1}
\xymatrix@R=0.6cm{
*+++{H} \ar@{>->}[d] \ar@{=}[r] &*+++{H} \ar@{>->}[d] \ar@{->>}[r] & *+++{0} \ar@{>->}[d]\\
*+++{N} \ar@{->>}[d] \ar@{>->}[r]  & *+++{L''} \ar@{->>}[d] \ar@{->>}[r]  & *+++{L} \ar@{=}[d] \\
*+++{K}  \ar@{>->}[r]  & *+++{L'} \ar@{->>}[r]  & *+++{L} \\
}
\end{equation}


Therefore, it is sufficient to prove that for an arbitrary pair of conflations 
$$K \rightarrowtail L' \stackrel{\qis}{\twoheadrightarrow} L, N \rightarrowtail L'' \stackrel{\qis}{\twoheadrightarrow} L$$
satisfying the condition (\ref{extpmult}) and such that there exists a commutative diagram (\ref{diag1}), we have the identity
\begin{equation} \label{f1}
\frac{1}{\left\langle K, L \right\rangle} [K]^{-1} \diamond \sum\limits_{\varepsilon' \in \Ext^1_{\mathcal{C}^b(\mathcal{E})}(L',M)} \frac{[\mt(\varepsilon')]}{|\Hom(L', M)|}  =
 \frac{1}{\left\langle N, L \right\rangle} [N]^{-1} \diamond \sum\limits_{\varepsilon'' \in \Ext^1_{\mathcal{C}^b(\mathcal{E})}(L'',M)} \frac{[\mt(\varepsilon'')]}{|\Hom(L'', M)|}.
\end{equation}
Using the conflation
$H \rightarrowtail N \twoheadrightarrow K,$
we find out that 
$$[N] = [K \oplus H] =  \left\langle H, K \right\rangle [H] \diamond [K] \quad \Leftrightarrow \quad [N] \diamond [K]^{-1} = \left\langle H, K \right\rangle [H] \quad \Leftrightarrow $$
$$\Leftrightarrow \quad  [K]^{-1} = \left\langle H, K \right\rangle [N]^{-1} \diamond [H].$$
Therefore, the right hand side of (\ref{f1}) is equal to
$$\frac{\left\langle H, K \right\rangle}{\left\langle K, L \right\rangle} [N]^{-1} \diamond [H] \diamond  \sum\limits_{\varepsilon' \in \Ext^1_{\mathcal{C}^b(\mathcal{E})}(L',M)} \frac{[\mt(\varepsilon')]}{|\Hom(L', M)|} =$$
$$= \frac{\left\langle H, L' \right\rangle}{\left\langle K, L \right\rangle \left\langle H, L \right\rangle} [N]^{-1} \diamond [H] \diamond \sum\limits_{\varepsilon' \in \Ext^1_{\mathcal{C}^b(\mathcal{E})}(L',M)} \frac{[\mt(\varepsilon')]}{|\Hom(L', M)|}  =$$
\begin{equation} \label{f2}
= \frac{\left\langle H, L' \right\rangle}{\left\langle N, L \right\rangle} [N]^{-1} \diamond [H] \diamond  \sum\limits_{\varepsilon' \in  \Ext^1_{\mathcal{C}^b(\mathcal{E})}(L',M)} \frac{[\mt(\varepsilon')]}{|\Hom(L', M)|}.
\end{equation}

From condition (\ref{extpmult}) for $i=1$ for $L'$ and $L''$ which are quasi-isomorphic to each other, we see that the morphism 
$\Ext^1_{\mathcal{C}^b(\mathcal{E})} (L', M) \to \Ext^1_{\mathcal{C}^b(\mathcal{E})} (L'', M)$
arising from the long exact sequence of extensions of elements of the conflation $N \rightarrowtail L'' \stackrel{\qis}{\twoheadrightarrow} L'$ by $M$ is in fact an isomorphism. That means that for each class $\varepsilon'$ in $\Ext^1_{\mathcal{C}^b(\mathcal{E})} (L', M)$ represented by a conflation
$$\quad M  \rightarrowtail  E'  \twoheadrightarrow L',$$
there is a unique class $\varepsilon''$ in $\Ext^1_{\mathcal{C}^b(\mathcal{E})} (L', M)$ represented by a conflation
$$M \rightarrowtail E'' \twoheadrightarrow L''$$
such that the following diagram commutes:
\begin{equation} \label{diag2}
\xymatrix@R=0.6cm{
*+++{0} \ar@{>->}[d] \ar@{>->}[r] &*+++{H} \ar@{>->}[d] \ar@{=}[r] & *+++{H} \ar@{>->}[d]\\
*+++{M} \ar@{=}[d] \ar@{>->}[r]  & *+++{E''} \ar@{->>}[d] \ar@{->>}[r]  & *+++{L''} \ar@{->>}[d] \\
*+++{M}  \ar@{>->}[r]  & *+++{E'} \ar@{->>}[r]  & *+++{L'.} \\
}
\end{equation}

Using the conflation
$$H  \rightarrowtail   E' \twoheadrightarrow E'',$$
we see that formula (\ref{f2}) can be rewritten as follows:
$$\frac{\left\langle H, L' \right\rangle}{\left\langle N, L \right\rangle} [N]^{-1} \diamond  \sum\limits_{\varepsilon'' \in \Ext^1_{\mathcal{C}^b(\mathcal{E})}(L'',M)} \frac{[\mt(\varepsilon'')]}{|\Hom(L', M)| \left\langle H, L' \right\rangle \left\langle H, M \right\rangle} =$$
$$= \frac{1}{\left\langle N, L \right\rangle \left\langle H, M \right\rangle} [N]^{-1} \diamond  \sum\limits_{\varepsilon'' \in \Ext^1_{\mathcal{C}^b(\mathcal{E})}(L'',M)} \frac{[\mt(\varepsilon'')]}{|\Hom(L', M)|} =$$
$$= \frac{1}{\left\langle N, L \right\rangle} [N]^{-1}\diamond  \sum\limits_{\varepsilon'' \in \Ext^1_{\mathcal{C}^b(\mathcal{E})}(L'',M)} \frac{[\mt(\varepsilon'')]}{|\Hom(L'', M)|},$$
as desired. In the last equation, we use nothing more than 
$$\left\langle H, M \right\rangle = \frac{|\Hom(L'', M)|}{|\Hom(L', M)|}.$$
This identity holds, since, by assumption, in the long exact sequence of extensions of elements of the conflation $H \rightarrowtail L'' \stackrel{\qis}{\twoheadrightarrow} L'$ by $M$ all morphisms 
$$\Ext^p_{\mathcal{C}^b(\mathcal{E})} (L', M) \to \Ext^p_{\mathcal{C}^b(\mathcal{E})} (L'', M)$$
are isomorphisms. This completes the proof.
\end{proof}


\begin{proposition} \label{zprodcomp}
The products $K \diamond L$ and $L \diamond K,$ given in this section coincide with those given in section 5.1, for an acyclic complex $K$ and an arbitrary $L.$ 
\end{proposition}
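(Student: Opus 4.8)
Both displayed identities follow from one computation, which I describe for $[C]\diamond[M]$ with $C\in\mathcal{C}_{ac}^b(\mathcal{E})$ acyclic and $M\in\mathcal{C}^b(\mathcal{E})$ arbitrary; I indicate the (minor) change for $[L]\diamond[C]$ at the end. The mechanism in both cases is the same: the hypothesis forces one of the relevant derived $\Ext$ groups to vanish, which collapses the sum in (\ref{diamonddef1}) to a single split term, after which everything reduces to bookkeeping with the Euler form.

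First I would pick, as allowed in (\ref{diamonddef1}), a conflation $K\rightarrowtail L'\stackrel{\qis}{\twoheadrightarrow}C$ with $L'$ satisfying (\ref{extpmult}); such an $L'$ exists by part (i) of (C4). Since $L'$ is quasi-isomorphic to the acyclic complex $C$ it is itself acyclic, hence isomorphic to $0$ in $\mathcal{D}^b(\mathcal{E})$, so (\ref{extpmult}) forces $\Ext^p_{\mathcal{C}^b(\mathcal{E})}(L',M)=0$ for all $p>0$. Consequently $\Ext^1(L',M)=0$, so the sum in (\ref{diamonddef1}) reduces to its split summand and $[C]\diamond[M]=\langle K,C\rangle^{-1}\,|\Hom(L',M)|^{-1}\,[K]^{-1}\diamond[L'\oplus M]$; moreover, in $\langle L',M\rangle=\prod_{p\ge 0}|\Ext^p_{\mathcal{C}^b(\mathcal{E})}(L',M)|^{(-1)^p}$ only the $p=0$ factor survives, so $\langle L',M\rangle=|\Hom(L',M)|$.

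Next I would unwind $[K]^{-1}\diamond[L'\oplus M]$. The kernel $K$ of a deflation quasi-isomorphism is acyclic; direct-summing $K\rightarrowtail L'\twoheadrightarrow C$ with the trivial conflation $0\rightarrowtail M\twoheadrightarrow M$ gives a conflation $K\rightarrowtail L'\oplus M\twoheadrightarrow C\oplus M$ with acyclic kernel, so relation (\ref{Zrelations}) yields $[L'\oplus M]=[K\oplus C\oplus M]$. By the definition of the module structure, $[K]\diamond[C\oplus M]=\langle K,C\oplus M\rangle^{-1}[K\oplus C\oplus M]$, whence, using invertibility of $[K]$ in $\mathbb{T}_{ac}(\mathcal{E})$ and bilinearity of the Euler form, $[K]^{-1}\diamond[K\oplus C\oplus M]=\langle K,C\rangle\langle K,M\rangle[C\oplus M]$. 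Substituting,
$$[C]\diamond[M]=\frac{\langle K,M\rangle}{|\Hom(L',M)|}\,[C\oplus M].$$
Finally, additivity of the Euler form along $K\rightarrowtail L'\twoheadrightarrow C$ gives $\langle L',M\rangle=\langle K,M\rangle\langle C,M\rangle$, which together with $\langle L',M\rangle=|\Hom(L',M)|$ from the previous step shows $\langle K,M\rangle/|\Hom(L',M)|=\langle C,M\rangle^{-1}$, i.e. $[C]\diamond[M]=\langle C,M\rangle^{-1}[C\oplus M]$ --- exactly the Section~5.1 formula. For $[L]\diamond[C]$ with $C$ acyclic and $L$ arbitrary I would argue in the same way, now choosing $K\rightarrowtail L'\stackrel{\qis}{\twoheadrightarrow}L$ satisfying (\ref{extpmult}) for extensions by $C$; here it is the second argument $C$ that is acyclic, so $\Ext^p_{\mathcal{D}^b(\mathcal{E})}(L',C)=\Ext^p_{\mathcal{D}^b(\mathcal{E})}(L',0)=0$ and hence $\Ext^p_{\mathcal{C}^b(\mathcal{E})}(L',C)=0$ for $p>0$, and the identical bookkeeping --- using additivity of $\langle-,C\rangle$ on $K_0(\mathcal{C}^b(\mathcal{E}))$, which is legitimate since $C$ is acyclic --- gives $[L]\diamond[C]=\langle L,C\rangle^{-1}[L\oplus C]$.

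The only genuine point of friction is this Euler-form accounting: one must see that the factor $\langle K,C\rangle^{-1}$ from the definition, the factor $\langle K,C\rangle\langle K,M\rangle$ produced when $[K]^{-1}\diamond[K\oplus C\oplus M]$ is rewritten via the module structure, and the normalizing $|\Hom(L',M)|^{-1}$ combine --- through additivity of the Euler form along the chosen conflation and the identity $\langle L',M\rangle=|\Hom(L',M)|$ coming from the vanishing of higher $\Ext$ --- into precisely the single factor $\langle C,M\rangle^{-1}$ of Section~5.1. The auxiliary fact that the kernel of a deflation quasi-isomorphism is acyclic (so that (\ref{Zrelations}) may be applied to the direct-sum conflations) is standard and is already used implicitly in the proof of Theorem~\ref{zfree}.
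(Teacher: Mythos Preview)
Your proof is correct and follows essentially the same route as the paper's: both arguments choose an admissible resolution, observe that acyclicity of the relevant complex forces the derived $\Ext^p$ (and hence, by (\ref{extpmult}), the $\mathcal{C}^b(\mathcal{E})$-$\Ext^p$) to vanish for $p>0$, so the sum collapses to the single split term $[L'\oplus M]$, and then finish by Euler-form bookkeeping along the chosen conflation. The only cosmetic difference is that the paper rewrites $[K'\oplus L]$ via the module-structure formula $[K'']\diamond_2[K\oplus L]=\langle K'',K\oplus L\rangle^{-1}[K'\oplus L]$ directly, whereas you first invoke relation (\ref{Zrelations}) to replace $[L'\oplus M]$ by $[K\oplus C\oplus M]$ and then cancel $[K]^{-1}$; these are two phrasings of the same step.
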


\begin{proof}
Denote the map defined in this section by $\diamond_1,$ the one used in the definition of $\mathbb{T}_{ac}(\mathcal{E})-$action by $\diamond_2.$ To compute the $K \diamond_1 L,$ we consider some conflation
$K'' \rightarrowtail K' \stackrel{\qis}{\twoheadrightarrow} K$
such that 
$$\Ext^p_{\mathcal{C}^b(\mathcal{E})}(K', L) \cong \Ext^p_{\mathcal{D}^b(\mathcal{E})}(K', L), \forall p > 0.$$
Since $K$ is acyclic, so is $K'.$ Thus, we find that 
$\Ext^p_{\mathcal{C}^b(\mathcal{E})}(K', L) = 0, \forall p > 0.$
Hence 
$$K \diamond_1 L = \frac{1}{\left\langle K'', K \right\rangle} [K'']^{-1} \diamond_2 \frac{1}{|\Hom(K', L)|} [K' \oplus L] =$$
$$= \frac{1}{\left\langle K'', K \right\rangle \left\langle K', L \right\rangle} [K'']^{-1} \diamond_2 [K' \oplus L] =$$
\begin{equation} \label{k''}
= \frac{1}{\left\langle K'', K \right\rangle \left\langle K'', L \right\rangle \left\langle K, L \right\rangle} [K'']^{-1} \diamond_2 [K' \oplus L].
\end{equation}
Using the conflation 
$K'' \rightarrowtail K' \oplus L \stackrel{\qis}{\twoheadrightarrow} K \oplus L,$
we get
$$[K''] \diamond_2 [K \oplus L] = \frac{1}{\left\langle K'', K \oplus L \right\rangle} [K' \oplus L].$$
Using this, we rewrite the expression (\ref{k''}) as
$\frac{1}{\left\langle K, L \right\rangle} [K \oplus L],$
which equals $K \diamond_2 L$ by definition. For $L \diamond K,$ the proof is similar.
\end{proof}

\begin{proposition}
The map $\diamond$ is compatible with the bimodule structure on $\mathcal{M}_1^{'}(\mathcal{E})$ and with relations (\ref{Zrelations}).
\end{proposition}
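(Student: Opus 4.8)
The plan is to verify, one at a time, the identities that make $\diamond$ descend to a $\mathbb{Q}$-bilinear map on $\mathcal{M}(\mathcal{E}) = \mathbb{T}_{ac}(\mathcal{E})\otimes_{\mathbb{A}_{ac}(\mathcal{E})}\mathcal{M}_2^{'}(\mathcal{E})$. Writing $K$ for an acyclic complex, these are the three "associativity with one acyclic entry" identities
\begin{gather*}
([K]\diamond[L])\diamond[M]=[K]\diamond([L]\diamond[M]),\qquad
[L]\diamond([M]\diamond[K])=([L]\diamond[M])\diamond[K],\\
([L]\diamond[K])\diamond[M]=[L]\diamond([K]\diamond[M]),
\end{gather*}
in which, by Proposition~\ref{zprodcomp}, the products with $[K]$ may be read as the $\mathbb{A}_{ac}(\mathcal{E})$-actions of Section~5.1 (these are exactly $\mathbb{A}_{ac}(\mathcal{E})$-bilinearity of $\diamond$ in the first slot, in the second slot, and balancedness in the middle), together with the statement that for every conflation $K\rightarrowtail L\twoheadrightarrow M$ with $K$ acyclic one has $[A]\diamond[L]=[A]\diamond[K\oplus M]$ and $[L]\diamond[A]=[K\oplus M]\diamond[A]$ in $\mathcal{M}(\mathcal{E})$ for all $A\in\mathcal{C}^b(\mathcal{E})$ (compatibility with the relations~(\ref{Zrelations})). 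Throughout I would use Proposition~\ref{welldef} to pick, for each computation, the most convenient resolution of the first argument. The two elementary facts doing all the work are: an acyclic complex $K'$ satisfying~(\ref{extpmult}) with respect to a target $X$ has $\Ext^{>0}_{\mathcal{C}^b(\mathcal{E})}(K',X)=0$, hence $\langle K',X\rangle=|\Hom(K',X)|$ (because $\Ext^{>0}_{\mathcal{D}^b(\mathcal{E})}(K',X)=\Hom_{\mathcal{D}^b(\mathcal{E})}(K',\Sigma^{\bullet}X)=0$), so that extensions of $X$ by $K'$ split and the factor $|\Hom(K',X)|^{-1}$ of~(\ref{diamonddef2}) is absorbed into the Euler twist; and a conflation with acyclic kernel whose outer terms already occur in~(\ref{diamonddef2}) is itself an instance of a relation~(\ref{Zrelations}).

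For the bimodule compatibilities, take first $([K]\diamond[L])\diamond[M]$ with $K$ acyclic. Here $[K]\diamond[L]=\langle K,L\rangle^{-1}[K\oplus L]$, and to evaluate $[K\oplus L]\diamond[M]$ I would resolve $K\oplus L$ by $K'\oplus L'$, with $K'\stackrel{\qis}{\twoheadrightarrow}K$ chosen (by part~(i) of~(C4), applied to $K$ and $M$) so that $\Ext^{>0}(K',M)=0$, and $L'\stackrel{\qis}{\twoheadrightarrow}L$ satisfying~(\ref{extpmult}) with respect to $M$; the direct sum still satisfies~(\ref{extpmult}). Then $\Ext^1_{\mathcal{C}^b(\mathcal{E})}(K'\oplus L',M)=\Ext^1_{\mathcal{C}^b(\mathcal{E})}(L',M)$, every middle term decomposes as $K'\oplus\mt(\varepsilon)$ for the corresponding $\varepsilon\in\Ext^1(L',M)$, and pulling $[K']$ out of the sum through $\diamond$ and using $\langle K',M\rangle=|\Hom(K',M)|$ together with the biadditivity of $\langle\cdot,\cdot\rangle$ and the relation $[K']=[K]+[\ker(K'\to K)]$ in $K_0(\mathcal{C}^b_{ac}(\mathcal{E}))$ produces exactly $[K]\diamond([L]\diamond[M])$. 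The identity $[L]\diamond([M]\diamond[K])=([L]\diamond[M])\diamond[K]$ is the mirror image: here one keeps a fixed resolution $L'$ of $L$ satisfying~(\ref{extpmult}) with respect to $M\oplus K$, notes that $\Ext^{>0}(L',K)\cong\Ext^{>0}_{\mathcal{D}^b(\mathcal{E})}(L',K)=0$ so that every extension of $L'$ by $M\oplus K$ splits off the $K$-summand, and runs the same arithmetic. The remaining identity $([L]\diamond[K])\diamond[M]=[L]\diamond([K]\diamond[M])$ is reduced to the previous patterns by first rewriting $[K]\diamond[M]=\langle K,M\rangle^{-1}[K\oplus M]$ via Proposition~\ref{zprodcomp}. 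In all three cases no idea is needed beyond those of Propositions~\ref{welldef} and~\ref{zprodcomp}; only the scalar bookkeeping varies.

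For compatibility with~(\ref{Zrelations}), fix a conflation $K\rightarrowtail L\twoheadrightarrow M$ with $K$ acyclic. For $[A]\diamond[L]=[A]\diamond[K\oplus M]$ I would choose (legitimately, by Proposition~\ref{welldef}) a resolution $A'\stackrel{\qis}{\twoheadrightarrow}A$ satisfying~(\ref{extpmult}) with respect to $K\oplus L\oplus M$, so in particular $\Ext^{>0}(A',K)=0$. Applying $\Hom(A',-)$ to the conflation then gives $\Ext^1(A',L)\cong\Ext^1(A',M)$ and $|\Hom(A',L)|=|\Hom(A',K)|\cdot|\Hom(A',M)|$; and for $\varepsilon\in\Ext^1(A',L)$ with image $\varepsilon_M$, the middle term $E_L=\mt(\varepsilon)$ fits into a conflation $K\rightarrowtail E_L\twoheadrightarrow\mt(\varepsilon_M)$ with $K$ acyclic, so $[E_L]=[K\oplus\mt(\varepsilon_M)]$ in $\mathcal{M}(\mathcal{E})$ by~(\ref{Zrelations}). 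On the other side the extensions of $A'$ by $K\oplus M$ all split as $K\oplus\mt(\varepsilon_M)$, the denominators agree, and the prefactor $\langle J,A\rangle^{-1}[J]^{-1}$ (with $J\rightarrowtail A'\twoheadrightarrow A$) is common; hence the two sums agree term by term. For $[L]\diamond[A]=[K\oplus M]\diamond[A]$ one needs a resolution of the \emph{first} argument adapted to the conflation: using a horseshoe argument together with the resolution statements behind Theorem~\ref{condd} (Lemma~\ref{projres} in the projective case, its $\Coh(X)$-analogue via $\mathcal{V}_N(X)$ in the second case), I would produce $M'\stackrel{\qis}{\twoheadrightarrow}M$ satisfying~(\ref{extpmult}) with respect to $A$, an acyclic $K'\stackrel{\qis}{\twoheadrightarrow}K$ with $\Ext^{>0}(K',A)=0$, and $L'\stackrel{\qis}{\twoheadrightarrow}L$ fitting into a conflation $K'\rightarrowtail L'\twoheadrightarrow M'$ and still satisfying~(\ref{extpmult}) with respect to $A$. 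Then $[L']=[K'\oplus M']$ by~(\ref{Zrelations}), the kernels $\ker(L'\to L)$ and $\ker(K'\oplus M'\to K\oplus M)$ have equal classes in $K_0(\mathcal{C}^b_{ac}(\mathcal{E}))$ (again through a conflation with acyclic kernel, using Lemma~\ref{k0inj}), and the two extension sums coincide after moving $[K']$ out, because the middle term of the pull-back along $L'\twoheadrightarrow M'$ of an extension of $M'$ by $A$ sits in a conflation with acyclic kernel $K'$ over the corresponding extension of $M'$ by $A$ — once more an instance of~(\ref{Zrelations}). Matching the Euler factors by biadditivity finishes the proof.

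The conceptual content is modest, and the difficulty is concentrated in two places. The first is arranging, for the right-multiplication half of the relations check, a resolution of $L$ that is simultaneously adapted to the fixed conflation $K\rightarrowtail L\twoheadrightarrow M$ and to the target $A$; this is exactly what the horseshoe construction from Section~3 supplies, and it is the only point where the full strength of (C4)/Theorem~\ref{condd} (beyond part~(i)) is used. The second, and the only genuinely laborious part, is carrying the numerous factors of the biadditive form $\langle\cdot,\cdot\rangle$ and the cardinalities $|\Hom(-,-)|$ through the manipulations so that they cancel; the recurring simplification that makes this go through is $\langle K',X\rangle=|\Hom(K',X)|$ for $K'$ acyclic with $\Ext^{>0}(K',X)=0$, precisely tailored to absorb the denominator of~(\ref{diamonddef2}). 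Nothing here goes beyond the long exact $\Ext$-sequences, the exact-category axioms for push-outs and pull-backs (used exactly as in the diagram~(\ref{diag1})), and the isomorphism of Lemma~\ref{k0inj}.
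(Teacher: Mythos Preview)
Your approach is correct and uses the same ingredients as the paper --- horseshoe-type $3\times3$ diagrams, long exact $\Ext$-sequences, and biadditivity of the Euler form --- but it is organized differently. The paper observes that bimodule compatibility and compatibility with the relations~(\ref{Zrelations}) can be checked \emph{simultaneously}: since $[K\oplus M]=\langle K,M\rangle\,[K]\diamond[M]$ by definition of the module action, both compatibilities together amount to the pair of identities
\[
\frac{1}{\langle K,M\rangle}\,[L]\diamond[N]=[K]\diamond([M]\diamond[N]),
\qquad
\frac{1}{\langle N,Q\rangle}\,[M]\diamond[P]=([M]\diamond[N])\diamond[Q],
\]
for conflations $K\rightarrowtail L\twoheadrightarrow M$ and $N\rightarrowtail P\twoheadrightarrow Q$ with $K,Q$ acyclic. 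Each is then proved by a single horseshoe diagram with compatible resolutions $K',L',M'$ (resp.\ dually), exactly the construction you invoke for $[L]\diamond[A]=[K\oplus M]\diamond[A]$. So where you verify five identities (three for the bimodule structure, two for the relations), the paper verifies two, each packaging one of your bimodule checks together with one of your relation checks. Your decomposition is more explicit but repetitive; the paper's is more economical but needs the preliminary reduction.

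Two small points on your write-up. First, the equality $[\ker(L'\to L)]=[\ker(K'\oplus M'\to K\oplus M)]$ in $K_0(\mathcal{C}^b_{ac}(\mathcal{E}))$ comes straight from the $3\times3$ lemma (both equal $[J_K]+[J_M]$); Lemma~\ref{k0inj} is not needed there. Second, in ``the two extension sums coincide after moving $[K']$ out'', note that the surjection $\Ext^1(M',A)\twoheadrightarrow\Ext^1(L',A)$ can have nontrivial kernel (the image of the connecting map $\Hom(K',A)\to\Ext^1(M',A)$), so the two sums are not in literal bijection; the resulting multiplicity is exactly absorbed by the ratio $|\Hom(K',A)|\cdot|\Hom(M',A)|/|\Hom(L',A)|$ from the long exact sequence. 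This is the same arithmetic that appears (somewhat implicitly) in the paper's chain of equalities, and it is worth spelling out.
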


\begin{proof}
It is easy to check that $\diamond$ is simultaneously compatible with the bimodule structure and with the relations if and only if for any conflations $K \rightarrowtail L \twoheadrightarrow M, N \rightarrowtail P \twoheadrightarrow Q$ with $K, Q \in \mathcal{C}_{ac}^b(\mathcal{E}),$ we have
\begin{equation} \label{compat}
\frac{1}{\left\langle K, M \right\rangle} [L] \diamond [N] = [K] \diamond ([M] \diamond [N]); \frac{1}{\left\langle N, Q \right\rangle} [M] \diamond [P] = ([M] \diamond [N]) \diamond [Q].
\end{equation}
Thus, it is enough to check the identities (\ref{compat}). Using the pull-back as in Proposition (\ref{welldef}), we can construct the following diagram:

\begin{equation} 
\xymatrix@R=0.6cm{
*+++{K''} \ar@{>->}[d] \ar@{>->}[r] &*+++{L''} \ar@{>->}[d] \ar@{=}[r] & *+++{M''} \ar@{>->}[d]\\
*+++{K'} \ar@{->>}[d] \ar@{>->}[r]  & *+++{L'} \ar@{->>}[d] \ar@{->>}[r]  & *+++{M'} \ar@{->>}[d] \\
*+++{K}  \ar@{>->}[r]  & *+++{L} \ar@{->>}[r]  & *+++{M,} \\
}
\end{equation}
where all elements in the first row and the first column are acyclic, and the last two columns can be used to define $L \diamond N$ and $M \diamond N.$ We have 
\begin{equation}
\frac{1}{\left\langle K, M \right\rangle} [L] \diamond [N] = \frac{1}{\left\langle K, M \right\rangle\left\langle L'', L \right\rangle} [L'']^{-1} \diamond \sum\limits_{\varepsilon_L \in \Ext^1_{\mathcal{C}^b(\mathcal{E})}(L', N)} \frac{[\mt(\varepsilon_L)]}{|\Hom(L', N)|}.
\end{equation}
Using the same arguments as in the proof of Proposition \ref{welldef} and the diagram similar to (\ref{diag2}), we can rewrite this as follows:
$$\frac{\left\langle K', N\right\rangle\left\langle K', M'\right\rangle}{\left\langle K, M \right\rangle\left\langle L'', L \right\rangle} [L'']^{-1}  \diamond [K'] \diamond \sum\limits_{\varepsilon_M \in \Ext^1_{\mathcal{C}^b(\mathcal{E})}(M', N)} \frac{[\mt(\varepsilon_M)]}{|\Hom(L', N)|} = $$
$$= \frac{\left\langle K', N\right\rangle\left\langle K, M''\right\rangle\left\langle K'', M''\right\rangle}{\left\langle K'', K \right\rangle\left\langle M'', K \right\rangle\left\langle M'', M \right\rangle} [L'']^{-1} \diamond [K'] \diamond \sum\limits_{\varepsilon_M \in \Ext^1_{\mathcal{C}^b(\mathcal{E})}(M', N)} \frac{[\mt(\varepsilon_M)]}{|\Hom(L', N)|} = $$
$$= \frac{\left\langle K', N\right\rangle\left\langle K, M''\right\rangle}{\left\langle M'', K \right\rangle\left\langle M'', M \right\rangle} [M'']^{-1} \diamond [K] \diamond \sum\limits_{\varepsilon_M \in \Ext^1_{\mathcal{C}^b(\mathcal{E})}(M', N)} \frac{[\mt(\varepsilon_M)]}{|\Hom(L', N)|} = $$
$$= \frac{\left\langle K', N\right\rangle}{\left\langle M'', M \right\rangle} [K] \diamond [M'']^{-1} \diamond \sum\limits_{\varepsilon_M \in \Ext^1_{\mathcal{C}^b(\mathcal{E})}(M', N)} \frac{[\mt(\varepsilon_M)]}{|\Hom(L', N)|} = $$
$$= [K] \diamond \left(\frac{1}{\left\langle M'', M\right\rangle} [M'']^{-1} \diamond \sum\limits_{\varepsilon_M \in \Ext^1_{\mathcal{C}^b(\mathcal{E})}(M', N)} \frac{[\mt(\varepsilon_M)]}{|\Hom(L', N)|} \right) = [K] \diamond ([M] \diamond [N]).$$
In the last but one equation, we use that $\left\langle K', M \right\rangle = \frac{|\Hom(L', M)|}{|\Hom(M', M)|},$ cf. the end of the proof of Proposition \ref{welldef}. We proved the first of identities (\ref{compat}); the proof of the second one is dual.
\end{proof}

Since the homomorphism $\diamond$ is compatible with the bimodule structure, it can be considered as a module homomorphism 
$$\mathcal{M}_1^{'}(\mathcal{E}) \times \mathcal{M}_1^{'}(\mathcal{E}) \to \mathcal{M}(\mathcal{E}).$$
Since it is compatible with relations (\ref{Zrelations}), it descends on $\mathcal{M}_2^{'}(\mathcal{E})$ and, moreover, defines a multiplication
$$\diamond: \mathcal{M}(\mathcal{E}) \times \mathcal{M}(\mathcal{E}) \to \mathcal{M}(\mathcal{E}).$$

\begin{definition}
We define the {\it semi-derived Hall algebra} $\mathcal{SDH(E)}$ as $\mathcal{M}(\mathcal{E})$ with the multiplication $\diamond.$
\end{definition}

\begin{remark}
It is easy to check that the product $\diamond$ given by formulae (\ref{diamonddef1})--(\ref{diamonddef2}) is the unique $\mathbb{T}_{ac}(\mathcal{E})-$bilinear multiplication on $\mathcal{M(E)}$ which coincides with that of the classical Hall algebra $\mathcal{H}(\mathcal{C}^b(\mathcal{E})$ on the pairs $(L', M)$ satisfying (\ref{extpmult}).
\end{remark}

We can give an alternative definition of the multiplication, where the summation is taken over the set of the isomorphism classes in the bounded derived category. It turns out that this gives us the structure constants.

\begin{definition}
Given a complex $M \in \mathcal{C}^b(\mathcal{E}),$ denote by $\overline{M}$ its isomorphism class in the bounded derived category $\mathcal{D}^b(\mathcal{E}).$ For any $\alpha \in \Iso(\mathcal{D}^b(\mathcal{E}))$ and
 any $L, M \in \mathcal{C}^b(\mathcal{E}),$ define the subset 
$$\Ext^1_{\mathcal{C}^b(\mathcal{E})}(A, B)_{\alpha} \subset \Ext^1_{\mathcal{C}^b(\mathcal{E})}(A, B)$$
as the set of all extensions of $L$ by $M$ whose middle term belongs to $\alpha.$ 
\end{definition}

\begin{lemma} \label{qisextclass}
Assume that for $E, E' \in \mathcal{C}^b(\mathcal{E}),$ we have:
\begin{itemize}
\item[1)] $\overline{E} = \overline{E'} = \alpha$ and
\item[2)] there exist $A, B \in \mathcal{C}^b(\mathcal{E}),$ such that 
$$\Ext^1_{\mathcal{C}^b(\mathcal{E})}(A, B)_E \neq 0, \Ext^1_{\mathcal{C}^b(\mathcal{E})}(A, B)_{E'} \neq 0.$$
\end{itemize}
Then in $\mathcal{M(E)}$ these two complexes determine the same element: $[E] = [E'].$
\end{lemma}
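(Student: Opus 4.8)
The plan is to reduce Lemma~\ref{qisextclass} to the freeness statement of Theorem~\ref{zfree}. By hypothesis $\overline{E}=\overline{E'}=\alpha$, so the classes $[E]$ and $[E']$ both lie in the rank-one free $\mathbb{T}_{ac}(\mathcal{E})$-submodule $\mathcal{M}_\alpha(\mathcal{E})$; hence there is a unique unit $u\in\mathbb{T}_{ac}(\mathcal{E})$, of the form $c\,[K_1]\diamond[K_2]^{-1}$ with $K_1,K_2$ acyclic and $c\in\mathbb{Q}^\times$, such that $[E']=u\diamond[E]$. The goal is to show $u=1$. The idea is that the hypothesis that $E$ and $E'$ both occur as middle terms of extensions of a \emph{common} pair $A,B$ forces the acyclic ``discrepancy'' between them to be trivial at the level of $K_0$, and the scalar to be $1$ as well.

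First I would make the scalar explicit. Choose, using part (i) of (C4), a conflation $K\rightarrowtail A'\overset{\qis}\twoheadrightarrow A$ with $A'$ satisfying (\ref{extpmult}) against $B$; by Theorem~\ref{zfree} (and the proof of Proposition~\ref{welldef}) the product $[A]\diamond[B]$ is then computed by pushing forward extensions of $A'$ by $B$ along $A'\twoheadrightarrow A$. Since $\Ext^1_{\mathcal{C}^b(\mathcal{E})}(A',B)\cong\Ext^1_{\mathcal{D}^b(\mathcal{E})}(A',B)$, every class in $\Ext^1_{\mathcal{C}^b(\mathcal{E})}(A,B)$ with middle term in $\alpha$ is hit, and all the middle terms $E''$ arising this way from a fixed class have $[E'']$ equal in $\mathcal{M}(\mathcal{E})$ — indeed they are related by conflations with acyclic kernel (the $H$ in diagram~(\ref{diag2})), so they represent the same element of $\mathcal{M}_\alpha(\mathcal{E})$. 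Thus it suffices to treat the case where both $E$ and $E'$ appear as middle terms of conflations $B\rightarrowtail E\twoheadrightarrow A$, $B\rightarrowtail E'\twoheadrightarrow A$ with the \emph{same} end terms $A,B$ (after replacing $A$ by a quasi-isomorphic complex we may even assume (\ref{extpmult}) holds). For such honest conflations, $[E]=[A\oplus B]=[E']$ in $K_0(\mathcal{C}^b(\mathcal{E}))$, since the Grothendieck group only sees end terms of conflations. Under the injection of Lemma~\ref{k0inj} combined with the map $\mathcal{M}(\mathcal{E})\to\mathbb{Q}[K_0(\mathcal{C}^b(\mathcal{E}))]$ used in the proof of Theorem~\ref{zfree}, the images of $[E]$ and $[E']$ therefore coincide; but that composite map is injective on each $\mathcal{M}_\alpha(\mathcal{E})$, whence $[E]=[E']$ in $\mathcal{M}(\mathcal{E})$.

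So the real content is the reduction in the previous paragraph: passing from ``$E,E'$ are arbitrary middle terms of extensions of some $A,B$ with $\overline{E}=\overline{E'}$'' to ``$E,E'$ are middle terms of conflations with identical end terms, up to acyclic summands that are invisible in $\mathcal{M}(\mathcal{E})$''. The key steps are: (1) use (C4)(i) to replace $A$ by $A'$ with good $\Ext$ against $B$, and note every extension class of $A$ by $B$ lifts to one of $A'$ by $B$ with the same (up to acyclic-kernel conflation) middle term; (2) observe that all middle terms coming from a fixed extension class represent one element of $\mathcal{M}(\mathcal{E})$, via diagram~(\ref{diag2})-type arguments; (3) conclude that $[E]$ and $[E']$ differ by an element of $\mathbb{T}_{ac}(\mathcal{E})$ that maps to $1$ in $\mathbb{Q}[K_0(\mathcal{C}^b(\mathcal{E}))]$; (4) apply Lemma~\ref{k0inj} and the rank-one freeness of $\mathcal{M}_\alpha(\mathcal{E})$ to kill that element.

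I expect the main obstacle to be step~(1)/(3): making precise why having $E$ as a middle term of \emph{some} $\Ext^1(A,B)$ pins down its $K_0$-class relative to $E'$ \emph{without} first knowing the two conflations have matching end data — in general $E$ and $E'$ need only be quasi-isomorphic, not isomorphic, and the extensions witnessing them may have genuinely different $A$'s and $B$'s. The resolution should be that quasi-isomorphic $A$'s (resp. $B$'s) differ by acyclic conflations, and such acyclic modifications change the middle term only by an acyclic-kernel conflation, hence not at all in $\mathcal{M}(\mathcal{E})$; so one may harmlessly normalize to a common $(A,B)$. Once that normalization is in place, everything collapses to the $K_0$-computation and an appeal to Theorem~\ref{zfree} and Lemma~\ref{k0inj}.
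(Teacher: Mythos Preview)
Your proposal contains the correct core argument, which is exactly the paper's proof: use Theorem~\ref{zfree} to write $[E']=t\diamond[E]$ for a unique $t\in\mathbb{T}_{ac}(\mathcal{E})$; observe that condition~2) forces $[E]=[A]+[B]=[E']$ in $K_0(\mathcal{C}^b(\mathcal{E}))$; then apply the injectivity of the composite $\mathbb{T}_{ac}(\mathcal{E})\to\mathcal{M}_\alpha(\mathcal{E})\to\mathbb{Q}[K_0(\mathcal{C}^b(\mathcal{E}))]$ from the proof of Theorem~\ref{zfree} (which rests on Lemma~\ref{k0inj}) to conclude $t=[0]$.

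However, almost everything else in your write-up is unnecessary, because you have misread hypothesis~2). It says: \emph{there exist $A,B$} such that both $\Ext^1(A,B)_E\neq 0$ and $\Ext^1(A,B)_{E'}\neq 0$. That is already a \emph{single} pair $(A,B)$ for which both $E$ and $E'$ occur as middle terms. So there are honest conflations $B\rightarrowtail E\twoheadrightarrow A$ and $B\rightarrowtail E'\twoheadrightarrow A$ with identical ends from the outset. Your entire ``reduction'' via (C4), the replacement of $A$ by $A'$, the diagram~(\ref{diag2}) manoeuvres, and your anticipated ``main obstacle'' (normalizing to a common $(A,B)$) are all superfluous: the hypothesis hands you the common pair directly, and you can pass immediately to the $K_0$-computation. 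The paper's proof is accordingly four lines long and uses neither (C4) nor any resolution of $A$.
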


\begin{proof}
By condition 1), both $[E]$ and $[E']$ belong to the same component $\mathcal{M(E)}_{\alpha}$ of $\mathcal{M(E)}.$ By Theorem \ref{zfree}, there exists a unique element $t$ of the quantum torus $\mathbb{T}_{ac}(\mathcal{E}),$ such that $[E'] = t \diamond [E].$ By condition 2), the classes of $[E]$ and $[E']$ in $K_0(\mathcal{C}^b(\mathcal{E}))$ coincide, hence the class of $t$ in $K_0(\mathcal{C}^b(\mathcal{E}))$ equals zero. By Lemma \ref{k0inj}, this implies that the class of $t$ in $K_0(\mathcal{C}^b_{ac}(\mathcal{E}))$ equals zero. Therefore, we have $t = [0]$ in $\mathbb{T}_{ac}(\mathcal{E})).$ We finally obtain that $[E'] = [0] \diamond [E] = [E],$ q.e.d.
\end{proof}

\begin{corollary} \label{qisextclassident}
For all objects $A, B$ of $\mathcal{C}^b(\mathcal{E}),$ we have the following identity:
\begin{equation} \label{eeps}
\sum\limits_{E \in \Iso (\mathcal{C}^b(\mathcal{E}))} (|\Ext^1_{\mathcal{C}^b(\mathcal{E})}(A, B)_E| [E]) = \sum\limits_{\alpha \in \Iso (\mathcal{D}^b(\mathcal{E}))} (|\Ext^1_{\mathcal{C}^b(\mathcal{E})}(A, B)_{\alpha}| [E_{\alpha, A, B}]), 
\end{equation}
where for each $\alpha,$ the complex $\quad E_{\alpha, A, B}$ is the middle term of any extension belonging to $\Ext^1_{\mathcal{C}^b(\mathcal{E})}(A, B)_{\alpha}.$ 
\end{corollary}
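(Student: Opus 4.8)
The plan is to obtain the identity (\ref{eeps}) in $\mathcal{M}(\mathcal{E})$ as a purely combinatorial rearrangement of its left-hand side, with Lemma~\ref{qisextclass} doing all the real work. I would fix $A, B \in \mathcal{C}^b(\mathcal{E})$ and first record the bookkeeping: the finiteness conditions on $\mathcal{E}$ make $\Ext^1_{\mathcal{C}^b(\mathcal{E})}(A,B)$ a finite set, so only finitely many isomorphism classes $E$ of complexes and finitely many classes $\alpha \in \Iso(\mathcal{D}^b(\mathcal{E}))$ occur with nonzero coefficient. Every class in $\Ext^1_{\mathcal{C}^b(\mathcal{E})}(A,B)$ has a middle term with a well-defined isomorphism class, and that class has a well-defined image $\alpha$ in $\mathcal{D}^b(\mathcal{E})$; hence there are disjoint-union decompositions
$$\Ext^1_{\mathcal{C}^b(\mathcal{E})}(A,B) = \bigsqcup_{E} \Ext^1_{\mathcal{C}^b(\mathcal{E})}(A,B)_E, \qquad \Ext^1_{\mathcal{C}^b(\mathcal{E})}(A,B)_\alpha = \bigsqcup_{E:\, \overline{E} = \alpha} \Ext^1_{\mathcal{C}^b(\mathcal{E})}(A,B)_E,$$
and, taking cardinalities, $|\Ext^1_{\mathcal{C}^b(\mathcal{E})}(A,B)_\alpha| = \sum_{E:\, \overline{E} = \alpha} |\Ext^1_{\mathcal{C}^b(\mathcal{E})}(A,B)_E|$.

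Next I would group the left-hand side of (\ref{eeps}) according to the derived isomorphism class, writing
$$\sum_{E \in \Iso(\mathcal{C}^b(\mathcal{E}))} |\Ext^1_{\mathcal{C}^b(\mathcal{E})}(A,B)_E|\,[E] = \sum_{\alpha \in \Iso(\mathcal{D}^b(\mathcal{E}))}\; \sum_{E:\, \overline{E} = \alpha} |\Ext^1_{\mathcal{C}^b(\mathcal{E})}(A,B)_E|\,[E].$$
In the inner sum only those $E$ with $\Ext^1_{\mathcal{C}^b(\mathcal{E})}(A,B)_E \neq 0$ contribute. For any two such complexes $E, E'$ with $\overline{E} = \overline{E'} = \alpha$, the hypotheses of Lemma~\ref{qisextclass} are met — they share the derived class $\alpha$, and both occur as middle terms of extensions of the fixed pair $A, B$ — so $[E] = [E']$ in $\mathcal{M}(\mathcal{E})$. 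In particular every such $[E]$ equals $[E_{\alpha, A, B}]$ for a chosen middle term $E_{\alpha, A, B}$ of an extension in $\Ext^1_{\mathcal{C}^b(\mathcal{E})}(A,B)_\alpha$, which simultaneously shows that $[E_{\alpha,A,B}]$ is independent of the choice. Substituting this and invoking the cardinality identity from the first step gives
$$\sum_{\alpha}\; \sum_{E:\, \overline{E} = \alpha} |\Ext^1_{\mathcal{C}^b(\mathcal{E})}(A,B)_E|\,[E_{\alpha, A, B}] = \sum_\alpha |\Ext^1_{\mathcal{C}^b(\mathcal{E})}(A,B)_\alpha|\,[E_{\alpha, A, B}],$$
which is the right-hand side of (\ref{eeps}).

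The only point requiring genuine attention — and the closest thing here to an obstacle, though it is minor — is verifying that the hypotheses of Lemma~\ref{qisextclass} really are in force: one must use that the summation is restricted to those $E$ with $\Ext^1_{\mathcal{C}^b(\mathcal{E})}(A,B)_E \neq 0$, so that both $E$ and $E'$ are genuinely realized as middle terms of extensions of the \emph{same} pair $A, B$, matching condition~2) of that lemma, and so that the chosen $E_{\alpha,A,B}$ exists for every $\alpha$ with nonzero coefficient. Everything else is finite-sum manipulation, legitimate by the finiteness hypotheses on $\mathcal{E}$.
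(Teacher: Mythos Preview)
Your proof is correct and follows essentially the same approach as the paper's: group the left-hand side by derived isomorphism class $\alpha$, invoke Lemma~\ref{qisextclass} to identify all contributing $[E]$ within a group with $[E_{\alpha,A,B}]$ (thereby also justifying well-definedness of the latter), and use the tautological cardinality identity to collapse the inner sum. The paper's version is simply terser.
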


\begin{proof}
Lemma \ref{qisextclass} implies that the right-hand side is well-defined. Now, for any $\alpha \in \Iso (\mathcal{D}^b(\mathcal{E})),$ we have, just by definition, 
$$\sum\limits_{E \in \Iso (\mathcal{C}^b(\mathcal{E})), \overline{E} = \alpha} |\Ext^1_{\mathcal{C}^b(\mathcal{E})}(A, B)_E|  = |\Ext^1_{\mathcal{C}^b(\mathcal{E})}(A, B)_{\alpha}|$$
and if this number is non-zero, then for each $E$ from the left-hand side, we have $[E] = [E_{\alpha, A, B}]$ in $\mathcal{M}(\mathcal{E}).$
\end{proof}

\begin{corollary} \label{qisextclassmult}
For each pair of bounded complexes $L, M \in \mathcal{C}^b(\mathcal{E}),$ the product $[L] \diamond [M]$ in the semi-derived Hall algebra is equal to the following sum:
\begin{equation}
[L] \diamond [M] = \frac{1}{\left\langle K, L \right\rangle} [K]^{-1} \diamond \sum\limits_{\alpha \in \Iso (\mathcal{D}^b(\mathcal{E}))} (\frac{|\Ext^1_{\mathcal{C}^b(\mathcal{E})}(L', M)_{\alpha}|}{|\Hom(L', M)|} [E_{\alpha, L', M}]),
\end{equation}
where 
$$K \rightarrowtail L' \stackrel{\qis}{\twoheadrightarrow} L $$ 
is a conflation such that $L'$ satisfies
$$\Ext^p_{\mathcal{C}^b(\mathcal{E})} (L', M) = \Ext^p_{\mathcal{D}^b(\mathcal{E})} (L', M), \quad \forall p > 0.$$ 
\end{corollary}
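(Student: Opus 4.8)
The plan is to obtain this formula as an immediate consequence of the defining formula~(\ref{diamonddef1}) for the product together with the collapsing identity of Corollary~\ref{qisextclassident}. The observation is that~(\ref{diamonddef1}) already writes $[L]\diamond[M]$ as a torus element applied to a finite sum over $\Iso(\mathcal{C}^b(\mathcal{E}))$ weighted by the numbers $|\Ext^1_{\mathcal{C}^b(\mathcal{E})}(L',M)_E|$, while Corollary~\ref{qisextclassident} says precisely that such a weighted sum can be reindexed by $\Iso(\mathcal{D}^b(\mathcal{E}))$.

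First I would fix, exactly as in the definition of $\diamond$, a conflation $K\rightarrowtail L'\stackrel{\qis}{\twoheadrightarrow}L$ with $L'$ satisfying~(\ref{extpmult}); such a conflation exists by part~(i) of~(C4), and by Proposition~\ref{welldef} the right-hand side of the asserted identity does not depend on this choice, just as the left-hand side does not. Then, starting from~(\ref{diamonddef1}) and pulling the scalar $1/|\Hom(L',M)|$ in front of the inner sum, we have
$$[L]\diamond[M]=\frac{1}{\left\langle K,L\right\rangle\,|\Hom(L',M)|}\;[K]^{-1}\diamond\Bigl(\sum_{E\in\Iso(\mathcal{C}^b(\mathcal{E}))}|\Ext^1_{\mathcal{C}^b(\mathcal{E})}(L',M)_E|\,[E]\Bigr).$$
Applying Corollary~\ref{qisextclassident} with $A=L'$ and $B=M$ replaces the bracketed sum by $\sum_{\alpha\in\Iso(\mathcal{D}^b(\mathcal{E}))}|\Ext^1_{\mathcal{C}^b(\mathcal{E})}(L',M)_\alpha|\,[E_{\alpha,L',M}]$, and pushing the scalar $1/|\Hom(L',M)|$ back into the sum gives exactly the stated expression.

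I do not expect any genuine obstacle here: all the substance has been placed in Lemma~\ref{qisextclass} and Corollary~\ref{qisextclassident} (which themselves rest on the freeness Theorem~\ref{zfree} and the injectivity of $K_0$ of Lemma~\ref{k0inj}), and the present corollary is just their repackaging into the form of a Hall-type product formula. The only points to verify are routine bookkeeping: that multiplication by a scalar and the action of $\mathbb{T}_{ac}(\mathcal{E})$ distribute over the finite sums involved, and that the middle term $E_{\alpha,L',M}$ is a well-defined element of $\mathcal{M}(\mathcal{E})$ — the latter being already established in Corollary~\ref{qisextclassident}.
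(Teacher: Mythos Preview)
Your proposal is correct and matches the paper's approach: the corollary is stated immediately after Corollary~\ref{qisextclassident} with no separate proof, since it is obtained precisely by substituting the identity~(\ref{eeps}) into the defining formula~(\ref{diamonddef1}), exactly as you describe.
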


\section{Main properties}
\subsection{Associativity}

\begin{theorem} \label{zassoc}
For each triple of bounded complexes $A, B, C \in \mathcal{C}^b(\mathcal{E}),$ we have
$$([A] \diamond [B]) \diamond [C] = [A] \diamond ([B] \diamond [C]).$$
\end{theorem}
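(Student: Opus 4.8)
The plan is to deduce associativity of $\diamond$ from associativity of the ordinary Ringel--Hall algebra $\mathcal{H}(\mathcal{C}^b(\mathcal{E}))$ of the exact category $\mathcal{C}^b(\mathcal{E})$ (Theorem~2.1, due to Ringel~\cite{R1} and Hubery~\cite{Hu}), by replacing $A,B,C$ with sufficiently ``nice'' quasi-isomorphic complexes for which $\diamond$ is literally the classical Hall product. As a first, purely formal step, observe that $\diamond$ is $\mathbb{Q}$-bilinear and, by the bimodule-compatibility established above, compatible with the $\mathbb{T}_{ac}(\mathcal{E})$-bimodule structure on $\mathcal{M}(\mathcal{E})$; since $\mathcal{M}(\mathcal{E})$ is generated over $\mathbb{T}_{ac}(\mathcal{E})$ by the classes $[M]$ with $M\in\mathcal{C}^b(\mathcal{E})$, it is enough to verify $([A]\diamond[B])\diamond[C]=[A]\diamond([B]\diamond[C])$ for $A,B,C\in\mathcal{C}^b(\mathcal{E})$. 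Moreover, quasi-isomorphic complexes have classes differing by an invertible element of $\mathbb{T}_{ac}(\mathcal{E})$ (as in the proof of Theorem~\ref{zfree}), so the same bimodule compatibility lets us replace $A,B,C$ by arbitrary quasi-isomorphic complexes without loss of generality.

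I would then apply condition (C4) to the triple $(A,B,C)$ --- after, if necessary, first replacing $C$ by a resolution of the type supplied by Lemma~\ref{projres} or its $\Coh(X)$-analogue --- to obtain conflations $K_1\rightarrowtail L_1\stackrel{\qis}{\twoheadrightarrow}A$ and $K_2\rightarrowtail L_2\stackrel{\qis}{\twoheadrightarrow}B$ satisfying (i)--(iii), and replace $A$ by $L_1$ and $B$ by $L_2$. The role of this replacement is to arrange that every complex occurring as the \emph{first} argument of $\diamond$ in either bracketing already satisfies the comparison isomorphism \eqref{extpmult} with respect to the relevant second argument, so that no further resolution is needed and each single product is given by the ordinary Hall formula. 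Concretely: by (i) and (ii) one obtains the comparison for the pair $(L_1,L_2)$ (deduced from the one for $(L_1,B)$ via the conflation $K_2\rightarrowtail L_2\twoheadrightarrow B$ together with $\Ext^{>0}(L_1,K_2)=0$), whence $[L_1]\diamond[L_2]=\sum_{E}\frac{|\Ext^1_{\mathcal{C}^b(\mathcal{E})}(L_1,L_2)_{E}|}{|\Hom(L_1,L_2)|}[E]$ with each $E$ fitting in a conflation $L_2\rightarrowtail E\twoheadrightarrow L_1$; and (iii) says precisely that every such $E$ satisfies \eqref{extpmult} against $C$, so that $[E]\diamond[C]=\sum_{F}\frac{|\Ext^1_{\mathcal{C}^b(\mathcal{E})}(E,C)_{F}|}{|\Hom(E,C)|}[F]$. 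On the other side, (i) gives the comparison for $(L_2,C)$, so $[L_2]\diamond[C]=\sum_{Y}\frac{|\Ext^1_{\mathcal{C}^b(\mathcal{E})}(L_2,C)_{Y}|}{|\Hom(L_2,C)|}[Y]$ with $C\rightarrowtail Y\twoheadrightarrow L_2$, and for the chosen resolutions each $[L_1]\diamond[Y]$ is again computed by the classical Hall formula.

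Granting this, both $([L_1]\diamond[L_2])\diamond[C]$ and $[L_1]\diamond([L_2]\diamond[C])$ are the images, under the canonical $\mathbb{Q}$-linear map $\mathcal{H}(\mathcal{C}^b(\mathcal{E}))\to\mathcal{M}(\mathcal{E})$ sending $[F]$ to $[F]$, of the two bracketings of the triple product $[L_1]\diamond[L_2]\diamond[C]$ in the ordinary Hall algebra $\mathcal{H}(\mathcal{C}^b(\mathcal{E}))$. These agree there by Theorem~2.1, hence they agree in $\mathcal{M}(\mathcal{E})$; and the formal reduction of the first step then propagates the identity from the generators $[A],[B],[C]$ to all of $\mathcal{M}(\mathcal{E})$.

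The hard part is exactly the ``niceness'' bookkeeping that makes the second paragraph legitimate: one must check that \emph{every} complex appearing as a first argument of $\diamond$ in \emph{either} bracketing --- in particular the (finitely many) middle terms of $\Ext^1_{\mathcal{C}^b(\mathcal{E})}(L_1,L_2)$ and of $\Ext^1_{\mathcal{C}^b(\mathcal{E})}(L_2,C)$, and the analogous complexes one level deeper --- satisfies \eqref{extpmult} with respect to the appropriate target, so that $\diamond$ really does coincide with the Ringel--Hall product throughout; this is the whole purpose of the three-part condition (C4) and is why it looks so artificial. Intertwined with it is the clerical verification that the scalar prefactors $\langle-,-\rangle^{\pm1}$ and the auxiliary classes $[K_i]^{-1}$ produced when one resolves the first arguments cancel correctly on the two sides --- the same type of manipulation with the multiplicative Euler form and with conflations $H\rightarrowtail N\twoheadrightarrow K$ that appears in the proof of Proposition~\ref{welldef}.
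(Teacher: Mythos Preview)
Your plan is essentially the paper's: both reduce associativity of $\diamond$ to associativity of the ordinary Ringel--Hall product $\bullet$ in $\mathcal{H}(\mathcal{C}^b(\mathcal{E}))$ by using (C4) to arrange that the relevant $\diamond$-products are literally given by the Hall formula, and both handle the acyclic prefactors $[K_i]^{-1}$ and Euler-form scalars via the bimodule compatibility already established. The only presentational difference is that you replace $[A],[B]$ by $[L_1],[L_2]$ at the outset and then treat the two bracketings symmetrically, whereas the paper keeps $A,B,C$ and transforms $([A]\diamond[B])\diamond[C]$ into $[A]\diamond([B]\diamond[C])$ through a single chain of equalities --- precisely the ``clerical verification'' you describe.

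One point in your symmetric version needs more care than you indicate. To compute the right bracketing $[L_1]\diamond([L_2]\diamond[C])$ classically, you need $[L_1]\diamond[Y]$ to be given by the Hall formula for every middle term $Y$ of $\Ext^1(L_2,C)$; that is, $L_1$ must satisfy \eqref{extpmult} against each such $Y$. Condition (C4) as stated does \emph{not} provide this: part~(iii) concerns middle terms of $\Ext^1(L_1,L_2)$ as first argument against $C$, not $L_1$ as first argument against middle terms of $\Ext^1(L_2,C)$; and part~(i) only gives the pair $(L_1,B)$. The paper's one-directional computation sidesteps having to check this case in isolation: it reaches $[L_1]\bullet([L_2]\bullet[C])$ from the \emph{left} bracketing via (ii), (iii) and classical associativity, and then reassembles this expression as $[A]\diamond([B]\diamond[C])$ by unwinding the definitions with the resolutions $L_2\twoheadrightarrow B$ and $L_1\twoheadrightarrow A$ together with the commutation rules for acyclics --- so the identity $[L_1]\diamond[Y]=[L_1]\bullet[Y]$ is never invoked on its own. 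If you want to keep your symmetric formulation, the cleanest fix is either to follow the paper's asymmetric route for the final identification, or to note that in both concrete cases of Theorem~\ref{condd} the chosen $L_1$ (with projective, resp.\ suitably negative, components) satisfies \eqref{extpmult} against \emph{every} second argument by Proposition~\ref{comderext}, which is stronger than what the abstract (C4) records.
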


\begin{proof}
We have
$$([A] \diamond [B]) \diamond [C] = (\frac{1}{\left\langle K_1, A \right\rangle} [K_1]^{-1} \diamond \sum\limits_{\varepsilon \in \Ext^1_{\mathcal{C}^b(\mathcal{E})}(L_1,B)}(\frac{[\mt(\varepsilon)]}{|\Hom(L_1, B)|})) \diamond [C] =$$
\medskip
$$= \frac{1}{\left\langle K_1, A \right\rangle} [K_1]^{-1} \diamond \sum\limits_{\varepsilon \in \Ext^1_{\mathcal{C}^b(\mathcal{E})}(L_1,B)} (\frac{1}{|\Hom(L_1, B)|} ([\mt(\varepsilon)] \diamond [C])).$$
Using part (ii) of condition (C4), we obtain that for each extension $\varepsilon$ of $L_1$ by $B$ represented by a conflation $B \rightarrowtail X \twoheadrightarrow L_1,$ there exists a unique extension $\varepsilon'$ of $L_1$ by $L_2$ represented by $L_2 \rightarrowtail Y \twoheadrightarrow L_1,$ such that the following diagram commutes:

\begin{equation}  
\xymatrix@R=0.6cm{
*+++{K_2} \ar@{>->}[d] \ar@{=}[r] &*+++{K_2} \ar@{>->}[d] \ar@{->>}[r] & *+++{0} \ar@{>->}[d]\\
*+++{L_2} \ar@{->>}[d] \ar@{>->}[r]  & *+++{Y} \ar@{->>}[d] \ar@{->>}[r]  & *+++{L_1} \ar@{=}[d] \\
*+++{B}  \ar@{>->}[r]  & *+++{X} \ar@{->>}[r]  & *+++{L_1.} \\
}
\end{equation}
We have 
$$[X] = \frac{1}{\left\langle K_2, X \right\rangle} [K_2]^{-1} \diamond [Y] = \frac{1}{\left\langle K_2, B \right\rangle \left\langle K_2, L_1 \right\rangle} [K_2]^{-1} \diamond [Y],$$
i.e. 
$$[\mt(\varepsilon)] = \frac{1}{\left\langle K_2, B \right\rangle \left\langle K_2, L_1 \right\rangle} [K_2]^{-1} \diamond [\mt(\varepsilon')].$$
Then, by part (iii) of condition (C4), the expression above can be rewritten as follows:
$$\frac{1}{\left\langle K_1, A \right\rangle \left\langle K_2, B \right\rangle \left\langle K_2, L_1 \right\rangle} ([K_1]^{-1} \diamond [K_2]^{-1}) \diamond \frac{|\Hom(L_1, L_2)|}{|\Hom(L_1, B)|} \times$$
$$\times(\sum\limits_{\varepsilon' \in \Ext^1_{\mathcal{C}^b(\mathcal{E})}(L_1,L_2)} \frac{1}{|\Hom(L_1, L_2)|} [\mt(\varepsilon')] \diamond [C]) =$$
$$= \frac{|\Hom(L_1, L_2)|}{\left\langle K_1, A \right\rangle \left\langle K_2, B \right\rangle \left\langle K_2, L_1 \right\rangle |\Hom(L_1, B)|} ([K_1]^{-1} \diamond [K_2]^{-1}) \diamond (([L_1] \bullet [L_2]) \diamond [C]),$$
where $\bullet$ is the usual Hall product in the exact category $\mathcal{C}^b(\mathcal{E}).$ Thus, by part (iii) of  condition (C4), both multiplications in the formula $(([L_1] \diamond [L_2]) \diamond [C])$ are the usual Hall multiplications. Therefore, we can use the associativity property for this expression and rewrite the last formula in the above calculations as follows: 
$$([A] \diamond [B]) \diamond [C] = \frac{|\Hom(L_1, L_2)|}{\left\langle K_1, A \right\rangle \left\langle K_2, B \right\rangle \left\langle K_2, L_1 \right\rangle |\Hom(L_1, B)|} ([K_1]^{-1} \diamond [K_2]^{-1}) \diamond ([L_1] \bullet ([L_2] \bullet [C])).$$
Using the associativity of the product with $[K]^{-1}$ and the commutation rule from Lemma \ref{acinvcomm} for $[L_1]$ and $[K_2]^{-1},$ we obtain that this is equal to:
$$\frac{|\Hom(L_1, L_2)|}{\left\langle K_1, A \right\rangle \left\langle K_2, B \right\rangle \left\langle K_2, L_1 \right\rangle |\Hom(L_1, B)|} [K_1]^{-1} \diamond \frac{1}{|\Hom(L_1, K_2)|} \left\langle K_2, L_1 \right\rangle [L_1] \bullet ([K_2]^{-1} \diamond ([L_2] \bullet [C])).$$
Just by definition (and since the two $\left\langle K_2, L_1 \right\rangle$ cancel each other), it equals
 $$\frac{|\Hom(L_1, L_2)|}{\left\langle K_1, A  \right\rangle |\Hom(L_1, B)| |\Hom(L_1, K_2)|} [K_1]^{-1} \diamond [L_1] \bullet ([B] \diamond [C]) = $$
$$ = \frac{|\Hom(L_1, L_2)|}{|\Hom(L_1, B)| |\Hom(L_1, K_2)|} [A] \diamond ([B] \diamond [C]).$$
It is easy to check that part (ii) of condition (C4) implies that
$$\frac{|\Hom(L_1, L_2)|}{|\Hom(L_1, B)| |\Hom(L_1, K_2)|} = 1;$$
therefore, we have
$([A] \diamond [B]) \diamond [C] = [A] \diamond ([B] \diamond [C]).$
\end{proof}

\subsection{Derived invariance}

In this section, we prove that our algebra is invariant under some (quite big) class of derived equivalences:

\begin{proposition} \label{equivdbcb}
Suppose that $F: \mathcal{E'} \to \mathcal{E}$ is an exact functor between exact categories inducing an equivalence of bounded derived categories
$$F: \mathcal{D}^b(\mathcal{E'}) \overset\sim\to \mathcal{D}^b(\mathcal{E}).$$
Then the bounded derived categories of their categories of bounded complexes are equivalent via $F$:
$$F: \mathcal{D}^b(\mathcal{C}^b(\mathcal{E'})) \overset\sim\to \mathcal{D}^b(\mathcal{C}^b(\mathcal{E})).$$
\end{proposition}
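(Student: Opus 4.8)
The plan is to exhibit the equivalence $F : \mathcal{D}^b(\mathcal{C}^b(\mathcal{E}')) \overset\sim\to \mathcal{D}^b(\mathcal{C}^b(\mathcal{E}))$ by means of a totalization/realization argument, using d\'evissage to reduce to the case of stalk complexes where the hypothesis on $F$ applies directly. The functor $\mathcal{C}^b(F) : \mathcal{C}^b(\mathcal{E}') \to \mathcal{C}^b(\mathcal{E})$ is manifestly exact (it is applied componentwise and preserves componentwise conflations), so it induces a triangulated functor on the bounded derived categories; the content is to show this induced functor is an equivalence. The key structural input is the d\'evissage principle (Lemma~\ref{devis}): the triangulated category $\mathcal{D}^b(\mathcal{C}^b(\mathcal{E}))$ is generated by the stalk complexes coming from $\mathcal{E}$ concentrated in a single degree, and likewise for $\mathcal{E}'$.

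First I would establish full faithfulness. Since $\mathcal{D}^b(\mathcal{C}^b(\mathcal{E}))$ is generated as a triangulated category by stalk complexes $X[i]$ with $X \in \mathcal{E}$, it suffices (by a standard d\'evissage argument with the five-lemma applied to long exact $\Hom$-sequences) to check that $F$ induces isomorphisms
$$\Hom_{\mathcal{D}^b(\mathcal{C}^b(\mathcal{E}'))}(X[i], Y[j]) \overset\sim\to \Hom_{\mathcal{D}^b(\mathcal{C}^b(\mathcal{E}))}(FX[i], FY[j])$$
for all $X, Y \in \mathcal{E}'$ and all $i, j$. Now $\Hom_{\mathcal{D}^b(\mathcal{C}^b(\mathcal{E}'))}(X[i], Y[j])$ is an $\Ext$-group $\Ext^{j-i}_{\mathcal{C}^b(\mathcal{E}')}(X[0], Y[0])$, and by the argument used to prove Proposition~\ref{comderext} (applied to the pair of stalk complexes $X[0], Y[0]$, for which all componentwise higher extensions are extensions in $\mathcal{E}'$ in a single degree), these coincide with $\Ext^{\bullet}_{\mathcal{D}^b(\mathcal{E}')}(X, Y)$ up to a degree shift; more precisely one reduces, exactly as in Lemma~\ref{homderext}, to morphism spaces in $\mathcal{D}^b(\mathcal{E}')$ between stalk complexes. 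Since $F : \mathcal{D}^b(\mathcal{E}') \overset\sim\to \mathcal{D}^b(\mathcal{E})$ is an equivalence by hypothesis, these morphism spaces match, and full faithfulness follows by assembling the d\'evissage.

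For essential surjectivity, I would argue that the essential image of $F$ is a triangulated subcategory of $\mathcal{D}^b(\mathcal{C}^b(\mathcal{E}))$ (since $\mathcal{C}^b(F)$ is exact, hence triangulated on the derived categories, and commutes with shift and cones) which is closed under the relevant direct summands; it contains every stalk complex $Z[i]$ with $Z \in \mathcal{E}$, because $F : \mathcal{D}^b(\mathcal{E}') \to \mathcal{D}^b(\mathcal{E})$ is essentially surjective and $F$ intertwines the d\'evissage embeddings $\mathcal{E} \hookrightarrow \mathcal{D}^b(\mathcal{C}^b(\mathcal{E}))$ with $\mathcal{E}' \hookrightarrow \mathcal{D}^b(\mathcal{C}^b(\mathcal{E}'))$. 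But the stalk complexes generate $\mathcal{D}^b(\mathcal{C}^b(\mathcal{E}))$ as a triangulated category by Lemma~\ref{devis}, so the essential image is everything.

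The main obstacle I anticipate is the bookkeeping in the first step: one must be careful that the identification $\Ext^{\bullet}_{\mathcal{C}^b(\mathcal{E}')}(X[0], Y[0]) \cong \Hom_{\mathcal{D}^b(\mathcal{E}')}(X, Y[\bullet])$ is natural in a way that is compatible with $F$, i.e.\ that the comparison maps assemble into a morphism of long exact sequences across the whole d\'evissage filtration, and that the generation statement of Lemma~\ref{devis} is genuinely used in the triangulated (not merely additive) sense so that both full faithfulness and essential surjectivity propagate from stalks to all of $\mathcal{D}^b(\mathcal{C}^b(\mathcal{E}))$. Once the naturality is set up carefully, the five-lemma inductions are routine; the subtlety is entirely in organizing the d\'evissage so that everything is compatible with the functor $F$.
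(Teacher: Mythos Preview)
There is a genuine gap: you are conflating the two distinct shifts on $\mathcal{D}^b(\mathcal{C}^b(\mathcal{E}))$. An object here is a bicomplex, with an \emph{inner} grading (the complex degree in $\mathcal{C}^b(\mathcal{E})$) and an \emph{outer} grading (the homological degree of $\mathcal{D}^b(-)$). The triangulated shift is the outer one. When you write ``stalk complexes $X[i]$ with $X \in \mathcal{E}$'' and then compute $\Hom(X[i],Y[j]) = \Ext^{j-i}_{\mathcal{C}^b(\mathcal{E}')}(X[0],Y[0])$, you are placing $X$ at inner degree $0$ and outer degree $i$. But these objects do \emph{not} generate $\mathcal{D}^b(\mathcal{C}^b(\mathcal{E}))$: the exact ``evaluation at inner degree $1$'' functor $\mathcal{C}^b(\mathcal{E})\to\mathcal{E}$, $M\mapsto M^1$, kills every such object (and hence the whole triangulated subcategory they generate) while sending the inner stalk $u_1(X)$ to $X\neq 0$. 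So your d\'evissage never reaches most of $\mathcal{D}^b(\mathcal{C}^b(\mathcal{E}))$.

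If instead you take as generators the bi-stalks $u_a(X)$ for all inner degrees $a$ (which does work, by a double application of Lemma~\ref{devis}), then you must compute $\Hom_{\mathcal{D}^b(\mathcal{C}^b(\mathcal{E}))}(u_a(X), u_b(Y)[n]) = \Ext^n_{\mathcal{C}^b(\mathcal{E})}(u_a(X), u_b(Y))$, and this depends on $n$ and $b-a$ \emph{separately}: for instance $\Ext^1_{\mathcal{C}^b(\mathcal{E})}(u_0(X),u_0(Y))=\Ext^1_{\mathcal{E}}(X,Y)$ while $\Ext^1_{\mathcal{C}^b(\mathcal{E})}(u_0(X),u_1(Y))=\Hom_{\mathcal{E}}(X,Y)$. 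Proposition~\ref{comderext} does not apply here because its hypothesis---vanishing of all componentwise higher extensions---fails for general $X,Y\in\mathcal{E}$. One can still show these groups are expressible purely in terms of $\Hom_{\mathcal{D}^b(\mathcal{E})}(X,Y[\bullet])$ and are therefore preserved by $F$, but this is exactly the work the paper does by a different (cleaner) device: restricting to $\mathcal{C}^{[-N,N]}(\mathcal{E})$ and using the exact adjoint pairs $(ev_k^\lambda, ev_k)$, so that $\Hom(ev_k^\lambda A, ev_l^\lambda B)=\Hom_{\mathcal{D}^b(\mathcal{E})}(A, ev_k ev_l^\lambda B)$ reduces immediately to the hypothesis on $F$. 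The generators there are not bi-stalks but the images $ev_k^\lambda(A)$, which for $k>-N$ are two-term inner-acyclic complexes; the adjunction is what makes the $\Hom$-computation tractable.
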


\begin{proof}
Denote by $\mathcal{C}^{[-N,N]}(\mathcal{E})$ the category of complexes over $\mathcal{E},$ concentrated in degrees $- N, - N + 1,\ldots, N.$
Since we have
$$\mathcal{D}^b(\mathcal{C}^b(\mathcal{E'})) = \bigcup\limits_{N=0}^{\infty} \mathcal{D}^{b}(\mathcal{C}^{[-N,N]}(\mathcal{E'})),$$
and similarly for $\mathcal{E},$ 
it is enough to prove that $F$ induces an equivalence
$$\mathcal{D}^b(\mathcal{C}^{[-N, N]}(\mathcal{E'})) \overset{\overset{F}\sim}\to \mathcal{D}^b(\mathcal{C}^{[-N, N]}(\mathcal{E})),$$
for all $N \in \mathbb{Z}_{\geq 0}.$ For this we use the same ``d\'{e}vissage'' trick as in the proof of Proposition \ref{homfin}. 
We start by recalling the family of evaluation functors
$$ev_k: \mathcal{C}^{[-N,N]}(\mathcal{E}) \to \mathcal{E}, k = - N, - N + 1,\ldots, N $$
$$(\ldots\to 0 \to A^{-N} \to \ldots \to A^k \to \ldots \to A^N \to 0 \to \ldots) \quad \overset{ev_k}\mapsto \quad A^k.$$
Each of these functors is exact and admits an exact left adjoint
$ev_{k}^{\lambda}: \mathcal{E} \to \mathcal{C}^{[-N,N]}(\mathcal{E}):$
$$A \quad \overset{ev_{-N}^{\lambda}}\mapsto \quad (\ldots\to 0 \to A \to 0 \to \ldots \to 0 \to \ldots \to 0 \to 0 \to \ldots),$$
$$A \quad \overset{ev_{k}^{\lambda}}\mapsto \quad (\ldots\to 0 \to 0 \to 0 \to \ldots \to A = A \to \ldots \to 0 \to 0 \to \ldots),$$ 
for $- N + 1 \leq k \leq N.$
Here $A$ sits in the degree $- N$ and in degrees $k - 1$ and $k,$ respectively.
Since $ev_k$ and $ev_{k}^{\lambda}$ are exact adjoint, they induce a pair of adjoint functors between $\mathcal{D}^b(\mathcal{C}^{[-N, N]}(\mathcal{E}))$ and $\mathcal{D}^b(\mathcal{E}),$ which we will denote by the same symbols; this holds for each $k = - N, - N + 1,\ldots, N.$
We consider similar functors $ev_k'$ and ${ev_{k}'}^{\lambda}$ for $\mathcal{E'}.$
Then for each $X, Y \in \mathcal{D}^b(\mathcal{E'}), k, l \in [- N, N],$ we have isomorphisms
$$\Hom_{\mathcal{D}^b(\mathcal{C}^{[-N, N]}(\mathcal{E'}))} (ev_k^{\lambda} X, ev_{l}^{\lambda} Y) = \Hom_{\mathcal{D}^b(\mathcal{E'})} (X, ev_k \circ ev_{l}^{\lambda} Y) =$$
$$=  \Hom_{\mathcal{D}^b(\mathcal{E})} (FX, F (ev_k \circ ev_l^{\lambda} Y)) = \Hom_{\mathcal{D}^b(\mathcal{E})} (FX, ev_{k}' \circ {ev_l'}^{\lambda} (FY)) =$$
$$= \Hom_{\mathcal{D}^b(\mathcal{C}^{[-N, N]}(\mathcal{E'}))} ({ev_{k}'}^{\lambda} FX, {ev_{l}'}^{\lambda} FY).$$ 
It follows that it remains to show that the images of $\mathcal{D}^b(\mathcal{E})$ under $ev^{\lambda}_{k}, k = - N, - N + 1,\ldots, N$ generate $\mathcal{D}^b(\mathcal{C}^{[-N, N]}(\mathcal{E}))$ by triangles, and similarly for $\mathcal{E'}.$ Let us prove that these images generate the subcategories $\mathcal{D}^b(\mathcal{C}^{b}_k(\mathcal{E'}))$ and $\mathcal{D}^b(\mathcal{C}^{b}_k(\mathcal{E}))$ generated by the complexes concentrated in degree $k,$ for $k \in [- N, N].$ We give the proof by induction on $k,$ and write it only for $\mathcal{E},$ since for $\mathcal{E'}$ everything is the same. For $k = -N$ the statement is clear by the definition of $ev_{-N}^{\lambda}.$ Assume that we have proved the statement for $k \leq l.$ Consider an arbitrary stalk complex 
$$A^{\bullet} := \ldots \to 0 \to \ldots \to 0 \to A \to 0 \to \ldots \to 0 \to \ldots,$$
with $A$ sitting in degree $l.$ Then there is a conflation of complexes
$\Sigma^{-1} A^{\bullet}   \rightarrowtail  C(1_{\Sigma^{-1} A^{\bullet}})  \twoheadrightarrow A^{\bullet}$
inducing a triangle in $\mathcal{D}^b(\mathcal{C}^{[-N, N]}(\mathcal{E})).$ But $\Sigma^{-1} A^{\bullet} \in \mathcal{D}^b(\mathcal{C}^{b}_{l-1}(\mathcal{E})),$ hence it is in the subcategory of $\mathcal{D}^b(\mathcal{C}^{[-N, N]}(\mathcal{E}))$ generated by the images
$C(1_{\Sigma^{-1} A^{\bullet}}) = ev_{l}^{\lambda} (A).$ Therefore all stalk complexes in $\mathcal{D}^b(\mathcal{C}^{b}_l(\mathcal{E}))$ are in the subcategory generated by the images of $ev_{k}^{\lambda},$ but the stalk complexes generate by triangles the whole $\mathcal{D}^b(\mathcal{C}^{b}_l(\mathcal{E})),$ cf. the proof of Lemma \ref{devis}. 

Now, similarly to Lemma \ref{devis}, we verify easily that the closure under extensions of the $\mathcal{D}^b(\mathcal{C}^{b}_l(\mathcal{E})),$ for $l = - N, - N + 1,\ldots, N,$ is $\mathcal{D}^b(\mathcal{C}^{[-N, N]}(\mathcal{E})),$ and this completes the proof.
\end{proof}

\begin{proposition} \label{dbcbequiv}
Under the assumptions of Proposition \ref{equivdbcb}, we have an equivalence of derived categories of the categories of acyclic bounded complexes:
$$\mathcal{D}^b(\mathcal{C}_{ac}^b(\mathcal{E'})) \overset{\overset{F}\sim}\to \mathcal{D}^b(\mathcal{C}_{ac}^b(\mathcal{E}))$$
which induces an isomorphism of the Grothendieck groups
$${\bf K_0}(\mathcal{C}_{ac}^b(\mathcal{E'})) \overset\sim\to {\bf K_0}(\mathcal{C}_{ac}^b(\mathcal{E}))$$
and respects the Euler form $\left\langle \cdot, \cdot \right\rangle.$
\end{proposition}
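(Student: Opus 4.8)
The plan is to run the d\'evissage argument from the proof of Proposition~\ref{equivdbcb}, this time inside the subcategories of acyclic complexes, and then to deduce the statements on $K_0$ and on the Euler form from the resulting triangle equivalence. Applied componentwise, $F$ is exact on $\mathcal{C}^b(\mathcal{E'})$, so it carries conflations to conflations; by the description of acyclicity through the conflations $Z^{n-1}\rightarrowtail K^n\twoheadrightarrow Z^n$ recalled in the proof of Lemma~\ref{k0inj}, it sends bounded acyclic complexes to bounded acyclic complexes and preserves their width. Hence it restricts to exact functors $\mathcal{C}^{[-N,N]}_{ac}(\mathcal{E'})\to\mathcal{C}^{[-N,N]}_{ac}(\mathcal{E})$, where $\mathcal{C}^{[-N,N]}_{ac}$ denotes the fully exact subcategory of acyclic complexes in $\mathcal{C}^{[-N,N]}$. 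As in Proposition~\ref{equivdbcb} we have $\mathcal{D}^b(\mathcal{C}^b_{ac}(\mathcal{E}))=\bigcup_{N=0}^{\infty}\mathcal{D}^b(\mathcal{C}^{[-N,N]}_{ac}(\mathcal{E}))$, so it is enough to show that $F$ induces an equivalence $\mathcal{D}^b(\mathcal{C}^{[-N,N]}_{ac}(\mathcal{E'}))\overset{\overset{F}{\sim}}{\to}\mathcal{D}^b(\mathcal{C}^{[-N,N]}_{ac}(\mathcal{E}))$ for each $N$.

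To prove this I would reuse the evaluation functors $ev_k$ and their exact left adjoints $ev_{k}^{\lambda}$. The key point is that for $-N+1\le k\le N$ the complex $ev_{k}^{\lambda}(A)$ --- namely $A$ placed in degrees $k-1$ and $k$ with identity differential --- is contractible, hence acyclic; since $\mathcal{C}^{[-N,N]}_{ac}$ is a full subcategory of $\mathcal{C}^{[-N,N]}$, the pair $(ev_{k}^{\lambda},ev_k)$ restricts, for such $k$, to an adjoint pair of exact functors $\mathcal{E}\rightleftarrows\mathcal{C}^{[-N,N]}_{ac}(\mathcal{E})$ and descends to an adjoint pair on bounded derived categories. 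Because $F\circ{ev_{k}'}^{\lambda}=ev_{k}^{\lambda}\circ F$ and $F\circ ev_{k}'=ev_{k}\circ F$, the same chain of adjunction isomorphisms as in Proposition~\ref{equivdbcb},
\[
\Hom_{\mathcal{D}^b(\mathcal{C}^{[-N,N]}_{ac}(\mathcal{E'}))}({ev_{k}'}^{\lambda}X,{ev_{l}'}^{\lambda}Y)\;\cong\;\Hom_{\mathcal{D}^b(\mathcal{E'})}(X,ev_{k}'{ev_{l}'}^{\lambda}Y)\;\cong\;\Hom_{\mathcal{D}^b(\mathcal{E})}(FX,ev_{k}\,ev_{l}^{\lambda}FY)\;\cong\;\Hom_{\mathcal{D}^b(\mathcal{C}^{[-N,N]}_{ac}(\mathcal{E}))}(ev_{k}^{\lambda}FX,ev_{l}^{\lambda}FY),
\]
where the middle isomorphism uses that $F\colon\mathcal{D}^b(\mathcal{E'})\overset{\sim}{\to}\mathcal{D}^b(\mathcal{E})$ is an equivalence, shows that $F$ is fully faithful on the triangulated subcategory generated by the images of $ev_{k}^{\lambda}$, $k\in[-N+1,N]$. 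By the d\'evissage through intelligent truncations from the proof of Lemma~\ref{k0inj}, every object of $\mathcal{C}^{[-N,N]}_{ac}(\mathcal{E})$ is an iterated extension of complexes of the form $ev_{k}^{\lambda}(X)$ with $k\in[-N+1,N]$, so this subcategory is all of $\mathcal{D}^b(\mathcal{C}^{[-N,N]}_{ac}(\mathcal{E}))$, and likewise over $\mathcal{E'}$; essential surjectivity is then immediate since $ev_{k}^{\lambda}(X)=F({ev_{k}'}^{\lambda}(F^{-1}X))$. This yields the claimed equivalence $\mathcal{D}^b(\mathcal{C}^b_{ac}(\mathcal{E'}))\overset{\overset{F}{\sim}}{\to}\mathcal{D}^b(\mathcal{C}^b_{ac}(\mathcal{E}))$.

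For the last two assertions I would use the standard facts that, for an essentially small idempotent complete exact category $\mathcal{B}$, the canonical functor $\mathcal{B}\to\mathcal{D}^b(\mathcal{B})$ induces an isomorphism $K_0(\mathcal{B})\overset{\sim}{\to}K_0(\mathcal{D}^b(\mathcal{B}))$, and that $\Ext^p_{\mathcal{B}}(X,Y)\cong\Hom_{\mathcal{D}^b(\mathcal{B})}(X,Y[p])$ for all $p\ge 0$. Applying the first to $\mathcal{B}=\mathcal{C}^b_{ac}(\mathcal{E'})$ and $\mathcal{B}=\mathcal{C}^b_{ac}(\mathcal{E})$, and using that $F$ commutes with the canonical functors, the triangle equivalence above descends to the isomorphism $K_0(\mathcal{C}^b_{ac}(\mathcal{E'}))\overset{\sim}{\to}K_0(\mathcal{C}^b_{ac}(\mathcal{E}))$. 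Applying the second to $\mathcal{B}=\mathcal{C}^b(\mathcal{E})$, for acyclic $K$ and any $A\in\mathcal{C}^b(\mathcal{E})$ one has $\langle K,A\rangle=\prod_{p\ge 0}|\Hom_{\mathcal{D}^b(\mathcal{C}^b(\mathcal{E}))}(K,A[p])|^{(-1)^p}$, a finite product by Lemma~\ref{homfin}; since $F$ is an equivalence $\mathcal{D}^b(\mathcal{C}^b(\mathcal{E'}))\overset{\sim}{\to}\mathcal{D}^b(\mathcal{C}^b(\mathcal{E}))$ (Proposition~\ref{equivdbcb}) whose restriction to acyclic objects is the equivalence just constructed, we get $\langle FK,FA\rangle=\langle K,A\rangle$, that is, $F$ respects the Euler form.

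I expect the main obstacle to be the bookkeeping in the second paragraph: verifying that $(ev_{k}^{\lambda},ev_k)$ really does restrict to the subcategories of acyclic complexes (this fails for $k=-N$, which is why one works with $k\ge -N+1$) and descends to their bounded derived categories compatibly with $F$, together with the d\'evissage identifying exactly the generators $ev_{k}^{\lambda}(X)$, $k\in[-N+1,N]$. Once this is in place the argument is entirely parallel to Proposition~\ref{equivdbcb}. A secondary point requiring care is the identification of the Euler form --- defined via Yoneda $\Ext$ in $\mathcal{C}^b(\mathcal{E})$ --- with the alternating product of morphism spaces in $\mathcal{D}^b(\mathcal{C}^b(\mathcal{E}))$, which is precisely where the comparison between Yoneda $\Ext$ and Hom in the bounded derived category is used.
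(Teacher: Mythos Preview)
Your approach is correct and genuinely different from the paper's. The paper argues \emph{extrinsically}: it uses the equivalence $\mathcal{D}^b(\mathcal{C}^b(\mathcal{E'}))\overset{\sim}{\to}\mathcal{D}^b(\mathcal{C}^b(\mathcal{E}))$ from Proposition~\ref{equivdbcb}, then shows that on each side the canonical functor $\mathcal{D}^b(\mathcal{C}^b_{ac})\hookrightarrow\mathcal{D}^b(\mathcal{C}^b)$ is fully faithful by citing \cite[Theorem~12.1]{Kel2}, and finally identifies the essential image via a new Lemma~\ref{tot}: an object of $\mathcal{D}^b(\mathcal{C}^b(\mathcal{E}))$ lies in the image of $\mathcal{D}^b(\mathcal{C}^b_{ac}(\mathcal{E}))$ if and only if its total complex is acyclic, a condition visibly preserved by $F$. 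Your argument is \emph{intrinsic}: you rerun the adjunction/d\'evissage machinery of Proposition~\ref{equivdbcb} directly inside the acyclic subcategories, exploiting that $ev_k^{\lambda}$ lands in acyclics for $k\ge -N+1$ and that intelligent truncations (Lemma~\ref{k0inj}) generate $\mathcal{C}^{[-N,N]}_{ac}$. This sidesteps both Keller's theorem and the $\Tot$-criterion, at the price of redoing the mechanism of Proposition~\ref{equivdbcb}; the paper's route is more conceptual and yields the reusable Lemma~\ref{tot}, while yours is more self-contained. Your treatment of the $K_0$-isomorphism and the Euler form (via $\Ext^p_{\mathcal{C}^b(\mathcal{E})}\cong\Hom_{\mathcal{D}^b(\mathcal{C}^b(\mathcal{E}))}(-,-[p])$ and Proposition~\ref{equivdbcb}) is also fine; the paper leaves these as implicit consequences of the equivalence.
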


\begin{proof}
By the equivalence of $\mathcal{D}^b(\mathcal{C}^b(\mathcal{E'}))$ and $\mathcal{D}^b(\mathcal{C}^b(\mathcal{E'})),$ it is sufficient to prove that the canonical functors
\begin{equation} \label{acff}
\mathcal{D}^b(\mathcal{C}_{ac}^b(\mathcal{E'})) \hookrightarrow \mathcal{D}^b(\mathcal{C}^b(\mathcal{E'})), \mathcal{D}^b(\mathcal{C}_{ac}^b(\mathcal{E})) \hookrightarrow \mathcal{D}^b(\mathcal{C}^b(\mathcal{E}))
\end{equation}
are fully faithful and that an object $A'$ of $\mathcal{D}^b(\mathcal{C}^b(\mathcal{E'}))$ lies in the essential image of $\mathcal{D}^b(\mathcal{C}^b_{ac}(\mathcal{E'}))$ iff $F(A')$ lies in the essential image of $\mathcal{D}^b(\mathcal{C}^b_{ac}(\mathcal{E})).$
The functors (\ref{acff}) are fully faithful by \cite[Theorem~12.1]{Kel2} applied to $\mathcal{B} = \mathcal{C}_{ac}^b(\mathcal{E}), \mathcal{A} = \mathcal{C}^b(\mathcal{E}).$ The followin lemma provides a functorial characterization of complexes of complexes, isomorphic in $\mathcal{D}^b(\mathcal{C}^b(\mathcal{E}))$ to complexes of acyclics, that gives us the desired condition on essential images and, therefore, completes the proof. 

\begin{lemma} \label{tot}
A complex $X \in \mathcal{C}^b(\mathcal{C}^b(\mathcal{E}))$ is isomorphic in $\mathcal{D}^b(\mathcal{C}^b(\mathcal{E}))$ to an object of $\mathcal{D}^b(\mathcal{C}_{ac}^b(\mathcal{E}))$ if and only if its total complex $\Tot(X)$ is acyclic, i.e. the functor
$$\Tot: \mathcal{D}^b(\mathcal{C}^b(\mathcal{E})) \to \mathcal{D}^b(\mathcal{E})$$
sends $X$ to 0.
\end{lemma}

\noindent{\it Proof.\/}
We start by an argument similar to Lemma \ref{projres}: for each complex of complexes $X \in \mathcal{C}^b(\mathcal{C}^b(\mathcal{E}))$ there exists a deflation quasi-isomorphism 
$X' \stackrel{\qis}{\twoheadrightarrow} X,$
where all but one of the components of $X'$ are acyclic. Indeed, for each complex $X^p \in \mathcal{C}^b(\mathcal{E})$ there exists a deflation from an acyclic  complex $C(1_{\Sigma^{-1} X})$ to $X^p.$ Without loss of generality, we can assume that $X$ is of form 
$$X: \ldots \to 0  \to X^0 \to \ldots \to X^n \to 0 \to \ldots, \quad X^p \in \mathcal{C}^b(\mathcal{E}).$$
We will construct the components of $X'$ step by step. Take a deflation $A^n \twoheadrightarrow X^n$ with $A^n$ acyclic. Then take the pullback of a morphism $X^{n-1} \to X^n$ and this deflation, and define $A^{n-1}$ (with a morphism $A^{n-1} \to A_n$ and a deflation $A^{n-1} \twoheadrightarrow X^{n-1})$ by an acyclic deflation to this pullback:
\begin{equation} \label{diag}
\xymatrix@R=0.6cm{
*+++{A^{n-1}} \ar@{.>>}[ddr] \ar@{->>}[dr] \ar@{.>}[drr] &&&\\
& *+++{\bullet} \ar@{->}[r] \ar@{->>}[d]  & *+++{A^n} \ar@{->>}[d] \ar@{->}[r] & *+++{0} \ar@{=}[d]\\
& *+++{X^{n-1}}  \ar@{->}[r]  & *+++{X^n} \ar@{->}[r]  & *+++{0}. \\
}
\end{equation}
In similar manner we construct $A^{n-2}, \ldots, A^1, A^0.$ At the last step, we do not consider a deflation from an acyclic complex, just a pullback $B$ given by the diagram
\begin{equation}
\xymatrix@R=0.6cm{
*+++{B} \ar@{->}[r] \ar@{->>}[d]  & *+++{A^0} \ar@{->>}[d]\\
*+++{0}  \ar@{->}[r]  & *+++{X^0}.\\
}
\end{equation}
By construction, we get a deflation quasi-isomorphism $X' \twoheadrightarrow X,$ where
$$X' = \ldots \to 0 \to B \to A^0 \to \ldots \to A^n \to 0 \to \ldots .$$

Now we see that $X$ is isomorphic in $\mathcal{D}^b(\mathcal{C}^b(\mathcal{E}))$ to a complex from $\mathcal{D}^b(\mathcal{C}^b_{ac}(\mathcal{E}))$ if and only if $X'$ is. But all $A^i$ are acyclic, hence this last condition holds if and if $B$ is acyclic. Clearly, in this and only this case we have $\Tot(X') = 0.$ But $\Tot$ is a functor from the derived category $\mathcal{D}^b(\mathcal{C}^b(\mathcal{E})),$ which gives us the desired criterion.
\end{proof}

\begin{corollary} \label{toriequiv}
Under the assumptions of Proposition \ref{equivdbcb}, we have an isomorphism of quantum tori of acyclic complexes:

$$\mathbb{T}_{ac}(\mathcal{E'}) \overset\sim\to \mathbb{T}_{ac}(\mathcal{E}).$$
\end{corollary}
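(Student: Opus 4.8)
The plan is to reduce the statement to the functorial description of the quantum torus as a twisted group algebra. Recall the lemma of Section~4 identifying $\mathbb{T}_{ac}(\mathcal{E})$ with the $\mathbb{Q}$-group algebra of $K_0(\mathcal{C}^b_{ac}(\mathcal{E}))$ whose multiplication is twisted by the inverse of the Euler form, and likewise for $\mathcal{E}'$. So it suffices to produce an isomorphism of abelian groups $\varphi\colon K_0(\mathcal{C}^b_{ac}(\mathcal{E}')) \overset\sim\to K_0(\mathcal{C}^b_{ac}(\mathcal{E}))$ that intertwines the two Euler forms, and then to observe that any such $\varphi$ extends canonically to an isomorphism of the corresponding twisted group algebras.

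First I would invoke Proposition~\ref{dbcbequiv}, which, under the hypotheses of Proposition~\ref{equivdbcb}, provides exactly such a $\varphi$: it is induced by $F$ through the equivalence $\mathcal{D}^b(\mathcal{C}_{ac}^b(\mathcal{E}')) \overset\sim\to \mathcal{D}^b(\mathcal{C}_{ac}^b(\mathcal{E}))$, it is an isomorphism of Grothendieck groups, and it respects $\langle\cdot,\cdot\rangle$. Extending $\varphi$ $\mathbb{Q}$-linearly to the underlying vector spaces of the two group algebras and checking multiplicativity on the group-like generators reduces, via the formula $[K_1]\diamond[K_2] = \langle K_1,K_2\rangle^{-1}[K_1\oplus K_2]$, precisely to the two properties of $\varphi$ just listed: compatibility with the group law of $K_0$ and with the twisting cocycle $\langle\cdot,\cdot\rangle^{-1}$. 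Since $\varphi$ carries the invertible generators $[K]$ to invertible generators, the induced algebra map is compatible with the Ore localizations on both sides, hence yields the desired isomorphism $\mathbb{T}_{ac}(\mathcal{E}') \overset\sim\to \mathbb{T}_{ac}(\mathcal{E})$.

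All of the substantive work has already been carried out in Proposition~\ref{dbcbequiv} (and in Lemma~\ref{tot}, which controls the essential image of the acyclic complexes); what remains for the corollary is the essentially formal passage from ``isometry of Grothendieck groups'' to ``isomorphism of twisted group algebras''. The only point deserving a line of care, and the one I would flag as the main (though minor) obstacle, is to record that the resulting isomorphism is canonical, i.e. literally induced by $F$ and independent of auxiliary choices --- but this is immediate, since $\varphi$ in Proposition~\ref{dbcbequiv} is built directly from the functor $F$.
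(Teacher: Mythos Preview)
Your proposal is correct and follows exactly the approach the paper intends: the corollary is stated without proof in the paper because it follows immediately from Proposition~\ref{dbcbequiv} (isomorphism of Grothendieck groups respecting the Euler form) together with the lemma in Section~4 identifying $\mathbb{T}_{ac}$ with the twisted group algebra of $K_0(\mathcal{C}^b_{ac})$. You have simply spelled out this implicit two-line argument in more detail.
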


\begin{theorem} \label{sdhequiv}
Under the assumptions of Proposition \ref{equivdbcb}, we have an isomorphism of algebras:
$$\mathcal{SDH}(\mathcal{E'}) \overset\sim\to \mathcal{SDH}(\mathcal{E}).$$
\end{theorem}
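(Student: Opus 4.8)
The plan is to show that the exact functor $F$, applied componentwise to complexes, already induces the desired isomorphism via the naive rule $[M']\mapsto[FM']$, and that each ingredient entering the definition of $\mathcal{SDH}$ is transported correctly. First I would record that, being exact, $F$ applies componentwise to give an exact additive functor $\mathcal{C}^b(\mathcal{E'})\to\mathcal{C}^b(\mathcal{E})$ — again written $F$ — which sends conflations to conflations, acyclic complexes to acyclic complexes, and quasi-isomorphisms to quasi-isomorphisms. Using this together with the ring isomorphism $\mathbb{T}_{ac}(\mathcal{E'})\overset\sim\to\mathbb{T}_{ac}(\mathcal{E})$ of Corollary~\ref{toriequiv}, I would define $\Phi$ on the generators $[M']$, $M'\in\mathcal{C}^b(\mathcal{E'})$, by $[M']\mapsto[FM']$.

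Next I would verify that $\Phi$ descends to a well-defined isomorphism of right $\mathbb{T}_{ac}$-modules (over the isomorphism of Corollary~\ref{toriequiv}). The relations~(\ref{Zrelations}) are respected because $F$ is additive and preserves conflations and acyclicity: a conflation $K\rightarrowtail L\twoheadrightarrow M$ with $K$ acyclic goes to $FK\rightarrowtail FL\twoheadrightarrow FM$ with $FK$ acyclic, so $[FL]=[FK\oplus FM]$ in $\mathcal{M}(\mathcal{E})$. Compatibility with the torus action comes down to the identity $\langle K,M\rangle=\langle FK,FM\rangle$ for $K$ acyclic and $M$ arbitrary; since the multiplicative Euler form is bilinear (Lemma~\ref{homfin}), this identity reduces — via d\'{e}vissage of $K$ into cones of identities (as in the proof of Lemma~\ref{k0inj}) and the isomorphism~(\ref{extcones}) — to the statement that $F$ preserves the Euler form of $\mathcal{E}$ itself on the components of $M$, which holds because $F\colon\mathcal{D}^b(\mathcal{E'})\overset\sim\to\mathcal{D}^b(\mathcal{E})$ is an equivalence and, $\mathcal{E}$ being idempotent complete by (C1), the Euler form of $\mathcal{E}$ is the restriction of the Euler pairing on $\mathcal{D}^b(\mathcal{E})$, with $F$ commuting with components and shifts. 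Bijectivity of $\Phi$ is then immediate from Theorem~\ref{zfree}: $F$ induces a bijection $\Iso(\mathcal{D}^b(\mathcal{E'}))\overset\sim\to\Iso(\mathcal{D}^b(\mathcal{E}))$, hence carries a system of representatives of quasi-isomorphism classes — a $\mathbb{T}_{ac}$-basis — to one on the other side.

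The heart of the argument is that $\Phi$ respects $\diamond$. Since $\diamond$ is $\mathbb{T}_{ac}$-bilinear on both sides and $\Phi$ is a module map over the torus isomorphism, it suffices to check $\Phi([L]\diamond[M])=\Phi([L])\diamond\Phi([M])$ for $L,M\in\mathcal{C}^b(\mathcal{E'})$; moreover, replacing $L$ by a quasi-isomorphic resolution and tracking the resulting torus factor via the previous step, I may assume $L$ satisfies~(\ref{extpmult}) relative to $M$. Then, by the remark characterizing $\diamond$ and by Corollary~\ref{qisextclassmult}, $[L]\diamond[M]=\sum_{\alpha}\frac{|\Ext^1_{\mathcal{C}^b(\mathcal{E'})}(L,M)_\alpha|}{|\Hom(L,M)|}[E_{\alpha,L,M}]$, and using~(\ref{extpmult}) I would rewrite the $\Ext^1$- and $\Hom$-groups appearing here as $\Hom_{\mathcal{D}^b(\mathcal{E'})}(L,\Sigma M)$ and $\Hom_{\mathcal{D}^b(\mathcal{E'})}(L,M)$, observing that under this identification $\mt(\varepsilon)$ is quasi-isomorphic to the cone of $\varepsilon[-1]$. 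This presents the product purely in terms of $\mathcal{D}^b(\mathcal{E'})$ and $\mathbb{T}_{ac}(\mathcal{E'})$. Computing $[FL]\diamond[FM]$ in $\mathcal{SDH}(\mathcal{E})$ through any admissible resolution (Proposition~\ref{welldef}) yields the analogous expression over $\mathcal{D}^b(\mathcal{E})$ and $\mathbb{T}_{ac}(\mathcal{E})$; since $F$ is a triangulated equivalence compatible with the quantum tori, it matches the two, and applying $\Phi$ term by term gives the identity. As $\Phi([0])=[0]$, this makes $\Phi$ an isomorphism of unital algebras.

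The main obstacle is exactly this last step, and the reason care is needed is that $F$ is only a \emph{derived} equivalence: it is not an equivalence between $\mathcal{C}^b(\mathcal{E'})$ and $\mathcal{C}^b(\mathcal{E})$, it need not preserve $\Ext^1_{\mathcal{C}^b}$, and it need not send a pair $(L,M)$ satisfying~(\ref{extpmult}) to one satisfying it over $\mathcal{E}$. One is therefore forced to express the multiplication intrinsically in terms of the derived category and the quantum torus — which is precisely what condition~(\ref{extpmult}) and the use of quasi-isomorphism classes in Theorem~\ref{zfree} make possible — \emph{before} transporting it along $F$, and then to recombine the torus prefactors correctly, essentially re-running on both sides the bookkeeping of Proposition~\ref{welldef} and of the associativity proof (Theorem~\ref{zassoc}). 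The Euler-form identity in the second step is the other spot requiring a small amount of care, and it is where the idempotent-completeness hypothesis (C1) is used.
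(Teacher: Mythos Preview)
Your overall strategy is sound, and your treatment of the module isomorphism and the Euler form via d\'evissage is correct. But you are overlooking the main tool the paper has set up precisely for this theorem: the conclusion of Proposition~\ref{equivdbcb}, namely that $F$ induces an equivalence $\mathcal{D}^b(\mathcal{C}^b(\mathcal{E'}))\overset\sim\to\mathcal{D}^b(\mathcal{C}^b(\mathcal{E}))$. Once one has this, the entire ``heart of the argument'' collapses: for any $A,B\in\mathcal{C}^b(\mathcal{E'})$ and any $p\geq 0$ one has $\Ext^p_{\mathcal{C}^b(\mathcal{E'})}(A,B)=\Hom_{\mathcal{D}^b(\mathcal{C}^b(\mathcal{E'}))}(A,\Sigma^p B)$, and the equivalence transports this isomorphically to $\Ext^p_{\mathcal{C}^b(\mathcal{E})}(FA,FB)$. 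So $F$ \emph{does} preserve $\Hom_{\mathcal{C}^b}$ and $\Ext^p_{\mathcal{C}^b}$ on the nose, it \emph{does} send a pair satisfying~(\ref{extpmult}) to one satisfying it, and hence it literally carries the formula~(\ref{diamonddef1}) from one side to the other. No rewriting in terms of $\mathcal{D}^b(\mathcal{E})$, no recombination of torus prefactors, no re-running of Proposition~\ref{welldef} is needed. This is the paper's proof, and it is one paragraph.

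Your alternative route---expressing $[L]\diamond[M]$ purely in terms of $\mathcal{D}^b(\mathcal{E'})$ and then transporting---can in principle be made to work, but as written it has a gap. Condition~(\ref{extpmult}) only concerns $p>0$, so it does \emph{not} let you replace $|\Hom_{\mathcal{C}^b(\mathcal{E'})}(L,M)|$ by $|\Hom_{\mathcal{D}^b(\mathcal{E'})}(L,M)|$; that identification holds in the concrete examples of Theorem~\ref{condd} (via Lemma~\ref{homderext}) but is not guaranteed by~(C4) alone. You would then have to carry the genuine $\Hom_{\mathcal{C}^b}$ denominator through the comparison and check that it matches the corresponding factor coming from the $\mathcal{E}$-side resolution together with $\langle K,FM\rangle$; this is doable (the bookkeeping is essentially that at the end of the proof of Proposition~\ref{welldef}), but it is exactly what the equivalence of Proposition~\ref{equivdbcb} lets you avoid.
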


\begin{proof}
By the Corollary \ref{toriequiv}, we have an isomorphism
$\mathcal{M}(\mathcal{E'}) \overset\sim\to \mathcal{M}(\mathcal{E})$
of free modules over isomorphic quantum tori with bases which are in bijection by the derived equivalence. Since we can consider all bounded complexes as stalk complexes in $\mathcal{D}^b(\mathcal{C}^b(\mathcal{E'}))$ and $\mathcal{D}^b(\mathcal{C}^b(\mathcal{E}))$ respectively, all extensions in the categories of bounded complexes are just morphisms in these derived categories, hence they are preserved under the functor $\mathcal{D}^b(\mathcal{C}^b(\mathcal{E'})) \to \mathcal{D}^b(\mathcal{C}^b(\mathcal{E}))$ induced by $F.$ It follows that the multiplication of the  semi-derived Hall algebra is preserved as well, which completes the proof.
\end{proof}

\section{Case with enough projectives}

Throughout this section, we assume that our category $\mathcal{E}$ satisfies the following conditions: 
\begin{itemize}
\item[1)] $\mathcal{E}$ has enough projectives;
\item[2)] each object in $\mathcal{E}$ has a finite projective resolution.
\end{itemize}

Here we record some simple facts concerning extensions in the category $\mathcal{C}^b(\mathcal{E}).$

\begin{lemma} \label{acproj}
For each complex $K \in \mathcal{C}^b_{ac}(\mathcal{E}),$ there exists an acyclic complex of complexes
\begin{equation} \label{projacres}
0 \to P_d \to P_{d-1} \to \ldots \to P_1 \to P_0 \to K \to 0,
\end{equation}
where $d$ is the maximum of the projective dimensions of the components of $K;$ and the $P_i$ are bounded acyclic complexes of projectives.
\end{lemma}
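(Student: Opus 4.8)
The plan is to build the resolution in \eqref{projacres} by \emph{resolving building blocks and assembling via the Horseshoe lemma}. First note that $\mathcal{C}_{ac}^b(\mathcal{E})$ is an exact category for the componentwise structure inherited from $\mathcal{C}^b(\mathcal{E})$, and that it is closed inside $\mathcal{C}^b(\mathcal{E})$ under extensions and under kernels of deflations (equivalently, under syzygies), since acyclicity is detected by the triangle functor $\mathcal{C}^b(\mathcal{E}) \to \mathcal{D}^b(\mathcal{E})$ and "two out of three" holds in a triangle. As in the proof of Lemma~\ref{k0inj}, the intelligent truncations \eqref{intelligenttrunc} present any bounded acyclic complex as a finite iterated extension of cones of identities of stalk complexes: writing $m$ for the degree of the rightmost nonzero component of $K$, there is a conflation $\tau_{<(m-1)} \rightarrowtail K \twoheadrightarrow \tau_{\geq(m-1)}$ in $\mathcal{C}_{ac}^b(\mathcal{E})$ with $\tau_{\geq(m-1)} \cong C(1_{K^m[m]})$ and $\tau_{<(m-1)}$ a bounded acyclic complex having one fewer nonzero component (its components being $K^j$ for $j \le m-2$ together with $Z^{m-2} = \ker d^{m-1}$).

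\textbf{Building blocks.} I would first prove the lemma for $K = C(1_{X[n]})$, $X \in \mathcal{E}$. Recall from the proof of Proposition~\ref{comderext} that $\forget_{\lambda} \circ \forget \cong C(1_{-})$ and that $\forget, \forget_{\lambda}$ are exact; composing with the (exact) stalk embedding $X \mapsto X[n]$ shows that $X \mapsto C(1_{X[n]})$ is an exact functor $\mathcal{E} \to \mathcal{C}^b(\mathcal{E})$. Fix a finite projective resolution $0 \to P_e \to \cdots \to P_0 \to X \to 0$ in $\mathcal{E}$ with $e = \operatorname{pd}(X)$; applying the exact functor to this iterated conflation yields an exact sequence of complexes $0 \to C(1_{P_e[n]}) \to \cdots \to C(1_{P_0[n]}) \to C(1_{X[n]}) \to 0$, where each $C(1_{P_i[n]})$ is a bounded acyclic (indeed contractible) complex of projectives. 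This is \eqref{projacres} for the building blocks, of length $\operatorname{pd}(X)$.

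\textbf{Lifting and induction.} The key input for gluing the building-block resolutions is: for any bounded acyclic complex of projectives $Q$ and any bounded acyclic complex $K'$, one has $\Ext^1_{\mathcal{C}^b(\mathcal{E})}(Q,K') = 0$; indeed, by Corollary~\ref{projext} this group equals $\Ext^1_{\mathcal{D}^b(\mathcal{E})}(Q,K')$, which vanishes because $Q$ is acyclic, hence zero in $\mathcal{D}^b(\mathcal{E})$. (Since $\mathcal{C}_{ac}^b(\mathcal{E})$ is extension-closed in $\mathcal{C}^b(\mathcal{E})$, the same vanishing holds for $\Ext^1_{\mathcal{C}_{ac}^b(\mathcal{E})}$.) Equivalently, morphisms out of such a $Q$ lift along any deflation with acyclic kernel. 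Now induct on the number of nonzero components of $K$: the cases $K = 0$ and $K \cong C(1_{K^m[m]})$ (two adjacent nonzero components, where the middle differential is an isomorphism) are handled above. In the inductive step, apply the conflation $\tau_{<(m-1)} \rightarrowtail K \twoheadrightarrow \tau_{\geq(m-1)}$; by induction $\tau_{<(m-1)}$ has a resolution $Q'_\bullet$ as in \eqref{projacres} and $\tau_{\geq(m-1)} \cong C(1_{K^m[m]})$ has one $Q''_\bullet$. Running the Horseshoe construction inside $\mathcal{C}_{ac}^b(\mathcal{E})$ — lifting $Q''_j \twoheadrightarrow \Omega^j(\tau_{\geq(m-1)})$ along $\Omega^j K \twoheadrightarrow \Omega^j(\tau_{\geq(m-1)})$ at each stage, the obstruction in $\Ext^1_{\mathcal{C}^b(\mathcal{E})}(Q''_j, \Omega^j(\tau_{<(m-1)}))$ vanishing by the above — produces a resolution of $K$ with $i$-th term $Q'_i \oplus Q''_i$, again a bounded acyclic complex of projectives. \emph{The main obstacle is precisely this Horseshoe step}: it takes place in an exact category in which the resolving objects $C(1_{P[n]})$ need not be projective objects of the ambient $\mathcal{C}^b(\mathcal{E})$, so the lifting must be supplied by the exact Ext-vanishing of Corollary~\ref{projext} rather than by abstract projectivity.

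\textbf{Length estimate.} It remains to see that the assembled resolution has length at most $d = \max_i \operatorname{pd}(K^i)$, for which one must check that every stalk object occurring as a building block of $K$ — namely $K^m$ and the successive syzygy objects $Z^j = \ker d^{j+1} \subseteq K^{j+1}$ — has projective dimension $\le d$. For $Z^{m-2}$, the conflation $Z^{m-2} \rightarrowtail K^{m-1} \twoheadrightarrow K^m$ and the standard bound $\operatorname{pd}(Z^{m-2}) \le \max(\operatorname{pd}(K^{m-1}), \operatorname{pd}(K^m) - 1)$ give $\operatorname{pd}(Z^{m-2}) \le d$, and a downward induction treats the deeper $Z^j$. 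Hence each building-block resolution has length $\le d$, and Horseshoe preserves this bound, so the resolution of $K$ has length $\le d$; padding with zero complexes makes it of the form $0 \to P_d \to \cdots \to P_0 \to K \to 0$. (Alternatively, one may phrase the bound abstractly: the above shows $\mathcal{C}_{ac}^b(\mathcal{E})$ has enough projectives — the bounded acyclic complexes of projectives, via Lemma~\ref{projres} — and that $\operatorname{pd}_{\mathcal{C}_{ac}^b(\mathcal{E})}(K) \le d$, so the $d$-th syzygy of any such resolution is projective in $\mathcal{C}_{ac}^b(\mathcal{E})$, hence, by Lemma~\ref{acprojdirsum} and idempotent completeness, itself a bounded acyclic complex of projectives.)
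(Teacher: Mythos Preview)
Your argument is correct, but the paper takes a much shorter and more direct route. Rather than decomposing $K$ into cones via intelligent truncations and reassembling with the Horseshoe lemma, the paper simply iterates Lemma~\ref{projres}: choose a deflation quasi-isomorphism $P_0 \stackrel{\qis}{\twoheadrightarrow} K$ with $P_0 \in \mathcal{C}^b(\mathcal{P})$; since $K$ is acyclic and the map is a quasi-isomorphism, $P_0$ is acyclic as well. The kernel $K_0$ then sits, componentwise, in a conflation $K_0^i \rightarrowtail P_0^i \twoheadrightarrow K^i$ with projective middle term, so $\operatorname{pd}(K_0^i) \le d-1$. Now induct on $d$. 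The length bound thus comes for free, with no need to track the $Z^j$ or to run Horseshoe.

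Your parenthetical alternative (``$\mathcal{C}_{ac}^b(\mathcal{E})$ has enough projectives via Lemma~\ref{projres}, and the $d$-th syzygy is projective there'') is essentially the paper's idea in abstract packaging. One small slip: Lemma~\ref{acprojdirsum} is stated only for $\mathbb{Z}/2$-graded complexes, so in the bounded $\mathbb{Z}$-graded setting you should instead argue directly that a summand of a bounded acyclic complex of projectives is again one (idempotent completeness of $\mathcal{P}$). Your main Horseshoe route does yield the resolving complexes explicitly as direct sums of cones on stalks, which is structurally informative but unnecessary for the statement at hand.
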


\begin{proof}
By Lemma \ref{projres}, there exists a conflation whose deflation is a quasi-isomorphism
$$K_0 \rightarrowtail P_0 \stackrel{\qis}{\twoheadrightarrow} K,$$ 
with $P_0 \in \mathcal{C}^b(\mathcal{P}).$ Then $P_0$ is acyclic, and from the long exact sequences of extensions of components by arbitrary objects of $\mathcal{E},$ it follows that the components of $K_1$ are of projective dimension at most $d - 1.$ Now we prove the statement by induction on the maximal projective dimension of the components of our acyclic complex.
\end{proof}

\begin{corollary} \label{arbproj}
For each complex $M \in \mathcal{C}^b(\mathcal{E}),$ there exists an acyclic complex of complexes
\begin{equation} \label{arbprojeq}
0 \to P_d \to P_{d-1} \to \ldots \to P_1 \to P_0 \stackrel{\qis}{\twoheadrightarrow} M \to 0,
\end{equation}
where $d$ is the maximum of the projective dimensions of the components of $M$ and the $P_i$ are bounded complexes of projectives which are acyclic for $i \geq 1.$
\end{corollary}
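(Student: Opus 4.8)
The plan is to bootstrap from Lemma~\ref{acproj}: first replace $M$ by a quasi-isomorphic bounded complex of projectives, then resolve the acyclic kernel of that replacement by means of Lemma~\ref{acproj}, and finally splice the two sequences together. First I would invoke Lemma~\ref{projres} to obtain a deflation quasi-isomorphism $P_0 \stackrel{\qis}{\twoheadrightarrow} M$ with $P_0 \in \mathcal{C}^b(\mathcal{P})$, and complete it to a conflation $K_0 \rightarrowtail P_0 \stackrel{\qis}{\twoheadrightarrow} M$ in $\mathcal{C}^b(\mathcal{E})$. Since the deflation is a quasi-isomorphism, its kernel $K_0$ is acyclic, so $K_0 \in \mathcal{C}^b_{ac}(\mathcal{E})$. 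When $d = 0$ every component of $M$ is projective, hence $M \in \mathcal{C}^b(\mathcal{P})$ and one simply takes $P_0 = M$, $K_0 = 0$; so from now on assume $d \geq 1$.

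Next I would bound the projective dimensions of the components of $K_0$. Evaluating the conflation in each degree $i$ gives a short exact sequence $K_0^i \rightarrowtail P_0^i \twoheadrightarrow M^i$ in $\mathcal{E}$ with $P_0^i$ projective (recall that $\mathcal{E}$ is idempotent complete, so a direct summand of a projective is projective); the long exact sequences of $\Ext_{\mathcal{E}}(-,B)$ then force $\operatorname{pd}(K_0^i) \leq \max(\operatorname{pd}(M^i) - 1,\, 0)$. Hence the maximum of the projective dimensions of the components of $K_0$ is at most $d - 1$. Applying Lemma~\ref{acproj} to $K_0$ therefore yields an acyclic complex of complexes of length at most $d-1$ resolving $K_0$ by bounded acyclic complexes of projectives; relabelling its indices by $+1$ and padding with zero terms, we may write it as $0 \to P_d \to P_{d-1} \to \cdots \to P_2 \to P_1 \to K_0 \to 0$ with each $P_i$, $i \geq 1$, a bounded acyclic complex of projectives.

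Finally I would splice this resolution with the conflation $K_0 \rightarrowtail P_0 \twoheadrightarrow M$ along $K_0$, producing the sequence $0 \to P_d \to \cdots \to P_1 \to P_0 \stackrel{\qis}{\twoheadrightarrow} M \to 0$. Exactness at $P_0$ holds because the image of $P_1 \to P_0$ is the image of the deflation $P_1 \twoheadrightarrow K_0$ followed by the inflation $K_0 \rightarrowtail P_0$, which is precisely $\ker(P_0 \twoheadrightarrow M)$; exactness elsewhere is inherited from Lemma~\ref{acproj}, and since the whole diagram is assembled out of conflations, the resulting sequence is acyclic in $\mathcal{C}^b(\mathcal{E})$. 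Together with $P_0 \in \mathcal{C}^b(\mathcal{P})$ and the fact that $P_0 \twoheadrightarrow M$ remains a quasi-isomorphism, this is exactly the sequence asserted. I do not expect any real obstacle here: the only points requiring care are the projective-dimension bookkeeping that keeps the length bounded by $d$, and the routine verification that splicing two admissible exact sequences of complexes along $K_0$ is again admissible; both follow immediately from Lemmas~\ref{projres} and~\ref{acproj} together with the axioms of an exact category.
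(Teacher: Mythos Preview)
Your proposal is correct and follows essentially the same approach as the paper: apply Lemma~\ref{projres} to obtain the conflation $K_0 \rightarrowtail P_0 \stackrel{\qis}{\twoheadrightarrow} M$, observe that the components of $K_0$ have projective dimension at most $d-1$, and then apply Lemma~\ref{acproj} to $K_0$. The paper's proof is a terse two-sentence version of exactly this argument; you have simply expanded the bookkeeping (the $d=0$ case, the dimension shift, and the splicing) in more detail.
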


\begin{proof}
By Lemma \ref{projres} there exists a conflation whose deflation is a quasi-isomorphism
$$K_0 \rightarrowtail P_0 \stackrel{\qis}{\twoheadrightarrow} M,$$
with $P_0 \in \mathcal{C}^b(\mathcal{P}).$ Now apply Lemma \ref{acproj} to $K_0,$ whose components have projective dimension at most $d - 1.$
\end{proof}

\begin{corollary} \label{achomfin}
For each acyclic complex $K \in \mathcal{C}_{ac}^b(\mathcal{E})$ and each $L \in \mathcal{C}^b(\mathcal{E})$ the following holds:
$$\Ext^p(K, L) = 0, \quad \forall p > d,$$
where $d$ is the maximum of projective dimensions of the components of $K.$
\end{corollary}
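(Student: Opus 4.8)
The plan is to reduce the vanishing statement for $\Ext^p(K,L)$ to the acyclic projective resolution produced in Lemma~\ref{acproj}, combined with the fact (Corollary~\ref{projext}) that bounded acyclic complexes of projectives are projective objects in $\mathcal{C}^b(\mathcal{P})$ and, more relevantly here, that $\Ext^p_{\mathcal{C}^b(\mathcal{E})}(P,L) = \Ext^p_{\mathcal{D}^b(\mathcal{E})}(P,L)$ for $P \in \mathcal{C}^b(\mathcal{P})$. First I would take the acyclic complex of complexes
$$0 \to P_d \to P_{d-1} \to \ldots \to P_1 \to P_0 \to K \to 0$$
from Lemma~\ref{acproj}, where each $P_i \in \mathcal{C}^b(\mathcal{P})$ is a bounded acyclic complex of projectives and $d$ is the maximum of the projective dimensions of the components of $K$. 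Splitting this into conflations
$$Z_1 \rightarrowtail P_0 \twoheadrightarrow K, \quad Z_{i+1} \rightarrowtail P_i \twoheadrightarrow Z_i \ (1 \le i \le d-1), \quad P_d = Z_d,$$
one gets long exact sequences in $\Ext^\bullet_{\mathcal{C}^b(\mathcal{E})}(-,L)$ relating $\Ext^p(K,L)$, $\Ext^p(Z_i,L)$ and $\Ext^p(P_i,L)$.

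The key point is that, since each $P_i$ is a bounded \emph{acyclic} complex of projectives, $\Ext^p_{\mathcal{C}^b(\mathcal{E})}(P_i,L) = \Ext^p_{\mathcal{D}^b(\mathcal{E})}(P_i,L)$ by Corollary~\ref{projext}; but $P_i \cong 0$ in $\mathcal{D}^b(\mathcal{E})$, so $\Ext^p_{\mathcal{C}^b(\mathcal{E})}(P_i,L) = 0$ for all $p > 0$ (in fact the $P_i$ are projective objects of $\mathcal{C}^b(\mathcal{P})$, hence of $\mathcal{C}^b(\mathcal{E})$, which gives the same vanishing directly). Feeding this into the long exact sequences yields dimension shifts: from the conflation $Z_1 \rightarrowtail P_0 \twoheadrightarrow K$ one obtains $\Ext^p(K,L) \cong \Ext^{p-1}(Z_1,L)$ for $p \ge 2$, and similarly $\Ext^{p-i}(Z_i,L) \cong \Ext^{p-i-1}(Z_{i+1},L)$ for $p - i \ge 2$. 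Iterating, for $p > d$ we arrive at $\Ext^p(K,L) \cong \Ext^{p-d}(Z_d,L) = \Ext^{p-d}(P_d,L)$, and since $p - d \ge 1$ and $P_d$ is an acyclic complex of projectives, this last group vanishes. (The case $p = d$ requires slightly more care, reaching $\Ext^0(P_d, L) = \Hom(P_d,L)$, which need not vanish — but the statement only claims vanishing for $p > d$, so the induction stops exactly where it should.)

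I do not expect a genuine obstacle here; the only thing to be careful about is bookkeeping the indices in the iterated long exact sequences so that the shift lands at $\Ext^{\ge 1}(P_d,L)$ precisely when $p > d$, and making sure the base case of Lemma~\ref{acproj}'s induction is invoked correctly (when $d = 0$, $K$ itself is a bounded acyclic complex of projectives and the claim is just Corollary~\ref{projext} with target $0$ in $\mathcal{D}^b(\mathcal{E})$). I would write the argument as a short induction on $d$, citing Lemma~\ref{acproj} and Corollary~\ref{projext}, with the dimension-shift from the long exact $\Ext$-sequences as the inductive step.
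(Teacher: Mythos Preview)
Your argument is correct and is essentially the same as the paper's: the paper's one-line proof just observes that, by Corollary~\ref{projext}, the acyclic complex of complexes from Lemma~\ref{acproj} is a projective resolution of $K$ in $\mathcal{C}^b(\mathcal{E})$ of length $d$, which immediately gives the vanishing. Your dimension-shifting via the long exact sequences is precisely the standard unwinding of that fact, so you are spelling out what the paper leaves implicit.
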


\begin{proof}
By Proposition \ref{projext}, the complex (\ref{projacres}) is a projective resolution of $K$ in $\mathcal{C}^b(\mathcal{E}).$
\end{proof}

\begin{corollary} \label{acext}
For each pair of complexes $L, M \in \mathcal{C}^b(\mathcal{E})$ the following holds:
$$\Ext^p_{\mathcal{C}^b(\mathcal{E})} (M, L) = \Ext^p_{\mathcal{D}^b(\mathcal{E})} (M, L), \quad \forall p > d,$$
where $d$ is the maximum of projective dimensions of the components of $M.$
\end{corollary}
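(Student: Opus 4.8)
The plan is to combine Corollary \ref{arbprojeq} with Corollary \ref{achomfin}. Let $M \in \mathcal{C}^b(\mathcal{E})$ have components of projective dimension at most $d$, and choose, by Corollary \ref{arbproj}, an acyclic complex of complexes
\begin{equation*}
0 \to P_d \to P_{d-1} \to \ldots \to P_1 \to P_0 \stackrel{\qis}{\twoheadrightarrow} M \to 0,
\end{equation*}
where each $P_i \in \mathcal{C}^b(\mathcal{P})$ and $P_i$ is acyclic for $i \geq 1$. First I would break this long exact sequence into conflations in $\mathcal{C}^b(\mathcal{E})$: writing $K_i$ for the image of $P_i \to P_{i-1}$ (with $K_0$ the kernel of $P_0 \twoheadrightarrow M$), we get conflations $K_1 \rightarrowtail P_1 \twoheadrightarrow K_0$, $K_2 \rightarrowtail P_2 \twoheadrightarrow K_1$, and so on, together with $K_0 \rightarrowtail P_0 \stackrel{\qis}{\twoheadrightarrow} M$. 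All the $K_i$ and $P_i$ for $i \geq 0$ are acyclic, and the projective dimensions of the components of $K_i$ are at most $d-1-i$ (this is the induction already used in the proof of Lemma \ref{acproj}), so in particular $K_{d-1} \in \mathcal{C}^b(\mathcal{P})$ and $K_d = 0$.

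Next I would feed these conflations into the long exact sequences for $\Ext^\bullet_{\mathcal{C}^b(\mathcal{E})}(-, L)$ and $\Ext^\bullet_{\mathcal{D}^b(\mathcal{E})}(-, L)$ simultaneously, producing a commutative ladder connected by the canonical comparison maps $\Ext^p_{\mathcal{C}^b(\mathcal{E})}(-, L) \to \Ext^p_{\mathcal{D}^b(\mathcal{E})}(-, L)$. For the objects $P_i \in \mathcal{C}^b(\mathcal{P})$, Corollary \ref{projext} tells us the comparison map is an isomorphism in all positive degrees; moreover, by Corollary \ref{achomfin}, $\Ext^p_{\mathcal{C}^b(\mathcal{E})}(P_i, L) = 0$ for $p > \dim_{\proj} P_i$, and the derived-category groups vanish in high degrees as well (one is reduced to $\mathcal{E}$, which is locally homologically finite, and the projective dimensions are bounded). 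The conflation $K_0 \rightarrowtail P_0 \stackrel{\qis}{\twoheadrightarrow} M$ is a quasi-isomorphism, so it identifies $\Ext^p_{\mathcal{D}^b(\mathcal{E})}(M, L)$ with $\Ext^p_{\mathcal{D}^b(\mathcal{E})}(P_0, L)$ (up to the usual shift); on the $\mathcal{C}^b(\mathcal{E})$ side the same conflation gives a long exact sequence relating $\Ext^\bullet(M, L)$, $\Ext^\bullet(P_0, L)$ and $\Ext^\bullet(K_0, L)$. A straightforward dimension shift down the ladder — using that $K_i$ has components of projective dimension $\leq d-1-i$, so that $\Ext^p_{\mathcal{C}^b(\mathcal{E})}(K_i, L)$ agrees with its derived analogue for $p$ large relative to $d-1-i$ by induction on $d$ — then propagates the isomorphism up to $M$ in degrees $p > d$.

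The cleanest way to organize this is probably induction on $d$: for $d = 0$, $M$ itself lies in $\mathcal{C}^b(\mathcal{P})$ and the statement is exactly Corollary \ref{projext}; for the inductive step, apply the hypothesis to $K_0$ (whose components have projective dimension $\leq d-1$) to get $\Ext^p_{\mathcal{C}^b(\mathcal{E})}(K_0, L) \cong \Ext^p_{\mathcal{D}^b(\mathcal{E})}(K_0, L)$ for $p > d-1$, and combine this with the isomorphism $\Ext^p_{\mathcal{C}^b(\mathcal{E})}(P_0, L) \cong \Ext^p_{\mathcal{D}^b(\mathcal{E})}(P_0, L)$ ($p > 0$) via the five-lemma applied to the comparison ladder of the two long exact sequences attached to $K_0 \rightarrowtail P_0 \twoheadrightarrow M$ — bearing in mind that on the derived side this conflation is a quasi-isomorphism, so the "$P_0$-term" and the "$K_0$-term" degenerate appropriately. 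The main obstacle I anticipate is bookkeeping: making sure the degree ranges line up across the shift (the comparison isomorphism for $K_0$ only starts at $p > d-1$, which is exactly what is needed to conclude at $p > d$ for $M$), and being careful that the long exact sequence in $\mathcal{D}^b(\mathcal{E})$ coming from a conflation whose deflation is already a quasi-isomorphism collapses correctly rather than being treated as a genuine triangle with three nonzero vertices. Once the indices are tracked honestly, the argument is a routine five-lemma induction.
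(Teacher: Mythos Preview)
Your proposal is correct and follows essentially the same approach as the paper: induction on $d$, using the conflation $K_0 \rightarrowtail P_0 \stackrel{\qis}{\twoheadrightarrow} M$ with $P_0 \in \mathcal{C}^b(\mathcal{P})$, Corollary~\ref{projext} for the $P_0$-terms, the inductive hypothesis for $K_0$ (whose components have projective dimension $\leq d-1$), and the five-lemma on the comparison ladder between the two long exact sequences. Your worry about the derived-side long exact sequence is easily dispelled: since $K_0$ is acyclic it is zero in $\mathcal{D}^b(\mathcal{E})$, so all $\Ext^p_{\mathcal{D}^b(\mathcal{E})}(K_0,L)$ vanish and the triangle degenerates exactly as you anticipated; this also means you could bypass the induction by invoking Corollary~\ref{achomfin} directly to kill $\Ext^p_{\mathcal{C}^b(\mathcal{E})}(K_0,L)$ for $p>d-1$, but either route works and the paper's phrasing is the inductive one.
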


\begin{proof}
Write down the long exact sequences of extensions in $\mathcal{C}^b(\mathcal{E})$ and in $\mathcal{D}^b(\mathcal{E})$ induced by morphisms from elements of a conflation
$K_0 \rightarrowtail P_0 \stackrel{\qis}{\twoheadrightarrow} M,$ 
with $P_0 \in \mathcal{C}^b(\mathcal{P}),$ to $L;$ and apply Corollary \ref{acext} to $K_0$ and $L$ and the five-lemma to these two sequences. Note that the maximum of projective dimensions of $K_0$ is $d - 1.$
\end{proof}

With the conditions 1) and 2) on $\mathcal{E},$ Proposition \ref{projres} establishes that the inclusion
$\mathcal{P} \hookrightarrow \mathcal{E}$
induces an equivalence of derived categories
$\mathcal{D}^b(\mathcal{P}) \overset\sim\to \mathcal{D}^b(\mathcal{E}).$
Then Theorem \ref{sdhequiv} provides an isomorphism
\begin{equation} \label{allproj}
I: \mathcal{SDH}(\mathcal{P}) \overset\sim\to \mathcal{SDH}(\mathcal{E}), \quad [P] \mapsto [P].
\end{equation}

\begin{proposition} \label{sdhpequiv}
The map 
$$F: [M] \mapsto \frac{1}{|\Hom(\bigoplus\limits_{k \in \mathbb{Z}_{\geq 0}} P_{2k+1}, M)|} [\bigoplus\limits_{k \in \mathbb{Z}_{\geq 0}} P_{2k+1}]^{-1} \diamond [\bigoplus\limits_{k \in \mathbb{Z}_{\geq 0}} P_{2k}];$$ 
$$[K]^{-1} \mapsto [F([K])]^{-1}, \quad K \in \mathcal{C}_{ac}^b(\mathcal{E})$$
provides an isomorphism 
$$\mathcal{SDH}(\mathcal{E}) \overset\sim\to \mathcal{SDH}(\mathcal{P}),$$
inverse to the isomorphism (\ref{allproj}). 
\end{proposition}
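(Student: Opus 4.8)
Since the map $I$ of (\ref{allproj}) is already an algebra isomorphism by Theorem~\ref{sdhequiv}, the plan is not to check directly that $F$ is well defined and multiplicative, but rather to show that the \emph{inverse} isomorphism $I^{-1}\colon\mathcal{SDH}(\mathcal{E})\to\mathcal{SDH}(\mathcal{P})$ takes the values prescribed by the formulas for $F$ on the classes $[M]$, $M\in\mathcal{C}^b(\mathcal{E})$, and $[K]^{-1}$, $K\in\mathcal{C}^b_{ac}(\mathcal{E})$, which together generate $\mathcal{SDH}(\mathcal{E})$ as an algebra. This will simultaneously show that the formulas are consistent (so $F$ makes sense) and that $F=I^{-1}$, hence an algebra isomorphism. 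For the first reduction: since $I$ is an algebra homomorphism with $I([P])=[P]$ for every complex of projectives $P$, and the right-hand side of the formula for $F([M])$, read inside $\mathcal{SDH}(\mathcal{E})$, is a product of classes of bounded complexes of projectives (note $\bigoplus_{k\geq 0}P_{2k+1}=P_1\oplus P_3\oplus\cdots$ is an acyclic complex of projectives, so $[\bigoplus_{k\geq 0}P_{2k+1}]$ is invertible in $\mathbb{T}_{ac}$), the equality $I^{-1}([M])=F([M])$ is equivalent to the following identity inside $\mathcal{SDH}(\mathcal{E})$ itself: for a resolution $0\to P_d\to\cdots\to P_1\to P_0\stackrel{\qis}{\twoheadrightarrow}M\to 0$ as in Corollary~\ref{arbproj},
$$[M]=\frac{1}{|\Hom(\bigoplus_{k\geq 0}P_{2k+1},\,M)|}\;[\bigoplus_{k\geq 0}P_{2k+1}]^{-1}\diamond[\bigoplus_{k\geq 0}P_{2k}].$$

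I would prove this key identity by induction on $d$. The case $d=0$ is immediate, since then $M$ is a complex of projectives, $P_0=M$ works, and both sides are $[M]$. For the inductive step, set $C:=\ker(P_0\to M)$; it lies in $\mathcal{C}^b_{ac}(\mathcal{E})$ because $P_0\stackrel{\qis}{\twoheadrightarrow}M$ is a deflation with acyclic kernel, and $0\to P_d\to\cdots\to P_1\to C\to 0$ is a resolution of $C$ as in Lemma~\ref{acproj} of length $d-1$. The conflation $C\rightarrowtail P_0\twoheadrightarrow M$ is among those generating the relations~(\ref{Zrelations}), so $[P_0]=[C\oplus M]=\left\langle C,M\right\rangle\,[C]\diamond[M]$, hence $[M]=\left\langle C,M\right\rangle^{-1}[C]^{-1}\diamond[P_0]$. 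Writing $A:=\bigoplus_{k\geq 1}P_{2k}$ and $B:=\bigoplus_{k\geq 0}P_{2k+1}$ (both acyclic complexes of projectives), the inductive hypothesis for $C$ reads $[C]=|\Hom(A,C)|^{-1}[A]^{-1}\diamond[B]$, so $[C]^{-1}=|\Hom(A,C)|\,[B]^{-1}\diamond[A]$; substituting and using $[A]\diamond[P_0]=\left\langle A,P_0\right\rangle^{-1}[A\oplus P_0]$ with $A\oplus P_0=\bigoplus_{k\geq 0}P_{2k}$ yields $[M]=\tfrac{|\Hom(A,C)|}{\left\langle C,M\right\rangle\left\langle A,P_0\right\rangle}\,[B]^{-1}\diamond[\bigoplus_{k\geq 0}P_{2k}]$. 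It remains to identify this scalar with $1/|\Hom(B,M)|$.

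For the scalar, I would use that, by Corollary~\ref{projext}, an acyclic complex of projectives $P$ satisfies $\Ext^p_{\mathcal{C}^b(\mathcal{E})}(P,N)=\Ext^p_{\mathcal{D}^b(\mathcal{E})}(P,N)=0$ for all $p>0$ and all $N$, whence $\left\langle P,N\right\rangle=|\Hom(P,N)|$; applied to $A$ and $B$ this turns the scalar into a pure Euler-form expression, and bilinearity of the Euler form together with the Grothendieck-group relations $[B]-[A]=[C]$ (resolution of $C$) and $[M]+[C]=[P_0]$ (the conflation) reduces the desired equality $\left\langle B,M\right\rangle\left\langle A,C\right\rangle=\left\langle C,M\right\rangle\left\langle A,P_0\right\rangle$ to an identity, since $\left\langle B,M\right\rangle=\left\langle C,M\right\rangle\left\langle A,M\right\rangle$ and $\left\langle A,M\right\rangle\left\langle A,C\right\rangle=\left\langle A,P_0\right\rangle$; this closes the induction. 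Granting the key identity, taking $M=K$ with a resolution as in Lemma~\ref{acproj} (for which $P_0\stackrel{\qis}{\twoheadrightarrow}K$ is a quasi-isomorphism) gives $I^{-1}([K])=F([K])\in\mathbb{T}_{ac}(\mathcal{P})$, hence $I^{-1}([K]^{-1})=(I^{-1}[K])^{-1}=[F([K])]^{-1}$; in particular $F([K])$ and $F([M])$ are independent of the resolution chosen, so $F$ is well defined and equals $I^{-1}$, which is the assertion. I expect the one genuinely delicate step to be the bookkeeping of Euler-form factors in the inductive step: one has to track carefully the order in which torus elements and module elements are multiplied, invoking the commutation relations of Lemma~\ref{acinvcomm} at each stage, before the Grothendieck-group relations collapse the scalar into the stated form.
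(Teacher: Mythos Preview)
Your proposal is correct and follows exactly the paper's approach: the paper's proof is the single sentence ``It is easy to check that $F$ is inverse to $I$ as a map. $I$ being an isomorphism, so is $F$.'' You have supplied precisely the verification the paper omits, reducing to an identity in $\mathcal{SDH}(\mathcal{E})$ and proving it by induction on the length of the resolution, with the Euler-form bookkeeping handled correctly via the Grothendieck-group relations $[B]=[C]+[A]$ and $[P_0]=[C]+[M]$. (Your closing caveat is slightly overcautious: in the way you have organized the computation no commutations from Lemma~\ref{acinvcomm} are actually needed, since everything stays in the order $[B]^{-1}\diamond[A]\diamond[P_0]$.)
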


\noindent It is easy to check that $F$ is inverse to $I$ as a map. $I$ being an isomorphism, so is $F.$

Note that all extensions in $\mathcal{C}^b(\mathcal{P})$ are isomorphic to those in $\mathcal{D}^b(\mathcal{P}),$ therefore the multiplication in $\mathcal{SDH}(\mathcal{P})$ is just the usual Hall multiplication in the exact category $\mathcal{C}^b(\mathcal{P}).$ It means that $\mathcal{SDH}(\mathcal{P})$ is the usual Hall algebra of $\mathcal{C}^b(\mathcal{P}),$ localized at the classes of acyclic complexes (all of which are contractible):

\begin{proposition} \label{sdhp}
$\qquad\quad \mathcal{SDH}(\mathcal{P}) = \mathcal{H}(\mathcal{C}^b(\mathcal{P}))[[K]^{-1}| K \in \mathcal{C}^b(\mathcal{P})].$
\newline If $\mathcal{E}$ has enough projectives and any object in $\mathcal{E}$ has finite projective resolution, then
$$\mathcal{SDH}(\mathcal{E}) = \mathcal{H}(\mathcal{C}^b(\mathcal{P}))[[K]^{-1}| K \in \mathcal{C}^b(\mathcal{P})].$$
\end{proposition}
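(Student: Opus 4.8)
The plan is to identify $\mathcal{SDH}(\mathcal{P})$ with the Ore localization of the ordinary Ringel--Hall algebra $\mathcal{H}(\mathcal{C}^b(\mathcal{P}))$ at the set $S$ of classes $[K]$ of acyclic complexes, and then transport the result along the isomorphism (\ref{allproj}).

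First I would collect the structural facts about $\mathcal{C}^b(\mathcal{P})$. Any conflation in $\mathcal{E}$ all of whose terms are projective splits, so the exact structure on $\mathcal{C}^b(\mathcal{P})$ is the componentwise split one; thus $\mathcal{C}^b(\mathcal{P})$ is a Frobenius category whose projective-injective objects are the contractible complexes, and under our standing assumptions (Lemma~\ref{acprojdirsum}, Corollary~\ref{projext}) these are exactly the acyclic complexes. Hence every conflation $K \rightarrowtail L \twoheadrightarrow M$ with $K$ acyclic splits and $L \cong K \oplus M$, which makes the defining relations (\ref{Zrelations}) of $\mathcal{M}_2(\mathcal{P})$ vacuous: $\mathcal{M}_2(\mathcal{P})$ is canonically the free $\mathbb{Q}$-vector space on $\Iso(\mathcal{C}^b(\mathcal{P}))$, i.e. the underlying space of $\mathcal{H}(\mathcal{C}^b(\mathcal{P}))$. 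For the same reason the Grothendieck monoid $M_0(\mathcal{C}_{ac}^b(\mathcal{P}))$ is just the direct-sum monoid $\Iso(\mathcal{C}_{ac}^b(\mathcal{P}))$; since all higher $\Ext$ groups out of an acyclic complex of projectives vanish (Corollary~\ref{projext}), the multiplicative Euler form satisfies $\langle K_1, K_2 \rangle = |\Hom(K_1, K_2)|$ on acyclic pairs, so $\mathbb{A}_{ac}(\mathcal{P})$ coincides with the subalgebra of $\mathcal{H}(\mathcal{C}^b(\mathcal{P}))$ spanned by the classes of acyclic complexes, $\mathcal{M}_2'(\mathcal{P})$ is $\mathcal{H}(\mathcal{C}^b(\mathcal{P}))$ regarded as a bimodule over this subalgebra (by left and right multiplication), and $\mathbb{T}_{ac}(\mathcal{P})$ is its Ore localization at $S$.

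Next I would check that $\diamond$ on $\mathcal{SDH}(\mathcal{P})$ is the localized Hall product. For $L, M \in \mathcal{C}^b(\mathcal{P})$ the condition (\ref{extpmult}) holds for all $p > 0$ by Corollary~\ref{projext}, so in (\ref{diamonddef1}) one may take $L' = L$ and $K = 0$, giving $[L] \diamond [M] = \sum_E \frac{|\Ext^1_{\mathcal{C}^b(\mathcal{P})}(L, M)_E|}{|\Hom(L, M)|}[E]$, which is precisely the Ringel--Hall product $[L] \bullet [M]$ in $\mathcal{H}(\mathcal{C}^b(\mathcal{P}))$ (the middle terms $E$ stay in $\mathcal{C}^b(\mathcal{P})$ because $\mathcal{P}$, hence $\mathcal{C}^b(\mathcal{P})$, is closed under extensions). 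Combined with Proposition~\ref{zprodcomp} for the $\mathbb{T}_{ac}(\mathcal{P})$-action, and with the observation that $[K] \bullet [M]$ and $[M] \bullet [K]$ differ only by the scalar $|\Hom(M,K)|/|\Hom(K,M)|$ (so that $S$ is indeed an Ore set in $\mathcal{H}(\mathcal{C}^b(\mathcal{P}))$), this shows that $\mathcal{SDH}(\mathcal{P})$ --- which by the previous paragraph has underlying space $\mathbb{T}_{ac}(\mathcal{P}) \otimes_{\mathbb{A}_{ac}(\mathcal{P})} \mathcal{H}(\mathcal{C}^b(\mathcal{P}))$ and a multiplication extending both the Hall product on $\mathcal{H}(\mathcal{C}^b(\mathcal{P}))$ and the torus multiplication --- is nothing but $\mathcal{H}(\mathcal{C}^b(\mathcal{P}))[S^{-1}]$. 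The resulting isomorphism is $[M] \mapsto [M]$, $[K]^{-1} \mapsto [K]^{-1}$; surjectivity is clear, and injectivity follows by comparing $\mathbb{T}_{ac}(\mathcal{P})$-bases, Theorem~\ref{zfree} giving freeness on $\Iso(\mathcal{D}^b(\mathcal{P}))$ on the $\mathcal{SDH}$ side and the Ore tensor identity giving the same on the localization side. Composing with the isomorphism $I$ of (\ref{allproj}) then yields $\mathcal{SDH}(\mathcal{E}) \overset\sim\to \mathcal{H}(\mathcal{C}^b(\mathcal{P}))[S^{-1}]$, which is the second assertion.

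The main obstacle I expect is the careful identification of algebra structures in the second and third paragraphs: verifying that the $\mathbb{A}_{ac}(\mathcal{P})$-bimodule structure on $\mathcal{M}_2'(\mathcal{P})$ really is left/right multiplication inside $\mathcal{H}(\mathcal{C}^b(\mathcal{P}))$ (this reduces to the Euler-form computation $\langle K, M\rangle = |\Hom(K,M)|$ and $\langle M, K\rangle = |\Hom(M,K)|$, using projectivity and injectivity of acyclic complexes in the Frobenius category $\mathcal{C}^b(\mathcal{P})$), and that $\mathbb{T}_{ac}(\mathcal{P}) \otimes_{\mathbb{A}_{ac}(\mathcal{P})} \mathcal{H}(\mathcal{C}^b(\mathcal{P}))$ is genuinely the Ore localization --- a standard but slightly fiddly fact that uses that $\mathbb{A}_{ac}(\mathcal{P})$ is commutative up to scalars, so that finitely many elements admit a common denominator. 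Everything else is bookkeeping, given Corollary~\ref{projext} and Lemma~\ref{acprojdirsum}.
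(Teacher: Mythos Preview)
Your proposal is correct and follows the same line as the paper, which in fact gives only a one-sentence sketch preceding the proposition: since by Corollary~\ref{projext} extensions in $\mathcal{C}^b(\mathcal{P})$ agree with those in $\mathcal{D}^b(\mathcal{P})$, the $\diamond$-product reduces to the ordinary Hall product, and the second claim follows from the isomorphism~(\ref{allproj}). Your write-up simply fills in the details the paper leaves implicit---the Frobenius structure on $\mathcal{C}^b(\mathcal{P})$, the vacuity of relations~(\ref{Zrelations}), the identification of $\mathbb{A}_{ac}(\mathcal{P})$ with the Hall subalgebra on acyclics, and the Ore-localization bookkeeping---none of which the paper spells out.
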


\subsection{Hereditary case}

It turns out that if the category $\mathcal{E}$ is hereditary, i.e of global dimension 1, and has enough projectives, then the multiplication in our algebra is given by the same formula as the usual Ringel-Hall multiplication.

\begin{theorem} \label{hermult}
Assume that $\mathcal{E}$ is hereditary and has enough projectives. Then for $L, M \in \mathcal{C}^b(\mathcal{E}),$ the product $[L] \diamond [M]$ in the semi-derived Hall algebra is equal to the following sum:
$$[L] \diamond [M] = \sum\limits_{X \in \Iso(\mathcal{C}^b(\mathcal{E}))} \frac{|\Ext^1_{\mathcal{C}^b(\mathcal{E})} (L, M)_X|}{|\Hom(L, M)|} [X],$$
\end{theorem}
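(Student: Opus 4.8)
The plan is to reduce the general formula to the definition of $\diamond$ given in \eqref{diamonddef1} and then eliminate the correction term $\frac{1}{\left\langle K, L \right\rangle}[K]^{-1}$ by exploiting the hereditariness of $\mathcal{E}$. First I would recall that when $\mathcal{E}$ is hereditary, every object of $\mathcal{E}$ has projective dimension at most $1$, so by Corollary~\ref{acext} one gets $\Ext^p_{\mathcal{C}^b(\mathcal{E})}(M, L) \cong \Ext^p_{\mathcal{D}^b(\mathcal{E})}(M, L)$ for all $p > 1$ and all $L, M \in \mathcal{C}^b(\mathcal{E})$. This means that the only obstruction to using $L$ itself (with the trivial conflation $0 \rightarrowtail L \overset{\qis}{\twoheadrightarrow} L$) in the defining formula \eqref{diamonddef1} is a possible discrepancy in $\Ext^1$. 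So the heart of the argument is to analyze, for a given $L$, a conflation $K \rightarrowtail L' \overset{\qis}{\twoheadrightarrow} L$ with $L'$ satisfying \eqref{extpmult}, and compare the sum over $\Ext^1_{\mathcal{C}^b(\mathcal{E})}(L', M)$ with the sum over $\Ext^1_{\mathcal{C}^b(\mathcal{E})}(L, M)$.

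The key computational step is the long exact sequence in $\Ext^{\bullet}_{\mathcal{C}^b(\mathcal{E})}(-, M)$ associated to the conflation $K \rightarrowtail L' \twoheadrightarrow L$. Since $K$ is acyclic with components of projective dimension at most $1$, Corollary~\ref{achomfin} gives $\Ext^p_{\mathcal{C}^b(\mathcal{E})}(K, M) = 0$ for $p > 1$. Combining this with $\Ext^p_{\mathcal{C}^b(\mathcal{E})}(L', M) \cong \Ext^p_{\mathcal{D}^b(\mathcal{E})}(L', M)$ and the fact that $K$ is acyclic (so $\Ext^p_{\mathcal{D}^b(\mathcal{E})}(K, M) = 0$ for all $p$, whence $\Ext^p_{\mathcal{D}^b(\mathcal{E})}(L', M) \cong \Ext^p_{\mathcal{D}^b(\mathcal{E})}(L, M)$), one extracts that $\Ext^p_{\mathcal{C}^b(\mathcal{E})}(L, M) \cong \Ext^p_{\mathcal{D}^b(\mathcal{E})}(L, M)$ for all $p > 0$: that is, \emph{every} $L$ automatically satisfies \eqref{extpmult} in the hereditary case. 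Thus one may take $L' = L$ and $K = 0$ directly in \eqref{diamonddef1}, yielding
$$[L] \diamond [M] = \sum\limits_{E \in \Iso(\mathcal{C}^b(\mathcal{E}))} \frac{|\Ext^1_{\mathcal{C}^b(\mathcal{E})}(L, M)_E|}{|\Hom(L, M)|} [E],$$
which is the asserted formula.

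I expect the main obstacle to be the careful bookkeeping in the long exact sequence to justify that \eqref{extpmult} holds for every $L$ once $\mathcal{E}$ is hereditary. One must be sure that the two finiteness inputs---vanishing of high $\Ext$ from acyclic complexes of projectives (Corollary~\ref{achomfin}, via the length-$1$ projective resolution \eqref{projacres}) and the comparison isomorphism for $L'$---interact correctly at degree $1$: specifically that $\Ext^1_{\mathcal{C}^b(\mathcal{E})}(L, M) \to \Ext^1_{\mathcal{C}^b(\mathcal{E})}(L', M)$ is an isomorphism and that the connecting map from $\Ext^1_{\mathcal{C}^b(\mathcal{E})}(K, M)$ does not spoil this. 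Here one uses that $K$ is contractible (by (C1), acyclic complexes over the Krull--Schmidt exact category with the relevant finiteness are as in Lemma~\ref{acprojdirsum} after replacing by a projective resolution), so that $\Ext^1_{\mathcal{C}^b(\mathcal{E})}(K, M)$ and $\Hom$-level terms behave as for contractibles. Once this is settled, the remaining manipulation is immediate from the definition, and the identity with the Ringel--Hall product over $\mathcal{C}^b(\mathcal{E})$ follows, noting that the middle-term map $\mt$ realizes exactly the Hall structure constants $\frac{|\Ext^1_{\mathcal{C}^b(\mathcal{E})}(L,M)_E|}{|\Hom(L,M)|}$.
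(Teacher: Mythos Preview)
Your central claim—that in the hereditary case \emph{every} $L$ automatically satisfies \eqref{extpmult}, so one may simply take $L'=L$ and $K=0$—is false at $p=1$. Corollary~\ref{acext} only gives the comparison isomorphism for $p>d=1$, and the long exact sequence argument you sketch does not close the gap. Concretely, take $\mathcal{E}=\rep_k(A_2)$ with arrow $1\to 2$, let $L=S_1$ as a stalk complex in degree~$0$, and let $M=K$ be the contractible complex $P_2\xrightarrow{1}P_2$ (in degrees $-1,0$). Then $\Ext^1_{\mathcal{D}^b(\mathcal{E})}(L,M)=0$ since $M$ is acyclic, but the conflation $K\rightarrowtail L'\twoheadrightarrow L$ (with $L'=(P_2\to P_1)$) is a nonsplit extension in $\mathcal{C}^b(\mathcal{E})$, so $\Ext^1_{\mathcal{C}^b(\mathcal{E})}(L,M)\neq 0$. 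The point you are missing is that although $K$ is contractible and hence $\Ext^1_{\mathcal{C}^b(\mathcal{E})}(K,M)=0$, the group $\Hom_{\mathcal{C}^b(\mathcal{E})}(K,M)$ is generally nonzero and the connecting map $\Hom_{\mathcal{C}^b(\mathcal{E})}(K,M)\to\Ext^1_{\mathcal{C}^b(\mathcal{E})}(L,M)$ need not vanish. Your appeal to Lemma~\ref{acprojdirsum} does not help: contractibility of $K$ only kills homotopy-level morphisms, not chain-level ones.

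The paper's proof does not attempt to make $L$ itself satisfy \eqref{extpmult}. Instead it fixes $L'\in\mathcal{C}^b(\mathcal{P})$, uses the long exact sequence to see that $\Ext^1_{\mathcal{C}^b(\mathcal{E})}(L,M)\twoheadrightarrow\Ext^1_{\mathcal{C}^b(\mathcal{E})}(L',M)$ is only a \emph{surjection}, and then carefully tracks how each class $\varepsilon'\in\Ext^1(L',M)$ is hit by exactly $|\Ext^1(L,M)|/|\Ext^1(L',M)|$ preimages $\varepsilon$, with $[\mt(\varepsilon')]=\langle K,M\rangle\langle K,L\rangle\,[K]\diamond[\mt(\varepsilon)]$ in $\mathcal{M}(\mathcal{E})$. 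The correction factor $\frac{1}{\langle K,L\rangle}[K]^{-1}$ and the Hom-ratios then cancel precisely via the multiplicativity of the long exact sequence \eqref{herles}. In other words, the discrepancy at $\Ext^1$ is not avoided but rather absorbed into the quantum-torus bookkeeping—this is the step your argument skips.
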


\begin{proof}
We can choose a conflation 
$K \rightarrowtail L' \stackrel{\qis}{\twoheadrightarrow} L$ 
with $K \in \mathcal{C}_{ac}^b(\mathcal{P}), L' \in \mathcal{C}^b(\mathcal{P})$ to write down the product $[L] \diamond [M]$ (if $L'$ has projective components, then, by the heredity assumption, $K$ has projective components as well). By Proposition \ref{projext}, we have
$$\Ext^p_{\mathcal{C}^b(\mathcal{E})} (K, M) = \Ext^p_{\mathcal{D}^b(\mathcal{E})} (K, M) = 0, \quad \forall p > 0,$$
hence
$\left\langle K, M \right\rangle = |\Hom(K, M)|.$
Consider the part of the long exact sequence of extensions in the category $\mathcal{C}^b(\mathcal{E}):$

\begin{equation} \label{herles}
0 \to \Hom(L, M) \to \Hom(L', M) \to \Hom(K, M) \to \atop \to \Ext^1_{\mathcal{C}^b(\mathcal{E})}(L, M) \to \Ext^1_{\mathcal{C}^b(\mathcal{E})}(L', M) \to \Ext^1_{\mathcal{C}^b(\mathcal{E})}(K, M) = 0.
\end{equation}
We find out that $\Ext^1_{\mathcal{C}^b(\mathcal{E})}(L, M)$ surjects onto $\Ext^1_{\mathcal{C}^b(\mathcal{E})}(L', M) = \Ext^1_{\mathcal{D}^b(\mathcal{E})}(L, M).$ That means that for each extension $\varepsilon \in \Ext^1_{\mathcal{C}^b(\mathcal{E})}(L, M)$ represented by 
$$M \rightarrowtail X \twoheadrightarrow L$$
there exists a unique extension $\varepsilon' \in \Ext^1_{\mathcal{C}^b(\mathcal{E})}(L', M)$ represented by
$$M \rightarrowtail E \twoheadrightarrow L'$$
such that the following diagram commutes:
\begin{equation}  
\xymatrix@R=0.6cm{
*+++{0} \ar@{>->}[d] \ar@{=}[r] &*+++{K} \ar@{>->}[d] \ar@{=}[r] & *+++{K} \ar@{>->}[d]\\
*+++{M} \ar@{=}[d] \ar@{>->}[r]  & *+++{E} \ar@{->>}[d] \ar@{->>}[r]  & *+++{L'} \ar@{->>}[d] \\
*+++{M}  \ar@{>->}[r]  & *+++{X} \ar@{->>}[r]  & *+++{L}, \\
}
\end{equation}
and for each $\varepsilon'$ there are exactly $\frac{|\Ext^1_{\mathcal{C}^b(\mathcal{E})}(L, M)|}{|\Ext^1_{\mathcal{C}^b(\mathcal{E})}(L', M)|}$ of $\varepsilon$ corresponding to $\varepsilon'.$ We see that for each such pair $(\varepsilon, \varepsilon'),$ we have
$$[\mt(\varepsilon')] = \left\langle K, M \right\rangle \left\langle K, L \right\rangle [K] \diamond [\mt(\varepsilon)],$$
cf. proofs of Proposition \ref{welldef} and Theorem \ref{zassoc}. We get
$$[L] \diamond [M] = \frac{1}{\left\langle K, L \right\rangle} [K]^{-1} \diamond \sum\limits_{\varepsilon' \in \Ext^1_{\mathcal{C}^b(\mathcal{E})}(L',M)}  \frac{[\mt(\varepsilon')]}{|\Hom(L', M)|} =$$
$$= \frac{1}{\left\langle K, L \right\rangle} [K]^{-1} \frac{|\Ext^1_{\mathcal{C}^b(\mathcal{E})}(L', M)|}{|\Ext^1_{\mathcal{C}^b(\mathcal{E})}(L, M)|} \diamond \sum\limits_{\varepsilon \in \Ext^1_{\mathcal{C}^b(\mathcal{E})}(L, M)} \frac{\left\langle K, M \right\rangle \left\langle K, L \right\rangle [K] \diamond [\mt(\varepsilon)]}{|\Hom(L', M)|} =$$
$$= (|\Hom(K, M)| \frac{|\Ext^1_{\mathcal{C}^b(\mathcal{E})}(L', M)|}{|\Ext^1_{\mathcal{C}^b(\mathcal{E})}(L, M)|} \frac{|\Hom(L, M)|}{|\Hom(L', M)|}) \sum\limits_{\varepsilon \in \Ext^1_{\mathcal{C}^b(\mathcal{E})}(L, M)} \frac{[\mt(\varepsilon)]}{|\Hom(L, M)|} =$$
$$= (|\Hom(K, M)| \frac{|\Ext^1_{\mathcal{C}^b(\mathcal{E})}(L', M)|}{|\Ext^1_{\mathcal{C}^b(\mathcal{E})}(L, M)|} \frac{|\Hom(L, M)|}{|\Hom(L', M)|}) \sum\limits_{X \in \Iso(\mathcal{C}^b(\mathcal{E}))} \frac{|\Ext^1_{\mathcal{C}^b(\mathcal{E})}(L, M)_X|}{|\Hom(L, M)|} [X].$$

It follows from the long exact sequence (\ref{herles}) that
$$|\Hom(K, M)| \frac{|\Ext^1_{\mathcal{C}^b(\mathcal{E})}(L', M)|}{|\Ext^1_{\mathcal{C}^b(\mathcal{E})}(L, M)|} \frac{|\Hom(L, M)|}{|\Hom(L', M)|} = 1,$$
and we find out the desired formula.
\end{proof}

\begin{corollary} \label{herquotloc}
Assume that $\mathcal{E}$ is hereditary and has enough projectives. Then there exists an algebra homomorphism 
$$p: \mathcal{H}(\mathcal{C}^b(\mathcal{E})) \to \mathcal{SDH(E)}.$$
The homomorphism $p$ induces an algebra isomorphism
\begin{equation}
(\mathcal{H}(\mathcal{C}^b(\mathcal{E}))/I)[S^{-1}] \overset\sim\to \mathcal{SDH(E)},
\end{equation}
where $I$ is the two-sided ideal generated by all differences
$[L] - [K\oplus M]$, where $K \rightarrowtail L \twoheadrightarrow M$ is a conflation in $\mathcal{C}^b(\mathcal{E})$ with acyclic $K,$ and $S$ is the set of all classes $[K]$ of acyclic complexes.
\end{corollary}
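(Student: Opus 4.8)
The plan is to take $p$ to be the tautological ``identity on isomorphism classes'' map and then to recognise $\mathcal{SDH}(\mathcal{E})$ as precisely the indicated localised quotient. Define $p\colon\mathcal{H}(\mathcal{C}^b(\mathcal{E}))\to\mathcal{SDH}(\mathcal{E})$ on the basis by $[M]\mapsto[M]$ and extend $\mathbb{Q}$-linearly. By the definition of the Hall product of $\mathcal{C}^b(\mathcal{E})$ (cf.\ Section~2), $[L]\diamond[M]=\sum_{X}\frac{|\Ext^1_{\mathcal{C}^b(\mathcal{E})}(L,M)_X|}{|\Hom(L,M)|}[X]$, which is verbatim the formula of Theorem~\ref{hermult} for the product of $[L]$ and $[M]$ in $\mathcal{SDH}(\mathcal{E})$; hence $p$ is a homomorphism of unital algebras. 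For a conflation $K\rightarrowtail L\twoheadrightarrow M$ with $K$ acyclic, relation~(\ref{Zrelations}) gives $[L]=[K\oplus M]$ in $\mathcal{SDH}(\mathcal{E})$, so $p$ annihilates every generator of $I$; as $\ker p$ is a two-sided ideal, $I\subseteq\ker p$. Finally $p([K])=[K]\in\mathbb{T}_{ac}(\mathcal{E})\subseteq\mathcal{SDH}(\mathcal{E})$ is invertible for every acyclic $K$.

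Next I would establish the key lemma: modulo $I$ and after inverting $S$, for acyclic $K$ and arbitrary $N\in\mathcal{C}^b(\mathcal{E})$ one has $[K\oplus N]=u\diamond[N]$ for some unit $u\in\mathbb{T}_{ac}(\mathcal{E})$; equivalently, left and right multiplication by $[K]$ in $\mathcal{H}(\mathcal{C}^b(\mathcal{E}))/I$ coincide up to an invertible scalar. This is proved with devices already used in the paper: pick a quasi-isomorphism $P_0\stackrel{\qis}{\twoheadrightarrow}K$ with $P_0\in\mathcal{C}^b(\mathcal{P})$ (Lemma~\ref{projres}), so that $P_0$ is a contractible complex of projectives (Lemma~\ref{acprojdirsum}) with $\Ext^1_{\mathcal{C}^b(\mathcal{E})}(P_0,N)=\Ext^1_{\mathcal{D}^b(\mathcal{E})}(P_0,N)=0$ (Corollary~\ref{projext}) and acyclic projective fibre (using that $\mathcal{E}$ is hereditary); pulling back an extension of $N$ by $K$ along $P_0\twoheadrightarrow K$ splits it off against $P_0$, and comparing classes via~(\ref{Zrelations}) gives the claim. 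In particular $S$ is a two-sided Ore set in $\mathcal{H}(\mathcal{C}^b(\mathcal{E}))/I$, $p$ inverts $S$, and by the universal property of Ore localisation $p$ descends to an algebra homomorphism $\phi\colon(\mathcal{H}(\mathcal{C}^b(\mathcal{E}))/I)[S^{-1}]\to\mathcal{SDH}(\mathcal{E})$.

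It remains to see that $\phi$ is bijective. Surjectivity is clear, since $\mathcal{SDH}(\mathcal{E})=\mathcal{M}(\mathcal{E})$ is generated as an algebra by the classes $[M]$ (which span $\mathcal{M}_2(\mathcal{E})$) together with the $[K]^{\pm1}$, all lying in the image of $\phi$. For injectivity I combine the lemma with the argument of Theorem~\ref{zfree}: a quasi-isomorphism $M\simeq M'$ is a zig-zag of conflations with acyclic kernel, so in $(\mathcal{H}(\mathcal{C}^b(\mathcal{E}))/I)[S^{-1}]$ one gets $[M']\in\mathbb{T}_{ac}(\mathcal{E})\diamond[M]$. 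Hence, fixing representatives $M_\alpha$ of the classes $\alpha\in\Iso(\mathcal{D}^b(\mathcal{E}))$, there is a surjection of right $\mathbb{T}_{ac}(\mathcal{E})$-modules $\bigoplus_{\alpha}\mathbb{T}_{ac}(\mathcal{E})\twoheadrightarrow(\mathcal{H}(\mathcal{C}^b(\mathcal{E}))/I)[S^{-1}]$ carrying the $\alpha$-th generator to $[M_\alpha]$. Its composition with $\phi$ sends that generator to $[M_\alpha]\in\mathcal{SDH}(\mathcal{E})$, i.e.\ to a basis element of the free $\mathbb{T}_{ac}(\mathcal{E})$-module $\mathcal{SDH}(\mathcal{E})$ (Theorem~\ref{zfree}), so this composition is an isomorphism. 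A surjection whose post-composition with $\phi$ is an isomorphism is itself an isomorphism and forces $\phi$ to be one as well.

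I expect the main obstacle to be the lemma in the second step. The ideal $I$ is built from conflations with acyclic \emph{sub}object, whereas controlling multiplication by an acyclic class unavoidably involves conflations with acyclic \emph{quotient}; reconciling the two is exactly where the projective-resolution-of-acyclics machinery and the hereditariness of $\mathcal{E}$ are needed. Granting that lemma, the identification of $\mathcal{SDH}(\mathcal{E})$ with $(\mathcal{H}(\mathcal{C}^b(\mathcal{E}))/I)[S^{-1}]$ is formal, using Theorem~\ref{hermult} for the multiplication and Theorem~\ref{zfree} for the module structure.
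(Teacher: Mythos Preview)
Your proposal is correct and follows the line the paper intends: the corollary is stated without proof, as an immediate consequence of Theorem~\ref{hermult} (which makes $p$ an algebra homomorphism) together with the very construction of $\mathcal{SDH}(\mathcal{E})$ in Section~5 and the freeness of Theorem~\ref{zfree}. Your identification of the one genuinely non-formal point --- that conflations with acyclic \emph{quotient} must be reduced, via a length-one projective resolution of the acyclic and the hereditariness of $\mathcal{E}$, to the relations in $I$ coming from acyclic \emph{subobjects} --- is exactly the content that the word ``Corollary'' is hiding.
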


The following known result is useful for us to give a set of generators of $\mathcal{SDH(A)}:$

\begin{lemma} \label{herderdirstalk}
Suppose that $\mathcal{A}$ is a hereditary abelian category. Then each object in $\mathcal{C}^b(\mathcal{A})$ is
quasi-isomorphic to the direct sum of its homology objects, where $H^i$ is concentrated in the degree $i.$ Equivalently, each object in $\mathcal{D}^b(\mathcal{A})$ is isomorphic to a direct sum of indecomposable stalk complexes.
\end{lemma}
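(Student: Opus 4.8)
The plan is to establish the first formulation; the second then follows at once, since every object of $\mathcal{A}$ is a finite direct sum of indecomposables (automatic here, as $\mathcal{A}$ is Krull--Schmidt under the standing hypotheses (C1)--(C2)), and an indecomposable stalk complex is exactly an indecomposable object of $\mathcal{A}$ placed in a single degree. So fix $X \in \mathcal{C}^b(\mathcal{A})$ and regard it as an object of $\mathcal{D}^b(\mathcal{A})$; write $H^i := H^i(X)$, and note that only finitely many $H^i$ are nonzero, say those with $a \le i \le b$. I would argue by induction on $b-a$. If $b = a$ there is nothing to prove, since $X$ then has homology concentrated in degree $a$ and is therefore quasi-isomorphic to the stalk complex $H^a[-a]$ (pass to the homology via the truncation functors).

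For the inductive step, I would use the canonical truncation triangle in $\mathcal{D}^b(\mathcal{A})$,
$$\tau_{\le a} X \longrightarrow X \longrightarrow \tau_{\ge a+1} X \overset{\delta}{\longrightarrow} (\tau_{\le a} X)[1].$$
Since $H^i(X) = 0$ for $i < a$, the complex $\tau_{\le a} X$ has homology concentrated in degree $a$, so $\tau_{\le a} X \cong H^a[-a]$; and $\tau_{\ge a+1} X$ has homology in degrees $a+1, \dots, b$, so by the induction hypothesis $\tau_{\ge a+1} X \cong \bigoplus_{i=a+1}^{b} H^i[-i]$. Thus it suffices to show that the connecting morphism $\delta$ is zero: the triangle then splits and yields $X \cong \bigoplus_{i=a}^{b} H^i[-i]$, as desired.

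The vanishing of $\delta$ is the only place heredity enters. Using the standard identification $\Hom_{\mathcal{D}^b(\mathcal{A})}(A[-i], B[-j]) \cong \Ext^{\,i-j}_{\mathcal{A}}(A, B)$ for $A, B \in \mathcal{A}$, one gets
$$\delta \in \Hom_{\mathcal{D}^b(\mathcal{A})}\!\Big(\bigoplus_{i=a+1}^{b} H^i[-i],\ H^a[-a][1]\Big) \cong \bigoplus_{i=a+1}^{b} \Ext^{\,i-a+1}_{\mathcal{A}}(H^i, H^a).$$
For every index $i$ in the sum we have $i - a + 1 \ge 2$, so each summand vanishes because $\mathcal{A}$ has global dimension $1$. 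Hence $\delta = 0$ and the induction closes.

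I do not anticipate a genuine obstacle: the argument is entirely formal once one has the canonical truncation triangles and the $\Hom$--$\Ext$ dictionary in $\mathcal{D}^b(\mathcal{A})$, both standard for any abelian category. The least routine points to spell out are that $\tau_{\le a}$ of a complex with homology concentrated in degrees $\ge a$ is quasi-isomorphic to a stalk complex, and that a distinguished triangle whose connecting morphism is zero splits; everything else reduces to the single use of $\Ext^{\ge 2}_{\mathcal{A}} = 0$.
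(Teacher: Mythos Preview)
Your argument is correct and is the standard proof of this well-known fact: induct on the homological amplitude using the canonical truncation triangle, then kill the connecting morphism via $\Ext^{\ge 2}_{\mathcal{A}}=0$. The paper does not give its own proof but simply refers to \cite{Kap}; your approach is precisely what one finds there.
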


The proof can be found, e.g., in \cite{Kap}. Let us denote by $u_{A, m}$ a stalk complex with non-zero component $A,$ concentrated in the degree $m.$ Let us also define the complexes $v_{A, m}, A \in \mathcal{A},$ by
$$v_{A, m} := \ldots \to 0 \to 0 \to A  \xrightarrow[]{1_A} A \to 0 \to 0 \to \ldots,$$
where $A$ sits in degrees $m$ and $(m + 1).$  For any class $\alpha \in K_0(\mathcal{A})$ and for any integer $m,$ let us define the class $v_{\alpha, m}$ as the product 
$$[v_{A, m}] \diamond [v_{B, m}]^{-1},$$
for any $A, B \in \mathcal{A}$ such that $\alpha = A - B$ in the Grothendieck group $K_0(\mathcal{A}).$

From now on up to the end of the section, we suppose that $\mathcal{A}$ is a hereditary abelian category with enough projectives, satisfying all the conditions (C1)-(C3).  

\begin{proposition} \label{genset}
$\mathcal{SDH(A)}$ is generated by the classes of complexes $u_{A, m}$ and the classes $v_{\alpha, m}.$
\end{proposition}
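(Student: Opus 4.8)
The plan is to use the fact, established earlier, that $\mathcal{SDH}(\mathcal{A})$ is a free module over the quantum torus $\mathbb{T}_{ac}(\mathcal{A})$ with a basis indexed by $\Iso(\mathcal{D}^b(\mathcal{A}))$, and to combine this with Lemma~\ref{herderdirstalk}. First I would observe that since $\mathcal{A}$ is hereditary, every object of $\mathcal{D}^b(\mathcal{A})$ is a direct sum of indecomposable stalk complexes, hence every quasi-isomorphism class in $\mathcal{C}^b(\mathcal{A})$ is represented by a direct sum $\bigoplus_m u_{H^m, m}$ of stalk complexes. So the classes $[\bigoplus_m u_{H^m,m}]$, as $H^\bullet$ ranges over all such families, together with the torus elements $[K]^{\pm 1}$, span $\mathcal{SDH}(\mathcal{A})$. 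It then suffices to show (a) that each such direct sum $[\bigoplus_m u_{H^m,m}]$ lies in the subalgebra generated by the $[u_{A,m}]$ and the $[v_{\alpha,m}]$, and (b) that each generator $[K]^{-1}$ of $\mathbb{T}_{ac}(\mathcal{A})$ lies in that subalgebra.

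For step (a), I would use Theorem~\ref{hermult}: in the hereditary case the multiplication in $\mathcal{SDH}(\mathcal{A})$ is given by the honest Ringel--Hall formula in $\mathcal{C}^b(\mathcal{A})$. Ordering the degrees, $u_{A,m}$ and $u_{B,n}$ with $m > n$ have no extensions in a direction that produces new complexes (or one can argue that the leading term of the product $[u_{A,m}]\diamond\cdots\diamond$ down the degrees, taken in decreasing order of $m$, is exactly $[\bigoplus u_{H^m,m}]$ with structure constant a power of $q$, all other terms having strictly ``smaller'' associated graded objects in a suitable filtration). A triangularity/induction argument on the total dimension of $H^\bullet$ then shows each $[\bigoplus_m u_{H^m,m}]$ is a polynomial in the $[u_{A,m}]$. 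This is the standard argument that stalk classes generate, adapted degree by degree; the one subtlety is keeping track of the nonzero (invertible) scalar coefficients, which is routine since everything happens over $\mathbb{Q}$.

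For step (b), I would invoke Lemma~\ref{acprojdirsum} (which applies since $\mathcal{A}$ has enough projectives and finite projective dimension — heredity gives projective dimension $\le 1$): every bounded acyclic complex is, up to isomorphism, of the form $K_P \oplus K_Q^*$ with $P, Q$ projective. It therefore suffices to express $[K_P]^{\pm1}$ and $[K_P^*]^{\pm1}$ in terms of the generators for $P$ projective. Now $K_P = v_{P, 0}$ (up to the conventions on which degrees are used), so $[K_P]$ is already among the $[v_{\alpha,m}]$-type classes, and $K_P^*$ is its image under the shift/duality, hence again of the form $v_{P,m}$ for a suitable $m$; and by the very definition of $v_{\alpha,m}$ as $[v_{A,m}]\diamond[v_{B,m}]^{-1}$, the inverses $[v_{A,m}]^{-1}$ lie in the generated subalgebra as well (write $[v_{A,m}]^{-1} = [v_{0,m}]\diamond[v_{A,m}]^{-1} = v_{-[A],m}$, using that $[0]$ is the unit). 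Thus all of $\mathbb{T}_{ac}(\mathcal{A})$ is captured.

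Combining (a) and (b): any element of $\mathcal{SDH}(\mathcal{A})$ is a $\mathbb{T}_{ac}(\mathcal{A})$-linear combination of quasi-isomorphism-class representatives $[\bigoplus_m u_{H^m,m}]$, the former lying in the subalgebra generated by the $[v_{\alpha,m}]$ by (b) and the latter by (a); hence the $[u_{A,m}]$ and $[v_{\alpha,m}]$ generate. The main obstacle I anticipate is step (a) — specifically, verifying cleanly that the product of stalk classes taken in the right order has the desired direct sum as its ``leading term'' with an invertible coefficient, and setting up the induction (on homology dimension vectors) so that the error terms are genuinely lower; this requires a careful but standard analysis of which extensions of stalk complexes in $\mathcal{C}^b(\mathcal{A})$ can occur and of the resulting Hall numbers.
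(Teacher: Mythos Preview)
Your overall strategy matches the paper's: use freeness over $\mathbb{T}_{ac}(\mathcal{A})$ together with Lemma~\ref{herderdirstalk}, then handle the torus and the stalk-sum representatives separately. Two remarks, one cosmetic and one a genuine gap.

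For step~(a), you are working harder than necessary. With the product taken in \emph{decreasing} degree order, one has $\Ext^1_{\mathcal{C}^b(\mathcal{A})}(u_{B,n},u_{A,m})=0$ for all $n>m$: in any conflation $u_{A,m}\rightarrowtail E\twoheadrightarrow u_{B,n}$ the chain-map condition on the inclusion forces the only possibly nonzero differential of $E$ to vanish. Since also $\Hom(u_{B,n},u_{A,m})=0$, Theorem~\ref{hermult} gives $[u_{B,n}]\diamond[u_{A,m}]=[u_{B,n}\oplus u_{A,m}]$ on the nose, and by induction the product $[A_{n_2}]\diamond\cdots\diamond[A_{n_1}]$ equals the direct sum exactly. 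No triangularity or filtration argument is needed; this is what the paper's ``similarly, we prove'' is pointing at.

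Step~(b), however, does not go through as written. Lemma~\ref{acprojdirsum} is a statement about \emph{$\mathbb{Z}/2$-graded} acyclic complexes \emph{with projective components}; the objects $K_P$ and $K_P^*$ are $\mathbb{Z}/2$-graded, and a general bounded acyclic complex over $\mathcal{A}$ is neither two-periodic nor built from projectives. So the lemma simply does not apply here, and your identification ``$K_P=v_{P,0}$'' is a type mismatch. The correct replacement is the intelligent-truncation argument already carried out in the proof of Lemma~\ref{k0inj}: every bounded acyclic complex is an iterated extension of complexes of the form $v_{A,m}$, so the classes $[v_{A,m}]$ generate $K_0(\mathcal{C}^b_{ac}(\mathcal{A}))$, and hence the $v_{\alpha,m}$ generate the twisted group algebra $\mathbb{T}_{ac}(\mathcal{A})$. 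This is exactly how the paper handles the torus, and once you swap in this argument your proof is complete.
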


\begin{proof}
From Theorem \ref{zfree}, it follows that $\mathcal{SDH(A)}$ has a basis over $\mathbb{Q},$ each of whose elements can be presented as the product of an element of the quantum torus by the class of an object in $\mathcal{D}^b(\mathcal{A}).$ It is easy to verify that for any acyclic bounded complex $M$ and any integer $i,$ we have $$[M] = a_i [\tau_{\geq i} M] \diamond [\tau_{< i} M],$$
for a certain $a_i \in \mathbb{Q}.$ Recall that $\tau_{\geq i} M, \tau_{< i} M$ are the intelligent truncations of $M.$ By induction, we prove that $\mathbb{A}_{ac}(\mathcal{A})$ is generated by the classes of the complexes $v_{A,m}.$ Then it is clear that $\mathbb{T}_{ac}(\mathcal{A})$ is generated by the  $v_{\alpha, m}.$ Similarly, we prove that the class of any direct sum  $\bigoplus\limits_{n_1 \leq i \leq n_2} A_i,$ where $A_i$ is a stalk complex concentrated in the degree $i,$ equals, up to a rational multiplicative coefficient, the product
$$[A_{n_1}] \diamond [A_{n_1 + 1}] \diamond \cdots \diamond [A_{n_2}],$$
i.e. a product of classes of $u_{A_i, i}.$
By Lemma \ref{herderdirstalk}, any quasi-isomorphism class in $\mathcal{C}^b(\mathcal{A})$ contains such a direct sum.
\end{proof}

\begin{lemma} \label{i_m}
There is a family of injective ring homomorphisms
$$I_m : \mathcal{H}(\mathcal{A}) \hookrightarrow \mathcal{SDH(A)}, \quad [A] \mapsto [u_{A,m}].$$
\end{lemma}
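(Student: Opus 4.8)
The plan is to deduce the homomorphism property almost directly from Theorem~\ref{hermult}, after reducing the relevant $\Hom$ and $\Ext^1$ computations in $\mathcal{C}^b(\mathcal{A})$ to computations in $\mathcal{A}$, and then to deduce injectivity from the freeness statement in Theorem~\ref{zfree}. First I would record that for stalk complexes concentrated in the same degree $m$ one has $\Hom_{\mathcal{C}^b(\mathcal{A})}(u_{A,m},u_{C,m}) = \Hom_{\mathcal{A}}(A,C)$, and that any conflation $u_{C,m} \rightarrowtail E \twoheadrightarrow u_{A,m}$ for the component-wise exact structure forces $E^i = 0$ for $i \neq m$ and hence $E = u_{E^m,m}$ with $C \rightarrowtail E^m \twoheadrightarrow A$ a conflation in $\mathcal{A}$. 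This yields, for every $B \in \Iso(\mathcal{A})$, a bijection $\Ext^1_{\mathcal{C}^b(\mathcal{A})}(u_{A,m},u_{C,m})_{u_{B,m}} \cong \Ext^1_{\mathcal{A}}(A,C)_B$, and in particular shows that no extension of $u_{A,m}$ by $u_{C,m}$ has a middle term outside the image of $u_{-,m}$.

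Next, since $\mathcal{A}$ is hereditary abelian with enough projectives, Theorem~\ref{hermult} applies to $\mathcal{E} = \mathcal{A}$ and gives
$$[u_{A,m}] \diamond [u_{C,m}] = \sum_{X \in \Iso(\mathcal{C}^b(\mathcal{A}))} \frac{|\Ext^1_{\mathcal{C}^b(\mathcal{A})}(u_{A,m},u_{C,m})_X|}{|\Hom(u_{A,m},u_{C,m})|}\,[X].$$
Substituting the identifications from the previous paragraph, the right-hand side collapses to $\sum_{B \in \Iso(\mathcal{A})} \frac{|\Ext^1_{\mathcal{A}}(A,C)_B|}{|\Hom_{\mathcal{A}}(A,C)|}\,[u_{B,m}] = I_m([A] \diamond [C])$. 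Together with $I_m([0]) = [u_{0,m}] = [0]$, the unit of $\mathcal{SDH}(\mathcal{A})$, this shows that $I_m$ is a homomorphism of unital rings for each $m$.

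Finally, for injectivity I would invoke Theorem~\ref{zfree}: $\mathcal{SDH}(\mathcal{A}) = \mathcal{M}(\mathcal{A})$ decomposes as $\bigoplus_{\alpha \in \Iso(\mathcal{D}^b(\mathcal{A}))} \mathcal{M}_{\alpha}(\mathcal{A})$ with each summand free of rank one over $\mathbb{T}_{ac}(\mathcal{A})$, so each $[u_{A,m}]$ is a nonzero element of the summand indexed by the class of $u_{A,m}$ in $\mathcal{D}^b(\mathcal{A})$. Since $u_{A,m} \cong u_{A',m}$ in $\mathcal{D}^b(\mathcal{A})$ if and only if $A \cong A'$ in $\mathcal{A}$ (compare homology in degree $m$), the images of the distinct $\mathbb{Q}$-basis elements $[A]$ of $\mathcal{H}(\mathcal{A})$ are nonzero vectors lying in pairwise distinct direct summands of $\mathcal{M}(\mathcal{A})$, hence form a $\mathbb{Q}$-linearly independent family; therefore $I_m$ is injective. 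I do not expect a genuine obstacle here; the only point requiring a little care is the claim that an extension of stalk complexes in a fixed degree cannot acquire a middle term that is a nontrivial complex, which is precisely where the component-wise exact structure on $\mathcal{C}^b(\mathcal{A})$ is used.
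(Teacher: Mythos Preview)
Your proof is correct and follows essentially the same route as the paper: both reduce the homomorphism property to the identification of $\Hom$ and $\Ext^1$ of degree-$m$ stalk complexes with those in $\mathcal{A}$, combined with the Ringel--Hall form of the product supplied by Theorem~\ref{hermult}. Your write-up is in fact more complete than the paper's, which leaves injectivity entirely implicit; your argument via Theorem~\ref{zfree} and the decomposition $\mathcal{M}(\mathcal{A}) = \bigoplus_{\alpha} \mathcal{M}_{\alpha}(\mathcal{A})$ is the natural way to make that step precise.
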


\begin{proof}
It is enough to prove that, for any $A, B \in \mathcal{A},$ we have
$$\sum\limits_{\mathcal{C}} \frac{|\Ext^1_{\mathcal{A}}(A, B)_C|}{|\Hom_{\mathcal{A}}(A, B)|} u_{C, m} = \sum\limits_{\mathcal{X}} \frac{|\Ext^1_{\mathcal{C}^b(\mathcal{A})}(u_{A, m}, u_{B, m})_{X}|}{|\Hom_{\mathcal{C}^b(\mathcal{A})}(u_{A, m}, u_{B, m})|} X,$$
which is easy to check.
\end{proof}

Let us write down some simple relations on generators in $\mathcal{SDH(A)}.$ 

\begin{lemma} \label{u_a,u_b}
Assume that for $A, B \in \mathcal{A},$ we have 
$\Hom(A, B) = 0.$
Then in $\mathcal{SDH(A)}$ we have
$$[u_{A,m}] \diamond [u_{B,n}] = [u_{A,m} \oplus u_{B,n}],$$
for any $m \neq n \in \mathbb{Z}.$
\end{lemma}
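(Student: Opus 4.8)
The plan is to reduce to an ordinary Ringel--Hall computation via Theorem~\ref{hermult}. Since $\mathcal{A}$ is hereditary with enough projectives, that theorem applies with $\mathcal{E}=\mathcal{A}$ and gives
$$[u_{A,m}] \diamond [u_{B,n}] \;=\; \sum_{X \in \Iso(\mathcal{C}^b(\mathcal{A}))} \frac{|\Ext^1_{\mathcal{C}^b(\mathcal{A})}(u_{A,m}, u_{B,n})_X|}{|\Hom_{\mathcal{C}^b(\mathcal{A})}(u_{A,m}, u_{B,n})|}\, [X].$$
So it is enough to understand morphisms and $1$-extensions between the two stalk complexes inside the exact category $\mathcal{C}^b(\mathcal{A})$. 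Because $m \neq n$, a chain map $u_{A,m} \to u_{B,n}$ is zero in every degree, so $\Hom_{\mathcal{C}^b(\mathcal{A})}(u_{A,m}, u_{B,n}) = 0$ and the denominator above is $1$.

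Next I would classify the conflations $u_{B,n} \rightarrowtail E \twoheadrightarrow u_{A,m}$ in $\mathcal{C}^b(\mathcal{A})$ up to equivalence. Reading such a sequence off degreewise forces $E$ to be concentrated in degrees $m$ and $n$, with $E^m \cong A$, $E^n \cong B$ and all other components zero; all differentials of $E$ then vanish automatically except possibly the one between degrees $m$ and $n$, which can be nonzero only when $|m-n| = 1$. If $|m-n| \geq 2$ there is no such differential, and if $n = m-1$ the requirement that $u_{B,n}$ be a subcomplex of $E$ forces the differential $B = E^{m-1} \to E^m = A$ to vanish; in both of these cases the only extension is the split one, so $\Ext^1_{\mathcal{C}^b(\mathcal{A})}(u_{A,m}, u_{B,n}) = 0$. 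In the remaining case $n = m+1$ an extension is determined precisely by the differential $A = E^m \to E^{m+1} = B$, equivalent extensions corresponding to equal differentials, so $\Ext^1_{\mathcal{C}^b(\mathcal{A})}(u_{A,m}, u_{B,m+1}) \cong \Hom_{\mathcal{A}}(A, B)$, which vanishes by hypothesis. Hence for every $m \neq n$ we get $\Ext^1_{\mathcal{C}^b(\mathcal{A})}(u_{A,m}, u_{B,n}) = 0$, the unique summand above is $X = u_{A,m} \oplus u_{B,n}$ with coefficient $1$, and the asserted identity follows.

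The single genuine computation is the identification $\Ext^1_{\mathcal{C}^b(\mathcal{A})}(u_{A,m}, u_{B,m+1}) \cong \Hom_{\mathcal{A}}(A, B)$, together with the observation that the hypothesis $\Hom(A,B)=0$ is used only there (the cases $n=m-1$ and $|m-n|\geq 2$ needing no hypothesis at all); this is elementary but is the crux of the argument. A more hands-on alternative, bypassing Theorem~\ref{hermult}, would feed a two-term projective resolution $L' = (P^1 \to P^0)$ of $A$ placed in degrees $m-1,m$ into the defining formula~(\ref{diamonddef2}), compute $\Ext^p_{\mathcal{C}^b(\mathcal{A})}(L', u_{B,n}) = \Ext^p_{\mathcal{D}^b(\mathcal{A})}(u_{A,m}, u_{B,n}) = \Ext^{p+n-m}_{\mathcal{A}}(A, B)$ via Corollary~\ref{projext}, and then cancel the Euler-form and $\Hom$ factors using the long exact sequence of the conflation $K \rightarrowtail L' \twoheadrightarrow u_{A,m}$; in that formulation the hypothesis $\Hom(A,B)=0$ reappears as the vanishing of $\Ext^1_{\mathcal{C}^b(\mathcal{A})}(L', u_{B,m-1})$.
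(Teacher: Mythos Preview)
Your main argument is correct and is exactly the route the paper indicates: invoke Theorem~\ref{hermult}, note that $\Hom_{\mathcal{C}^b(\mathcal{A})}(u_{A,m},u_{B,n})=0$ for $m\neq n$, and classify the conflations to see that $\Ext^1_{\mathcal{C}^b(\mathcal{A})}(u_{A,m},u_{B,n})$ vanishes. One small slip in your closing alternative sketch: the derived computation gives $\Ext^p_{\mathcal{D}^b(\mathcal{A})}(u_{A,m},u_{B,n})\cong \Ext^{\,p+m-n}_{\mathcal{A}}(A,B)$ (not $p+n-m$), so the hypothesis $\Hom(A,B)=0$ enters in the case $n=m+1$, in agreement with your main argument rather than at $n=m-1$.
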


\begin{lemma} \label{u_a,u_a}
Assume that for $A \in \mathcal{A},$ we have 
$\dim \Hom(A, A) = 1.$
Assume also that $\mathcal{A}$ is linear over a finite field $k = \mathbb{F}_q.$ Then in $\mathcal{SDH(A)}$ we have
$$[u_{A,m}] \diamond [u_{A,n}] = [u_{A,m} \oplus u_{A,n}] + (q - 1) \delta_m^{n-1} [v_{A,n}],$$
for any $m \neq n \in \mathbb{Z}.$ As a corollary, we have
$$[v_{A,n}] = (q - 1)^{-1} ([u_{A,n}] \diamond [u_{A,(n+1)}] - [u_{A,(n+1)}] \diamond [u_{A,n}]), \quad \forall n \in \mathbb{Z}.$$
\end{lemma}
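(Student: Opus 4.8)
The plan is to derive both parts of the statement from the Ringel--Hall-type formula for the product established in Theorem~\ref{hermult}, so that the whole argument reduces to a small explicit computation of morphism and extension spaces between stalk complexes in the exact category $\mathcal{C}^b(\mathcal{A})$, together with an identification of the middle terms of those extensions. Since $\mathcal{A}$ is hereditary and has enough projectives, Theorem~\ref{hermult} applies, so for $m\neq n$ we have
\[
[u_{A,m}] \diamond [u_{A,n}] = \sum_{X \in \Iso(\mathcal{C}^b(\mathcal{A}))} \frac{|\Ext^1_{\mathcal{C}^b(\mathcal{A})}(u_{A,m}, u_{A,n})_X|}{|\Hom_{\mathcal{C}^b(\mathcal{A})}(u_{A,m}, u_{A,n})|}\,[X].
\]
First I would observe that when $m\neq n$ the complexes $u_{A,m}$ and $u_{A,n}$ share no non-zero degree, so every chain map between them is zero; hence the denominator equals $1$, and the product is exactly the sum, with multiplicities, of the classes $[X]$ of the middle terms of all extensions of $u_{A,m}$ by $u_{A,n}$ in $\mathcal{C}^b(\mathcal{A})$.

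Next I would compute $\Ext^1_{\mathcal{C}^b(\mathcal{A})}(u_{A,m}, u_{A,n})$ directly from the component-wise exact structure. A conflation $u_{A,n}\rightarrowtail X\twoheadrightarrow u_{A,m}$ is degreewise split, so $X^i\cong A$ for $i\in\{m,n\}$ and $X^i=0$ otherwise; thus the only differential of $X$ that can be non-zero is the one linking degrees $m$ and $n$, which already forces $|m-n|=1$. Writing out the compatibility of that differential with the split monomorphism $u_{A,n}\hookrightarrow X$ shows that it must vanish when $n=m-1$, so the extension group is zero unless $n=m+1$. In the remaining case $X$ is the two-term complex $A\xrightarrow{d}A$ placed in degrees $m$ and $m+1$ with $d\in\Hom_{\mathcal{A}}(A,A)$ arbitrary, and a routine check (any equivalence of two such extensions is the identity in each degree, hence forces $d=d'$) identifies $\Ext^1_{\mathcal{C}^b(\mathcal{A})}(u_{A,m}, u_{A,m+1})$ with $\Hom_{\mathcal{A}}(A,A)$, a set of $q$ elements since $\mathcal{A}$ is $\mathbb{F}_q$-linear and $\dim_k\Hom(A,A)=1$.

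Finally I would sort these $q$ classes by their middle term. The class $d=0$ is split, with middle term $u_{A,m}\oplus u_{A,m+1}$. The hypothesis $\dim_k\Hom(A,A)=1$ says $\End_{\mathcal{A}}(A)=k$ is a field, so each of the remaining $q-1$ values of $d$ is invertible in $\mathcal{A}$; the chain isomorphism with components $(d,1_A)$ then identifies $A\xrightarrow{d}A$ with $v_{A,m}$, the contractible complex $A\xrightarrow{1_A}A$ in degrees $m$ and $m+1$. Thus $|\Ext^1_{\mathcal{C}^b(\mathcal{A})}(u_{A,m},u_{A,m+1})_{u_{A,m}\oplus u_{A,m+1}}|=1$ and $|\Ext^1_{\mathcal{C}^b(\mathcal{A})}(u_{A,m},u_{A,m+1})_{v_{A,m}}|=q-1$, and substituting these counts (and the vanishing of the extension group for $n\neq m+1$) into the formula above yields the asserted product rule. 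For the last assertion I would specialize this rule to the ordered pairs $(u_{A,n},u_{A,n+1})$ and $(u_{A,n+1},u_{A,n})$: the extension term survives only in the first product, where it contributes $(q-1)[v_{A,n}]$, while the split parts $[u_{A,n}\oplus u_{A,n+1}]$ and $[u_{A,n+1}\oplus u_{A,n}]$ coincide; subtracting and dividing by $q-1$ gives the stated formula for $[v_{A,n}]$.

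I expect the only step needing real care to be the computation of $\Ext^1_{\mathcal{C}^b(\mathcal{A})}(u_{A,m},u_{A,n})$: one must work directly with the degreewise-split conflations of the exact category $\mathcal{C}^b(\mathcal{A})$ and keep careful track of which of the two adjacent configurations $n=m\pm1$ actually yields a non-trivial extension. Everything else is routine bookkeeping, together with the elementary fact that a non-zero endomorphism of $A$ is invertible.
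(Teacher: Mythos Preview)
Your argument is correct and follows exactly the approach sketched in the paper, which merely points to Theorem~\ref{hermult} and says the relevant $\Hom$ and $\Ext^1$ spaces between stalk complexes in $\mathcal{C}^b(\mathcal{A})$ are easy to compute. Note that your computation (correctly) produces the middle term $v_{A,m}$ in the only nontrivial case $m=n-1$, which is precisely what is needed for the stated corollary; the subscript $n$ on $v$ in the displayed product formula appears to be a typographical slip.
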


To prove both of these lemmas, one should use the form of the product given in Theorem \ref{hermult}. Note that 
$$\Hom_{\mathcal{C}^b(\mathcal{A})}(u_{A,m}, u_{B,n}) = 0, \quad \forall A, B \in \mathcal{A}, \forall m \neq n \in \mathbb{Z},$$
and $\Ext^1_{\mathcal{C}^b(\mathcal{A})}(u_{A,m}, u_{B,n})$ can be easily computed. 



We can also calculate the value of the Euler form on pairs of generators.

\begin{lemma} \label{eulerform}
For any $A, B \in \mathcal{A}$ and any $m, n \in \mathbb{Z},$ we have 
$$\left\langle v_{\alpha,m}, u_{B,n} \right\rangle = \delta_m^n \left\langle \alpha, B \right\rangle_{\mathcal{A}}; \quad \left\langle v_{\alpha,m}, v_{\beta,n} \right\rangle = (\delta_m^n + \delta_m^{n+1}) \left\langle \alpha, \beta \right\rangle_{\mathcal{A}};$$
$$\left\langle u_{B,n}, v_{\alpha,m} \right\rangle = \delta_m^{n-1} \left\langle B, \alpha \right\rangle_{\mathcal{A}}.$$ 
\end{lemma}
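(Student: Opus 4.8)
The plan is to reduce the lemma to a single clean identity computing the multiplicative Euler form of a two-term contractible complex against an arbitrary complex, and then to specialise. Concretely, I claim it suffices to prove that for every object $A$ of $\mathcal{E}$ and every $N \in \mathcal{C}^b(\mathcal{E})$ one has
\begin{equation*}
\left\langle v_{A,m}, N \right\rangle = \left\langle A, N^m \right\rangle_{\mathcal{E}}, \qquad \left\langle N, v_{A,m} \right\rangle = \left\langle N^{m+1}, A \right\rangle_{\mathcal{E}},
\end{equation*}
where on the right-hand sides $\left\langle \cdot, \cdot \right\rangle_{\mathcal{E}}$ is the multiplicative Euler form of $\mathcal{E}$ (with the convention $\left\langle \cdot, 0 \right\rangle_{\mathcal{E}} = \left\langle 0, \cdot \right\rangle_{\mathcal{E}} = 1$). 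Indeed, the multiplicative Euler form is bilinear on $K_0(\mathcal{C}^b_{ac}(\mathcal{E})) \times K_0(\mathcal{C}^b(\mathcal{E}))$ (as noted just after Lemma \ref{homfin}), and under the identification $K_0(\mathcal{C}^b_{ac}(\mathcal{E})) \cong \coprod_{\mathbb{Z}} K_0(\mathcal{E})$ from the proof of Lemma \ref{k0inj} the class of $v_{A,m}$ lies in a single copy of $K_0(\mathcal{E})$ and equals $[A]$ there; hence $v_{\alpha,m}$ has class $\alpha$ in that same copy. So once the two displayed formulas are known, and using bilinearity of $\left\langle \cdot, \cdot\right\rangle_{\mathcal{E}}$ in each variable, the three formulas of the lemma follow by substituting $N = u_{B,n}$, whose only nonzero component is $B$ in degree $n$, and $N = v_{B,n}$, whose nonzero components are $B$ in degrees $n$ and $n+1$: this produces exactly the Kronecker deltas $\delta_m^n$, $\delta_m^n + \delta_m^{n+1}$, and $\delta_{m+1}^n = \delta_m^{n-1}$, respectively.

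To prove the two displayed formulas I would use the exact forgetful functor $\forget \colon \mathcal{C}^b(\mathcal{E}) \to {\bf gr^b}(\mathcal{E})$ together with its adjoints. Recall from the proof of Proposition \ref{comderext} that $\forget$ is exact and has an exact left adjoint $\forget_\lambda$ with $\forget_\lambda \circ \forget(X) \cong C(1_X)$; the dual construction ("cofree complex on a graded object") gives an exact right adjoint $\forget_\rho$, its exactness being visible degreewise. A direct inspection of morphism spaces shows that $v_{A,m}$ is, up to a harmless bookkeeping of the degree shift, the image under $\forget_\lambda$ of the stalk graded object $A$ placed in degree $m$, and simultaneously the image under $\forget_\rho$ of $A$ placed in degree $m+1$. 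Since an adjoint pair of exact functors between exact categories induces isomorphisms on all $\Ext$-groups — this is precisely the mechanism behind the isomorphisms (\ref{extcones}) used in the excerpt — one gets, for every $p \geq 0$,
\begin{equation*}
\Ext^p_{\mathcal{C}^b(\mathcal{E})}(v_{A,m}, N) \cong \Ext^p_{\mathcal{E}}(A, N^m), \qquad \Ext^p_{\mathcal{C}^b(\mathcal{E})}(N, v_{A,m}) \cong \Ext^p_{\mathcal{E}}(N^{m+1}, A),
\end{equation*}
the middle step passing through ${\bf gr^b}(\mathcal{E})$, where $\Ext$ is computed componentwise so that only the matching degree survives. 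Taking the alternating product of cardinalities of these groups over $p$ yields the two displayed identities. When $\mathcal{E} = \mathcal{A}$ is hereditary, $\Ext^p_{\mathcal{A}} = 0$ for $p \geq 2$, so each $\left\langle \cdot, \cdot\right\rangle_{\mathcal{A}}$ occurring on the right is just $|\Hom|/|\Ext^1|$, but this simplification is not actually needed to state or prove the lemma.

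I do not expect a genuine obstacle: the lemma is a bookkeeping computation once organised this way. The two points needing care are (a) identifying $v_{A,m}$ with the image of the correct stalk object under $\forget_\lambda$ and under $\forget_\rho$, i.e. getting the degree shifts right — these are exactly what produce the three distinct Kronecker deltas, so an off-by-one here gives the wrong answer; and (b) making the reduction to $K_0$ in the first paragraph precise, i.e. checking that the element $v_{\alpha,m} \in \mathbb{T}_{ac}(\mathcal{E})$ indeed represents the class $\alpha$ in the appropriate summand of $K_0(\mathcal{C}^b_{ac}(\mathcal{E}))$, so that bilinearity of the Euler form applies. As an alternative to the adjoint-functor argument one could instead write out directly the four complexes of $\Hom$-spaces between the explicit one- and two-term complexes $u_{\bullet,\bullet}$ and $v_{\bullet,\bullet}$ and read off their cohomology, but that route is longer and less transparent.
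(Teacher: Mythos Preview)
Your argument is correct, and the paper states this lemma without proof, so there is no detailed proof to compare against. The route you take---identifying $v_{A,m}$ simultaneously as $\forget_\lambda$ of the stalk graded object $A$ in degree $m$ and as $\forget_\rho$ of the stalk $A$ in degree $m+1$, then invoking the $\Ext$-adjunction isomorphisms in the spirit of~(\ref{extcones})---is exactly the machinery the paper sets up in the proof of Proposition~\ref{comderext}, so your proof fits seamlessly into the paper's toolkit. The degreewise checks you outline are all correct; in particular your identification of which degree survives under each adjunction is right and produces the three Kronecker deltas as stated.

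Two minor remarks. First, the exact right adjoint $\forget_\rho$ is not mentioned explicitly in the paper, but its existence and exactness are, as you say, immediate from the componentwise formula $\forget_\rho(X)^n = X^n \oplus X^{n+1}$ dual to the one implicit in $\forget_\lambda$. Second, the passage from $v_{A,m}$ to $v_{\alpha,m}$ via bilinearity is fine once you note that the Euler form on $K_0(\mathcal{C}^b_{ac}(\mathcal{E})) \times K_0(\mathcal{C}^b(\mathcal{E}))$ is well-defined (this is exactly the content of the definition and Lemma~\ref{homfin}); since $[v_{A,m}] - [v_{B,m}]$ represents $v_{\alpha,m}$ for $\alpha = [A]-[B]$, multiplicativity does the rest.
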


\begin{corollary} \label{eulerform2}
For any $A, B \in \mathcal{A}$ and any $n > m \in \mathbb{Z},$ we have 
$$\left\langle u_{A,m}, u_{B,n} \right\rangle = (\left\langle A, B \right\rangle_{\mathcal{A}})^{(-1)^{(n-m)}}; \quad \left\langle u_{A,m}, u_{B,m} \right\rangle = \left\langle A, B \right\rangle_{\mathcal{A}};$$
$$\left\langle u_{B,n}, u_{A,m} \right\rangle = 1.$$
\end{corollary}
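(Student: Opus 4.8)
The plan is to deduce Corollary~\ref{eulerform2} from Lemma~\ref{eulerform}, using the bilinearity of the multiplicative Euler form on $K_0(\mathcal{C}^b(\mathcal{A}))$ (which is available under (C1)--(C3); cf.\ Section~4) together with one direct $\Ext$-computation for stalk complexes. Since the shift functor $\Sigma$ is an exact autoequivalence of $\mathcal{C}^b(\mathcal{A})$ with $\Sigma u_{A,m}=u_{A,m-1}$, the value $\left\langle u_{A,m},u_{B,n}\right\rangle$ depends only on $n-m$; write it $H_{A,B}(n-m)$. The stalk complex $u_{A,m+1}$ is a subcomplex of $v_{A,m}$ with quotient $u_{A,m}$, so $[v_{A,m}]=[u_{A,m}]+[u_{A,m+1}]$ in $K_0(\mathcal{C}^b(\mathcal{A}))$. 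Combining this with Lemma~\ref{eulerform}, which gives $\left\langle v_{A,m},u_{B,n}\right\rangle=\left\langle A,B\right\rangle_{\mathcal{A}}$ when $m=n$ and $1$ otherwise, and with bilinearity, one obtains the recursion
\[
H_{A,B}(k)\,H_{A,B}(k-1)\;=\;\left\langle A,B\right\rangle_{\mathcal{A}}^{\,\delta_{k,0}},\qquad k\in\mathbb{Z}.
\]
This pins down $H_{A,B}$ up to the single value $H_{A,B}(0)=\left\langle u_{A,m},u_{B,m}\right\rangle$: once that value is shown to be $\left\langle A,B\right\rangle_{\mathcal{A}}$, the recursion forces $H_{A,B}(k)=1$ for $k<0$ and $H_{A,B}(k)=\left\langle A,B\right\rangle_{\mathcal{A}}^{(-1)^k}$ for $k\geq0$, which is exactly Corollary~\ref{eulerform2} (the claim $\left\langle u_{B,n},u_{A,m}\right\rangle=H_{B,A}(m-n)=1$ for $n>m$ is the case $k<0$ with $A$ and $B$ interchanged).

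It therefore remains to prove the base value $\left\langle u_{A,m},u_{B,m}\right\rangle=\left\langle A,B\right\rangle_{\mathcal{A}}$, i.e.\ $\Ext^p_{\mathcal{C}^b(\mathcal{A})}(u_{A,m},u_{B,m})\cong\Ext^p_{\mathcal{A}}(A,B)$ for all $p$ (both sides vanishing for $p\geq2$ as $\mathcal{A}$ is hereditary). Since $\mathcal{A}$ has enough projectives, so does $\mathcal{C}^b(\mathcal{A})$: its projective objects are the contractible complexes of projectives, and $v_{P,m}\twoheadrightarrow u_{A,m}$ induced by a projective presentation $P\twoheadrightarrow A$ in $\mathcal{A}$ is a deflation from a projective. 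Starting from a projective resolution $0\to P_1\to P_0\to A\to0$ in $\mathcal{A}$ one builds a projective resolution of $u_{A,m}$ in $\mathcal{C}^b(\mathcal{A})$, beginning with $v_{P_0,m}\twoheadrightarrow u_{A,m}$; its kernel is again a two-term complex of projectives of the same shape up to shift, so iterating yields the (generally infinite) resolution $\cdots\to v_{P_1,m}\oplus v_{P_0,m+1}\to v_{P_0,m}\to u_{A,m}\to0$. Applying $\Hom_{\mathcal{C}^b(\mathcal{A})}(-,u_{B,m})$ and using that $\Hom_{\mathcal{C}^b(\mathcal{A})}(v_{Q,k},u_{B,m})=\Hom_{\mathcal{A}}(Q,B)$ if $k=m$ and $0$ otherwise, all terms except those coming from $v_{P_0,m}$ and $v_{P_1,m}$ die, and the surviving two-term complex $\Hom_{\mathcal{A}}(P_0,B)\to\Hom_{\mathcal{A}}(P_1,B)$ is precisely $\Hom_{\mathcal{A}}(-,B)$ applied to $0\to P_1\to P_0\to A\to0$; its cohomology is $\Hom_{\mathcal{A}}(A,B)$ in degree $0$, $\Ext^1_{\mathcal{A}}(A,B)$ in degree $1$, and $0$ elsewhere. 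Hence $\left\langle u_{A,m},u_{B,m}\right\rangle=|\Hom_{\mathcal{A}}(A,B)|\cdot|\Ext^1_{\mathcal{A}}(A,B)|^{-1}=\left\langle A,B\right\rangle_{\mathcal{A}}$.

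The main obstacle is this last step: pinning down the projective objects of $\mathcal{C}^b(\mathcal{A})$, the shape of the projective resolution of a stalk complex, and the identification of the surviving subquotient of the $\Hom$-complex with the projective resolution of $A$ in $\mathcal{A}$. (Alternatively, running the same resolution against $u_{B,n}$ for arbitrary $n$ gives $\Ext^p_{\mathcal{C}^b(\mathcal{A})}(u_{A,m},u_{B,n})\cong\Ext^{\,p-(n-m)}_{\mathcal{A}}(A,B)$, from which all three formulae of the Corollary drop out at once by taking alternating products of cardinalities; but going through Lemma~\ref{eulerform} is shorter.) Everything else is the routine bookkeeping of solving the displayed recursion.
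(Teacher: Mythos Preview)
Your argument is correct and follows the route the paper intends: the Corollary is deduced from Lemma~\ref{eulerform} via bilinearity of the Euler form and the relation $[v_{A,m}]=[u_{A,m}]+[u_{A,m+1}]$ in $K_0(\mathcal{C}^b(\mathcal{A}))$, which yields precisely your recursion $H_{A,B}(k)\,H_{A,B}(k-1)=\langle A,B\rangle_{\mathcal{A}}^{\delta_{k,0}}$; together with the base value $H_{A,B}(0)=\langle A,B\rangle_{\mathcal{A}}$ this determines everything.

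The only place you do more work than necessary is the base case. The identity $\Ext^p_{\mathcal{C}^b(\mathcal{A})}(u_{A,m},u_{B,m})\cong\Ext^p_{\mathcal{A}}(A,B)$ is already recorded in the proof of Lemma~\ref{homfin} (``For those, the extensions are extensions in $\mathcal{E}$ of their non-zero components''), so $\langle u_{A,m},u_{B,m}\rangle=\langle A,B\rangle_{\mathcal{A}}$ is immediate and the explicit projective resolution of $u_{A,m}$ in $\mathcal{C}^b(\mathcal{A})$ is not needed. Your resolution argument is nonetheless correct; a minor phrasing issue is that you only need that contractible bounded complexes of projectives \emph{are} projective and that $\mathcal{C}^b(\mathcal{A})$ has enough of them, not that these are \emph{all} projectives (though in fact they are).
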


One may twist the multiplication in $\mathcal{SDH(A)}$ in different ways. Since the classes of the stalk complexes yield the full set of generators, we can define the twist of their products and then extend it by bilinearity. Here are some natural twists:

\begin{itemize}
\item[1)] $[u_{A,m}] *_1 [u_{B,n}] := \left\langle u_{A,m}, u_{B,n} \right\rangle [u_{A,m}] \diamond [u_{B,n}].$ With this twist, the quantum torus becomes commutative;
\item[2)] $[u_{A,m}] *_2 [u_{B,n}] := \sqrt{\left\langle u_{A,m}, u_{B,n} \right\rangle} [u_{A,m}] \diamond [u_{B,n}].$ This twist is probably the most natural analogue of Ringel's twist;
\item[3)] $[u_{A,m}] *_3 [u_{B,n}] := (\sqrt{\left\langle A, B \right\rangle_{\mathcal{A}}})^{\delta_m^n} [u_{A,m}] \diamond [u_{B,n}].$ This twist is a $\mathbb{Z}-$graded analogue of the twist used in \cite{B};
\item[4)] $[u_{A,m}] *_4 [u_{B,n}] := (\sqrt{\left\langle A, B \right\rangle_{\mathcal{A}}})^{(-1)^{(n-m)}} [u_{A,m}] \diamond [u_{B,n}].$ Note that here we take arbitrary $n, m \in \mathbb{Z},$ which need not satisfy $n > m.$ This twist is similar to the one used in \cite{HL}, \cite{Sh}.
\end{itemize}

In any of cases 2), 3) and 4), we get a family of embeddings of $\mathcal{H}_{tw}(\mathcal{A})$ into $\mathcal{SDH}_{tw}(\mathcal{A}),$ with the twist thus defined. It is easy to show that all of these twisted algebras are invariant under the same class of derived equivalences as in Theorem \ref{sdhequiv}, cf. Theorem \ref{z2sdhequivtw} below. 

\subsection{Example: quiver representations}

Let $Q$ be a simply-laced Dynkin quiver on the vertices ${1,\ldots, n}.$ Let $\mathcal{A}$ be the category of finite-dimensional representations of $Q$ over the field $k = \mathbb{F}_q.$ This abelian category satisfies all the assumptions (C1)-(C3); moreover, it is hereditary and has enough projectives. For each vertex $i$ of $Q,$ we denote the corresponding one-dimensional simple module by $S_i \in \mathcal{A}.$ Two objects associated to a quiver $Q,$ the Hall algebra $\mathcal{H}(\mathcal{A})$ and the quantum group $U_{\sqrt{q}}(\mathfrak{g})$ are related to each other by the following important result.

\begin{theorem} [Ringel, {\cite[Sch]{R1}}] \label{rgthm}
There are isomorphisms of algebras
$$R: U_{\sqrt{q}}(\mathfrak{n}^+) \overset\sim\to \mathcal{H}_{tw}(\rep_{\mathbb{F}_q} (Q)), \quad R^e : U_{\sqrt{q}}(\mathfrak{b}^+) \overset\sim\to H^e_{tw}(\rep_{\mathbb{F}_q} (Q)),$$
defined on generators by
$$R(E_i) = R^e(E_i) = \frac{[S_i]}{(q - 1)}, \quad R^e(K_i) = K_{S_i}.$$
\end{theorem}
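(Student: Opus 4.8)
The plan is to reconstruct Ringel's original argument, establishing the statement for $\mathfrak{n}^+$ first and then deducing the Borel version. Write $v := \sqrt q$ and recall that $U_v(\mathfrak{n}^+)$ is presented by generators $E_1,\dots,E_n$ subject to the quantum Serre relations. First I would define $R$ on generators by $R(E_i) = [S_i]/(q-1)$ and check that these relations hold among the normalized classes in $\mathcal{H}_{tw}(\mathcal{A})$. Each Serre relation involves only two vertices, so the check reduces to a computation in the Hall algebra of representations of a two-vertex Dynkin subquiver, i.e.\ to the cases $A_1\times A_1$ and $A_2$. For a disconnected pair both first extension groups vanish and $\langle S_i,S_j\rangle_{\mathcal{A}}=1$, so $[S_i]*[S_j]=[S_i\oplus S_j]=[S_j]*[S_i]$, which is the relation $E_iE_j=E_jE_i$. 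For $A_2$ with an arrow $i\to j$ there are only finitely many relevant isomorphism classes (namely $S_i\oplus S_j$ and the non-split extension), the pertinent Hall numbers $|\Ext^1_{\mathcal{A}}(S_i,S_j)_{S_i\oplus S_j}|$ and $|\Ext^1_{\mathcal{A}}(S_i,S_j)_E|$ are computed directly, and one verifies the cubic identity $[S_i]*[S_i]*[S_j]-(v+v^{-1})\,[S_i]*[S_j]*[S_i]+[S_j]*[S_i]*[S_i]=0$. This produces a well-defined algebra homomorphism $R$, and likewise $R^e$ once one also sets $R^e(K_i)=K_{S_i}$.

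Next I would prove that $R$ is surjective, i.e.\ that the classes $[S_i]$ generate $\mathcal{H}_{tw}(\mathcal{A})$ as an algebra. This follows from the triangularity of the Hall product of simples: if a module $M$ admits a filtration with subquotients $S_{i_1},\dots,S_{i_\ell}$ in order, then $[S_{i_1}]\diamond\cdots\diamond[S_{i_\ell}] = c\,[M] + (\text{a $\mathbb{Q}$-combination of classes }[N])$, with $c\in\mathbb{Q}^\times$ and the $N$ ranging over modules with the same dimension vector but ``more split''. A downward induction inside each fixed dimension vector — a finite set of modules, since $\mathbb{F}_q$ is finite — then shows every $[M]$ lies in the subalgebra generated by the $[S_i]$.

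To conclude that $R$ is an isomorphism it now suffices to match graded dimensions. Here Gabriel's theorem is the key structural input: since $Q$ is simply-laced Dynkin, $\mathcal{A}$ has only finitely many indecomposables, in bijection with the positive roots $\Phi^+$, so by Krull--Schmidt a $\mathbb{Q}$-basis of $\mathcal{H}_{tw}(\mathcal{A})$ is indexed by functions $\Phi^+\to\mathbb{Z}_{\geq 0}$, graded by $K_0(\mathcal{A})=\mathbb{Z}^n$ via total dimension vector. On the other side $U_v(\mathfrak{n}^+)$ has a PBW basis indexed by the same set of functions and graded by the root lattice in the same way; in each graded piece both spaces are finite-dimensional of equal dimension. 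A surjection of graded vector spaces with equal finite graded dimensions is an isomorphism, so $R$ is a graded algebra isomorphism.

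Finally, for $R^e$ I would extend $R$ by $R^e(K_i)=K_{S_i}$ and verify the Cartan relations. The relation $K_i*K_j=K_{i+j}$ holds by the very construction of $\mathcal{H}^e_{tw}(\mathcal{A})$, while $K_{S_i}*[S_j]=\sqrt{(\alpha_i,S_j)}\,[S_j]*K_{S_i}$; a short computation shows that the multiplicative symmetrized Euler form satisfies $(\alpha_i,S_j)=q^{(\alpha_i,\alpha_j)}=v^{2(\alpha_i,\alpha_j)}$ — for instance $\langle S_i,S_j\rangle_{\mathcal{A}}=q^{-1}$, $\langle S_j,S_i\rangle_{\mathcal{A}}=1$ for the $A_2$ case, using that the symmetrized Euler form on $K_0(\mathcal{A})$ is the Cartan pairing because $Q$ is acyclic simply-laced — so the relation becomes exactly $K_iE_jK_i^{-1}=v^{(\alpha_i,\alpha_j)}E_j$. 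Since $\mathcal{H}^e_{tw}(\mathcal{A})$ is free of rank one over the (twisted) group algebra of $K_0(\mathcal{A})$ as a module over $\mathcal{H}_{tw}(\mathcal{A})$, and $U_v(\mathfrak{b}^+)$ similarly over $U_v(\mathfrak{n}^+)$, bijectivity of $R^e$ reduces to that of $R$. The computational heart of the argument, and the one thing that genuinely has to be done by hand, is the verification of the quantum Serre relations through the explicit rank-two Hall numbers; the only non-elementary external input is Gabriel's theorem, which underpins the dimension count yielding injectivity.
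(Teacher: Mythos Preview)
The paper does not give its own proof of this theorem; it is stated as a known result with attribution to Ringel \cite{R1} (see also \cite{Sch}) and then used as input. Your proposal is a correct and standard reconstruction of Ringel's argument: verifying the quantum Serre relations by reduction to rank-two subquivers, obtaining surjectivity from the triangularity of iterated products of simples, and deducing injectivity from the graded dimension count supplied by Gabriel's theorem together with the PBW basis of $U_v(\mathfrak{n}^+)$; the extension to the Borel part via the Cartan relations and the free module structure is likewise the standard step.
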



In the algebra $\mathcal{SDH(A)},$ we choose a set of generators consisting of elements of two types, according to Proposition \ref{genset}: generators of the quantum torus $\mathbb{T}_{ac}(\mathcal{E})$ and classes of stalk complexes. Let $u_{i, m} := u_{S_i, m},$ for $i = 1,\ldots,n$ and $m \in \mathbb{Z}.$ 

\begin{proposition} The algebra $\mathcal{SDH(A)}$ is generated by the classes $[u_{i,m}], [v_{\alpha,m}],$ subject only to the following relations: for every $p > m + 1 \in \mathbb{Z}$ and all $1 \leq i, j \leq n$ and $\alpha, \beta \in K_0(\mathcal{A}),$ we have
\begin{itemize}
\item[(U)] the relations in $I_m(\mathcal{H(A)})$ for the $u_{i,m},$ $i = 1,\ldots,n;$
$$u_{i,m} u_{j,m+1} = u_{j,m+1} u_{i,m} + \delta_{i}^{j} (q - 1) v_{S_i,m}; \qquad\qquad 
u_{i,m} u_{j,p} = u_{j,p} u_{i,m};$$
\item[(V)] 
$$v_{\alpha, m} v_{\beta, m} = \frac{\langle \beta, \alpha \rangle}{\langle \alpha, \beta \rangle} v_{\beta, m} v_{\alpha, m}; \quad\quad 
v_{\alpha,m} v_{\beta,m+1} = \langle \beta, \alpha \rangle v_{\beta,m+1} v_{\alpha,m}; \quad\quad
v_{\alpha,m} v_{\beta,p} =  v_{\beta,p} v_{\alpha,m};$$
\item[(UV)] 
$$v_{\alpha,m} u_{j,m+1} = \langle S_j, \alpha \rangle u_{j,m+1} v_{\alpha,m}; \quad\quad
u_{i,m} v_{\beta,m} = \langle \beta, S_i \rangle v_{\beta,m} u_{i,m};$$
$$u_{i,m} v_{\beta,m+1} =  v_{\beta,m+1} u_{i,m}; \quad\quad
u_{i,m} v_{\beta,p} =  v_{\beta,p} u_{i,m}; \quad\quad
v_{\alpha,m} u_{j,p} =  u_{j,p} v_{\alpha,m}.$$ 
\end{itemize}
\end{proposition}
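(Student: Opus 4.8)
The plan is to let $\mathcal{B}$ be the $\mathbb{Q}$-algebra presented by the generators $[u_{i,m}],[v_{\alpha,m}]$ and the relations (U), (V), (UV), to construct a surjective algebra homomorphism $\Phi\colon\mathcal{B}\to\mathcal{SDH(A)}$, and then to prove that $\Phi$ is injective by comparing a spanning set of $\mathcal{B}$ with the known basis of $\mathcal{SDH(A)}$. Well-definedness of $\Phi$ is exactly the assertion that the three families of relations hold in $\mathcal{SDH(A)}$: the relations of $I_m(\mathcal{H(A)})$ hold by Lemma~\ref{i_m}; the identity $u_{i,m}u_{j,m+1}=u_{j,m+1}u_{i,m}+\delta_i^j(q-1)v_{S_i,m}$ is Lemma~\ref{u_a,u_b} when $i\neq j$ (there $\Hom_{\mathcal{A}}(S_i,S_j)=0$) and Lemma~\ref{u_a,u_a} when $i=j$ (there $\dim\End_{\mathcal{A}}(S_i)=1$), while $u_{i,m}u_{j,p}=u_{j,p}u_{i,m}$ for $p>m+1$ follows from the multiplication formula of Theorem~\ref{hermult} together with the vanishing of $\Hom_{\mathcal{C}^b(\mathcal{A})}$ and $\Ext^1_{\mathcal{C}^b(\mathcal{A})}$ between stalk complexes in non-adjacent degrees; the relations (V) and (UV) follow from the commutation rules of Lemma~\ref{acinvcomm} in $\mathbb{T}_{ac}(\mathcal{A})$ and from the defining bimodule identities $[v]\diamond[x]=\left\langle\cdot,\cdot\right\rangle^{\pm1}\,[x]\diamond[v]$ for $v$ acyclic, all evaluated via the Euler-form values of Lemma~\ref{eulerform}. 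Surjectivity of $\Phi$ is Proposition~\ref{genset}, once one recalls (Ringel's Theorem~\ref{rgthm}) that $\mathcal{H(A)}$ is generated by the simple classes $[S_i]$, so that via $I_m$ every $[u_{A,m}]$ is a polynomial in the $[u_{i,m}]$.

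Next I would put elements of $\mathcal{B}$ into a normal form. Using (UV), every word rewrites, up to a scalar, as (a word in the $v$-generators)$\cdot$(a word in the $u$-generators). Using (V) together with the description of $\mathbb{T}_{ac}(\mathcal{A})$ as the twisted group algebra of $K_0(\mathcal{C}^b_{ac}(\mathcal{A}))\cong\coprod_{\mathbb{Z}}K_0(\mathcal{A})$ (Lemma~\ref{k0inj}), the $v$-word reduces to a single monomial $w$ from a fixed $\mathbb{Q}$-basis of $\mathbb{T}_{ac}(\mathcal{A})$. Using (U), the $u$-word is brought to the shape $h_{n_1}h_{n_1+1}\cdots h_{n_2}$ with $n_1<\cdots<n_2$, where each $h_m$ is a fixed $\mathbb{Q}$-basis monomial of $\mathcal{H(A)}$ written in the $u_{i,m}$ (Ringel's Theorem~\ref{rgthm} again); the relation $u_{i,m}u_{j,m+1}=u_{j,m+1}u_{i,m}+\delta_i^j(q-1)v_{S_i,m}$ shows that each straightening step either preserves the set of degrees present or strictly shrinks it, the correction term being absorbed into $w$, so the process terminates. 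Hence $\mathcal{B}$ is $\mathbb{Q}$-spanned by the monomials $w\cdot h_{n_1}\cdots h_{n_2}$.

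To finish, I compare with $\mathcal{SDH(A)}$. By Theorem~\ref{zfree} it is free over $\mathbb{T}_{ac}(\mathcal{A})$ on the isomorphism classes of $\mathcal{D}^b(\mathcal{A})$, which by Lemma~\ref{herderdirstalk} are exactly the sums $\bigoplus_m A_m$ of stalk complexes with $A_m\in\mathcal{A}$ in degree $m$; by the hereditary multiplication formula (Theorem~\ref{hermult}, cf.\ the proof of Proposition~\ref{genset}) the class $[\bigoplus_m A_m]$ equals a scalar times $[A_{n_1}]\diamond\cdots\diamond[A_{n_2}]$, and each $[A_m]\in I_m(\mathcal{H(A)})$ is a $\mathbb{Q}$-combination of products of $[u_{i,m}]$. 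So, with respect to a suitable ordering (by the class of $\bigoplus_m A_m$, then by the torus factor), $\Phi$ carries the spanning family $\{w\cdot h_{n_1}\cdots h_{n_2}\}$ onto the basis $\{t\diamond[\bigoplus_m A_m]\}$ of $\mathcal{SDH(A)}$ by a locally finite upper-triangular transition matrix with nonzero diagonal. Therefore this family maps to a basis, $\Phi$ sends a spanning set of $\mathcal{B}$ bijectively onto a basis of $\mathcal{SDH(A)}$, and consequently $\Phi$ is an isomorphism, which is the assertion of the Proposition.

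The main obstacle is the bookkeeping in the normal-form reduction and in the triangularity claim: one must be sure that (U), (V), (UV) really reduce every word to the asserted form and that no two distinct normal forms become identified, i.e.\ that the rewriting system is confluent. The clean way to organize this is to filter $\mathcal{B}$ by a well-ordered invariant of the $u$-part --- for instance the finite multiset of degrees occurring, together with the total length --- observe that every relation is triangular for this filtration, its top term being the intended straightening while all corrections (the $(q-1)v_{S_i,m}$ terms and the quantum Serre-type corrections inside each $I_m(\mathcal{H(A)})$) lie strictly lower, pass to the associated graded, and note that $\mathrm{gr}\,\Phi$ is visibly an isomorphism onto the associated graded of $\mathcal{SDH(A)}$ for the matching filtration; this forces $\Phi$ itself to be an isomorphism. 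Everything else --- the Euler-form computations and the identification of the $v$-subalgebra with $\mathbb{T}_{ac}(\mathcal{A})$ --- is routine given Lemmas~\ref{eulerform} and~\ref{k0inj}.
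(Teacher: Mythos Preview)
Your proposal is correct and follows essentially the same strategy as the paper: define the abstract algebra $\mathcal{B}$ on generators and relations, build the map $\Phi$ to $\mathcal{SDH(A)}$, verify well-definedness via Lemmas~\ref{i_m}--\ref{eulerform} and Theorem~\ref{hermult}, verify surjectivity via Proposition~\ref{genset} and Ringel's theorem, and then prove injectivity by exhibiting a spanning family of $\mathcal{B}$ whose image in $\mathcal{SDH(A)}$ is a basis (using Theorem~\ref{zfree} together with the identification $K_0(\mathcal{C}^b_{ac}(\mathcal{A}))\cong\coprod_{\mathbb{Z}}K_0(\mathcal{A})$ from Lemma~\ref{k0inj}).

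The only substantive difference is one of care rather than of strategy. The paper simply asserts that the putative spanning set is mapped to a basis, whereas you spell out two points the paper leaves implicit: (a) why the relations actually suffice to reduce every word in $\mathcal{B}$ to the claimed normal form, and (b) why the image family is linearly independent. Your triangular/filtration argument is a clean way to organize both; the paper's version works because, with the degrees of the $u$-factors taken strictly monotone, the product $[u_{A_{l_1},l_1}]\diamond\cdots\diamond[u_{A_{l_k},l_k}]$ is (up to a nonzero scalar and a torus factor) precisely the class $[\bigoplus_j u_{A_{l_j},l_j}]$, so no genuine triangularity beyond the Hall-algebra level inside each $I_m(\mathcal{H(A)})$ is needed---but this is exactly what your ``top term'' remark captures. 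Either packaging yields the same conclusion.
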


\begin{proof}
Let $B$ be the algebra generated by the classes $[u_{i,m}], [v_{\alpha,m}],$ subject only to the above relations. We have a natural map 
$$\phi: B \to \mathcal{SDH(A)}, [X] \mapsto [X].$$ 
Due to Theorem \ref{rgthm}, we know that the classes $[u_{i,m}]$ give a basis of $I_m(\mathcal{H(A)}),$ for any integer $m.$ Therefore, by Proposition \ref{genset}, the classes $[u_{i,m}], [v_{\alpha,m}]$ generate $\mathcal{SDH(A)}.$ Applying lemmas \ref{i_m}-\ref{eulerform} and Theorem \ref{rgthm}, one easily proves that all the relations in the statement are satisfied in the algebra $\mathcal{SDH(A)},$ i.e. the map $\phi$ is a surjective homomorphism. Let us prove its injectivity. Being the twisted group algebra of $K_0(\mathcal{C}^b(\mathcal{A})),$ the quantum torus has a basis consisting of the elements of this Grothendieck group. We know (cf. the proof of Lemma \ref{k0inj}) that $K_0 (\mathcal{C}^b_{ac}(\mathcal{E})) \cong \coprod\limits_{\mathbb{Z}} K_0 (\mathcal{E}).$ From here it immediately follows that the elements of the form 
$$[v_{\alpha_1,m_1}] \diamond [v_{\alpha_2,m_2}] \diamond\ldots\diamond [v_{\alpha_k,m_k}], m_1 > m_2 >\ldots> m_k,$$
form a basis of $\mathbb{T}_{ac}(\mathcal{A})$ over $\mathbb{F}_q.$ The set formed by the elements
$$([u_{i_1,l_1}] \diamond [v_{i_2,l_2}] \diamond\ldots\diamond [v_{i_k,l_k}]) \diamond ([v_{\alpha_1,m_1}] \diamond [v_{\alpha_2,m_2}] \diamond\ldots\diamond [v_{\alpha_k,m_k}]),$$
where $l_1 > l_2 >\ldots> l_k, m_1 > m_2 >\ldots> m_k,$
spans $B.$ Thanks to the above argument and the freeness of the algebra $\mathcal{SDH(A)}$ over the quantum torus, this set is mapped to a basis of $\mathcal{SDH(A)}.$ Thus, the homomorphism $\phi$ is injective, i.e. it is an isomorphism.
\end{proof}

Note that with any of the twists 2), 3) and 4), the relations (U1) will be transformed into the quantum Serre relations.

\section{Tilting objects and derived equivalences}

We say that $T$ is a {\it tilting} object in an exact category $\mathcal{E}$ satisfying conditions (C1) - (C4), if the following conditions hold:

\begin{itemize}
\item[(T1)] the groups $\Ext^i(T,T)$ vanish for $i>0;$
\item[(T2)] the full exact category $\add T$ containing all finite direct sums of indecomposable summands of $T$ generates $\mathcal{D}^b(\mathcal{A});$
\item[(T3)] the algebra $A = \End(T)$ is of finite global dimension.
\end{itemize}

Note that the exact structure on $\add T$ splits. Therefore, the inclusion
$I_T: \add(T) \to \mathcal{E}$
induces an equivalence 
$$\mathcal{D}^b(\add(T)) \overset\sim\to \mathcal{D}^b(\mathcal{E})$$
by conditions (T1) and (T2). 
Moreover, we have an exact fully faithful functor
$$\Hom(T,?): \add(T) \to \mod(A).$$
This functor induces an equivalence from $\add(T)$
onto the subcategory $\proj(A)$ of finitely generated projective
$A$-modules. Therefore, by condition (T3), $\Hom(T,?)$ also induces an equivalence
$$\mathcal{D}^b(\add(T)) \overset\sim\to \mathcal{D}^b(\mod (A)).$$




In particular, any complex $M \in \mathcal{C}^b(\mathcal{A})$ admits a finite $\mathcal{C}^b(\add T)-${\it coresolution}, i.e. an acyclic complex of complexes
\begin{equation} \label{factcores}
0 \to M \overset{\qis}\to T_0 \to T_1 \ldots \to T_d \to 0,
\end{equation}
where all $T_i$ belong to $\mathcal{C}^b(\add T),$ and all of them, except for $T_0,$ are acyclic. By Theorem \ref{sdhequiv} and similarly to Proposition \ref{sdhpequiv}, we obtain the following theorem.

\begin{theorem} \label{tiltequiv}
We have the following isomorphisms between the semi-derived Hall algebras:
$$\mathcal{SDH}(\mod(\End(T))) \overset\sim\leftarrow \mathcal{SDH}(\add T)  \overset{\overset{I_T}\sim}\to \mathcal{SDH}(\mathcal{E}).$$
The isomorphism inverse to $I_T$ is given on the classes of complexes as follows:
$$G: [M] \mapsto  \frac{1}{\prod\limits_{k \in \mathbb{Z}_{\geq 0}} \left\langle M, T_{2k+1} \right\rangle} [\bigoplus\limits_{k \in \mathbb{Z}_{\geq 0}} T_{2k}] \diamond [\bigoplus\limits_{k \in \mathbb{Z}_{\geq 0}} T_{2k+1}]^{-1},$$
where the $T_i$ are the objects in any coresolution of $M$ of the form (\ref{factcores}).
\end{theorem}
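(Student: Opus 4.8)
The plan is to obtain the two isomorphisms as instances of the derived-invariance result Theorem~\ref{sdhequiv}, and then to pin down the explicit formula for $I_T^{-1}$ by a computation inside $\mathcal{SDH}(\mathcal{E})$ that runs exactly parallel to the proof of Proposition~\ref{sdhpequiv}, with the $\mathcal{C}^b(\add T)$-coresolutions~(\ref{factcores}) playing the role of the projective resolutions there.

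First I would check that $\add T$ and $B:=\End(T)$ both give rise to exact categories satisfying (C1)--(C4), so that $\mathcal{SDH}(\add T)$ and $\mathcal{SDH}(\mod B)$ are defined and Theorem~\ref{sdhequiv} applies. For $\add T$ the exact structure splits; hence all higher $\Ext$-groups vanish ((C3) is trivial) and the $\Hom$-spaces are finite, being subspaces of those of $\mathcal{E}$ ((C2) holds), while (C1) is clear. As $\add T$ has enough projectives and every object has projective dimension $0$, condition (C4) follows from Theorem~\ref{condd}(1). For $\mod B$: by (C2) for $\mathcal{E}$ the algebra $B=\Hom_{\mathcal{E}}(T,T)$ is a finite-dimensional $k$-algebra, so $\mod B$ satisfies (C1)--(C2); by (T3) it has finite global dimension, so (C3) holds and, having enough projectives, it satisfies (C4) again by Theorem~\ref{condd}(1). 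The inclusion $I_T\colon\add T\hookrightarrow\mathcal{E}$ and the functor $\Hom(T,?)\colon\add T\to\mod B$ are exact (they carry split conflations to conflations), and, as recalled before the statement, (T1)--(T3) make them induce equivalences $\mathcal{D}^b(\add T)\overset\sim\to\mathcal{D}^b(\mathcal{E})$ and $\mathcal{D}^b(\add T)\overset\sim\to\mathcal{D}^b(\mod B)$. Theorem~\ref{sdhequiv} therefore yields the claimed isomorphisms $\mathcal{SDH}(\mod B)\overset\sim\leftarrow\mathcal{SDH}(\add T)\overset{\overset{I_T}\sim}\to\mathcal{SDH}(\mathcal{E})$.

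To identify $I_T^{-1}$ it suffices, since $I_T$ is an isomorphism, to prove that the element $G([M])$ produced by the stated formula satisfies $I_T(G([M]))=[M]$ in $\mathcal{SDH}(\mathcal{E})$ for every $M\in\mathcal{C}^b(\mathcal{E})$; independence of the chosen coresolution~(\ref{factcores}) will then follow automatically. Since $I_T$ is induced by the inclusion, it is the identity on classes of complexes from $\mathcal{C}^b(\add T)$ and carries $\mathbb{T}_{ac}(\add T)$ isomorphically onto $\mathbb{T}_{ac}(\mathcal{E})$ (Corollary~\ref{toriequiv}); hence
$$I_T(G([M]))=\frac{1}{\prod_{k\geq 0}\langle M,T_{2k+1}\rangle}\,[\bigoplus_{k\geq 0}T_{2k}]\diamond[\bigoplus_{k\geq 0}T_{2k+1}]^{-1},$$
now evaluated in $\mathcal{SDH}(\mathcal{E})$. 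As $\bigoplus_{k\geq 0}T_{2k+1}$ is acyclic, the bilinearity $\langle M,\bigoplus_k T_{2k+1}\rangle=\prod_k\langle M,T_{2k+1}\rangle$ and the module rule $[M]\diamond[K]=\langle M,K\rangle^{-1}[K\oplus M]$ reduce the identity $I_T(G([M]))=[M]$ to the single equation
$$[M\oplus\bigoplus_{k\geq 0}T_{2k+1}]=[\bigoplus_{k\geq 0}T_{2k}]\qquad\text{in }\mathcal{M}(\mathcal{E}).$$

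It remains to prove this equality. Breaking the acyclic complex of complexes $0\to M\to T_0\to\cdots\to T_d\to 0$ into conflations, and using that $T_1,\dots,T_d$ are acyclic, one sees inductively that all the intermediate kernels are acyclic and that $M\to T_0$ is a quasi-isomorphism; consequently both $M\oplus\bigoplus_k T_{2k+1}$ and $\bigoplus_k T_{2k}$ are quasi-isomorphic to $M$. On the other hand, taking the alternating sum of the same acyclic complex in $K_0(\mathcal{C}^b(\mathcal{E}))$ gives $[M]+\sum_{k}[T_{2k+1}]=\sum_{k}[T_{2k}]$, so the two complexes also have equal classes in $K_0(\mathcal{C}^b(\mathcal{E}))$. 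Writing $[\bigoplus_k T_{2k}]=t\diamond[M\oplus\bigoplus_k T_{2k+1}]$ for the unique $t\in\mathbb{T}_{ac}(\mathcal{E})$ provided by Theorem~\ref{zfree} and applying the argument of Lemma~\ref{qisextclass} (via injectivity of $K_0(\mathcal{C}^b_{ac}(\mathcal{E}))\hookrightarrow K_0(\mathcal{C}^b(\mathcal{E}))$, Lemma~\ref{k0inj}) forces $t=[0]$, which gives the required equality and completes the identification of $I_T^{-1}$ with $G$. I expect the main obstacle to be essentially organisational: checking (C4) for the two auxiliary categories so that Theorem~\ref{sdhequiv} is legitimately applicable, and keeping track of the $K_0$-bookkeeping in the last step; everything else is a transcription of Proposition~\ref{sdhpequiv}.
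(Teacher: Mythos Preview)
Your proposal is correct and follows exactly the approach the paper indicates: the paper simply writes ``By Theorem \ref{sdhequiv} and similarly to Proposition \ref{sdhpequiv}, we obtain the following theorem,'' and you have faithfully unpacked both parts. Your verification of (C1)--(C4) for $\add T$ and $\mod(\End T)$, and your reduction of $I_T(G([M]))=[M]$ to the identity $[M\oplus\bigoplus_k T_{2k+1}]=[\bigoplus_k T_{2k}]$ via the argument of Lemma~\ref{qisextclass}, supply the details the paper omits.
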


We will consider a special case of these tilting equivalences in Section 9.6.


\section{$\mathbb{Z}/2$-graded version of $\mathcal{SDH(E)}$}

In this section, we will develop a $\mathbb{Z}/2$-graded version of all the previous constructions. We should point out once again that the whole concept of the  semi-derived Hall algebras $\mathcal{SDH(E)}$ was inspired by the work \cite{B} of Bridgeland where he considered $\mathbb{Z}/2$-graded complexes with projective components. When we try to generalize this construction to arbitrary exact categories with enough projectives we encounter a major obstacle: the stupid truncation functors $\sigma_{\geq n}$ are no longer defined. Recall that these truncations have been an important ingredient of our construction: we used them in the proofs of Propositions \ref{homfin} and \ref{dbcbequiv} and of Lemmas \ref{homderext}, Lemma \ref{k0inj} and Lemma \ref{qisextclass}.
Another difficulty is that neither the $\mathbb{Z}/2$-graded analogue of Lemma \ref{projres}, nor its analogue for vector bundles seem to work in full generality. Unfortunately, we do not know, at least for the moment, how to resolve all these difficulties for the categories of coherent sheaves; but for the categories $\mathcal{E}$ with enough projectives we find a solution and can construct a $\mathbb{Z}/2$-graded analogue of $\mathcal{SDH(E)}.$ The key point is to replace the category $\mathcal{C}_{\mathbb{Z}/2}(\mathcal{E})$ with a suitable subcategory.

\subsection{Choice of a subcategory}
From now on, we assume that the exact category $\mathcal{E}$ satisfies conditions (C1) and (C2) from Section~3 and the following condition:

\begin{itemize}
\item[(C4')] $\mathcal{E}$ has enough projectives, and each object has a finite projective resolution.
\end{itemize}

Note that (C4') implies condition (C3) from Section~3. We are interested in a subcategory $\mathcal{\widetilde{E}}$ of $\mathcal{C}_{\mathbb{Z}/2}(\mathcal{E}),$ where each object admits a deflation quasi-isomorphism from a complex $P$ with projective components such that, moreover, for each complex $M \in \mathcal{\widetilde{E}},$ we have:
\begin{equation} \label{z2comderext}
\Ext^p_{\mathcal{C}_{\mathbb{Z}/2}(\mathcal{E})} (P, M) = \Ext^p_{\mathcal{D}_{\mathbb{Z}/2}(\mathcal{E})} (P, M), \quad \forall p > 0.
\end{equation}

\begin{definition}
For any exact category $\mathcal{E}$, denote by $\mathcal{\widetilde{E}}$ the closure with respect to extensions and quasi-isomorphism classes of the full subcategory of all stalk complexes inside $\mathcal{C}_{\mathbb{Z}/2}(\mathcal{E}).$ 
\end{definition}

We consider the category $\mathcal{\widetilde{P}}.$ Note that it coincides with the intersection $\mathcal{\widetilde{E}} \cap \mathcal{C}_{\mathbb{Z}/2}(\mathcal{P}).$ Indeed, the inclusion $\mathcal{\widetilde{P}} \subset \mathcal{\widetilde{E}} \cap \mathcal{C}_{\mathbb{Z}/2}(\mathcal{P})$ follows from the definition. The inclusion in the other direction will follow from Proposition \ref{widetildee} below. The following lemmas will be very useful for us.

\begin{lemma} \label{z2acprojres}
For any acyclic complex $K \in \mathcal{C}_{\mathbb{Z}/2, ac}(\mathcal{E}),$ there exists a deflation quasi-isomorphism
$\xymatrix{P_K \ar@{->>}[r]^-{\qis} & K,}$
with $P_K \in \mathcal{\widetilde{P}}.$
\end{lemma}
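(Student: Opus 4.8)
The plan is to reduce the statement to its $\mathbb{Z}$-graded counterpart, which has already been established, by using the exact functor $\pi\colon \mathcal{C}^b(\mathcal{E}) \to \mathcal{C}_{\mathbb{Z}/2}(\mathcal{E})$ together with the structure theory of acyclic $\mathbb{Z}/2$-graded complexes of projectives. More precisely, by Corollary \ref{arbproj} (applied in the $\mathbb{Z}$-graded setting, or rather directly via Lemma \ref{projres}), any object of $\mathcal{C}^b(\mathcal{E})$ admits a deflation quasi-isomorphism from a bounded complex of projectives; I want the $\mathbb{Z}/2$-graded analogue for the given acyclic $K$. The first step is therefore to produce \emph{some} deflation quasi-isomorphism $P \stackrel{\qis}{\twoheadrightarrow} K$ with $P$ having projective components: one lifts $K$ degree by degree using that $\mathcal{E}$ has enough projectives, exactly as in the proof of Lemma \ref{projres}, the only change being that there are now two degrees $0,1 \in \mathbb{Z}/2$ instead of a bounded range. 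Since $K$ is acyclic, a short exact sequence / five-lemma argument on homology shows $P$ is acyclic as well, and its kernel has projective components (hence, being an acyclic complex of projectives, is again of the desired type).

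Next I would identify $P$ concretely. By Lemma \ref{acprojdirsum}, any acyclic $\mathbb{Z}/2$-graded complex of projectives is isomorphic to $K_{P_0} \oplus K_{P_1}^*$ for projectives $P_0, P_1 \in \mathcal{P}$; each of $K_{P}$ and $K_{P}^*$ is a stalk-like contractible complex, in particular it is an extension of stalk complexes (it fits in a conflation $P[1] \rightarrowtail K_P \twoheadrightarrow P$ in $\mathbb{Z}/2$-graded degrees, i.e. $P$ in one degree mapping identically to $P$ in the other). Therefore $P \in \mathcal{\widetilde{P}}$ by definition of $\mathcal{\widetilde{P}}$ as the closure under extensions and quasi-isomorphism of stalk complexes. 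This is the key point that makes the target $P_K$ land in $\mathcal{\widetilde{P}}$ rather than merely in $\mathcal{C}_{\mathbb{Z}/2}(\mathcal{P})$.

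Combining these two steps gives the lemma: take $P_K := P$ from the first step, which by the second step lies in $\mathcal{\widetilde{P}}$, and the deflation quasi-isomorphism $P_K \stackrel{\qis}{\twoheadrightarrow} K$ is the one constructed. The main subtlety I anticipate is making sure the degreewise lifting actually produces a chain map that is a \emph{deflation} (componentwise admissible epimorphism) and a \emph{quasi-isomorphism} simultaneously; in the bounded case this is handled by Lemma \ref{projres} / \cite[4.1, Lemma b)]{Kel1}, and in the $\mathbb{Z}/2$-graded case one checks directly that lifting $K^0$ and $K^1$ by projective covers and then taking the mapping-cone-type construction on the differentials yields an acyclic $P$ surjecting onto $K$ with acyclic kernel of projective dimension controlled by the components of $K$. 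Because $K$ has only two components, this degreewise argument is finite and causes no convergence issue; the one genuine input beyond formal nonsense is Lemma \ref{acprojdirsum}, which forces $P$ into the span of contractible complexes $K_{P_i}, K_{P_i}^*$ and hence into $\mathcal{\widetilde{P}}$.
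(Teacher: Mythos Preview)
Your overall strategy matches the paper's: produce a deflation from an acyclic $\mathbb{Z}/2$-graded complex of projectives onto $K$, then invoke Lemma \ref{acprojdirsum} to see that any such complex lies in $\widetilde{\mathcal{P}}$. The second step is exactly how the paper concludes.

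The gap is in your first step. You appeal to ``lifting degree by degree, exactly as in the proof of Lemma \ref{projres},'' but that proof is an induction that needs a rightmost nonzero degree to start from; in the periodic setting there is no such base case, and the paper explicitly warns at the beginning of Section 9 that the $\mathbb{Z}/2$-graded analogue of Lemma \ref{projres} does \emph{not} seem to work in full generality. Your fallback description---projective covers of $K^0,K^1$ plus an unspecified ``mapping-cone-type construction on the differentials''---is too vague: naively lifting the two differentials to maps between the chosen projectives gives no reason for the composites to vanish.

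The paper instead uses the acyclicity of $K$ \emph{in the construction itself}, not merely afterward. From the two conflations $Z^0 \rightarrowtail K^0 \twoheadrightarrow Z^1$ and $Z^1 \rightarrowtail K^1 \twoheadrightarrow Z^0$ it picks projective deflations $p^i\colon P^i \twoheadrightarrow Z^i$, lifts each $p^i$ along the deflation $K^{i+1} \twoheadrightarrow Z^i$ to a map $g^i\colon P^i \to K^{i+1}$, and assembles these into a chain map $K_{P^0}\oplus K_{P^1}^* \to K$ which is a componentwise deflation by the five-lemma applied to diagrams (\ref{pkz}). Since source and target are both acyclic, this deflation is automatically a quasi-isomorphism. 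So the acyclicity hypothesis is what makes the explicit construction possible; it is not, as you phrase it, an a posteriori check that the already-constructed $P$ is acyclic.
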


\begin{proof}
Given an acyclic complex $K = \xymatrix{K^0 \ar@<0.5ex>[r]^{f^0} & K^1 \ar@<0.5ex>[l]^{f^1}} \in \mathcal{C}_{\mathbb{Z}/2}(\mathcal{E}),$
we get two conflations
$$\xymatrix{Z^0 \ar[r]^-{i^0} & K^0 \ar[r]^-{\pi^1} &Z^1, && Z^1 \ar[r]^-{i^1} &K^1 \ar[r]^-{\pi^0} &Z^0.}$$
By (C4'), we have deflations $\xymatrix{P^0 \ar@{->>}[r]^-{p^0} &Z^0}, \xymatrix{P^1 \ar@{->>}[r]^-{p^1} &Z^1},$ with $P^0, P^1 \in \mathcal{P}.$ Now we lift $p^0$ to a morphism $g^0: P^0 \to K^1,$ such that $i^0 \circ g^0 = p^0,$ and similarly we get $g^1: P^1 \to K^0.$ This determines two commutative diagrams:
\begin{equation} \label{pkz}
\xymatrix@R=0.6cm{
*+++{P^1} \ar@{->>}[d] \ar@{>->}[r]  & *+++{P^0 \oplus P^1} \ar@{->}[d] \ar@{->>}[r]  & *+++{P^0} \ar@{->>}[d] & *+++{P^0} \ar@{->>}[d] \ar@{>->}[r]  & *+++{P^0 \oplus P^1} \ar@{->}[d] \ar@{->>}[r]  & *+++{P^1} \ar@{->>}[d] \\
*+++{Z^1}  \ar@{>->}[r]  & *+++{K^0} \ar@{->>}[r]  & *+++{Z^0} & *+++{Z^0}  \ar@{>->}[r]  & *+++{K^1} \ar@{->>}[r]  & *+++{Z^1}\\
}
\end{equation}
By the five-lemma, the two middle vertical arrows are deflations. Also, we get another two commutative diagrams: 
\begin{equation} \label{pk}
\xymatrix@R=0.6cm{
*+++{P^1} \ar@{->>}[d] \ar@{=}[r]  & *+++{P^1} \ar@{->}[d] \ar@{->>}[r]  & *+++{0} \ar@{->>}[d] & *+++{P^0} \ar@{->>}[d] \ar@{=}[r]  & *+++{P^0} \ar@{->}[d] \ar@{->>}[r]  & *+++{0} \ar@{->>}[d] \\
*+++{K^1}  \ar@{>->}[r]  & *+++{K^0} \ar@{->>}[r]  & *+++{K^1} & *+++{K^0}  \ar@{>->}[r]  & *+++{K^1} \ar@{->>}[r]  & *+++{K^0}\\
}
\end{equation}
Consider the complex 
$$P_K = \xymatrix{P^0 \ar@<0.5ex>[r]^{1} & P^0 \ar@<0.5ex>[l]^{0}} \oplus \xymatrix{P^1 \ar@<0.5ex>[r]^{0} & P^1 \ar@<0.5ex>[l]^{1}}.$$
Since diagrams (\ref{pk}) are commutative, we get a deflation $P_K \overset{p'}{\to} K$ that is clearly a quasi-isomorphism. By Lemma \ref{acprojdirsum}, all acyclic complexes with projective components lie in $\mathcal{\widetilde{P}}.$ 
\end{proof}

\begin{lemma} \label{impitilde}
Any $\mathbb{Z}/2-$graded complex in the image $\pi(\mathcal{C}^b(\mathcal{E}))$ can be obtained by a finite sequence of extensions of stalk complexes. In particular, $\pi(\mathcal{C}^b(\mathcal{E}))$ is contained in $\mathcal{\widetilde{E}}.$
\end{lemma}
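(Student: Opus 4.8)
\emph{Proof proposal.}
The plan is to transport the finite dévissage filtration of a bounded complex through the exact functor $\pi$. First I would fix an object $M \in \mathcal{C}^b(\mathcal{E})$ and invoke Lemma~\ref{devis}: via the stupid truncations $\sigma_{\geq n}$, the complex $M$ fits into a finite chain of conflations in $\mathcal{C}^b(\mathcal{E})$ whose successive subquotients are precisely the stalk complexes $M^i[-i]$ for the (finitely many) degrees $i$ with $M^i \neq 0$. Concretely, if $n$ is the top nonzero degree one has the conflation $M^n[-n] \rightarrowtail M \twoheadrightarrow \sigma_{\leq n-1} M$, and one then proceeds by induction on the number of nonzero components of $M$; this is exactly the statement packaged in Lemma~\ref{devis}.

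Next I would apply the exact functor $\pi\colon \mathcal{C}^b(\mathcal{E}) \to \mathcal{C}_{\mathbb{Z}/2}(\mathcal{E})$ to this chain. Since $\pi$ is exact, it carries each of these conflations to a conflation in $\mathcal{C}_{\mathbb{Z}/2}(\mathcal{E})$, so $\pi(M)$ is a finite iterated extension of the objects $\pi(M^i[-i])$. The one point to verify is that each $\pi(M^i[-i])$ is a stalk complex in $\mathcal{C}_{\mathbb{Z}/2}(\mathcal{E})$: by the definition (\ref{impi}) of $\pi$, the $\mathbb{Z}/2$-graded complex $\pi(M^i[-i])$ has $M^i$ in its even component if $i$ is even (resp. in its odd component if $i$ is odd) and $0$ in the other component, while its differentials, being induced from the (zero) differentials of the stalk complex $M^i[-i]$, vanish. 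Hence $\pi(M^i[-i])$ is concentrated in a single $\mathbb{Z}/2$-degree, i.e. it is a stalk complex.

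Finally, the second assertion is immediate from the definition of $\mathcal{\widetilde{E}}$: it is the closure under extensions (and quasi-isomorphism classes) of the full subcategory of all stalk complexes inside $\mathcal{C}_{\mathbb{Z}/2}(\mathcal{E})$, hence it contains every finite iterated extension of stalk complexes, in particular $\pi(M)$. Therefore $\pi(\mathcal{C}^b(\mathcal{E})) \subseteq \mathcal{\widetilde{E}}$.

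I do not expect a serious obstacle here: the argument is purely formal once one records that $\pi$ is exact and sends stalk complexes to stalk complexes. The only mild care needed is the bookkeeping of the dévissage (which degrees survive and in which order the extensions are formed), and this is already supplied by Lemma~\ref{devis}, so there is nothing delicate to grind through.
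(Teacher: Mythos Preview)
Your proposal is correct and is essentially the same approach as the paper's: the paper's proof is a single sentence invoking the principle of finite d\'evissage for bounded complexes together with the exactness of $\pi$, and you have faithfully unpacked exactly these two ingredients (plus the immediate observation that $\pi$ sends stalk complexes to stalk complexes).
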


\begin{proof}
The statement follows from the principle of finite d\'{e}vissage for bounded complexes and from the exactness of the functor $\pi.$
\end{proof}

Now we can verify that $\mathcal{\widetilde{P}}$ and $\mathcal{\widetilde{E}}$ have the desired properties.

\begin{lemma} \label{z2projcool}
For any $P \in \mathcal{\widetilde{P}}, M \in \mathcal{\widetilde{E}},$ the isomorphism (\ref{z2comderext}) holds.
\end{lemma}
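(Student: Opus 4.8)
The plan is to treat this as the $\mathbb{Z}/2$-graded counterpart of Corollary~\ref{projext} (equivalently, of Proposition~\ref{comderext} applied to complexes of projectives) and to reproduce that proof, with the ``stupid truncation'' dévissage — unavailable in the $\mathbb{Z}/2$-graded world — replaced by the dévissage built into the definitions of $\mathcal{\widetilde{P}}$ and $\mathcal{\widetilde{E}}$ as the closures of the stalk complexes under extensions and quasi-isomorphisms.

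The key input is the analogue of the final assertion of Corollary~\ref{projext}: \emph{every acyclic complex of projectives is both a projective and an injective object of the exact category $\mathcal{C}_{\mathbb{Z}/2}(\mathcal{E})$}; in particular $\Ext^p_{\mathcal{C}_{\mathbb{Z}/2}(\mathcal{E})}(K,M)=\Ext^p_{\mathcal{C}_{\mathbb{Z}/2}(\mathcal{E})}(M,K)=0$ for all $p>0$, all $M$, and all acyclic $K\in\mathcal{C}_{\mathbb{Z}/2}(\mathcal{P})$. By Lemma~\ref{acprojdirsum} it suffices to treat $K=K_P$ and $K=K^*_P$ with $P\in\mathcal{P}$, and these are (up to the involution $*$) the free complexes $\forget_\lambda(P,0)$ on the one-component graded objects, where $\forget_\lambda$ is the exact left adjoint of the forgetful functor $\forget\colon\mathcal{C}_{\mathbb{Z}/2}(\mathcal{E})\to{\bf gr}_{\mathbb{Z}/2}(\mathcal{E})$ onto $\mathbb{Z}/2$-graded objects of $\mathcal{E}$, exactly as in the derivation of~(\ref{extcones}). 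Then $\Ext^p_{\mathcal{C}_{\mathbb{Z}/2}(\mathcal{E})}(K_P,M)\cong\Ext^p_{{\bf gr}_{\mathbb{Z}/2}(\mathcal{E})}((P,0),\forget(M))=\Ext^p_{\mathcal{E}}(P,M^0)=0$ for $p>0$ since $P$ is projective; the exact right adjoint $\forget_\rho$ of $\forget$ gives the injectivity statement in the same way. Combining this with Lemma~\ref{z2acprojres} and an induction on the maximal projective dimension of the components of an acyclic complex (as in the proofs of Corollaries~\ref{acproj}--\ref{achomfin}), one gets the slightly stronger fact that $\Ext^p_{\mathcal{C}_{\mathbb{Z}/2}(\mathcal{E})}(P,K)=\Ext^p_{\mathcal{C}_{\mathbb{Z}/2}(\mathcal{E})}(K,P)=0$ for every complex of projectives $P$, every acyclic $K$, and every $p>0$.

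With this in hand I would run a double dévissage on the natural comparison map $\Ext^p_{\mathcal{C}_{\mathbb{Z}/2}(\mathcal{E})}(P,M)\to\Ext^p_{\mathcal{D}_{\mathbb{Z}/2}(\mathcal{E})}(P,M)$, which is defined for every $P\in\mathcal{\widetilde{P}}$ (as such $P$ has projective components) and every $M$. First fix $M$ a stalk complex of $\mathcal{E}$ and vary $P$: the class of $P$ for which the comparison is bijective in all degrees $p>0$ contains the acyclic complexes of projectives (both sides vanish, by the key input), and the stalk complexes of projectives, for which $P=\pi(\text{stalk complex in }\mathcal{C}^b(\mathcal{E}))$ and $M=\pi(\text{stalk complex in }\mathcal{C}^b(\mathcal{E}))$, so that the isomorphisms~(\ref{piext}) together with Corollary~\ref{projext} reduce the claim to the bounded case; it is closed under extensions, by the long exact $\Ext$-sequences on both sides and a diagram chase (at $p=1$ one uses the four-lemma rather than the five-lemma, because $\Hom$ in $\mathcal{C}_{\mathbb{Z}/2}(\mathcal{E})$ and in $\mathcal{D}_{\mathbb{Z}/2}(\mathcal{E})$ need not agree, but the injectivity of the comparison at $p=1$, which the chase itself yields, is all one needs); and it is closed under the quasi-isomorphisms of complexes of projectives appearing in the construction of $\mathcal{\widetilde{P}}$, since a pull-back argument as in the proof of Proposition~\ref{welldef} reduces such a quasi-isomorphism to a deflation quasi-isomorphism whose kernel is an acyclic complex of projectives, against which the key input kills the relevant terms. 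Thus the comparison is bijective for all $P\in\mathcal{\widetilde{P}}$ and all stalk complexes $M$. Now fix such a $P$ and vary $M$: the class of $M$ for which the comparison is bijective in all degrees $p>0$ contains the stalk complexes (just shown), is closed under extensions (the same chase), and is closed under quasi-isomorphisms — using Lemma~\ref{z2acprojres}-type resolutions, a deflation quasi-isomorphism $P'\stackrel{\qis}{\twoheadrightarrow}M$ with $P'\in\mathcal{C}_{\mathbb{Z}/2}(\mathcal{P})$ has acyclic kernel, which $\Ext^{>0}_{\mathcal{C}_{\mathbb{Z}/2}(\mathcal{E})}(P,-)$ annihilates by the strengthened key input, and a pull-back as in Proposition~\ref{welldef} reduces an arbitrary quasi-isomorphism to this case. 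Since $\mathcal{\widetilde{E}}$ is the smallest such closure of stalk complexes, this proves the lemma.

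The genuinely delicate part is this dévissage: with the stupid truncations gone one cannot quote Proposition~\ref{comderext}, and the long-exact-sequence bookkeeping must be carried out by hand inside the artificially defined categories $\mathcal{\widetilde{P}}$ and $\mathcal{\widetilde{E}}$. The two points demanding care are that (i) the comparison is \emph{false} at the $\Hom$-level, so only the comparison in positive degrees propagates and the chase at $p=1$ has to be run through the four-lemma; and (ii) resolving a quasi-isomorphism between complexes that are not themselves complexes of projectives produces kernels without projective components, which is exactly why one first passes, via pull-backs as in Proposition~\ref{welldef}, to quasi-isomorphisms between complexes of projectives, whose kernels are acyclic complexes of projectives. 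The remaining verifications are a routine transcription of the bounded-case arguments of Section~3.
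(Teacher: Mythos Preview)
Your approach is genuinely different from the paper's, and the difference matters. The paper does \emph{not} run a d\'evissage directly on the comparison map $\Ext^p_{\mathcal{C}_{\mathbb{Z}/2}(\mathcal{E})}\to\Ext^p_{\mathcal{D}_{\mathbb{Z}/2}(\mathcal{E})}$. Instead it observes that the cone-resolution argument of Proposition~\ref{comderext} goes through verbatim in the $\mathbb{Z}/2$-graded setting, yielding $\Ext^p_{\mathcal{C}_{\mathbb{Z}/2}(\mathcal{E})}(P,M)=\Hom_{\mathbf{K}_{\mathbb{Z}/2}(\mathcal{E})}(P,\Sigma^p M)$ for any $P$ with projective components. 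The only missing ingredient is then the identity $\Hom_{\mathbf{K}_{\mathbb{Z}/2}(\mathcal{E})}(P,M)=\Hom_{\mathcal{D}_{\mathbb{Z}/2}(\mathcal{E})}(P,M)$ for $P\in\widetilde{\mathcal{P}}$ and $M\in\widetilde{\mathcal{E}}$, and \emph{this} is what the paper establishes by d\'evissage. The advantage is decisive: both $\mathbf{K}_{\mathbb{Z}/2}$ and $\mathcal{D}_{\mathbb{Z}/2}$ are triangulated, so every conflation yields a long exact $\Hom$-sequence extending in both directions, and the five-lemma applies at every degree with no boundary trouble.

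Your direct d\'evissage has two concrete problems. First, the ``key input'' that acyclic complexes of projectives are \emph{injective} objects of $\mathcal{C}_{\mathbb{Z}/2}(\mathcal{E})$ is false: the $\forget_\rho$ adjunction gives $\Ext^p_{\mathcal{C}_{\mathbb{Z}/2}(\mathcal{E})}(M,K_Q)\cong\Ext^p_{\mathcal{E}}(M^1,Q)$, which does not vanish for general $M$. What \emph{is} true---and what you actually need---is that $\Ext^p_{\mathcal{C}_{\mathbb{Z}/2}(\mathcal{E})}(P,K_Q)\cong\Ext^p_{\mathcal{E}}(P^1,Q)=0$ when $P$ has projective components; this follows from the same adjunction plus projectivity of $P^1$, not from any injectivity of $K_Q$. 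Second, your handling of $p=1$ is incomplete. To get \emph{injectivity} of $\Ext^1_{\mathcal{C}}(P_2,M)\to\Ext^1_{\mathcal{D}}(P_2,M)$ from the long exact sequence attached to a conflation $P_1\rightarrowtail P_2\twoheadrightarrow P_3$, the four-lemma requires surjectivity of $\Hom_{\mathcal{C}}(P_1,M)\to\Hom_{\mathcal{D}}(P_1,M)$; the diagram chase does not produce this for free. Your argument can be repaired by carrying the auxiliary statement ``$\Hom_{\mathcal{C}}\to\Hom_{\mathcal{D}}$ is surjective'' through the d\'evissage alongside ``$\Ext^{>0}_{\mathcal{C}}\to\Ext^{>0}_{\mathcal{D}}$ is bijective'' (it holds for the base cases and propagates), but this is exactly the bookkeeping that passing through $\mathbf{K}_{\mathbb{Z}/2}$ eliminates.
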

 
\begin{proof}
All the arguments of the proof of Proposition \ref{comderext}, except for Lemma \ref{homderext}, work for $P \in \mathcal{C}_{\mathbb{Z}/2}(\mathcal{P}), M \in \mathcal{C}_{\mathbb{Z}/2}(\mathcal{E}).$ It remains to prove that, for any $P \in \mathcal{\widetilde{P}}, M \in \mathcal{\widetilde{E}},$ we have 
\begin{equation} \label{z2homderext}
\Hom_{{\bf K}_{\mathbb{Z}/2}(\mathcal{E})}(P, M) = \Hom_{\mathcal{D}_{\mathbb{Z}/2}(\mathcal{E})}(P, M).
\end{equation}
It is easy to check that for $P, M$ stalk complexes, concentrated in degrees $i$ and $j$ respectively, both sides of (\ref{z2homderext}) are isomorphic to $\Hom_{\mathcal{E}} (P^i, M^j)$ if $i = j$ and vanish if $i \neq j.$ By Lemma \ref{acprojdirsum}, any $P \in \mathcal{C}_{\mathbb{Z}/2, ac}(\mathcal{P})$ is an iterated extension of stalk complexes with projective components, thus, by the five-lemma, the identity (\ref{z2homderext}) holds for any such $P$ and stalk $M.$ In all the following steps we will apply the five-lemma in an appropriate way. Firstly, we prove that the identity (\ref{z2homderext}) holds for stalk complexes $M$ and any $P$ from the closure with respect to extensions and quasi-isomorphisms of the full subcategory of stalk complexes in $\mathcal{C}_{\mathbb{Z}/2}(\mathcal{P}),$ i.e. for any $P \in \mathcal{\widetilde{P}}$ and stalk complex $M.$ Then, using once again Lemma \ref{acprojdirsum}, we prove it for any $P \in \mathcal{\widetilde{P}}, M \in \mathcal{C}_{\mathbb{Z}/2, ac}(\mathcal{P}).$ By Lemma \ref{z2acprojres} and induction, we prove the identity for any $P \in \mathcal{\widetilde{P}}, M \in \mathcal{C}_{\mathbb{Z}/2, ac}(\mathcal{E}).$ Applying the five-lemma once more, we prove it in the desired generality, i.e. for any $P \in \mathcal{\widetilde{P}}, M \in \mathcal{\widetilde{E}}.$ 
\end{proof}
 


\begin{proposition} \label{widetildee}
For any object $A \in \mathcal{\widetilde{E}},$ there exists a deflation quasi-isomorphism 
$P \stackrel{\qis}{\twoheadrightarrow} A,$
with $P \in \mathcal{\widetilde{P}}.$
\end{proposition}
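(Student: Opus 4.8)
The plan is to introduce the full subcategory $\mathcal{S}\subseteq\mathcal{C}_{\mathbb{Z}/2}(\mathcal{E})$ of those objects $A$ that admit a deflation quasi-isomorphism $P\stackrel{\qis}{\twoheadrightarrow}A$ with $P\in\widetilde{\mathcal{P}}$, and to prove that $\mathcal{S}$ contains all stalk complexes and is closed under extensions and under quasi-isomorphisms. Since $\widetilde{\mathcal{E}}$ is by definition the smallest such class, this yields $\widetilde{\mathcal{E}}\subseteq\mathcal{S}$, which is exactly the assertion. For a stalk complex $A$, write $A=\pi(A_0)$ for the corresponding stalk complex $A_0\in\mathcal{C}^b(\mathcal{E})$; by Lemma~\ref{projres} (applicable under (C4')) there is a deflation quasi-isomorphism $Q\stackrel{\qis}{\twoheadrightarrow}A_0$ with $Q\in\mathcal{C}^b(\mathcal{P})$, and applying the exact functor $\pi$ gives a deflation quasi-isomorphism $\pi(Q)\stackrel{\qis}{\twoheadrightarrow}A$; moreover $\pi(Q)\in\widetilde{\mathcal{P}}$, because $Q$ is a finite iterated extension of stalk complexes with projective components and $\pi$ is exact (the $\mathcal{P}$-analogue of Lemma~\ref{impitilde}). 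Also, every acyclic complex lies in $\mathcal{S}$ by Lemma~\ref{z2acprojres}.

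The heart of the proof is closure under extensions. Given a conflation $X\rightarrowtail Y\twoheadrightarrow Z$ with $X,Z\in\mathcal{S}$, choose deflation quasi-isomorphisms $P_X\stackrel{\qis}{\twoheadrightarrow}X$ and $P_Z\stackrel{\qis}{\twoheadrightarrow}Z$ with $P_X,P_Z\in\widetilde{\mathcal{P}}$ and acyclic kernels $K_X,K_Z$. Pulling the conflation back along $P_Z\twoheadrightarrow Z$ yields a conflation $X\rightarrowtail Y_0\twoheadrightarrow P_Z$ together with a deflation $Y_0\twoheadrightarrow Y$ whose kernel is $K_Z$, hence a quasi-isomorphism. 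This conflation is classified by a class $\zeta\in\Ext^1_{\mathcal{C}_{\mathbb{Z}/2}(\mathcal{E})}(P_Z,X)$, and the long exact sequence attached to $K_X\rightarrowtail P_X\twoheadrightarrow X$ shows $\zeta$ lifts to some $\tilde\zeta\in\Ext^1_{\mathcal{C}_{\mathbb{Z}/2}(\mathcal{E})}(P_Z,P_X)$, because the obstruction lies in $\Ext^2_{\mathcal{C}_{\mathbb{Z}/2}(\mathcal{E})}(P_Z,K_X)=\Ext^2_{\mathcal{D}_{\mathbb{Z}/2}(\mathcal{E})}(P_Z,K_X)=0$ by Lemma~\ref{z2projcool} (valid for an acyclic second argument, as in its proof) since $K_X$ is acyclic. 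The conflation $P_X\rightarrowtail P_Y\twoheadrightarrow P_Z$ representing $\tilde\zeta$ has $P_Y\in\widetilde{\mathcal{P}}$, as $\widetilde{\mathcal{P}}$ is extension-closed by definition; composing the inflations $K_X\rightarrowtail P_X\rightarrowtail P_Y$ and dividing out by $K_X$ (a Noether isomorphism in the exact category $\mathcal{C}_{\mathbb{Z}/2}(\mathcal{E})$, cf.~\cite{Buh}) produces a deflation $P_Y\twoheadrightarrow Y_0$ with kernel $K_X$, hence a deflation quasi-isomorphism; composing with $Y_0\twoheadrightarrow Y$ gives $Y\in\mathcal{S}$.

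Closure under quasi-isomorphisms then follows easily. A conflation $K\rightarrowtail A\twoheadrightarrow A'$ with $K$ acyclic gives $A'\in\mathcal{S}$, since the composite $P_A\stackrel{\qis}{\twoheadrightarrow}A\twoheadrightarrow A'$ is again a deflation and a quasi-isomorphism (its kernels are acyclic); a conflation $K\rightarrowtail A'\twoheadrightarrow A$ with $K$ acyclic gives $A'\in\mathcal{S}$ by the extension-closure step, since $K,A\in\mathcal{S}$; and an arbitrary quasi-isomorphism decomposes, via the mapping cylinder, into moves of these two types. Hence the class of objects quasi-isomorphic to a member of $\mathcal{S}$ is again contained in $\mathcal{S}$; this is the same reduction that is used implicitly in the proof of Theorem~\ref{zfree}.

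I expect the main obstacle to be the last point: pinning down precisely what "closure with respect to quasi-isomorphism classes" means in $\mathcal{C}_{\mathbb{Z}/2}(\mathcal{E})$, where the stupid and intelligent truncations are unavailable, and verifying that this closure is generated by extensions together with the two acyclic-kernel conflation moves above. The remaining ingredients — that base change and cobase change along a deflation with acyclic kernel again have acyclic kernel, and that $\pi$ carries finite dévissage filtrations to finite dévissage filtrations — are routine manipulations in exact categories and require no new ideas.
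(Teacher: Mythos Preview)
Your argument is correct and follows the same three-step scaffold as the paper: show $\mathcal{S}$ contains stalk complexes, is extension-closed, and is closed under quasi-isomorphism (the paper states exactly these three points, and likewise notes that (iii) is an easy consequence of (ii)). Your treatment of stalk complexes via $\pi$ applied to a bounded projective resolution matches the paper's.

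The one place where your argument genuinely differs is the extension-closure step. The paper proves this as Lemma~\ref{z2etildeprojres} via an explicit mapping-cone construction: it uses the projectivity of the contractible complex $C(1_{\Sigma P_N})$ to lift a map to $N$ through $M\twoheadrightarrow N$, obtains an induced map $f:\Sigma P_N\to L$, lifts $f$ to $f':\Sigma P_N\to P_L$ using $\Ext^1_{\mathcal{C}_{\mathbb{Z}/2}(\mathcal{E})}(\Sigma P_N,A_L)=0$ (again from Lemma~\ref{z2projcool}), and takes $C(f')$ as the resolution of $M$. You instead pull back to get an extension class $\zeta\in\Ext^1(P_Z,X)$ and lift it to $\Ext^1(P_Z,P_X)$ using the vanishing of $\Ext^2_{\mathcal{C}_{\mathbb{Z}/2}(\mathcal{E})}(P_Z,K_X)$. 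Both routes ultimately rest on the same key input, Lemma~\ref{z2projcool} applied with acyclic second argument; yours is a cleaner horseshoe-type argument that avoids the explicit cone diagram, while the paper's construction makes the resolving object more concrete. Your concern about the meaning of ``closure under quasi-isomorphism classes'' is unwarranted here: in this paper the relevant notion is exactly zig-zags of conflations with acyclic kernel (cf.\ the proof of Theorem~\ref{zfree}), and your two moves cover precisely those.
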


\noindent{\it Proof.\/}
To prove the statement, it is enough to show the following:
\begin{itemize}
\item[(i)] for any $\mathbb{Z}/2-$graded stalk complex there exists such a deflation;
\item[(ii)] the condition of the existence of such a deflation is closed under extensions;
\item[(iii)] this condition is stable under quasi-isomorphism.
\end{itemize}

We first prove (i). Without loss of generality, one can consider a stalk complex of the form  $M^{\bullet} = \xymatrix{M \ar@<0.5ex>[r]^{0} & 0 \ar@<0.5ex>[l]^{0}}.$ Then $\pi(P^{\bullet}) \overset{\qis}{\twoheadrightarrow} M^{\bullet}$ is a desired deflation quasi-isomorphism, for 
$$P^{\bullet} :=\ldots \to 0 \to P^{-d} \to P^{-(d-1)} \to \ldots \to P^0 \to 0 \to \ldots,$$
where
$0 \to P^{-d} \to P^{-(d-1)} \to \ldots \to P^0 \to M \to 0$
is a finite projective resolution of $M$ in $\mathcal{E}.$ Point (iii) is an easy consequence of (ii). Thus, it remains to prove point (ii).


\begin{lemma} \label{z2etildeprojres}
Assume given a conflation $L \rightarrowtail M \twoheadrightarrow N$ in $\mathcal{\widetilde{E}},$
whose end terms $L, N$ admit deflation quasi-isomorphisms
$$P_L \stackrel{\qis}{\twoheadrightarrow} L, \quad\quad P_N \stackrel{\qis}{\twoheadrightarrow} N,$$
with $P_L, P_N \in \mathcal{\widetilde{P}}.$ Then the middle term $M$ also admits a deflation quasi-isomorphism
$P(M) \stackrel{\qis}{\twoheadrightarrow} M,$
with $P(M) \in \mathcal{\widetilde{P}}.$
\end{lemma}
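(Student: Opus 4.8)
Here is the plan for proving Lemma~\ref{z2etildeprojres}.

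The plan is to avoid the naive ``horseshoe'' construction and to resolve $M$ in two stages. A word first on why the direct approach fails: to build a deflation quasi-isomorphism $P_L\oplus P_N\twoheadrightarrow M$ one would have to lift $p_N\colon P_N\twoheadrightarrow N$ along the deflation $M\twoheadrightarrow N$, and the obstruction to such a lift is exactly the class of the given conflation in $\Ext^1_{\mathcal{C}_{\mathbb{Z}/2}(\mathcal{E})}(P_N,L)$, which is in general nonzero; and, with stupid truncations unavailable, there is no ``outermost degree'' from which to run an induction that cancels it. This is the main obstacle. It is circumvented by first replacing the quotient term $N$ by its resolution $P_N$ via a pullback, and then lifting the resulting extension class along $p_L$, at which point Lemma~\ref{z2projcool} supplies precisely the surjectivity required.

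For the first step I would write the given conflation as $L\rightarrowtail M\stackrel{g}{\twoheadrightarrow}N$ and form the pullback $M':=M\times_N P_N$ in $\mathcal{C}_{\mathbb{Z}/2}(\mathcal{E})$. Pulling back $p_N$ along $g$ gives a deflation $r\colon M'\twoheadrightarrow M$ with $\ker r\cong\ker p_N$; the latter is acyclic since $p_N$ is a quasi-isomorphism, so $r$ is itself a deflation quasi-isomorphism. Pulling back $g$ along $p_N$ gives a deflation $q\colon M'\twoheadrightarrow P_N$ with $\ker q\cong\ker g=L$, that is, a conflation $L\rightarrowtail M'\stackrel{q}{\twoheadrightarrow}P_N$ in $\mathcal{C}_{\mathbb{Z}/2}(\mathcal{E})$. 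It then suffices to produce a deflation quasi-isomorphism onto $M'$ from an object of $\mathcal{\widetilde{P}}$, since composing with $r$ yields the desired deflation quasi-isomorphism onto $M$.

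For the second step, let $\eta\in\Ext^1_{\mathcal{C}_{\mathbb{Z}/2}(\mathcal{E})}(P_N,L)$ be the class of the conflation $L\rightarrowtail M'\stackrel{q}{\twoheadrightarrow}P_N$. The key point is that the map
$$(p_L)_*\colon\Ext^1_{\mathcal{C}_{\mathbb{Z}/2}(\mathcal{E})}(P_N,P_L)\longrightarrow\Ext^1_{\mathcal{C}_{\mathbb{Z}/2}(\mathcal{E})}(P_N,L)$$
is surjective (indeed bijective): since $P_N\in\mathcal{\widetilde{P}}$ and $P_L,L\in\mathcal{\widetilde{E}}$, Lemma~\ref{z2projcool} identifies both sides with $\Ext^1_{\mathcal{D}_{\mathbb{Z}/2}(\mathcal{E})}(P_N,P_L)$ and $\Ext^1_{\mathcal{D}_{\mathbb{Z}/2}(\mathcal{E})}(P_N,L)$ — these identifications being the canonical comparison maps, hence natural in the second variable — and on the derived side $(p_L)_*$ is induced by the quasi-isomorphism $p_L$, so it is an isomorphism. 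I would then choose $\widetilde{\eta}$ with $(p_L)_*\widetilde{\eta}=\eta$, represent it by a conflation $P_L\stackrel{u}{\rightarrowtail}\widetilde{M}\twoheadrightarrow P_N$, and note that $\widetilde{M}\in\mathcal{\widetilde{P}}$ because $\mathcal{\widetilde{P}}$ is closed under extensions and $P_L,P_N\in\mathcal{\widetilde{P}}$ (its components $\widetilde{M}^i\cong P_L^i\oplus P_N^i$ are automatically projective). Since $p_L$ is a deflation with kernel $\ker p_L$, the pushout of this conflation along $p_L$ is the quotient $\widetilde{M}/u(\ker p_L)$, which by the choice of $\widetilde{\eta}$ is isomorphic to $M'$ as an extension of $P_N$ by $L$; hence $\widetilde{M}\twoheadrightarrow\widetilde{M}/u(\ker p_L)\cong M'$ is a deflation with acyclic kernel $\ker p_L$, i.e.\ a deflation quasi-isomorphism. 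Composing $\widetilde{M}\twoheadrightarrow M'\stackrel{r}{\twoheadrightarrow}M$ finishes the proof. The remaining verifications are routine: that the kernel of a deflation which is a quasi-isomorphism is acyclic; that pullbacks of deflations along deflations and pushouts of inflations along deflations behave as stated, with the indicated kernels (standard for exact categories, cf.\ \cite{Buh}); and the naturality of the comparison maps used above.
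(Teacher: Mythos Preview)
Your proof is correct and follows essentially the same strategy as the paper: both arguments use Lemma~\ref{z2projcool} as the key input and produce the resolving object as an extension of $P_N$ by $P_L$ in $\mathcal{\widetilde P}$, then map it onto $M$ by a deflation with acyclic kernel. The organization differs slightly --- the paper works with an explicit cone construction (covering $P_N$ by the contractible projective $C(1_{\Sigma P_N})$, obtaining a connecting map $\Sigma P_N\to L$, lifting it to $P_L$ via the vanishing $\Ext^1(\Sigma P_N,A_L)=0$, and forming $C(f')$), whereas you first pull back along $p_N$ and then lift the resulting extension class along $(p_L)_*$ --- but the object you build and the paper's $C(f')$ play the same role, and the two instances of Lemma~\ref{z2projcool} invoked are equivalent.
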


\begin{proof}
Since $\Sigma = \Sigma^{-1},$ we have a conflation 
$\Sigma P_N \rightarrowtail C(1_{\Sigma P_N}) \stackrel{\qis}{\twoheadrightarrow} P_N.$
Since $C(1_{\Sigma P_N})$ is contractible with projective components, it is projective in $\mathcal{\widetilde{P}}.$ In particular, the composition 
$C(1_{\Sigma P_N}) \rightarrowtail P_N \stackrel{\qis}{\twoheadrightarrow} N$ lifts, with respect to the deflation 
$M \twoheadrightarrow N,$ to a morphism $C(1_{\Sigma P_N}) \to M.$ We have an induced map of kernels $\Sigma P_N \stackrel{f}\to L,$
completing the commutative diagram
\begin{equation}
\xymatrix@R=0.6cm{
*+++{\Sigma P_N} \ar@{->}[d] \ar@{>->}[r]  & *+++{C(1_{\Sigma P_N})} \ar@{->}[d] \ar@{->>}[r]  & *+++{P_N} \ar@{->>}[d] \\
*+++{L}  \ar@{>->}[r]  & *+++{M} \ar@{->>}[r]  & *+++{N.} \\
}
\end{equation}
Consider the conflation
$A_L \rightarrowtail P_L \stackrel{\qis}{\twoheadrightarrow} L,$
where $A_L$ is acyclic. Since $P_N$ belongs to $\mathcal{\widetilde{P}},$ so does $\Sigma P_N.$ Thus, 
$$\Ext^1_{\mathcal{C}_{\mathbb{Z}/2}(\mathcal{E})} (\Sigma P_N, A_L) = \Ext^1_{\mathcal{D}_{\mathbb{Z}/2}(\mathcal{E})} (\Sigma P_N, A_L) = 0,$$
and $f$ can be lifted to a morphism $f': \Sigma P_N \to P_L.$ The cone $C(f')$ of the morphism $f'$ coincides with the push-out of the diagram 
$\xymatrix{*+++{P_L} & *+++{\Sigma P_N} \ar@{->}[l]_{f'} \ar@{>->}[r] & *+++{C(1_{\Sigma P_N})}}.$
Thus, we get morphisms $C(f') \to P_N$ and $C(f') \to M$ such that the following diagram commutes: 

\begin{equation} 
\xymatrix@R=0.6cm{
*+++{\Sigma P_N} \ar@{->}[d] \ar@{>->}[r] &*+++{C(1_{\Sigma P_N})} \ar@{->}[d] \ar@{->>}[r] & *+++{P_N} \ar@{=}[d]\\
*+++{P_L} \ar@{->>}[d] \ar@{>->}[r]  & *+++{C(f')} \ar@{->}[d] \ar@{->}[r]  & *+++{P_N} \ar@{->>}[d] \\
*+++{L}  \ar@{>->}[r]  & *+++{M} \ar@{->>}[r]  & *+++{N.} \\
}
\end{equation}
The morphism $C(f') \to M$ is a deflation, by the five-lemma applied to the last two rows. The second row is the conflation induced from the first row by the morphism $f'.$ The category $\mathcal{\widetilde{P}}$ being closed under extensions, $C(f')$ lies in $\mathcal{\widetilde{P}}.$ Since $C(1_{\Sigma P_N})$ is acyclic, we find that the deflation $C(f') \twoheadrightarrow M$ is a quasi-isomorphism. This completes the proof of the lemma and of the proposition.
\end{proof}

Now we can prove the inclusion $\mathcal{\widetilde{E}} \cap \mathcal{C}_{\mathbb{Z}/2}(\mathcal{P}) \subset \mathcal{\widetilde{P}},$ mentioned above. We have just shown that for any $P \in \mathcal{\widetilde{E}},$ in $\mathcal{C}_{\mathbb{Z}/2}(\mathcal{E}),$ there exists a conflation $K \rightarrowtail P' \overset{\qis}{\twoheadrightarrow} P,$ with $P' \in \mathcal{\widetilde{P}}.$ If $P$ has projective components, so does $K.$ Thus, this conflation belongs to $\mathcal{C}_{\mathbb{Z}/2}(\mathcal{P})$ and induces a quasi-isomorphism in this category. But $\mathcal{\widetilde{P}}$ is closed with respect to such quasi-isomorphisms, therefore, $P$ also lies in $\mathcal{\widetilde{P}}.$ We proved the desired inclusion. Thus, $\mathcal{\widetilde{E}} \cap \mathcal{C}_{\mathbb{Z}/2}(\mathcal{P}) = \mathcal{\widetilde{P}}.$

In fact, in the two most motivating cases, the category $\mathcal{\widetilde{E}}$ coincides with the entire category $\mathcal{C}_{\mathbb{Z}/2}(\mathcal{E}).$ 

\begin{proposition} \label{abprojetilde}
Assume that either:
\begin{itemize}
\item{} $\mathcal{E}$ is an abelian category or
\item{} $\mathcal{E}$ is the full exact subcategory of projective objects in a hereditary abelian category.
\end{itemize}
Then any object in $\mathcal{C}_{\mathbb{Z}/2}(\mathcal{E})$ is an iterated extension of stalk complexes. In particular, the category $\mathcal{\widetilde{E}}$ equals $\mathcal{C}_{\mathbb{Z}/2}(\mathcal{E}).$
\end{proposition}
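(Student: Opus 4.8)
The plan is to handle both cases at once by peeling off a single stalk complex and recognizing the remainder as lying in the image of the folding functor $\pi$, so that Lemma~\ref{impitilde} takes over. Write $M$ for a $\mathbb{Z}/2$-graded complex with components $M^0,M^1$ and differentials $d^0\colon M^0\to M^1$, $d^1\colon M^1\to M^0$ satisfying $d^0 d^1 = d^1 d^0 = 0$. I want to produce a subobject $N\subseteq M$ in $\mathcal{C}_{\mathbb{Z}/2}(\mathcal{E})$ with $N\in\pi(\mathcal{C}^b(\mathcal{E}))$ and with $M/N$ a stalk complex; then $N$ is an iterated extension of stalk complexes by Lemma~\ref{impitilde}, $M/N$ is one trivially, and hence so is $M$. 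Since $\widetilde{\mathcal{E}}$ contains all stalk complexes and is closed under extensions, this gives $\widetilde{\mathcal{E}}=\mathcal{C}_{\mathbb{Z}/2}(\mathcal{E})$.

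For the construction I would set $Z:=\ker\bigl(d^1\colon M^1\to M^0\bigr)$ and let $N$ be the subcomplex of $M$ which is $M^0$ in degree $0$ and $Z$ in degree $1$. Because $d^1 d^0=0$, the morphism $d^0$ factors uniquely as $M^0\to Z\hookrightarrow M^1$, so $N$ is a genuine $\mathbb{Z}/2$-graded complex, its differential $Z\to M^0$ being zero; and $N$ is precisely $\pi$ applied to the two-term complex $[\,M^0\to Z\,]$ placed in degrees $0$ and $1$, so $N\in\pi(\mathcal{C}^b(\mathcal{E}))$ as soon as $Z\in\mathcal{E}$. The inclusion $N\hookrightarrow M$ is the identity in degree $0$ and $Z\hookrightarrow M^1$ in degree $1$, so its cokernel is the stalk complex concentrated in degree $1$ with value $M^1/Z\cong\operatorname{im}(d^1)$. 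Thus, assuming $Z$ and $\operatorname{im}(d^1)$ lie in $\mathcal{E}$, one obtains a conflation $N\rightarrowtail M\twoheadrightarrow(0\to M^1/Z)$ in $\mathcal{C}_{\mathbb{Z}/2}(\mathcal{E})$ of the required shape.

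The step requiring care — and the only point where the hypotheses are used — is verifying that $Z=\ker(d^1)$ and $M^1/Z\cong\operatorname{im}(d^1)$ are again objects of $\mathcal{E}$, so that $N$ and $(0\to M^1/Z)$ lie in $\mathcal{C}_{\mathbb{Z}/2}(\mathcal{E})$ and the degree-$1$ sequence $0\to Z\to M^1\to M^1/Z\to 0$ is a conflation in $\mathcal{E}$. If $\mathcal{E}$ is abelian this is automatic. If $\mathcal{E}$ is the full exact subcategory $\mathcal{P}$ of projective objects in a hereditary abelian category $\mathcal{A}$, one forms these kernels and images in $\mathcal{A}$; here the hypothesis of global dimension at most one is exactly what forces the subobjects $\ker(d^1)\subseteq M^1$ and $\operatorname{im}(d^1)\subseteq M^0$ of projective objects to be projective, and makes the above short exact sequence a conflation in $\mathcal{P}$. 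I would also carry out the routine checks that $N$ really is a subcomplex (the elementary identities for the two composites of differentials of $N$) and that the identification $N=\pi\bigl([\,M^0\to Z\,]\bigr)$ matches the grading and sign conventions fixed for $\pi$.
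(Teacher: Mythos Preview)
Your argument is correct and uses the same key conflation as the paper: with $Z=\ker(d^1)$, one has $(M^0\to Z)\rightarrowtail M\twoheadrightarrow(0\to M^1/Z)$, the quotient a stalk complex and the subobject an iterated extension of stalks. The only cosmetic differences are that the paper decomposes $(M^0\to Z)$ directly as an extension of two stalk complexes rather than invoking Lemma~\ref{impitilde}, and for the projective case simply cites \cite[Lemma~4.2]{B} instead of spelling out, as you do, that heredity forces $\ker(d^1)$ and $\operatorname{im}(d^1)$ to be projective.
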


\begin{proof}
The second statement is an immediate consequence of \cite[Lemma\,4.2]{B}. Let now $\mathcal{E}$ be abelian. Consider any complex $\xymatrix{M^0 \ar@<0.5ex>[r]^{d^0} & M^1 \ar@<0.5ex>[l]^{d^1}}$ in $\mathcal{C}_{\mathbb{Z}/2}(\mathcal{E}).$ We have a natural conflation: 
$$\xymatrix@C=0.5cm{*++{(M^0} \ar@<0.5ex>[r]^{d^0} & *++{Z^1)} \ar@<0.5ex>[l]^{0}  \ar@{>->}[r] & *++{(M^0} \ar@<0.5ex>[r]^{d^0} & *++{M^1)} \ar@<0.5ex>[l]^{d^1} \ar@{->>}[r] & *++{(0} \ar@<0.5ex>[r] & *++{{M_1}/{Z^1})} \ar@<0.5ex>[l]},$$
where $Z^1$ is the kernel of $d^1.$
The last term is a stalk complex, the first one is an extension of $\xymatrix{0 \ar@<0.5ex>[r]^{0} & Z^1 \ar@<0.5ex>[l]^{0}}$ by $\xymatrix{M^0 \ar@<0.5ex>[r]^{0} & 0 \ar@<0.5ex>[l]^{0}}.$ 
\end{proof} 

\subsection{Construction of the  algebra $\mathcal{SDH}_{\mathbb{Z}/2}(\mathcal{E})$}

In order to construct $\mathcal{SDH}_{\mathbb{Z}/2}(\mathcal{E}),$ we first define the Euler form 
$$\left\langle \cdot, \cdot \right\rangle: K_0(\mathcal{C}_{\mathbb{Z}/2, ac}(\mathcal{E}) \times K_0(\mathcal{\widetilde{E}}) \to \mathbb{Q}^\times, \quad \left\langle [K], [A] \right\rangle := \prod\limits_{p=0}^{+\infty} |\Ext^{p}_{\mathcal{C}_{\mathbb{Z}/2}(\mathcal{E})} (K, A)|^{(-1)^p}.$$
By the same rule, we define the form
$$\left\langle \cdot, \cdot \right\rangle: K_0(\mathcal{\widetilde{E}}) \times K_0(\mathcal{C}_{\mathbb{Z}/2, ac}(\mathcal{E}) \to \mathbb{Q}^\times.$$
These two forms coincide on $K_0(\mathcal{C}_{\mathbb{Z}/2, ac}(\mathcal{E}) \times K_0(\mathcal{C}_{\mathbb{Z}/2, ac}(\mathcal{E}),$ thus we can denote them by the same symbol and call each of them the Euler form.

\begin{proposition} \label{z2euler}
Each of the Euler forms $\left\langle \cdot, \cdot \right\rangle$ defined above is a well-defined group homomorphism.
\end{proposition}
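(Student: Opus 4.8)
The plan is to reduce, as in the bounded case, to two facts about the exact category $\mathcal{C}_{\mathbb{Z}/2}(\mathcal{E})$ and then let the rest be formal bookkeeping with long exact $\Ext$-sequences. First, each group $\Ext^{p}_{\mathcal{C}_{\mathbb{Z}/2}(\mathcal{E})}(K,A)$ (and $\Ext^{p}_{\mathcal{C}_{\mathbb{Z}/2}(\mathcal{E})}(A,K)$), for $K\in\mathcal{C}_{\mathbb{Z}/2,ac}(\mathcal{E})$ and $A\in\widetilde{\mathcal{E}}$, is a subquotient of a $\Hom$-group in $\mathcal{C}_{\mathbb{Z}/2}(\mathcal{E})$, hence finite by (C2); so every factor of the alternating product is a positive integer and the form lands in $\mathbb{Q}^{\times}$ as soon as all but finitely many factors are $1$. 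The substantive point, then, is a $\mathbb{Z}/2$-graded version of local homological finiteness (Lemma~\ref{homfin}): there is $n_0$ with $\Ext^{p}_{\mathcal{C}_{\mathbb{Z}/2}(\mathcal{E})}(K,A)=0=\Ext^{p}_{\mathcal{C}_{\mathbb{Z}/2}(\mathcal{E})}(A,K)$ for all $p>n_0$. Once this is in hand, bi-additivity is the usual argument: a conflation $K'\rightarrowtail K\twoheadrightarrow K''$ of acyclics (or of objects of $\widetilde{\mathcal{E}}$ in the appropriate slot) gives a long exact $\Ext$-sequence with only finitely many nonzero terms, and taking the alternating product of cardinalities yields $\langle K,A\rangle=\langle K',A\rangle\cdot\langle K'',A\rangle$, and symmetrically in the other variable and for the other form; hence each form descends to the Grothendieck groups and is a homomorphism into $\mathbb{Q}^{\times}$.

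To prove the vanishing I would manufacture a \emph{finite} resolution by objects of $\widetilde{\mathcal{P}}$. By Lemma~\ref{z2acprojres} (resp.\ Proposition~\ref{widetildee}) there is a deflation quasi-isomorphism $P_0\twoheadrightarrow K$ (resp.\ $P_0\twoheadrightarrow A$) with $P_0\in\widetilde{\mathcal{P}}$, and in the first case $P_0$ is even contractible. Its kernel $K_1$ is acyclic, and reading off the degreewise conflations in $\mathcal{E}$ one sees that the projective dimensions of the components of $K_1$ drop by one (here (C4') ensures these are finite). Iterating on the successive acyclic kernels, exactly as in Lemma~\ref{acproj}, produces an acyclic complex of complexes
\[
0\to P_d\to\cdots\to P_1\to P_0\to K\to 0,
\]
with $P_1,\dots,P_d$ contractible complexes of projectives, hence objects of $\widetilde{\mathcal{P}}$ by Lemma~\ref{acprojdirsum}, and $d$ the maximal projective dimension of a component of $K$ (in the $\Ext(A,K)$ case the term $P_0$ is merely a deflation quasi-isomorphism from an object of $\widetilde{\mathcal{P}}$, while $P_1,\dots,P_d$ are contractible).

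It remains to check this resolution computes the relevant $\Ext$-groups, i.e.\ that its terms are $\Hom_{\mathcal{C}_{\mathbb{Z}/2}(\mathcal{E})}(-,A)$-acyclic (resp.\ $\Hom_{\mathcal{C}_{\mathbb{Z}/2}(\mathcal{E})}(-,K)$-acyclic). This is where Lemma~\ref{z2projcool} does the work: for $P_j\in\widetilde{\mathcal{P}}$ and any $Y\in\widetilde{\mathcal{E}}$ one has $\Ext^{p}_{\mathcal{C}_{\mathbb{Z}/2}(\mathcal{E})}(P_j,Y)=\Ext^{p}_{\mathcal{D}_{\mathbb{Z}/2}(\mathcal{E})}(P_j,Y)$ for $p>0$. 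Taking $Y=A$ kills the right-hand side for $j=0,\dots,d$ because each $P_j$ is contractible, hence zero in $\mathcal{D}_{\mathbb{Z}/2}(\mathcal{E})$; taking $Y=K$ — which lies in $\widetilde{\mathcal{E}}$, since by Lemma~\ref{z2acprojres} every acyclic complex receives a quasi-isomorphism from $\widetilde{\mathcal{P}}$ — kills $\Ext^{>0}(P_j,K)$ for $j\ge 1$ by contractibility of $P_j$, and $\Ext^{>0}_{\mathcal{C}_{\mathbb{Z}/2}(\mathcal{E})}(P_0,K)$ because $K$ is acyclic, hence zero in $\mathcal{D}_{\mathbb{Z}/2}(\mathcal{E})$, while $P_0\in\widetilde{\mathcal{P}}$. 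Therefore $\Ext^{p}_{\mathcal{C}_{\mathbb{Z}/2}(\mathcal{E})}(K,A)$ (resp.\ $\Ext^{p}_{\mathcal{C}_{\mathbb{Z}/2}(\mathcal{E})}(A,K)$) is the $p$-th cohomology of $\Hom_{\mathcal{C}_{\mathbb{Z}/2}(\mathcal{E})}(P_{\bullet},A)$ (resp.\ of $\Hom_{\mathcal{C}_{\mathbb{Z}/2}(\mathcal{E})}(P_{\bullet},K)$), which is concentrated in degrees $\le d$; this is the desired local homological finiteness, with $n_0=d$.

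The main obstacle is exactly the place where the $\mathbb{Z}$-graded machinery breaks: stupid truncations no longer exist, so the dévissage that underlies Lemma~\ref{homfin} is not directly available. The way around it is to run the induction on componentwise projective dimension through Lemma~\ref{z2acprojres}, and to substitute for ``acyclic complexes of projectives are projective objects of $\mathcal{C}^{b}(\mathcal{P})$'' (Corollary~\ref{projext}) the weaker but sufficient statement that contractible complexes of projectives behave like projectives \emph{against all of $\widetilde{\mathcal{E}}$}, which is precisely the content of Lemma~\ref{z2projcool}. Verifying that these two ingredients dovetail — in particular that the resolution above really has terms in $\widetilde{\mathcal{P}}$ and really is $\Hom$-acyclic on the relevant arguments — is the crux; everything after that is the same Euler-characteristic argument as for $\mathcal{H}(\mathcal{E})$.
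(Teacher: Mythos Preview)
Your proposal is correct and follows essentially the same route as the paper's proof: build a finite-length resolution of $K$ (respectively of $A$) by objects of $\widetilde{\mathcal{P}}$ using Lemma~\ref{z2acprojres} and Proposition~\ref{widetildee}, iterate on the acyclic kernels exactly as in Lemma~\ref{acproj}/Corollary~\ref{arbproj} with the componentwise projective dimension dropping at each step, and conclude vanishing of the higher $\Ext$-groups via Lemma~\ref{z2projcool}, after which bi-additivity is the standard five-lemma argument. The paper phrases the second form slightly differently (``as in Lemma~\ref{arbproj} and Corollary~\ref{acext}'') and records the bound as $d+1$, but the underlying mechanism is the same as yours.
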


\begin{proof}
If such an Euler form is well-defined, it is a group homomorphism by the five-lemma; thus, we should verify only that in both of these two alternating products all but a finite number of factors vanish, for all $K, A.$ For this we use Lemma \ref{z2acprojres}: for any given acyclic $K = \xymatrix{K^0 \ar@<0.5ex>[r]^{d^0} & K^1 \ar@<0.5ex>[l]^{d^1}},$ we can find a deflation $P_0 \twoheadrightarrow K,$ with $P_0 \in \mathcal{C}_{\mathbb{Z}/2, ac}(\mathcal{P}).$ As in Lemma \ref{acproj}, we then prove that, 
for each complex $K \in \mathcal{C}_{\mathbb{Z}/2,ac}(\mathcal{E}),$ there exists an acyclic complex of complexes
\begin{equation} \label{z2acprres}
0 \to P_d \to P_{d-1} \to \ldots \to P_1 \to P_0 \to K \to 0,
\end{equation}
where $P_i \in \mathcal{C}_{\mathbb{Z}/2, ac}(\mathcal{P}), i = 0, 1,\ldots, d, \quad d$ is the maximum of the projective dimensions of $K^0$ and $K^1.$ 
As in Corollary \ref{achomfin}, from this it follows that for any $A \in \mathcal{\widetilde{E}},$ we have 
$\Ext^p(K, A) = 0, \quad \forall p > d,$ as desired.
Moreover, as in Lemma \ref{arbproj} and Corollary \ref{acext}, we show that
$$\Ext^p_{\mathcal{C}_{\mathbb{Z}/2}(\mathcal{E})} (A, K) = \Ext^p_{\mathcal{D}_{\mathbb{Z}/2}(\mathcal{E})} (A, K) = 0, \quad \forall p > d + 1,$$
proving the well-definedness of the second form. 
\end{proof}

Define the quantum torus of $\mathbb{Z}/2-$graded acyclic complexes $\mathbb{T}_{\mathbb{Z}/2, ac}(\mathcal{E})$ as the $\mathbb{Q}-$group algebra of $K_0(\mathcal{C}_{\mathbb{Z}/2, ac}(\mathcal{E})),$ with the multiplication twisted by the inverse of the Euler form, i.e. the product of classes of acyclic complexes $K_1, K_2 \in \mathcal{C}_{\mathbb{Z}/2, ac}(\mathcal{E})$ is defined as follows: 
$$[K_1] \diamond [K_2] := \frac{1}{\left\langle K_1, K_2 \right\rangle} [K_1 \oplus K_2].$$ 

\begin{warning}
The canonical map $i': K_0(\mathcal{C}_{\mathbb{Z}/2, ac}(\mathcal{E})) \to K_0 (\mathcal{\widetilde{E}})$ is not injective. E.g., we have
\begin{equation} \label{notinj}
i'([\xymatrix{X \ar@<0.5ex>[r]^{1} & X \ar@<0.5ex>[l]^{0}}]) = [\xymatrix{X \ar@<0.5ex>[r]^{0} & 0 \ar@<0.5ex>[l]^{0}}] + [\xymatrix{0 \ar@<0.5ex>[r]^{0} & X \ar@<0.5ex>[l]^{0}}] = i'([\xymatrix{X \ar@<0.5ex>[r]^{0} & X \ar@<0.5ex>[l]^{1}}]).
\end{equation}
Cf. the proof of Theorem \ref{z2assocfree}. Moreover, the Euler form cannot be defined as before on the whole product $K_0(\mathcal{\widetilde{E}}) \times K_0(\mathcal{\widetilde{E}}),$ if $\mathcal{E}$ has non-trivial Euler form. 
\end{warning}

Indeed, assume that we have a group homomorphism 
$$\left\langle \cdot, \cdot \right\rangle: K_0(\mathcal{\widetilde{E}}) \times K_0(\mathcal{\widetilde{E}}) \to \mathbb{Q}^\times, \quad \left\langle [K], [A] \right\rangle := \prod\limits_{p=0}^{+\infty} |\Ext^{p}_{\mathcal{C}_{\mathbb{Z}/2}(\mathcal{E})} (K, A)|^{(-1)^p}.$$
If $\left\langle Z, Y \right\rangle_{\mathcal{E}} \neq 0,$ for certain $Z, Y \in \mathcal{E},$ it is easy to check that 
$$\left\langle [\xymatrix{Z \ar@<0.5ex>[r]^{1} & Z \ar@<0.5ex>[l]^{0}}], [\xymatrix{Y \ar@<0.5ex>[r]^{1} & 0 \ar@<0.5ex>[l]^{0}}] \right\rangle = \left\langle Z, Y \right\rangle_{\mathcal{E}} \neq 0 = \left\langle [\xymatrix{Z \ar@<0.5ex>[r]^{0} & Z \ar@<0.5ex>[l]^{1}}], [\xymatrix{Y \ar@<0.5ex>[r]^{1} & 0 \ar@<0.5ex>[l]^{0}}] \right\rangle,$$ 
but the classes of $\xymatrix{Z \ar@<0.5ex>[r]^{1} & Z \ar@<0.5ex>[l]^{0}}$ and $\xymatrix{Z \ar@<0.5ex>[r]^{0} & Z \ar@<0.5ex>[l]^{1}}$ in $K_0(\mathcal{\widetilde{E}})$ coincide, cf. (\ref{notinj}).
Here by $\left\langle \cdot, \cdot \right\rangle_{\mathcal{E}}$ we mean the Euler form of the category $\mathcal{E}.$
\smallskip\par

Define the {\it quantum affine space of} $\mathbb{Z}/2-${\it graded acyclic complexes} $\mathbb{A}_{\mathbb{Z}/2, ac}(\mathcal{E})$ as the $\mathbb{Q}-$monoid algebra of the Grothendieck monoid $M_0(\mathcal{C}_{\mathbb{Z}/2, ac}(\mathcal{E})),$ with the multiplication twisted by the inverse of the Euler form.
Define the {\it left relative Grothendieck monoid} $M_{0}' (\mathcal{\widetilde{E}})$ as the free monoid generated by the set $\Iso(\mathcal{\widetilde{E}}),$ divided by the following set of relations:
$$\left\langle [L] = [K \oplus M] | K \rightarrowtail L \twoheadrightarrow M \quad \mbox{is a conflation}, K \in \mathcal{C}_{\mathbb{Z}/2,ac}(\mathcal{E}) \right\rangle.$$ 
Similarly, define the {\it left relative Grothendieck group} $K_{0}' (\mathcal{\widetilde{E}})$ as the free group generated by the set $\Iso(\mathcal{\widetilde{E}}),$ divided by the following set of relations:
$$\left\langle [K] - [L] + [M] = 0 | K \rightarrowtail L \twoheadrightarrow M \quad \mbox{is a conflation}, K \in \mathcal{C}_{\mathbb{Z}/2,ac}(\mathcal{E}) \right\rangle.$$
Consider the $\mathbb{Q}-$vector space with basis parametrized by the elements of $M_{0}' (\mathcal{\widetilde{E}}).$ Define on this space a structure of a  bimodule $\mathcal{M'}_{\mathbb{Z}/2}(\mathcal{E})$ over $\mathbb{A}_{\mathbb{Z}/2, ac}(\mathcal{E})$ by the rule 
$$[K] \diamond [M] := \frac{1}{\left\langle K, M \right\rangle} [K \oplus M], \quad\quad [M] \diamond [K] := \frac{1}{\left\langle M, K \right\rangle} [M \oplus K]$$
for $K \in \mathcal{C}_{\mathbb{Z}/2, ac}(\mathcal{E}), M \in \mathcal{\widetilde{E}}.$ Then $\mathcal{M}_{\mathbb{Z}/2}(\mathcal{E}) := \mathbb{T}_{\mathbb{Z}/2, ac}(\mathcal{E}) \otimes_{\mathbb{A}_{\mathbb{Z}/2, ac}(\mathcal{E})} \mathcal{M'}_{\mathbb{Z}/2}(\mathcal{E}) \otimes_{\mathbb{A}_{\mathbb{Z}/2, ac}(\mathcal{E})} \mathbb{T}_{\mathbb{Z}/2, ac}(\mathcal{E})$ is a bimodule over the quantum torus $\mathbb{T}_{\mathbb{Z}/2, ac}(\mathcal{E}).$ 

\begin{theorem} \label{z2free}
$\mathcal{M}_{\mathbb{Z}/2}(\mathcal{E})$ is a free right module over $\mathbb{T}_{\mathbb{Z}/2, ac}(\mathcal{E}).$ Each choice of representatives of the quasi-isomorphism classes yields a basis.
\end{theorem}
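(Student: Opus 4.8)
The plan is to mimic the proof of Theorem~\ref{zfree}, systematically replacing the Grothendieck group $K_0(\mathcal{C}^b(\mathcal{E}))$ by the left relative Grothendieck group $K_0'(\mathcal{\widetilde{E}})$; this substitution is forced on us, since, as the Warning explains, the obvious $\mathbb{Z}/2$-analogue of Lemma~\ref{k0inj} (with $K_0(\mathcal{\widetilde{E}})$ in place of $K_0(\mathcal{C}^b(\mathcal{E}))$) is false. As a first step I would note that every relation built into $\mathcal{M}_{\mathbb{Z}/2}(\mathcal{E})$ — both those defining $M_0'(\mathcal{\widetilde{E}})$ and the Euler twist — identifies classes of quasi-isomorphic complexes, so that $\mathcal{M}_{\mathbb{Z}/2}(\mathcal{E}) = \bigoplus_\alpha \mathcal{M}_\alpha$ as right $\mathbb{T}_{\mathbb{Z}/2, ac}(\mathcal{E})$-modules, the sum ranging over the quasi-isomorphism classes $\alpha$ of objects of $\mathcal{\widetilde{E}}$, and that it is enough to show each $\mathcal{M}_\alpha$ is free of rank one.

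For the generation statement, fix $M \in \mathcal{\widetilde{E}}$ in the class $\alpha$; given any quasi-isomorphic $M' \in \mathcal{\widetilde{E}}$, I would connect $M$ and $M'$ inside $\mathcal{\widetilde{E}}$ by a chain of conflations with acyclic kernel. Using Proposition~\ref{widetildee} one passes to deflation quasi-isomorphic objects $P_M, P_{M'} \in \mathcal{\widetilde{P}}$; by Lemma~\ref{z2projcool} the isomorphism $P_M \overset\sim\to P_{M'}$ in $\mathcal{D}_{\mathbb{Z}/2}(\mathcal{E})$ is realized by an honest quasi-isomorphism of complexes, and its mapping cylinder (together with the cone, which has projective components and hence lies in $\mathcal{\widetilde{P}}$ by Lemma~\ref{acprojdirsum}) produces the required conflations, exactly as in Theorem~\ref{zfree}. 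A conflation $K \rightarrowtail L \twoheadrightarrow N$ with $K$ acyclic yields $[L] = \langle K, N \rangle\,[K]\diamond[N]$ with $[K]$ invertible in $\mathbb{T}_{\mathbb{Z}/2, ac}(\mathcal{E})$; hence $[M'] \in \mathbb{T}_{\mathbb{Z}/2, ac}(\mathcal{E})\diamond[M]$, and $[K]\mapsto[K]\diamond[M]$ maps $\mathbb{T}_{\mathbb{Z}/2, ac}(\mathcal{E})$ onto $\mathcal{M}_\alpha$.

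For independence I would use the canonical $\mathbb{T}_{\mathbb{Z}/2, ac}(\mathcal{E})$-linear map $\mathcal{M}_{\mathbb{Z}/2}(\mathcal{E}) \to \mathbb{Q}[K_0'(\mathcal{\widetilde{E}})]$ sending the class of a complex to its class in $K_0'(\mathcal{\widetilde{E}})$; this is well-defined because the relations of $\mathcal{M}_{\mathbb{Z}/2}(\mathcal{E})$ are precisely the relations of $K_0'(\mathcal{\widetilde{E}})$ and because $\mathbb{Q}[K_0'(\mathcal{\widetilde{E}})]$ carries a $\mathbb{T}_{\mathbb{Z}/2, ac}(\mathcal{E})$-module structure via the Euler form of Proposition~\ref{z2euler}, in which the classes of acyclic complexes are invertible (being group elements). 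The composite $\mathbb{T}_{\mathbb{Z}/2, ac}(\mathcal{E}) \twoheadrightarrow \mathcal{M}_\alpha \to \mathbb{Q}[K_0'(\mathcal{\widetilde{E}})]$ carries the basis element indexed by $\gamma \in K_0(\mathcal{C}_{\mathbb{Z}/2, ac}(\mathcal{E}))$ to a nonzero scalar multiple of the group element $i'(\gamma)$ translated by the class of $M$; since distinct elements of an abelian group are $\mathbb{Q}$-linearly independent in its group algebra, this composite — hence the surjection from the quantum torus — is injective provided the natural map $i'\colon K_0(\mathcal{C}_{\mathbb{Z}/2, ac}(\mathcal{E})) \to K_0'(\mathcal{\widetilde{E}})$ is injective.

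Everything thus reduces to that injectivity, the $\mathbb{Z}/2$-graded substitute for Lemma~\ref{k0inj}, and this is the main obstacle. I would first compute $K_0(\mathcal{C}_{\mathbb{Z}/2, ac}(\mathcal{E}))$: the acyclic resolution~\eqref{z2acprres} by acyclic complexes of projectives writes every class as an alternating sum of classes $[K_P]$, $[K_Q^*]$, and Lemma~\ref{acprojdirsum} (uniqueness of the decomposition $K_P\oplus K_Q^*$), together with the splitting of conflations onto contractible complexes, identifies $K_0(\mathcal{C}_{\mathbb{Z}/2, ac}(\mathcal{E}))$ with $K_0(\mathcal{E})\oplus K_0(\mathcal{E})$, freely generated by these two families. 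To see the generators survive in $K_0'(\mathcal{\widetilde{E}})$ I would construct a homomorphism $K_0'(\mathcal{\widetilde{E}}) \to K_0(\mathcal{E})\oplus K_0(\mathcal{E})$ left-inverse to $i'$, sending $A \in \mathcal{\widetilde{E}}$ to the pair of projective classes read off from a $\mathbb{Z}/2$-graded projective resolution of $A$ (Proposition~\ref{widetildee}, Lemma~\ref{z2acprojres}) via Lemma~\ref{acprojdirsum}, and then checking independence of the resolution and compatibility with acyclic-kernel conflations, using Lemma~\ref{z2projcool} to control the relevant $\Ext$-groups. The delicate point — and the precise reason the $\mathbb{Z}/2$-graded case is genuinely harder — is this well-definedness: there is no truncation functor on $\mathcal{C}_{\mathbb{Z}/2}(\mathcal{E})$ to imitate the clean splitting $K_0(\mathcal{C}^b(\mathcal{E}))\cong\coprod_{\mathbb{Z}}K_0(\mathcal{E})$ of Lemma~\ref{k0inj}, and the class of an acyclic complex already fails to separate $K_X$ from $K_X^*$ in $K_0(\mathcal{\widetilde{E}})$ (Warning); passing to $K_0'(\mathcal{\widetilde{E}})$, where the spurious relation $[K_X]=[K_X^*]$ is dropped because the conflation realizing it has non-acyclic kernel, restores the needed information, and Lemma~\ref{acprojdirsum} and Proposition~\ref{widetildee} are exactly what make the ``projective content'' homomorphism on $K_0'(\mathcal{\widetilde{E}})$ well-defined.
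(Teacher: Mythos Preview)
Your proposal follows the same strategy as the paper: decompose $\mathcal{M}_{\mathbb{Z}/2}(\mathcal{E})$ by quasi-isomorphism class, reduce freeness of each component to the injectivity of the natural map $i\colon K_0(\mathcal{C}_{\mathbb{Z}/2,ac}(\mathcal{E})) \to K_0'(\widetilde{\mathcal{E}})$ (the paper states this as Lemma~\ref{k0'inj}), and prove injectivity by constructing a retraction $\Phi$. Your generation argument is actually more explicit than the paper's, which simply says one mimics Theorem~\ref{zfree}.

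Where your sketch diverges from the paper is in the construction of $\Phi$, and here the paper supplies a concrete mechanism that your outline lacks. You propose to send $A \in \widetilde{\mathcal{E}}$ to ``the pair of projective classes read off from a $\mathbb{Z}/2$-graded projective resolution via Lemma~\ref{acprojdirsum}''; but Lemma~\ref{acprojdirsum} only decomposes \emph{acyclic} complexes of projectives, and the resolving object $P_M \in \widetilde{\mathcal{P}}$ is typically not acyclic, so it does not directly yield such a pair. The paper's device is a Krull--Schmidt argument: since $\widetilde{\mathcal{P}}$ is Krull--Schmidt, every $P \in \widetilde{\mathcal{P}}$ has a well-defined ``acyclic part'' (the sum of its acyclic indecomposable summands), and because acyclic complexes of projectives are projective objects in $\widetilde{\mathcal{P}}$, every conflation with acyclic kernel splits there, so this gives a homomorphism $\Phi_{\mathcal{P}}\colon K_0'(\widetilde{\mathcal{P}}) \to K_0(\mathcal{C}_{\mathbb{Z}/2,ac}(\mathcal{P}))$. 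One then sets $\Phi([M]) := \Phi_{\mathcal{P}}([P_M]) - [A_M]$ for any conflation $A_M \rightarrowtail P_M \stackrel{\qis}{\twoheadrightarrow} M$ with $P_M \in \widetilde{\mathcal{P}}$, and checks independence of the resolution (pullback/$3\times 3$-diagram, as in Proposition~\ref{welldef}) and additivity on conflations with acyclic kernel (horseshoe, using $\Ext^1(P_M,K)=0$ from Lemma~\ref{z2projcool}). You correctly flag well-definedness as the delicate point and name the right ingredients, but the Krull--Schmidt extraction of the acyclic part is the one genuine idea your outline is missing.
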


\begin{proof}
The proof of Theorem \ref{zfree} would work if Lemma \ref{k0inj} did not fail: the natural map $i': K_0(\mathcal{C}_{\mathbb{Z}/2, ac}(\mathcal{E})) \to K_0 (\mathcal{C}_{\mathbb{Z}/2}(\mathcal{E}))$ is not injective; cf. the above warning. 
Note that nonetheless, this injectivity has been used only to prove that there is a grading on the whole module compatible with the grading of any quasi-isomorphism component $\mathcal{M}_{\alpha} (\mathcal{E})$ (if we fix a representative $E$ such that $\overline{E} = \alpha$) by $K_0(\mathcal{C}^b_{ac}(\mathcal{E})).$ Note that the grading we have chosen in the $\mathbb{Z}-$graded case can be now refined: indeed, $\mathcal{M}_{\mathbb{Z}/2}(\mathcal{E})$ is naturally graded by $K_0'(\mathcal{\widetilde{E}}).$ The following lemma provides the compatibility of the $K_0(\mathcal{C}_{\mathbb{Z}/2, ac}(\mathcal{E}))-$grading of any quasi-isomorphism component with this one.

\begin{lemma} \label{k0'inj}
The natural map 
$$i: K_0(\mathcal{C}_{\mathbb{Z}/2, ac}(\mathcal{E})) \rightarrow K_0'(\mathcal{\widetilde{E}}), [M] \mapsto [M]$$
is injective.
\end{lemma}

\begin{proof}
We will define a map 
$\Phi: K_0'(\mathcal{\widetilde{E}}) \rightarrow K_0(\mathcal{C}_{\mathbb{Z}/2, ac}(\mathcal{E})),$
such that 
\begin{equation} \label{phii}
\Phi \circ i = \mbox{Id}_{K_0(C_{\mathbb{Z}/2, ac}(\mathcal{E}))}.
\end{equation}
We start by constructing such a map for complexes with projective components, i.e. we construct a retraction $\Phi_{\mathcal{P}}$ for 
$$i_{\mathcal{P}}: K_0(\mathcal{C}_{\mathbb{Z}/2, ac} (\mathcal{P})) \rightarrow K_0'(\mathcal{\widetilde{P}}), [M] \mapsto [M].$$
Since acyclic complexes with projective components are projective objects in $\mathcal{\widetilde{P}},$ all relations in $K_0'(\mathcal{\widetilde{P}})$ come actually from direct sums. This yields a natural projection map 
$$p: K_0'(\mathcal{\widetilde{P}}) \twoheadrightarrow K_0^{split}(\mathcal{\widetilde{P}}), [M] \mapsto [M].$$
Since $\mathcal{P}$ is Krull-Schmidt category, so is $\mathcal{C}_{\mathbb{Z}/2}(\mathcal{P})$ and, therefore, $\mathcal{\widetilde{P}}.$ Therefore, one can define an ``acyclic part'' of a complex with projective components: each object $M \in \mathcal{\widetilde{P}}$ can be decomposed in a unique way (up to a permutation of factors) into a finite direct sum of indecomposable complexes:
$$M = \bigoplus\limits_{i=1}^{m(M)} M_i \oplus \bigoplus\limits_{j=1}^{k(M)} M_j',$$
where all $M_i$ are acyclic while the $M_j'$ are not. Then 
$$\phi: K_0^{split}(\mathcal{\widetilde{P}}) \twoheadrightarrow K_0^{split}(\mathcal{C}_{\mathbb{Z}/2, ac}(\mathcal{P})) = K_0(\mathcal{C}_{\mathbb{Z}/2, ac}(\mathcal{P})), [M] \mapsto \bigoplus\limits_{i=1}^{m(M)} M_i$$
is a well-defined group epimorphism, and for $\Phi_{\mathcal{P}} := \phi \circ p$ we get 
$$\Phi_{\mathcal{P}} \circ i_{\mathcal{P}} = \mbox{Id}_{K_0(\mathcal{C}_{\mathbb{Z}/2, ac}(\mathcal{P}))}.$$

Given any complex $M \in \mathcal{\widetilde{E}},$ by Proposition \ref{widetildee}, we have at least one deflation quasi-isomorphism $P_M \stackrel{\qis}{\twoheadrightarrow} M,$ with $P_M \in \mathcal{\widetilde{P}};$  consider also the kernel of this deflation $A_M.$ We define the left inverse to the inclusion $i$ as 
$$\Phi([M]) := [\Phi_{\mathcal{P}}(P_M)] - [A_M].$$
We should verify two things: 
\begin{itemize}
\item[1)] The element $\Phi(M)$ thus defined does not depend on the choice of a deflation quasi-isomorphism $P_M \stackrel{\qis}{\twoheadrightarrow} M;$
\item[2)] The map $\Phi$ is additive on conflations $K \rightarrowtail L \twoheadrightarrow M$ of $\widetilde{E}$ with $K$ acyclic, i.e. it is actually a well-defined group homomorphism.
\end{itemize}
We prove part 1) in the same manner as Proposition \ref{welldef} concerning the well-definedness of the multiplication in $\mathcal{SDH(E)}.$ Namely, using suitable pull-backs, we reduce the problem to the case where we have two resolutions of $M$ linked by a deflation quasi-isomorphism $P_M \stackrel{\qis}{\twoheadrightarrow} {P'}_M.$ For these two resolutions, we construct a commutative $3 \times 3-$diagram, similar to diagram (\ref{diag1}):

\begin{equation} \label{diagphiwell}
\xymatrix@R=0.6cm{
*+++{H} \ar@{>->}[d] \ar@{=}[r] &*+++{H} \ar@{>->}[d] \ar@{->>}[r] & *+++{0} \ar@{>->}[d]\\
*+++{N} \ar@{->>}[d] \ar@{>->}[r]  & *+++{P_M} \ar@{->>}[d] \ar@{->>}[r]  & *+++{M} \ar@{=}[d] \\
*+++{K}  \ar@{>->}[r]  & *+++{P'_M} \ar@{->>}[r]  & *+++{M.} \\
}
\end{equation}
Here $H, N, K$ are acyclic complexes. By considering the second column we see that $H$ has projective components. Since $\Phi_{\mathcal{P}}$ is well-defined on $K_{0}'(\mathcal{P}),$ we have $\Phi_{\mathcal{P}}([P_M]) - \Phi_{\mathcal{P}}([P'_M]) = \Phi_{\mathcal{P}}([H]).$ But $\Phi_{\mathcal{P}}([H]) = [H] = [N] - [K],$ and it follows that 
$$\Phi_{\mathcal{P}}([P_M)]) - [N] = \Phi_{\mathcal{P}}([P'_M]) - [K],$$
proving the desired independence.

Let us prove the additivity of the map $\Phi.$ We should show that for any conflation $K \rightarrowtail L \twoheadrightarrow M,$ with $K$ acyclic, we have $\Phi(K) - \Phi(L) + \Phi(M) = 0.$ Take a pair of conflations $A_K \rightarrowtail P_K \stackrel{\qis}{\twoheadrightarrow} K$ and $A_M \rightarrowtail P_M \stackrel{\qis}{\twoheadrightarrow} M.$ Since $\Ext^1(P_M, K) = 0$ (by Lemma \ref{z2projcool}), the morphism $P_M \twoheadrightarrow M$ can be lifted along the deflation $L \twoheadrightarrow M$ and we can find a deflation quasi-isomorphism $P_K \oplus P_M \stackrel{\qis}{\twoheadrightarrow} L$ with the kernel $A_L.$ With the induced conflation of kernels $A_K \rightarrowtail A_L \twoheadrightarrow A_M,$ we get the commutative diagram

\begin{equation}
\xymatrix@R=0.6cm{
*+++{A_K} \ar@{>->}[d] \ar@{>->}[r] &*+++{A_L} \ar@{>->}[d] \ar@{->>}[r] & *+++{A_M} \ar@{>->}[d]\\
*+++{P_K} \ar@{->>}[d] \ar@{>->}[r]  & *+++{P_K \oplus P_M} \ar@{->>}[d] \ar@{->>}[r]  & *+++{P_M} \ar@{->>}[d] \\
*+++{K}  \ar@{>->}[r]  & *+++{L} \ar@{->>}[r]  & *+++{M.} \\
}
\end{equation}
Using the well-definedness of $\Phi$ and the additivity of $\Phi_{\mathcal{P}},$ it is easy to check now that 
\newline$\Phi(K) - \Phi(L) + \Phi(M) = 0.$

It remains to check identity (\ref{phii}). Take an acyclic complex $M \in C_{\mathbb{Z}/2, ac}(\mathcal{E}).$ By Lemma \ref{z2acprojres}, there exists a deflation quasi-isomorphism $P_M  \stackrel{\qis}{\twoheadrightarrow} M$ with kernel $A_M.$ Since $M$ is acyclic, so is $P_M.$ Therefore, $\Phi_{\mathcal{P}}(P_M) = [P_M]$ and 
$$\Phi \circ i ([M]) = \Phi(M) = [P_M] - [A_M] = [M].$$
By the additivity proved above, we get the desired identity of homomorphisms.
\end{proof}

With these two compatible gradings at hand, we can mimic the proof of Theorem \ref{zfree}, replacing everywhere bounded complexes by the $\mathbb{Z}/2-$graded and $K_0(\mathcal{C}^b(\mathcal{E}))$ by $K_0(\mathcal{\widetilde{E}}).$
\end{proof}

\begin{definition}
We endow $\mathcal{M}_{\mathbb{Z}/2}(\mathcal{E})$ with the following multiplication: the product of the classes of two complexes $L, M \in \mathcal{\widetilde{E}}$ is defined as 
\begin{equation}
[L] \diamond [M] = \frac{1}{\left\langle A_L, L \right\rangle} [A_L]^{-1} \diamond \sum\limits_{\varepsilon \in \Ext^1_{\mathcal{\widetilde{E}}}(P_L, M)} \frac{[\mt(\varepsilon)]}{|\Hom(P_L, M)|},
\end{equation}
or, equivalently, as
\begin{equation}
[L] \diamond [M] = \frac{1}{\left\langle A_L, L \right\rangle} [A_L]^{-1} \diamond \sum\limits_{X \in \Iso (\widetilde{\mathcal{E}})} (\frac{|\Ext^1_{\widetilde{\mathcal{E}}}(P_L, M)_{X}|}{|\Hom(P_L, M)|} [X]),
\end{equation}
where 
$A_L \rightarrowtail P_L \stackrel{\qis}{\twoheadrightarrow} L $
is an arbitrary conflation with $A_L \in \mathcal{C}_{\mathbb{Z}/2, ac}(\mathcal{E}), P_L \in \mathcal{\widetilde{P}}.$ We call the resulting algebra the {\it $\mathbb{Z}/2-$graded semi-derived Hall algebra} $\mathcal{SDH}_{\mathbb{Z}/2}(\mathcal{E}).$
\end{definition}

\begin{proposition}
The multiplication $\diamond$ of $\mathcal{SDH}_{\mathbb{Z}/2}(\mathcal{E})$ is well-defined and compatible with the module structure.
\end{proposition}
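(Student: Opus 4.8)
The plan is to transcribe, almost verbatim, the proofs of Proposition~\ref{welldef}, Proposition~\ref{zprodcomp} and the subsequent compatibility statement of Section~5 into the $\mathbb{Z}/2$-graded setting, the role of condition (C4) being played here by Lemma~\ref{z2projcool} (existence of $\widetilde{\mathcal{P}}$-resolutions with $\Ext^p_{\mathcal{C}_{\mathbb{Z}/2}}=\Ext^p_{\mathcal{D}_{\mathbb{Z}/2}}$) together with Proposition~\ref{widetildee}. For well-definedness, suppose we are given two conflations $A_1\rightarrowtail P_1\stackrel{\qis}{\twoheadrightarrow}L$ and $A_2\rightarrowtail P_2\stackrel{\qis}{\twoheadrightarrow}L$ with $A_i\in\mathcal{C}_{\mathbb{Z}/2,ac}(\mathcal{E})$ and $P_i\in\widetilde{\mathcal{P}}$. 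Form the pullback $P_3$ of the two deflations; it carries deflation quasi-isomorphisms onto $P_1$ and $P_2$, and by Proposition~\ref{widetildee} there is a deflation quasi-isomorphism $P''\stackrel{\qis}{\twoheadrightarrow}P_3$ with $P''\in\widetilde{\mathcal{P}}$, hence a deflation quasi-isomorphism $P''\stackrel{\qis}{\twoheadrightarrow}L$ through $\widetilde{\mathcal{P}}$. Exactly as in Proposition~\ref{welldef}, push-outs then yield a commutative $3\times3$-diagram analogous to (\ref{diag1}) whose first row and first column consist of acyclic complexes, so it suffices to treat a pair of conflations $A\rightarrowtail P_1\stackrel{\qis}{\twoheadrightarrow}L$, $N\rightarrowtail P_2\stackrel{\qis}{\twoheadrightarrow}L$ with $P_1,P_2\in\widetilde{\mathcal{P}}$ fitting into such a diagram with acyclic $H,N,K$ and a deflation $P_2\twoheadrightarrow P_1$.

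In that situation $H$ has projective components and lies in $\widetilde{\mathcal{P}}$, so Lemma~\ref{z2projcool} gives $\Ext^p_{\mathcal{C}_{\mathbb{Z}/2}}(P_i,H)=\Ext^p_{\mathcal{D}_{\mathbb{Z}/2}}(P_i,H)=0$ for all $p>0$ (as $H$ is acyclic); the long exact sequence attached to $H\rightarrowtail P_2\twoheadrightarrow P_1$ then shows that $\Ext^p_{\mathcal{C}_{\mathbb{Z}/2}}(P_1,M)\to\Ext^p_{\mathcal{C}_{\mathbb{Z}/2}}(P_2,M)$ is an isomorphism for every $p>0$ and that $\langle H,M\rangle=|\Hom(P_2,M)|/|\Hom(P_1,M)|$. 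From here the computation is identical to the one in Proposition~\ref{welldef}: one uses the $\Ext^1$-isomorphism to match middle terms via a diagram analogous to (\ref{diag2}) and the commutation rules of Lemma~\ref{acinvcomm} in $\mathbb{T}_{\mathbb{Z}/2,ac}(\mathcal{E})$, and the two expressions defining $[L]\diamond[M]$ coincide.

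Next, compatibility with the $\mathbb{A}_{\mathbb{Z}/2,ac}(\mathcal{E})$-bimodule structure and the relations defining $M_0'(\widetilde{\mathcal{E}})$. First, as in Proposition~\ref{zprodcomp}, one checks that for $K$ acyclic the new products $K\diamond L$ and $L\diamond K$ reproduce the bimodule actions: choose $A_K\rightarrowtail P_K\stackrel{\qis}{\twoheadrightarrow}K$ with $P_K\in\widetilde{\mathcal{P}}$ by Lemma~\ref{z2acprojres}, note $P_K$ is acyclic with $\Ext^{p}_{\mathcal{C}_{\mathbb{Z}/2}}(P_K,L)=0$ for $p>0$ by Lemma~\ref{z2projcool}, and run the same telescoping identity. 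Then, exactly as in the compatibility proposition of Section~5, one reduces (via a $3\times3$-diagram of $\widetilde{\mathcal{P}}$-resolutions) to the two identities analogous to (\ref{compat}) with $\mathcal{C}^b(\mathcal{E})$ replaced by $\widetilde{\mathcal{E}}$, using Lemma~\ref{z2projcool} to compare $\Ext^1$'s along the resolutions, the cardinality identities of the type $\langle H,M\rangle=|\Hom(P_2,M)|/|\Hom(P_1,M)|$ deduced as above, and the relations of Lemma~\ref{acinvcomm}. Compatibility with the relations then lets $\diamond$ descend to $\mathcal{M}'_{\mathbb{Z}/2}(\mathcal{E})$, and $\mathbb{A}_{\mathbb{Z}/2,ac}(\mathcal{E})$-bilinearity extends it to all of $\mathcal{M}_{\mathbb{Z}/2}(\mathcal{E})$.

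The main obstacle is not the algebra but bookkeeping the ambient subcategory: every pullback, push-out, cone and extension produced in the argument must again lie in $\widetilde{\mathcal{E}}$ (and the auxiliary acyclic-projective objects in $\widetilde{\mathcal{P}}$), because Lemma~\ref{z2projcool} --- our only substitute for (C4) --- is available precisely there. This is guaranteed by the fact that $\widetilde{\mathcal{E}}$ and $\widetilde{\mathcal{P}}$ are closed under extensions and quasi-isomorphisms by construction and that $\widetilde{\mathcal{P}}=\widetilde{\mathcal{E}}\cap\mathcal{C}_{\mathbb{Z}/2}(\mathcal{P})$ by Proposition~\ref{widetildee}. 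It is worth noting that, in contrast with the proof of freeness (Theorem~\ref{z2free}), the non-injectivity of $i'$ recorded in the Warning is irrelevant here, since the well-definedness of $\diamond$ never appeals to the $K_0$-grading.
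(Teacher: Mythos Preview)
Your proposal is correct and follows exactly the approach the paper itself indicates: the paper's proof consists of the single sentence ``The proof follows the lines of the proofs of Propositions~\ref{welldef} and~\ref{zprodcomp},'' and you have carried out precisely that transcription, with Lemma~\ref{z2projcool} and Proposition~\ref{widetildee} playing the role of condition~(C4). One small slip: where you invoke $\Ext^p_{\mathcal{C}_{\mathbb{Z}/2}}(P_i,H)=0$, the long exact sequence you use actually needs $\Ext^p_{\mathcal{C}_{\mathbb{Z}/2}}(H,M)=0$ (which equally follows from Lemma~\ref{z2projcool}, since $H\in\widetilde{\mathcal{P}}$ and $M\in\widetilde{\mathcal{E}}$), and the cleanest way to see the $\Ext^1$-isomorphism is, as in the paper, directly via $\Ext^1_{\mathcal{C}_{\mathbb{Z}/2}}(P_i,M)\cong\Ext^1_{\mathcal{D}_{\mathbb{Z}/2}}(L,M)$ for $i=1,2$.
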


\begin{proof}
The proof follows the lines of the proofs of Propositions \ref{welldef} and \ref{zprodcomp}.
\end{proof}

We do not know, whether the $\mathbb{Z}/2-$graded versions of Lemma \ref{qisextclass}, Corollary \ref{qisextclassident} and Corollary \ref{qisextclassmult} do hold, since their proofs use Lemma \ref{k0inj}.

In \cite{B}, Bridgeland uses a non-trivial twist. Let us construct a similarly twisted version of the algebra $\mathcal{SDH}_{\mathbb{Z}/2}(\mathcal{E}).$ 
We define a bilinear form 
$$\left\langle \cdot, \cdot \right\rangle_{\cw}: \Iso(\mathcal{C}_{\mathbb{Z}/2}(\mathcal{E})) \times \Iso(\mathcal{C}_{\mathbb{Z}/2}(\mathcal{E})) \to \mathbb{Q}^\times, \quad \left\langle M, N \right\rangle_{\cw} := \sqrt{\left\langle M^0, N^0 \right\rangle \cdot \left\langle M^1,N^1 \right\rangle},$$
where $M$ has the form $\xymatrix{M^0 \ar@<0.5ex>[r] & M^1 \ar@<0.5ex>[l]},$ and similarly for $N.$
This form descends to a bilinear form (denoted by the same symbol)
$$\left\langle \cdot, \cdot \right\rangle_{\cw}: A \times B \to \mathbb{Q}^\times,$$
where $A$ and $B$ are some of the Grothendieck groups considered above, i.e.
$$A, B \in \left\{K_0(\mathcal{C}_{\mathbb{Z}/2}(\mathcal{E})), K_0(\mathcal{C}_{\mathbb{Z}/2, ac}(\mathcal{E})), K_0(\widetilde{\mathcal{E}}), K_0'(\mathcal{\widetilde{E}}) \right\}.$$

\begin{lemma}
On the Grothendieck group of acyclic complexes, the bilinear form $\left\langle \cdot, \cdot \right\rangle_{\cw}$ coincides with the usual Euler form:
$$\left\langle \alpha, \beta \right\rangle_{\cw} = \left\langle \alpha, \beta \right\rangle, \quad \forall (\alpha, \beta) \in K_0(\mathcal{C}_{\mathbb{Z}/2, ac}(\mathcal{E})) \times K_0(\mathcal{C}_{\mathbb{Z}/2, ac}(\mathcal{E})).$$
\end{lemma}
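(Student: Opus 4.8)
The plan is to reduce everything to a computation on a generating family of $K_0(\mathcal{C}_{\mathbb{Z}/2, ac}(\mathcal{E}))$. Both $\left\langle \cdot, \cdot \right\rangle$ and $\left\langle \cdot, \cdot \right\rangle_{\cw}$ are $\mathbb{Q}^\times$-valued bilinear forms on $K_0(\mathcal{C}_{\mathbb{Z}/2, ac}(\mathcal{E})) \times K_0(\mathcal{C}_{\mathbb{Z}/2, ac}(\mathcal{E}))$: the former by Proposition \ref{z2euler}, the latter by the discussion preceding the statement. Hence it is enough to check that they agree on pairs $([G],[H])$ with $G,H$ ranging over a generating set of acyclic complexes.

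Next I would pin down such a generating set. Using the acyclic complex of complexes
$$0 \to P_d \to \ldots \to P_1 \to P_0 \to K \to 0, \qquad P_i \in \mathcal{C}_{\mathbb{Z}/2, ac}(\mathcal{P}),$$
constructed in the proof of Proposition \ref{z2euler} and split into conflations of acyclic complexes in the usual way, one sees that every class $[K] \in K_0(\mathcal{C}_{\mathbb{Z}/2, ac}(\mathcal{E}))$ equals $\sum_i (-1)^i [P_i]$; thus $K_0(\mathcal{C}_{\mathbb{Z}/2, ac}(\mathcal{E}))$ is generated by the classes of objects of $\mathcal{C}_{\mathbb{Z}/2, ac}(\mathcal{P})$. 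By Lemma \ref{acprojdirsum} each such object is isomorphic to $K_P \oplus K_Q^*$ with $P, Q \in \mathcal{P}$, and since $K_{P \oplus P'} \cong K_P \oplus K_{P'}$ (and similarly for $K^*$), the classes $[K_P]$ and $[K_P^*]$, $P \in \mathcal{P}$, already generate $K_0(\mathcal{C}_{\mathbb{Z}/2, ac}(\mathcal{E}))$.

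Finally I would evaluate both forms on the pairs formed from the generators $K_P, K_P^*$. Each $K_P$ and $K_P^*$ is contractible with projective components, hence lies in $\widetilde{\mathcal{P}}$ and is zero in $\mathcal{D}_{\mathbb{Z}/2}(\mathcal{E})$; by Lemma \ref{z2projcool}, $\Ext^p_{\mathcal{C}_{\mathbb{Z}/2}(\mathcal{E})}(K_P, L) = \Ext^p_{\mathcal{D}_{\mathbb{Z}/2}(\mathcal{E})}(K_P, L) = 0$ for all $p > 0$ and $L \in \widetilde{\mathcal{E}}$, and the same for $K_P^*$. Therefore $\left\langle K_P, L \right\rangle = |\Hom_{\mathcal{C}_{\mathbb{Z}/2}(\mathcal{E})}(K_P, L)|$, and a direct inspection of the defining diagrams yields natural identifications $\Hom_{\mathcal{C}_{\mathbb{Z}/2}(\mathcal{E})}(K_P, L) \cong \Hom_{\mathcal{E}}(P, L^0)$ and $\Hom_{\mathcal{C}_{\mathbb{Z}/2}(\mathcal{E})}(K_P^*, L) \cong \Hom_{\mathcal{E}}(P, L^1)$. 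As $P$ is projective, $|\Hom_{\mathcal{E}}(P, -)| = \left\langle P, - \right\rangle_{\mathcal{E}}$; taking $L = K_Q$ or $L = K_Q^*$, both of whose degree-$0$ and degree-$1$ components equal $Q$, this gives $\left\langle P, Q \right\rangle_{\mathcal{E}}$ in each of the four cases $(K_P, K_Q)$, $(K_P, K_Q^*)$, $(K_P^*, K_Q)$, $(K_P^*, K_Q^*)$. On the $\cw$ side, all four generators $K_P, K_P^*, K_Q, K_Q^*$ have equal components ($P$, resp. $Q$, in both degrees), so the corresponding value of $\left\langle \cdot, \cdot \right\rangle_{\cw}$ is $\sqrt{\left\langle P, Q \right\rangle_{\mathcal{E}} \left\langle P, Q \right\rangle_{\mathcal{E}}} = \left\langle P, Q \right\rangle_{\mathcal{E}}$. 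So the two forms agree on a generating set, hence on all of $K_0(\mathcal{C}_{\mathbb{Z}/2, ac}(\mathcal{E})) \times K_0(\mathcal{C}_{\mathbb{Z}/2, ac}(\mathcal{E}))$. I do not expect a genuine obstacle; the only step needing a little care is the reduction to generators, namely verifying that the generation statement for $K_0(\mathcal{C}_{\mathbb{Z}/2, ac}(\mathcal{E}))$ indeed follows from the resolution of Proposition \ref{z2euler} together with Lemma \ref{acprojdirsum}.
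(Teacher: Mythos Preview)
Your proposal is correct and follows exactly the paper's approach: reduce to the generators $[K_P], [K_P^*]$ of $K_0(\mathcal{C}_{\mathbb{Z}/2, ac}(\mathcal{E}))$ (via the resolution from Proposition~\ref{z2euler} and Lemma~\ref{acprojdirsum}) and check the identity there. The paper merely declares the verification on generators ``trivial,'' while you have spelled out the computation explicitly; the content is the same.
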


\begin{proof}
The statement is trivial for $\alpha, \beta \in \left\{[K_P], [K_P^*] | P \in \mathcal{P} \right\}.$ We know (cf. the proof of Proposition \ref{z2euler}) that these classes generate the whole Grothendieck group $K_0(\mathcal{C}_{\mathbb{Z}/2, ac}(\mathcal{E})).$ 
\end{proof}

We define the {\it twisted $\mathbb{Z}/2-$graded semi-derived Hall algebra} $\mathcal{SDH}_{\mathbb{Z}/2,tw}(\mathcal{E})$ in the same way as the non-twisted version, but with replacing $\mathbb{A}_{\mathbb{Z}/2,ac}(\mathcal{E})$ and $\mathbb{T}_{\mathbb{Z}/2,ac}(\mathcal{E})$ by the monoid and the group algebras $\mathbb{Q} M_0'(\mathcal{C}_{\mathbb{Z}/2, ac}(\mathcal{E}))$ and $\mathbb{Q} K_0(\mathcal{C}_{\mathbb{Z}/2, ac}(\mathcal{E})),$ respectively. The multiplication in $\mathcal{SDH}_{\mathbb{Z}/2,tw}(\mathcal{E})$ is given by the rule 
$$[M_1] * [M_2] := \left\langle M_1, M_2 \right\rangle_{\cw} [M_1] \diamond [M_2], \quad [M_1], [M_2] \in K_0'(\widetilde{\mathcal{E}}).$$
It is easy to check that this is a free bimodule over $\mathbb{Q} K_0(\mathcal{C}_{\mathbb{Z}/2, ac}(\mathcal{E})),$ with the same type of bases as in theorem \ref{z2free}.
We define the {\it reduced twisted} version of the $\mathbb{Z}/2-$graded semi-derived Hall algebra by setting $[K] = 1$ whenever $K$ is an acyclic
complex, invariant under the shift functor:

$$\mathcal{SDH}_{\mathbb{Z}/2,tw,red}(\mathcal{E}) := \mathcal{SDH}_{\mathbb{Z}/2,tw}(\mathcal{E})/([K] - 1: K \in \mathcal{C}_{\mathbb{Z}/2,ac}(\mathcal{E}), K \cong K^*).$$
Due to isomorphism (\ref{k0k0}), this is the same as setting

\begin{equation} \label{reduction}
K_\alpha * K_\alpha^* = 1, \quad \forall \alpha \in K_0(\mathcal{E}),
\end{equation}
where
$$
K_\alpha := [{{K_A}}] * [{{K_B}}]^{-1},\quad
K_\alpha^* := [{{K_A}}^*] * [{{K_B}}^*]^{-1},
$$
for any $A, B \in \mathcal{E}$ such that $\alpha = A - B.$ It is easy to check (cf. \cite{B}) that these two elements are well-defined, i.e. they do not depend on choice of $A, B.$ 

\subsection{Associativity and derived invariance}

\begin{theorem} \label{z2assocfree}
$\mathcal{SDH}_{\mathbb{Z}/2}(\mathcal{E})$ is an associative unital algebra.
\end{theorem}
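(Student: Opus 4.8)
The plan is to carry over, essentially word for word, the proofs of Theorem~\ref{zfree} (already done above as Theorem~\ref{z2free}) and Theorem~\ref{zassoc}, replacing bounded complexes by $\mathbb{Z}/2$-graded complexes lying in $\mathcal{\widetilde{E}}$ and substituting, for the three parts of condition~(C4), the combination of Proposition~\ref{widetildee} and Lemma~\ref{z2projcool}. Unitality is immediate: the class $[0]$ of the zero complex is a two-sided unit, since for $M \in \mathcal{\widetilde{E}}$ one takes the trivial conflation $0 \rightarrowtail 0 \overset{\qis}{\twoheadrightarrow} 0$, and then $\Hom(0,M)$ is a point while $\Ext^1_{\mathcal{\widetilde{E}}}(0,M)=0$ with its unique split extension having middle term $M$, whence $[0]\diamond[M]=[M]=[M]\diamond[0]$.

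For associativity, the $\mathbb{T}_{\mathbb{Z}/2,ac}(\mathcal{E})$-bilinearity of $\diamond$ (already established together with well-definedness) together with the fact that $\mathcal{M}_{\mathbb{Z}/2}(\mathcal{E})$ is spanned over $\mathbb{T}_{\mathbb{Z}/2,ac}(\mathcal{E})$ by classes of complexes reduces the claim to the identity $([A]\diamond[B])\diamond[C] = [A]\diamond([B]\diamond[C])$ for $A,B,C\in\mathcal{\widetilde{E}}$. I would fix, using Proposition~\ref{widetildee}, conflations $A_A\rightarrowtail P_A\overset{\qis}{\twoheadrightarrow}A$ and $A_B\rightarrowtail P_B\overset{\qis}{\twoheadrightarrow}B$ with $P_A,P_B\in\mathcal{\widetilde{P}}$ and $A_A,A_B$ acyclic. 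Lemma~\ref{z2projcool} then gives $\Ext^p_{\mathcal{C}_{\mathbb{Z}/2}(\mathcal{E})}(P_A,M)=\Ext^p_{\mathcal{D}_{\mathbb{Z}/2}(\mathcal{E})}(P_A,M)$ for all $M\in\mathcal{\widetilde{E}}$ and $p>0$; since $A_B$ is acyclic this forces $\Ext^p(P_A,A_B)=0$ for all $p>0$, which is the $\mathbb{Z}/2$-graded replacement of parts (i)--(ii) of (C4). Feeding the conflation $A_B\rightarrowtail P_B\twoheadrightarrow B$ into $\Hom(P_A,-)$ yields both the isomorphism $\Ext^1(P_A,P_B)\overset\sim\to\Ext^1(P_A,B)$ and the counting identity $|\Hom(P_A,P_B)|=|\Hom(P_A,B)|\cdot|\Hom(P_A,A_B)|$, exactly as at the end of the proof of Proposition~\ref{welldef}.

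With these inputs I would run the computation of the proof of Theorem~\ref{zassoc} unchanged: each extension of $P_A$ by $B$ lifts uniquely to an extension of $P_A$ by $P_B$, whose middle term $Y$ has projective components (an extension of complexes of projectives again has projective components) and is an extension of two objects of $\mathcal{\widetilde{P}}$, hence lies in $\mathcal{\widetilde{P}}$; therefore Lemma~\ref{z2projcool} applies to the pair $(Y,C)$, which is the $\mathbb{Z}/2$-graded replacement of part (iii) of (C4). Commuting $[P_A]$ past $[A_B]^{-1}$ by the formally identical $\mathbb{Z}/2$-graded analogue of Lemma~\ref{acinvcomm}, and using $\Ext^1(P_A,A_B)=0$, the product $([A]\diamond[B])\diamond[C]$ collapses, up to an explicit product of Euler-form factors, to $[A_A]^{-1}\diamond[A_B]^{-1}\diamond\big(([P_A]\bullet[P_B])\bullet[C]\big)$, where $\bullet$ is the Hall product of the extension-closed exact subcategory $\mathcal{\widetilde{E}}\subset\mathcal{C}_{\mathbb{Z}/2}(\mathcal{E})$, which is associative by Hubery's theorem~\cite{Hu}. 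Associativity of $\bullet$, the counting identity above, and the symmetric computation for $[A]\diamond([B]\diamond[C])$ then finish the proof.

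I expect the only real work to be the bookkeeping of subcategory membership: checking that every auxiliary object produced along the way --- the lifted middle terms $Y$, the pullbacks used (as in Proposition~\ref{welldef}) to reconcile two choices of resolution, the acyclic kernels appearing in the $3\times 3$-diagrams --- actually lies in $\mathcal{\widetilde{P}}$, respectively $\mathcal{\widetilde{E}}$, so that Lemma~\ref{z2projcool} is legitimately applicable to it; this is precisely where one uses that $\mathcal{\widetilde{P}}$ and $\mathcal{\widetilde{E}}$ are closed under extensions and quasi-isomorphisms. The genuine conceptual difficulty of the $\mathbb{Z}/2$-graded setting --- the failure of stupid truncations and of the naive analogue of Lemma~\ref{k0inj} --- has already been absorbed into Section~9.1 (the construction of $\mathcal{\widetilde{E}}$, Lemma~\ref{z2projcool}, Proposition~\ref{widetildee}) and into Lemma~\ref{k0'inj}; once those are in place, the quantum-torus arithmetic and the Euler-form cancellations are identical to the $\mathbb{Z}$-graded case and present no new obstacle.
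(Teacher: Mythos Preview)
Your proposal is correct and follows exactly the approach of the paper, which simply asserts that $[0]$ is the unit and that the proof of Theorem~\ref{zassoc} goes through in the $\mathbb{Z}/2$-graded case without any changes. You have spelled out explicitly what the paper leaves implicit: that Proposition~\ref{widetildee} supplies the resolutions, that Lemma~\ref{z2projcool} plays the role of all three parts of condition~(C4), and that closure of $\mathcal{\widetilde{P}}$ under extensions ensures the middle terms $Y$ remain in $\mathcal{\widetilde{P}}$.
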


\begin{proof}
The class $[0]$ is clearly the unit of $\mathcal{SDH}_{\mathbb{Z}/2}(\mathcal{E}).$ One can check that the proof of Theorem \ref{zassoc} concerning the associativity works in this $\mathbb{Z}/2-$graded case without any changes. 
\end{proof}

\begin{theorem} \label{equivsdh2}
Suppose that $F: \mathcal{E'} \to \mathcal{E}$ is an exact functor between exact categories satisfying conditions (C1), (C2) and (C4'), inducing an equivalence of bounded derived categories
$$F: \mathcal{D}^b(\mathcal{E'}) \overset\sim\to \mathcal{D}^b(\mathcal{E})$$
Then $F$ induces an isomorphism in the $\mathbb{Z}/2-$graded semi-derived Hall algebras:
$$F: \mathcal{SDH}_{\mathbb{Z}/2}(\mathcal{E'}) \overset\sim\to \mathcal{SDH}_{\mathbb{Z}/2}(\mathcal{E}).$$
\end{theorem}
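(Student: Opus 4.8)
The plan is to adapt the proof of Theorem~\ref{sdhequiv} to the $\mathbb{Z}/2$-graded setting, replacing $\mathcal{C}^b(\mathcal{E})$ by $\widetilde{\mathcal{E}}$ and the ``d\'evissage'' arguments by the structural results of Section~9. First I would set up the functor: applying $F$ componentwise yields an exact functor $F\colon \mathcal{C}_{\mathbb{Z}/2}(\mathcal{E}') \to \mathcal{C}_{\mathbb{Z}/2}(\mathcal{E})$ which sends stalk complexes to stalk complexes and, being exact, preserves acyclicity and hence quasi-isomorphisms; therefore it restricts to exact functors $\widetilde{\mathcal{E}'} \to \widetilde{\mathcal{E}}$, $\widetilde{\mathcal{P}'}\to\widetilde{\mathcal{P}}$ and $\mathcal{C}_{\mathbb{Z}/2,ac}(\mathcal{E}')\to\mathcal{C}_{\mathbb{Z}/2,ac}(\mathcal{E})$. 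Since $F$ induces an equivalence $\mathcal{D}^b(\mathcal{E}')\overset\sim\to\mathcal{D}^b(\mathcal{E})$ and $\Hom_{\mathcal{E}'}(A,B)=\Hom_{\mathcal{D}^b(\mathcal{E}')}(A,B)\cong\Hom_{\mathcal{D}^b(\mathcal{E})}(FA,FB)=\Hom_{\mathcal{E}}(FA,FB)$ for $A,B\in\mathcal{E}'$, the functor $F\colon\mathcal{E}'\to\mathcal{E}$ is fully faithful, whence $F$ is fully faithful on $\mathcal{C}_{\mathbb{Z}/2}(\mathcal{E}')$, and a fortiori on $\widetilde{\mathcal{E}'}$.

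Next I would transport the basis. Using the isomorphisms \eqref{piext} one gets, for stalk complexes, $\Hom_{\mathcal{D}_{\mathbb{Z}/2}(\mathcal{E}')}(\pi A,\pi B)=\bigoplus_{i}\Hom_{\mathcal{D}^b(\mathcal{E}')}(A,\Sigma^{2i}B)\cong\Hom_{\mathcal{D}_{\mathbb{Z}/2}(\mathcal{E})}(\pi FA,\pi FB)$; since $\widetilde{\mathcal{E}}$ is generated by stalk complexes under extensions and quasi-isomorphisms (which $F$ preserves, and, being fully faithful with an equivalence on $\mathcal{D}^b$, also reflects), and since every stalk object of $\mathcal{E}$ is, up to quasi-isomorphism, in the essential image of $F$ by the equivalence on $\mathcal{D}^b$, the functor $F$ induces an equivalence between the relevant triangulated subcategories of $\mathcal{D}_{\mathbb{Z}/2}(\mathcal{E}')$ and $\mathcal{D}_{\mathbb{Z}/2}(\mathcal{E})$; in particular it gives a bijection on quasi-isomorphism classes in $\widetilde{\mathcal{E}}$, hence on the bases of Theorem~\ref{z2free}.

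For the quantum torus, I would note that by Lemma~\ref{z2acprojres}, the resolution \eqref{z2acprres} and Lemma~\ref{acprojdirsum}, the assignments $M\mapsto[K_M]$, $M\mapsto[K_M^*]$ identify $K_0(\mathcal{C}_{\mathbb{Z}/2,ac}(\mathcal{E}))$ with $K_0(\mathcal{E})^{\oplus2}$, and $F(K_M)=K_{FM}$, $F(K_M^*)=K^*_{FM}$; since $F$ gives an isomorphism $K_0(\mathcal{E}')\cong K_0(\mathcal{D}^b(\mathcal{E}'))\cong K_0(\mathcal{D}^b(\mathcal{E}))\cong K_0(\mathcal{E})$, we obtain $K_0(\mathcal{C}_{\mathbb{Z}/2,ac}(\mathcal{E}'))\overset\sim\to K_0(\mathcal{C}_{\mathbb{Z}/2,ac}(\mathcal{E}))$. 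That it respects the Euler form follows as in Propositions~\ref{dbcbequiv} and \ref{z2euler}: the form on acyclics is determined by the Euler form of $\mathcal{E}$, which an equivalence on $\mathcal{D}^b(\mathcal{E})$ preserves. Thus $F$ induces $\mathbb{T}_{\mathbb{Z}/2,ac}(\mathcal{E}')\overset\sim\to\mathbb{T}_{\mathbb{Z}/2,ac}(\mathcal{E})$, and together with the basis bijection, an isomorphism of free modules $\mathcal{M}_{\mathbb{Z}/2}(\mathcal{E}')\overset\sim\to\mathcal{M}_{\mathbb{Z}/2}(\mathcal{E})$. Multiplicativity is then checked from the defining formula for $\diamond$: $F$ being exact and fully faithful on $\widetilde{\mathcal{E}}$ induces isomorphisms $\Ext^1_{\widetilde{\mathcal{E}'}}(P_L,M)\cong\Ext^1_{\widetilde{\mathcal{E}}}(FP_L,FM)$ compatible with the stratification by $\mt(\varepsilon)$, and it carries a conflation $A_L\rightarrowtail P_L\twoheadrightarrow L$ with $P_L\in\widetilde{\mathcal{P}'}$ to one of the same shape over $\mathcal{E}$; combined with preservation of the Euler form, the products agree.

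The main obstacle I expect is the input packaged into the second paragraph: establishing the $\mathbb{Z}/2$-graded analogues of Propositions~\ref{equivdbcb}--\ref{dbcbequiv} --- that $F$ reflects quasi-isomorphisms within $\widetilde{\mathcal{E}}$ and that the essential image of acyclics is stable --- without the stupid truncations available in the bounded case. I would resolve this using the evaluation functors $ev_0,ev_1\colon\widetilde{\mathcal{E}}\to\mathcal{E}$, whose exact left adjoints are $M\mapsto K_M$ and $M\mapsto K_M^*$ respectively (since $\Hom_{\mathcal{C}_{\mathbb{Z}/2}(\mathcal{E})}(K_A,M)\cong\Hom_{\mathcal{E}}(A,M^0)$ and dually), playing the role of the $ev_k^{\lambda}$ in the proof of Proposition~\ref{equivdbcb}, together with the analogue of Lemma~\ref{tot} characterizing the objects of $\mathcal{D}^b(\widetilde{\mathcal{E}})$ isomorphic to complexes of acyclics via a total-complex functor to $\mathcal{D}_{\mathbb{Z}/2}(\mathcal{E})$. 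This reduces everything to the already-established equivalence on $\mathcal{D}^b(\mathcal{E})$ and the properties of $\mathcal{C}_{\mathbb{Z}/2,ac}(\mathcal{P})$ from Lemmas~\ref{acprojdirsum} and \ref{z2acprojres}.
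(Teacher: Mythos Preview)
Your overall architecture---isomorphism of quantum tori via the identification $K_0(\mathcal{C}_{\mathbb{Z}/2,ac}(\mathcal{E}))\cong K_0(\mathcal{E})^{\oplus 2}$, bijection on quasi-isomorphism classes via $\pi$ and the equivalence on $\mathcal{D}^b$, then matching the product---is exactly the paper's. The divergence is in how you propose to handle what you call the ``main obstacle''.

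There your suggested fix does not work as stated. In Proposition~\ref{equivdbcb} the generation argument is anchored by $ev_{-N}^\lambda$, whose image consists of \emph{stalk} complexes; the inductive step then produces stalks in all degrees from the two-term images of $ev_k^\lambda$, $k>-N$. In the $\mathbb{Z}/2$-graded situation both left adjoints $ev_0^\lambda=K_{(-)}$ and $ev_1^\lambda=K_{(-)}^*$ land in \emph{acyclic} complexes, and these do not generate $\mathcal{D}^b(\widetilde{\mathcal{E}})$ as a triangulated category: in $K_0(\widetilde{\mathcal{E}})$ one has $[K_A]=[K_A^*]=[u_{A,0}]+[u_{A,1}]$, so the subcategory they span has $K_0$-image contained in the ``diagonal'' and cannot contain a single stalk. (They do generate $\mathcal{D}^b(\mathcal{C}_{\mathbb{Z}/2,ac}(\mathcal{E}))$, but that only recovers the torus part you already have.) Hence a direct $\mathbb{Z}/2$-graded rerun of Proposition~\ref{equivdbcb} along the lines you sketch is blocked.

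The paper avoids this altogether: it never proves a $\mathbb{Z}/2$-graded analogue of Propositions~\ref{equivdbcb}--\ref{dbcbequiv}. Instead it observes that $F$ induces an equivalence on the extension closure $\mathcal{D}'_{\mathbb{Z}/2}(\mathcal{E})$ of $\pi(\mathcal{D}^b(\mathcal{E}))$ inside $\mathcal{D}_{\mathbb{Z}/2}(\mathcal{E})$---this is immediate from the folding isomorphism analogous to~\eqref{piext}---and that $\widetilde{\mathcal{E}}[\qis^{-1}]$ embeds fully faithfully (in fact equivalently) into $\mathcal{D}'_{\mathbb{Z}/2}(\mathcal{E})$; this yields the basis bijection. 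For the extension and $\Hom$ spaces entering $\diamond$ and the Euler form, the paper then invokes~\eqref{piext} together with the \emph{already established} $\mathbb{Z}$-graded Proposition~\ref{equivdbcb}, applied to preimages under $\pi$. In short: rather than rebuilding the ``derived of derived'' machinery over $\mathbb{Z}/2$, fold down via $\pi$ to the bounded case where you already have it. Everything else in your outline (the first three paragraphs) is correct and matches the paper; only the last paragraph should be replaced by this $\pi$-reduction.
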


\begin{proof}
Since $F$ induces an equivalence of the bounded derived categories, it induces an isomorphism of their Grothendieck groups and preserves the Euler form. But $K_0(\mathcal{D}^b(\mathcal{E})) = K_0 (\mathcal{E}),$ hence $F$ induces an isomorphism $K_0 (\mathcal{E'}) \overset\sim\to K_0 (\mathcal{E}).$ It easily follows from the existence of resolutions (\ref{z2acprres}), that $K_0(\mathcal{C}_{\mathbb{Z}/2, ac}(\mathcal{P})) \overset\sim\to K_0(\mathcal{C}_{\mathbb{Z}/2, ac}(\mathcal{E})).$ By Lemma \ref{acprojdirsum} and because the exact structure of $K_0(\mathcal{C}_{\mathbb{Z}/2, ac}(\mathcal{P}))$ is split, we have a canonical isomorphism 
$$K_0(\mathcal{C}_{\mathbb{Z}/2, ac}(\mathcal{P})) \overset\sim\to  K_0 (\mathcal{P}) \oplus K_0 (\mathcal{P}).$$
But $K_0 (\mathcal{P}) \overset\sim\to K_0 (\mathcal{E})$ by our assumptions. By composing all these isomorphisms, we finally find that $K_0(\mathcal{C}_{\mathbb{Z}/2, ac}(\mathcal{E})) \overset\sim\to  K_0 (\mathcal{E}) \oplus K_0 (\mathcal{E}).$
In other words, the natural homomorphism:
\begin{equation} \label{k0k0}
K_0 (\mathcal{E}) \oplus K_0 (\mathcal{E}) \to K_0(\mathcal{C}_{\mathbb{Z}/2, ac}(\mathcal{E})), \quad([M],[N]) \mapsto [\xymatrix{M \ar@<0.5ex>[r]^{1} & M \ar@<0.5ex>[l]^{0}}] + [\xymatrix{N \ar@<0.5ex>[r]^{0} & N \ar@<0.5ex>[l]^{1}}]
\end{equation}
is an isomorphism, and similarly for $\mathcal{E'}.$
These two isomorphisms are compatible with $F;$ therefore, $F$ induces an isomorphism of the Grothendieck groups of acyclic $\mathbb{Z}/2-$graded complexes. It clearly preserves the Euler form. It follows that $F$ induces an isomorphism
$$\mathbb{T}_{\mathbb{Z}/2, ac}(\mathcal{E'}) \overset\sim\to \mathbb{T}_{\mathbb{Z}/2, ac}(\mathcal{E}).$$

Since $F$ induces an equivalence $\mathcal{D}^b(\mathcal{E}') \overset\sim\to \mathcal{D}^b(\mathcal{E}),$ it induces an equivalence between the images of the bounded derived categories under the natural map $\pi$ into the $\mathbb{Z}/2-$graded derived categories and also an equivalence of the extension closures of these images.
Let $\mathcal{D}^{'}_{\mathbb{Z}/2}(\mathcal{E})$ denote this extension closure of $\pi(\mathcal{D}^b(\mathcal{E})),$ and similarly for $\mathcal{E'}.$ Note that stalk complexes lie in the image of $\pi$ and, therefore, in $\mathcal{D}^{'}_{\mathbb{Z}/2}(\mathcal{E'})$ and $\mathcal{D}^{'}_{\mathbb{Z}/2}(\mathcal{E}),$ respectively. We have natural fully faithful functors 
$$\psi_\mathcal{E}: \mathcal{\widetilde{E}} [{\qis}^{-1}] \to \mathcal{D}^{'}_{\mathbb{Z}/2}(\mathcal{E}), \quad \psi_\mathcal{E'}: \mathcal{\widetilde{E'}} [{\qis}^{-1}] \to \mathcal{D}^{'}_{\mathbb{Z}/2}(\mathcal{E'}),$$
which are both the identity on objects (in fact, both of them are equivalences). 
Therefore, $F$ induces a bijection between the sets of quasi-isomorphism classes of objects in $\mathcal{\widetilde{E}}$ and $\mathcal{\widetilde{E'}}$ i.e., by Theorem \ref{z2free}, between the bases of $\mathcal{SDH}_{\mathbb{Z}/2}(\mathcal{E'})$ and $\mathcal{SDH}_{\mathbb{Z}/2}(\mathcal{E})$ as modules over the (isomorphic to each other) quantum tori. By property (\ref{piext}) and Proposition \ref{equivdbcb}, all homomorphism and extension spaces in $\mathcal{\widetilde{E}}$ are preserved under $F.$ By the same reasons, the Euler form on $K_0(\mathcal{C}_{\mathbb{Z}/2, ac}(\mathcal{E})) \times K_0(\mathcal{\widetilde{E}})$ is preserved by $F.$ Since $\psi_\mathcal{E'}$ and $\psi_\mathcal{E}$ are equivalences and $F$ induces an equivalence between $\mathcal{D}^{'}_{\mathbb{Z}/2}(\mathcal{E'})$ and $\mathcal{D}^{'}_{\mathbb{Z}/2}(\mathcal{E}),$ the multiplication (in particular, the action of the quantum torus) in $\mathcal{SDH}_{\mathbb{Z}/2}(\mathcal{E'})$ is preserved under $F,$ hence $F: \mathcal{SDH}_{\mathbb{Z}/2}(\mathcal{E'}) \overset\sim\to \mathcal{SDH}_{\mathbb{Z}/2}(\mathcal{E}).$
\end{proof}

\begin{theorem} \label{z2sdhequivtw}
The algebras $\mathcal{SDH}_{\mathbb{Z}/2,tw}(\mathcal{E})$ and $\mathcal{SDH}_{\mathbb{Z}/2,tw,red}(\mathcal{E})$ are associative and unital. Each derived equivalence $F$ from Theorem \ref{equivsdh2} induces an isomorphism of the twisted and reduced twisted $\mathbb{Z}/2-$graded semi-derived Hall algebras.
\end{theorem}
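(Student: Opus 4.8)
The plan is to deduce both assertions from Theorems~\ref{z2assocfree} and~\ref{equivsdh2}, by checking that twisting, and then reducing, affects neither associativity nor the behaviour under $F$. For associativity of $\mathcal{SDH}_{\mathbb{Z}/2,tw}(\mathcal{E})$, the key point is that $\mathcal{M}_{\mathbb{Z}/2}(\mathcal{E})$ is graded by the (ordinary) Grothendieck group $K_0(\widetilde{\mathcal{E}})$: put $[M]$ in degree $[M]\in K_0(\widetilde{\mathcal{E}})$ for $M\in\widetilde{\mathcal{E}}$, and a torus element $[K]^{\pm 1}$ in degree $\pm[K]$. The relations defining $\mathcal{M}'_{\mathbb{Z}/2}(\mathcal{E})$ and the $\mathbb{A}_{\mathbb{Z}/2,ac}(\mathcal{E})$-bimodule structure are compatible with this assignment (a conflation $K\rightarrowtail L\twoheadrightarrow M$ with $K$ acyclic gives $[L]=[K]+[M]$ in $K_0(\widetilde{\mathcal{E}})$), so the grading passes to $\mathcal{M}_{\mathbb{Z}/2}(\mathcal{E})=\mathbb{T}_{\mathbb{Z}/2,ac}\otimes_{\mathbb{A}_{\mathbb{Z}/2,ac}}\mathcal{M}'_{\mathbb{Z}/2}\otimes_{\mathbb{A}_{\mathbb{Z}/2,ac}}\mathbb{T}_{\mathbb{Z}/2,ac}$. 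One then checks that $\diamond$ is homogeneous: in $[L]\diamond[M]=\frac{1}{\left\langle A_L,L\right\rangle}[A_L]^{-1}\diamond\sum_{\varepsilon}\frac{[\mt(\varepsilon)]}{|\Hom(P_L,M)|}$, every middle term $\mt(\varepsilon)$ of a conflation $M\rightarrowtail\mt(\varepsilon)\twoheadrightarrow P_L$ has class $[M]+[P_L]$ in $K_0(\widetilde{\mathcal{E}})$, and $[P_L]=[A_L]+[L]$; hence the product lies in degree $[L]+[M]$, independently of the auxiliary conflation $A_L\rightarrowtail P_L\overset{\qis}{\twoheadrightarrow}L$. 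Since $\left\langle \cdot,\cdot \right\rangle_{\cw}$ descends to a bimultiplicative form on $K_0(\widetilde{\mathcal{E}})$, the usual identity for bilinear-form twists gives associativity of $[M_1]*[M_2]:=\left\langle M_1,M_2\right\rangle_{\cw}\,[M_1]\diamond[M_2]$ — both iterated products acquire, relative to $\diamond$, the scalar $\left\langle M_1,M_2\right\rangle_{\cw}\left\langle M_1,M_3\right\rangle_{\cw}\left\langle M_2,M_3\right\rangle_{\cw}$ on homogeneous elements — and $[0]$, living in degree $0$, remains a unit because $\left\langle 0,-\right\rangle_{\cw}=\left\langle -,0\right\rangle_{\cw}=1$.

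The reduced version $\mathcal{SDH}_{\mathbb{Z}/2,tw,red}(\mathcal{E})$ is by definition the quotient of $\mathcal{SDH}_{\mathbb{Z}/2,tw}(\mathcal{E})$ by the two-sided ideal generated by the elements $[K]-1$ with $K$ acyclic and $K\cong K^{*}$; a quotient of an associative unital algebra by a two-sided ideal is again associative and unital. Using the freeness statement following Theorem~\ref{z2free} together with the identification (\ref{k0k0}) of $K_0(\mathcal{C}_{\mathbb{Z}/2,ac}(\mathcal{E}))$ with $K_0(\mathcal{E})\oplus K_0(\mathcal{E})$ — under which the $*$-invariant acyclic classes form the diagonal — one sees that this reduction merely collapses the quantum torus to the twisted group algebra of $K_0(\mathcal{E})$ and leaves the module free over it, so in particular $1\neq 0$.

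For derived invariance, recall that $F$ acts componentwise on $\mathbb{Z}/2$-graded complexes; by Theorem~\ref{equivsdh2} it induces an isomorphism of the underlying $\mathbb{Q}$-vector spaces carrying the distinguished bases to each other, the quantum tori to each other, and $\diamond$ to $\diamond$. Since the twisted algebras have the same underlying spaces, it suffices to observe that $F$ preserves the twist: $\left\langle M_1,M_2\right\rangle_{\cw}=\sqrt{\left\langle M_1^0,M_2^0\right\rangle_{\mathcal{E}'}\left\langle M_1^1,M_2^1\right\rangle_{\mathcal{E}'}}$, and, $F$ inducing an equivalence $\mathcal{D}^b(\mathcal{E}')\overset\sim\to\mathcal{D}^b(\mathcal{E})$, it preserves the Euler form of $\mathcal{E}'$, so the quantities under the radicals — hence their positive square roots — agree. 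Thus $F$ carries $*$ to $*$ and gives $F:\mathcal{SDH}_{\mathbb{Z}/2,tw}(\mathcal{E}')\overset\sim\to\mathcal{SDH}_{\mathbb{Z}/2,tw}(\mathcal{E})$. For the reduced algebras, $F$ is exact, hence sends $\mathcal{C}_{\mathbb{Z}/2,ac}(\mathcal{E}')$ into $\mathcal{C}_{\mathbb{Z}/2,ac}(\mathcal{E})$, and applying an additive functor componentwise commutes with the involution $*$; concretely $F(K_A)=K_{FA}$ and $F(K_A^{*})=K_{FA}^{*}$, so $F$ sends the generating family $\{K_\alpha*K_\alpha^{*}-1:\alpha\in K_0(\mathcal{E}')\}$ of the reduction ideal bijectively onto $\{K_\beta*K_\beta^{*}-1:\beta\in K_0(\mathcal{E})\}$ along the induced isomorphism $K_0(\mathcal{E}')\overset\sim\to K_0(\mathcal{E})$. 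Being an algebra isomorphism, $F$ therefore maps one reduction ideal onto the other and descends to an isomorphism $\mathcal{SDH}_{\mathbb{Z}/2,tw,red}(\mathcal{E}')\overset\sim\to\mathcal{SDH}_{\mathbb{Z}/2,tw,red}(\mathcal{E})$.

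The only step that requires genuine care is the homogeneity of $\diamond$ with respect to the $K_0(\widetilde{\mathcal{E}})$-grading, i.e.\ the bookkeeping of Grothendieck classes through the factors $[A_L]^{-1}$ and $[\mt(\varepsilon)]$ in the definition of the product, together with the compatibility of this grading with the $\mathbb{T}_{\mathbb{Z}/2,ac}(\mathcal{E})$-action; once that is in place, the associativity of the twist and its invariance under $F$ are formal consequences of the untwisted statements and the bimultiplicativity of $\left\langle \cdot,\cdot\right\rangle_{\cw}$, and I expect no difficulty beyond routine verification.
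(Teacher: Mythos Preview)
Your proof is correct and follows essentially the same route as the paper's. The paper's argument is very terse: it says associativity ``follows from Theorem~\ref{z2assocfree} and the well-definedness of the form $\left\langle\cdot,\cdot\right\rangle_{\cw}$ as a homomorphism $K_0'(\widetilde{\mathcal{E}})\times K_0'(\widetilde{\mathcal{E}})\to\mathbb{Q}^\times$,'' and handles derived invariance by noting that $F$ acts componentwise and preserves the Euler form of $\mathcal{E}$, hence preserves $\left\langle\cdot,\cdot\right\rangle_{\cw}$; for the reduced version it simply invokes compatibility of the isomorphism~(\ref{k0k0}) with $F$. Your proposal unpacks exactly these points.

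The one place where you are more explicit than the paper, and in fact slightly sharper, is the grading argument: you grade by the \emph{ordinary} Grothendieck group $K_0(\widetilde{\mathcal{E}})$ rather than the relative group $K_0'(\widetilde{\mathcal{E}})$. This is the right choice for verifying homogeneity of $\diamond$, since for a general conflation $M\rightarrowtail\mt(\varepsilon)\twoheadrightarrow P_L$ the relation $[\mt(\varepsilon)]=[M]+[P_L]$ holds in $K_0(\widetilde{\mathcal{E}})$ but need not hold in $K_0'(\widetilde{\mathcal{E}})$ when $M$ is not acyclic. Since the paper has already recorded that $\left\langle\cdot,\cdot\right\rangle_{\cw}$ descends to $K_0(\widetilde{\mathcal{E}})$ as well, your version of the bookkeeping is both correct and arguably cleaner; it is, in any case, the same underlying mechanism.
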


\begin{proof}
The first part follows from Theorem \ref{z2assocfree} and the well-definedness of the form $\left\langle \cdot, \cdot \right\rangle_{\cw}$ as a homomorphism $K_0'(\widetilde{\mathcal{E}}) \times K_0'(\widetilde{\mathcal{E}}) \to \mathbb{Q}^\times.$ To prove the invariance of $\mathcal{SDH}_{\mathbb{Z}/2,tw}(\mathcal{E})$, it is enough to prove that this form is preserved under the functor $F.$ Since $F$ is induced by an exact functor between exact categories, it sends the $i-$th component of a complex in $\widetilde{\mathcal{E'}}$ to the $i-$th component of its image in $\widetilde{\mathcal{E}},$ for $i = 0, 1.$ The Euler form on the Grothendieck group $K_0(\mathcal{E'})$ is sent by $F$ to the Euler form on $K_0(\mathcal{E}),$ since all extension spaces are certain homomorphism spaces in (equivalent) derived categories. By these two arguments, the form $\left\langle \cdot, \cdot \right\rangle_{\cw}$ is preserved under $F.$ 
Isomorphism (\ref{k0k0}) being compatible with $F,$ so is condition (\ref{reduction}). 
\end{proof}

As in the $\mathbb{Z}-$graded case, we get the following corollaries of Theorems \ref{equivsdh2} and \ref{z2sdhequivtw}.

\begin{corollary} \label{sdh2pequiv}
We have the following isomorphisms of algebras:
\begin{equation} \label{allprojz2}
\mathcal{H}(\widetilde{\mathcal{P}})[[K]^{-1}| H^{\bullet}(K) = 0] \cong \mathcal{SDH}_{\mathbb{Z}/2}(\mathcal{P}) \overset{I}{\overset\sim\to} \mathcal{SDH}_{\mathbb{Z}/2}(\mathcal{E});
\end{equation}
where the isomorphism $I$ is induced by the inclusion functor $\mathcal{P} \hookrightarrow \mathcal{A}.$ These isomorphisms are compatible with the twist and the reduction considered above. The isomorphism $F,$ inverse to $I,$ is defined as in Proposition \ref{sdhpequiv}.
\end{corollary}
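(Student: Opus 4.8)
The plan is to reproduce, in the $\mathbb{Z}/2$-graded setting, the arguments behind Propositions~\ref{sdhp} and~\ref{sdhpequiv} and Theorem~\ref{z2sdhequivtw}. First I would establish the left-hand isomorphism in~\eqref{allprojz2}. The category $\mathcal{P}$, with its (split) exact structure, satisfies (C1), (C2) and (C4'), and $\mathcal{\widetilde{P}}$ is an extension-closed subcategory of $\mathcal{C}_{\mathbb{Z}/2}(\mathcal{P})$. By Lemma~\ref{z2projcool} applied with $\mathcal{E}$ replaced by $\mathcal{P}$, one has $\Ext^p_{\mathcal{C}_{\mathbb{Z}/2}(\mathcal{P})}(L,M)=\Ext^p_{\mathcal{D}_{\mathbb{Z}/2}(\mathcal{P})}(L,M)$ for all $L,M\in\mathcal{\widetilde{P}}$ and all $p>0$, so in the definition of the product of $\mathcal{SDH}_{\mathbb{Z}/2}(\mathcal{P})$ one may always take the trivial conflation $0\rightarrowtail L\stackrel{\qis}{\twoheadrightarrow}L$; hence $\diamond$ on $\mathcal{\widetilde{P}}$ is the ordinary Ringel--Hall product. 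Moreover every object of $\mathcal{C}_{\mathbb{Z}/2,ac}(\mathcal{P})$ is contractible (Lemma~\ref{acprojdirsum}), hence both a projective and an injective object of $\mathcal{\widetilde{P}}$ (the evaluation functors take values in the split category $\mathcal{P}$), so every conflation $K\rightarrowtail L\twoheadrightarrow M$ with $K\in\mathcal{C}_{\mathbb{Z}/2,ac}(\mathcal{P})$ splits and the defining relations of the relative module hold already among isomorphism classes, exactly as in the proof of Lemma~\ref{k0'inj}. Therefore $\mathcal{SDH}_{\mathbb{Z}/2}(\mathcal{P})$ is nothing but $\mathcal{H}(\mathcal{\widetilde{P}})$ with the classes of acyclic complexes formally inverted, which is the first isomorphism.

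For the isomorphism $I$, I would invoke Theorem~\ref{equivsdh2}: the inclusion $\mathcal{P}\hookrightarrow\mathcal{E}$ is an exact functor between categories satisfying (C1), (C2) and (C4'), and by Lemma~\ref{projres} it induces an equivalence $\mathcal{D}^b(\mathcal{P})\overset\sim\to\mathcal{D}^b(\mathcal{E})$. Theorem~\ref{equivsdh2} then yields an algebra isomorphism $I\colon\mathcal{SDH}_{\mathbb{Z}/2}(\mathcal{P})\overset\sim\to\mathcal{SDH}_{\mathbb{Z}/2}(\mathcal{E})$ which, by its construction, is the identity on the canonical bases of Theorem~\ref{z2free} and on the quantum tori; in particular it is induced by the inclusion functor and sends $[P]\mapsto[P]$ for $P\in\mathcal{\widetilde{P}}$.

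Next I would write down the inverse $F$ exactly as in Proposition~\ref{sdhpequiv}. By the $\mathbb{Z}/2$-graded analogue of Corollary~\ref{arbproj} (established in the course of the proof of Proposition~\ref{z2euler}, via (C4')), every $M\in\mathcal{\widetilde{E}}$ fits into a finite acyclic complex of complexes $0\to P_d\to\cdots\to P_1\to P_0\stackrel{\qis}{\twoheadrightarrow}M\to 0$ with $P_0\in\mathcal{\widetilde{P}}$ and $P_i\in\mathcal{C}_{\mathbb{Z}/2,ac}(\mathcal{P})$ for $i\geq 1$; one then defines $F$ by the same formula as in Proposition~\ref{sdhpequiv}, using these $P_i$, together with $F([K]^{-1}):=[F([K])]^{-1}$ for $K$ acyclic. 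Splitting the resolution into conflations with acyclic kernels and repeatedly applying the relations defining $\mathcal{M}_{\mathbb{Z}/2}(\mathcal{E})$ together with the commutation rules of $\mathbb{T}_{\mathbb{Z}/2,ac}(\mathcal{E})$ (as in the proofs of Propositions~\ref{welldef} and~\ref{sdhpequiv}), one checks that this expression equals $[M]$ already inside $\mathcal{SDH}_{\mathbb{Z}/2}(\mathcal{E})$ and is independent of the chosen resolution; this shows simultaneously that $F$ is well defined and that $F$ and $I$ are mutually inverse as maps of sets, whence $F$ is the inverse algebra isomorphism. Finally, compatibility with the twist and the reduction follows as in Theorem~\ref{z2sdhequivtw}: the form $\langle\cdot,\cdot\rangle_{\cw}$ is computed componentwise and the inclusion $\mathcal{P}\hookrightarrow\mathcal{E}$ acts as the identity on components, so it is preserved under $F$; and isomorphism~\eqref{k0k0} is compatible with $I$, so the relations $K_\alpha*K_\alpha^*=1$ defining the reduced algebra are respected.

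The step I expect to be the main obstacle is the verification that $F$ is the set-theoretic inverse of $I$. On the one hand, this needs the finite $\mathcal{\widetilde{P}}$-resolutions of objects of $\mathcal{\widetilde{E}}$ to be available, which is exactly where condition (C4') and the $\mathbb{Z}/2$-analogue of Corollary~\ref{arbproj} enter. On the other hand, one must keep track of the quantum-torus coefficients carefully: Lemma~\ref{k0inj} is no longer at our disposal, and its role has to be taken over by the refinement of the grading by $K_0'(\mathcal{\widetilde{E}})$ furnished by Lemma~\ref{k0'inj}, which is what makes it possible to pin down elements of $\mathcal{SDH}_{\mathbb{Z}/2}(\mathcal{E})$ unambiguously.
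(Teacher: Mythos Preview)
Your proposal is correct and follows exactly the approach the paper intends: the paper presents this corollary without a separate proof, simply stating it ``as in the $\mathbb{Z}$-graded case'' as a consequence of Theorems~\ref{equivsdh2} and~\ref{z2sdhequivtw}, and your argument spells out precisely those details---the identification of $\mathcal{SDH}_{\mathbb{Z}/2}(\mathcal{P})$ with the localized Hall algebra via Lemma~\ref{z2projcool} and the splitness of conflations with contractible kernel (paralleling Proposition~\ref{sdhp}), the isomorphism $I$ from Theorem~\ref{equivsdh2}, the inverse $F$ built from the $\mathbb{Z}/2$-graded analogue of Corollary~\ref{arbproj} (paralleling Proposition~\ref{sdhpequiv}), and compatibility with twist and reduction from Theorem~\ref{z2sdhequivtw}.
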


\begin{corollary} \label{z2tiltequiv}
Let $T$ be a tilting object in an exact category $\mathcal{E}$ satisfying conditions (C1), (C2) and (C4'). We have the following isomorphisms between the $\mathbb{Z}/2$-graded semi-derived Hall algebras:
$$\mathcal{SDH}_{\mathbb{Z}/2}(\mod(\End(T))) \overset\sim\leftarrow \mathcal{SDH}_{\mathbb{Z}/2}(\add T) \overset{\overset{I_T}\sim}\to \mathcal{SDH}_{\mathbb{Z}/2}(\mathcal{E}),$$
where $I_T$ is induced by the inclusion functor
$I_T: \add T \hookrightarrow \mathcal{E}.$ These isomorphisms are compatible with the twist and the reduction considered above.
The isomorphism $G,$ inverse to $I_T,$ is defined as in Theorem \ref{tiltequiv}. 
\end{corollary}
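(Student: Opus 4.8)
The plan is to deduce the corollary from the $\mathbb{Z}/2$-graded derived invariance already established, namely Theorem~\ref{equivsdh2} and Theorem~\ref{z2sdhequivtw}, in exactly the way Theorem~\ref{tiltequiv} was deduced from Theorem~\ref{sdhequiv}. The first step is to check that the three exact categories $\add T$, $\mathcal{E}$ and $\mod(\End T)$ each satisfy conditions (C1), (C2) and (C4'), which is what makes $\mathcal{SDH}_{\mathbb{Z}/2}$ defined for all of them. For $\mathcal{E}$ this is the hypothesis. The category $\add T$, with its split exact structure, is essentially small, idempotent complete and $k$-linear; its $\Hom$-spaces are finite, being subspaces of those of $\mathcal{E}$, and all higher $\Ext$-groups vanish, so (C1) and (C2) hold; (C4') holds trivially, since every object of $\add T$ is projective and serves as its own finite projective resolution. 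Finally, $A := \End(T)$ is a finite-dimensional $k$-algebra by (C2), so $\mod A$ is an essentially small, idempotent complete, $k$-linear abelian category with finite $\Hom$ and $\Ext$ spaces; it has enough projectives, and by (T3) every module has a finite projective resolution, so (C1), (C2) and (C4') hold for $\mod A$ as well.

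Next I would observe that $I_T \colon \add T \hookrightarrow \mathcal{E}$ and $\Hom(T,?)\colon \add T \to \mod A$ are exact functors inducing equivalences of bounded derived categories. The inclusion $I_T$ is exact because a conflation in $\add T$ is split and hence maps to a (split) conflation in $\mathcal{E}$; it induces $\mathcal{D}^b(\add T)\overset\sim\to\mathcal{D}^b(\mathcal{E})$ by (T1) and (T2), as recalled at the beginning of Section~8. The functor $\Hom(T,?)$ is exact and fully faithful, identifies $\add T$ with $\proj(A)$, and hence induces $\mathcal{D}^b(\add T)\overset\sim\to\mathcal{D}^b(\mod A)$ by (T3). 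Applying Theorem~\ref{equivsdh2} to each of these two functors produces the two algebra isomorphisms in the statement, and Theorem~\ref{z2sdhequivtw} gives their compatibility with the twist and the reduction.

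For the explicit formula for $G$, the inverse of $I_T$, I would follow the proof of Proposition~\ref{sdhpequiv} in the form used for Theorem~\ref{tiltequiv}. One first establishes the $\mathbb{Z}/2$-graded analogue of the coresolution~(\ref{factcores}): every $M\in\mathcal{\widetilde{E}}$ fits into a finite acyclic complex of complexes $0\to M\overset{\qis}\to T_0\to T_1\to\cdots\to T_d\to 0$ with $T_i\in\mathcal{C}_{\mathbb{Z}/2}(\add T)$ and $T_i$ acyclic for $i\geq 1$. This is obtained by taking the $\mathcal{C}^b(\add T)$-coresolutions~(\ref{factcores}) of bounded complexes, pushing them forward along $\pi$ to cover $\pi(\mathcal{C}^b(\mathcal{E}))$, and then extending to all of $\mathcal{\widetilde{E}}$ by an extension-closure-and-lifting argument of the type used for Proposition~\ref{widetildee} and in the construction of the resolutions~(\ref{z2acprres}). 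The relations defining $\mathcal{SDH}_{\mathbb{Z}/2}(\mathcal{E})$ then force
$$[M]=\frac{1}{\prod\limits_{k\in\mathbb{Z}_{\geq 0}}\left\langle M,T_{2k+1}\right\rangle}\,[\bigoplus\limits_{k\in\mathbb{Z}_{\geq 0}}T_{2k}]\diamond[\bigoplus\limits_{k\in\mathbb{Z}_{\geq 0}}T_{2k+1}]^{-1},$$
which is the prescription for $G$; that $G$ is a ring homomorphism with $G\circ I_T=\mathrm{id}$ on the generating classes is then immediate, and since $I_T$ is already known to be an isomorphism, $G$ is its inverse.

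The point where real work is needed is the coresolution statement in the third paragraph: because the $\mathbb{Z}/2$-graded setting lacks the stupid truncations used throughout Sections~3--8, one must re-run, with $\add T$ in place of $\mathcal{P}$, the delicate d\'evissage-and-lifting arguments of Section~9.1 in order to pass from complexes in the image of $\pi$ to arbitrary objects of $\mathcal{\widetilde{E}}$. The isomorphism statement itself, by contrast, is a formal consequence of Theorems~\ref{equivsdh2} and~\ref{z2sdhequivtw} together with the routine verification in the first paragraph.
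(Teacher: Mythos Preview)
Your proposal is correct and follows exactly the paper's approach: the paper states this corollary without proof, indicating only that it follows from Theorems~\ref{equivsdh2} and~\ref{z2sdhequivtw} ``as in the $\mathbb{Z}$-graded case,'' and you have supplied precisely the details this entails (verifying (C1), (C2), (C4') for the three categories, and applying the derived invariance theorems to the two exact functors). Your caveat about the coresolution step needed for the explicit formula for $G$ is apt, since the paper does not spell this out either; but as you note, the isomorphism statement itself is already secured by the invariance theorems, and the formula is only an explicit description of an inverse that must exist.
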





\subsection{Hereditary case: Bridgeland's construction, Drinfeld doubles and quantum groups}

As in the $\mathbb{Z}-$graded case, we have an alternative formula for the product in $\mathcal{SDH}_{\mathbb{Z}/2}(\mathcal{E}),$ if $\mathcal{E}$ is hereditary. The proof is the same as for Theorem \ref{hermult}.

\begin{theorem} \label{z2hermult}
Assume that $\mathcal{E}$ is hereditary and has enough projectives. Then for $L, M \in \widetilde{\mathcal{E}},$ the product $[L] \diamond [M]$ is equal to the following sum:
\begin{equation}
[L] \diamond [M] = \sum\limits_{X \in \Iso(\widetilde{\mathcal{E}})} \frac{|\Ext^1_{\widetilde{\mathcal{E}}} (L, M)_X|}{|\Hom(L, M)|} [X],
\end{equation}
\end{theorem}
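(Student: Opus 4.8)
The plan is to transcribe, almost verbatim, the proof of Theorem \ref{hermult}, systematically replacing $\mathcal{C}^b(\mathcal{E})$ by $\widetilde{\mathcal{E}}$, the category $\mathcal{C}^b(\mathcal{P})$ by $\widetilde{\mathcal{P}}$, Lemma \ref{projres} by Proposition \ref{widetildee}, and Proposition \ref{projext} by Lemma \ref{z2projcool}. First I would pick, using Proposition \ref{widetildee}, a conflation $A_L \rightarrowtail P_L \stackrel{\qis}{\twoheadrightarrow} L$ with $P_L \in \widetilde{\mathcal{P}}$ and $A_L$ acyclic. Since $\mathcal{E}$ is hereditary, a subobject of a projective is again projective, so the componentwise conflations $A_L^i \rightarrowtail P_L^i \twoheadrightarrow L^i$ force $A_L \in \mathcal{C}_{\mathbb{Z}/2, ac}(\mathcal{P})$, hence $A_L \in \widetilde{\mathcal{P}}$ by Lemma \ref{acprojdirsum}. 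Applying Lemma \ref{z2projcool} together with the acyclicity of $A_L$ gives $\Ext^p_{\mathcal{C}_{\mathbb{Z}/2}(\mathcal{E})}(A_L, M) \cong \Ext^p_{\mathcal{D}_{\mathbb{Z}/2}(\mathcal{E})}(A_L, M) = 0$ for all $p > 0$, so that $\left\langle A_L, M \right\rangle = |\Hom(A_L, M)|$ and $\Ext^1_{\widetilde{\mathcal{E}}}(A_L, M) = 0$ (the latter using that $\widetilde{\mathcal{E}}$ is extension-closed in $\mathcal{C}_{\mathbb{Z}/2}(\mathcal{E})$, so $\Ext^1$ agrees). With these vanishings the defining formula of the product becomes
\[
[L] \diamond [M] = \frac{1}{\left\langle A_L, L \right\rangle}[A_L]^{-1} \diamond \sum_{\varepsilon' \in \Ext^1_{\widetilde{\mathcal{E}}}(P_L, M)} \frac{[\mt(\varepsilon')]}{|\Hom(P_L, M)|}.
\]

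Next, I would use the six-term exact sequence obtained by applying $\Hom(-, M)$ to $A_L \rightarrowtail P_L \twoheadrightarrow L$ in $\widetilde{\mathcal{E}}$; since $\Ext^1_{\widetilde{\mathcal{E}}}(A_L, M) = 0$, the map $\Ext^1_{\widetilde{\mathcal{E}}}(L, M) \to \Ext^1_{\widetilde{\mathcal{E}}}(P_L, M)$ is surjective, with all fibres of size $|\Ext^1_{\widetilde{\mathcal{E}}}(L, M)| / |\Ext^1_{\widetilde{\mathcal{E}}}(P_L, M)|$. For each $\varepsilon \in \Ext^1_{\widetilde{\mathcal{E}}}(L, M)$ represented by $M \rightarrowtail X \twoheadrightarrow L$ and its image $\varepsilon'$, the exact-category axioms produce a $3 \times 3$ diagram exactly as in the proof of Theorem \ref{hermult}, with $\mt(\varepsilon')$ as middle term and a conflation $A_L \rightarrowtail \mt(\varepsilon') \twoheadrightarrow \mt(\varepsilon)$; the defining relations of $\mathcal{M}_{\mathbb{Z}/2}(\mathcal{E})$ (the class of the middle term of a conflation with acyclic kernel), bilinearity of the Euler form, and the conflation $M \rightarrowtail \mt(\varepsilon) \twoheadrightarrow L$ then give $[\mt(\varepsilon')] = \left\langle A_L, M \right\rangle \left\langle A_L, L \right\rangle\, [A_L] \diamond [\mt(\varepsilon)]$. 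Substituting this and reindexing the sum over $\varepsilon$, the factors $[A_L]^{-1} \diamond [A_L]$ and $\left\langle A_L, L \right\rangle$ cancel and one obtains
\[
[L] \diamond [M] = c \cdot \sum_{X \in \Iso(\widetilde{\mathcal{E}})} \frac{|\Ext^1_{\widetilde{\mathcal{E}}}(L, M)_X|}{|\Hom(L, M)|}\,[X], \qquad c := \frac{|\Hom(A_L, M)|\cdot|\Ext^1_{\widetilde{\mathcal{E}}}(P_L, M)|\cdot|\Hom(L, M)|}{|\Ext^1_{\widetilde{\mathcal{E}}}(L, M)|\cdot|\Hom(P_L, M)|},
\]
where I have also used $\left\langle A_L, M \right\rangle = |\Hom(A_L, M)|$ and $\sum_{\varepsilon}[\mt(\varepsilon)] = \sum_{X} |\Ext^1_{\widetilde{\mathcal{E}}}(L,M)_X|\,[X]$.

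Finally, I would invoke the six-term exact sequence $0 \to \Hom(L,M) \to \Hom(P_L,M) \to \Hom(A_L,M) \to \Ext^1_{\widetilde{\mathcal{E}}}(L,M) \to \Ext^1_{\widetilde{\mathcal{E}}}(P_L,M) \to 0$ once more: the alternating product of the orders of its terms equals $1$, which is precisely the statement $c = 1$, completing the proof. The computation is thus word-for-word the one in Theorem \ref{hermult}; the only thing requiring attention — and the closest thing to an obstacle — is confirming that each ingredient used there has a genuine $\mathbb{Z}/2$-graded counterpart in the restricted setting of $\widetilde{\mathcal{E}}$: existence of projective deflation quasi-isomorphisms inside $\widetilde{\mathcal{E}}$ (Proposition \ref{widetildee}), comparison of extensions in $\widetilde{\mathcal{P}}$ with those in the derived category (Lemma \ref{z2projcool}), well-definedness and bilinearity of the Euler form (Proposition \ref{z2euler}), heredity forcing $A_L$ to have projective components, and the coincidence of $\Ext^1$ computed in the extension-closed subcategory $\widetilde{\mathcal{E}}$ with $\Ext^1$ in $\mathcal{C}_{\mathbb{Z}/2}(\mathcal{E})$. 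Once these are granted, nothing new happens relative to the $\mathbb{Z}$-graded argument.
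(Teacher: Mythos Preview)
Your proposal is correct and follows exactly the approach the paper intends: the paper itself states only that ``The proof is the same as for Theorem~\ref{hermult},'' and your write-up carries out precisely this transcription, with the correct $\mathbb{Z}/2$-graded substitutes (Proposition~\ref{widetildee} for Lemma~\ref{projres}, Lemma~\ref{z2projcool} for Proposition~\ref{projext}, and the observation that heredity forces $A_L \in \mathcal{C}_{\mathbb{Z}/2,ac}(\mathcal{P})$). The only points you flagged as needing attention are exactly the ones that do, and your justifications for them are sound.
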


\begin{corollary} \label{z2herquotloc}
Assume that $\mathcal{E}$ is hereditary and has enough projectives. Then there exists an algebra homomorphism 
$$p: \mathcal{H}(\widetilde{\mathcal{E}}) \to \mathcal{SDH_{\mathbb{Z}/2}(E)}.$$
The homomorphism $p$ induces an algebra isomorphism
\begin{equation}
(\mathcal{H}(\widetilde{\mathcal{E}})/I_{\mathbb{Z}/2})[S_{\mathbb{Z}/2}^{-1}] \overset\sim\to \mathcal{SDH_{\mathbb{Z}/2}(E)},
\end{equation}
where $I_{\mathbb{Z}/2}$ is the two-sided ideal generated by all differences
$[L] - [K\oplus M]$, where $K \rightarrowtail L \twoheadrightarrow M$ is a conflation in $\widetilde{\mathcal{E}}$ with acyclic $K,$ and $S_{\mathbb{Z}/2}$ is the set of all classes $[K]$ of acyclic $\mathbb{Z}/2-$graded complexes.
\end{corollary}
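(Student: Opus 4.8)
The plan is to transcribe the argument for the $\mathbb{Z}$-graded statement, Corollary~\ref{herquotloc}, using Theorem~\ref{z2hermult} in place of Theorem~\ref{hermult} and Theorem~\ref{z2free} in place of Theorem~\ref{zfree}. First I would note that $\widetilde{\mathcal{E}}$, as an extension-closed subcategory of $\mathcal{C}_{\mathbb{Z}/2}(\mathcal{E})$, inherits from (C1)--(C2) essential smallness together with the finiteness of $\Hom$ and $\Ext^1$, so the Ringel--Hall algebra $\mathcal{H}(\widetilde{\mathcal{E}})$ is defined. The map $p$ is then defined on basis elements by $[M]\mapsto[M]$, the right-hand side denoting the image in $\mathcal{M}_{\mathbb{Z}/2}(\mathcal{E})$ of the isomorphism class of $M\in\widetilde{\mathcal{E}}$, and extended $\mathbb{Q}$-linearly. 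Since by Theorem~\ref{z2hermult} the product of two such classes in $\mathcal{SDH}_{\mathbb{Z}/2}(\mathcal{E})$ is literally given by the Ringel--Hall formula in $\widetilde{\mathcal{E}}$, the map $p$ is a homomorphism of unital algebras.

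Next I would verify that $p$ descends to the claimed quotient and localization. For a conflation $K\rightarrowtail L\twoheadrightarrow M$ in $\widetilde{\mathcal{E}}$ with $K$ acyclic one has $p([L]-[K\oplus M])=[L]-[K\oplus M]=0$ in $\mathcal{M}_{\mathbb{Z}/2}(\mathcal{E})$, by the defining relations of $M_0'(\widetilde{\mathcal{E}})$; as these differences generate the two-sided ideal $I_{\mathbb{Z}/2}$ and $p$ is multiplicative, $p(I_{\mathbb{Z}/2})=0$. Every class $[K]$ with $K$ acyclic is a unit of $\mathbb{T}_{\mathbb{Z}/2,ac}(\mathcal{E})\subseteq\mathcal{SDH}_{\mathbb{Z}/2}(\mathcal{E})$, so the universal property of localization furnishes an algebra map
$$\bar p\colon (\mathcal{H}(\widetilde{\mathcal{E}})/I_{\mathbb{Z}/2})[S_{\mathbb{Z}/2}^{-1}]\longrightarrow\mathcal{SDH}_{\mathbb{Z}/2}(\mathcal{E}).$$
Surjectivity of $\bar p$ follows at once from Theorem~\ref{z2free}: the target is spanned over $\mathbb{Q}$ by products of quantum-torus monomials with classes $[E_\alpha]$ of quasi-isomorphism-class representatives, the torus is generated by the $[K]^{\pm1}$ with $K$ acyclic, each such $K$ is an object of $\widetilde{\mathcal{E}}$ with $[K]=p([K])$, and all the products in question are Ringel--Hall products, hence lie in the image of $\bar p$.

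The heart of the matter is injectivity, and this is where the hereditary hypothesis is used. Because $\mathcal{E}$ has global dimension $1$, the components of any acyclic $\mathbb{Z}/2$-graded complex have projective dimension at most $1$, so $\Ext^{\ge2}_{\mathcal{C}_{\mathbb{Z}/2}(\mathcal{E})}(K,-)=0$ for $K$ acyclic (the $\mathbb{Z}/2$-analogue of Corollary~\ref{achomfin}, already used in the proof of Proposition~\ref{z2euler}), and therefore $\left\langle K,N\right\rangle=|\Hom(K,N)|/|\Ext^1_{\mathcal{C}_{\mathbb{Z}/2}(\mathcal{E})}(K,N)|$. Combining this with the identity $[X]=[K\oplus N]$ valid in the quotient for every conflation $K\rightarrowtail X\twoheadrightarrow N$ with $K$ acyclic, I would check that the relations
$$[K]\diamond[N]=\frac{1}{\left\langle K,N\right\rangle}[K\oplus N],\qquad [N]\diamond[K]=\frac{1}{\left\langle N,K\right\rangle}[K\oplus N]$$
hold in $(\mathcal{H}(\widetilde{\mathcal{E}})/I_{\mathbb{Z}/2})[S_{\mathbb{Z}/2}^{-1}]$ for acyclic $K$ and arbitrary $N\in\widetilde{\mathcal{E}}$. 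Two consequences follow. First, the localized subalgebra generated by the classes of acyclic complexes receives a surjection from $\mathbb{T}_{\mathbb{Z}/2,ac}(\mathcal{E})$ that is split by $\bar p$, hence is isomorphic to $\mathbb{T}_{\mathbb{Z}/2,ac}(\mathcal{E})$. Second, the d\'evissage argument from the proof of Theorem~\ref{z2free} goes through verbatim inside the quotient, yielding $[M]=t_M\diamond[E_{\overline M}]$ there for every $M\in\widetilde{\mathcal{E}}$, with $t_M$ in that torus. Consequently, fixing a $\mathbb{Q}$-basis $\{t_\beta\}$ of $\mathbb{T}_{\mathbb{Z}/2,ac}(\mathcal{E})$ and representatives $\{E_\alpha\}$ of the quasi-isomorphism classes, the elements $t_\beta\diamond[E_\alpha]$ span $(\mathcal{H}(\widetilde{\mathcal{E}})/I_{\mathbb{Z}/2})[S_{\mathbb{Z}/2}^{-1}]$ over $\mathbb{Q}$, while $\bar p$ carries them to the family $\{t_\beta\diamond[E_\alpha]\}$, which is a $\mathbb{Q}$-basis of $\mathcal{SDH}_{\mathbb{Z}/2}(\mathcal{E})$ by Theorem~\ref{z2free}. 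A spanning set mapping bijectively onto a basis is itself a basis and the map is injective, so $\bar p$ is an isomorphism.

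I expect the difficulty to be bookkeeping rather than conceptual: the delicate point is to confirm that every relation invoked in the d\'evissage portion of the proof of Theorem~\ref{z2free} is a formal consequence of the relations cutting out $I_{\mathbb{Z}/2}$ together with the localization at $S_{\mathbb{Z}/2}$ --- precisely where the vanishing of the higher $\Ext$-groups of acyclic complexes, hence heredity, is indispensable --- and that the split surjection above identifies the acyclic subalgebra with the full quantum torus $\mathbb{T}_{\mathbb{Z}/2,ac}(\mathcal{E})$, not merely a proper quotient of it.
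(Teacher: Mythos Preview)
Your approach is correct and is the natural way to flesh out this corollary, which the paper states without proof (it is meant to be immediate from Theorem~\ref{z2hermult}, just as Corollary~\ref{herquotloc} is from Theorem~\ref{hermult}). The overall architecture---define $p$ on classes, use Theorem~\ref{z2hermult} to see it is multiplicative, factor through $I_{\mathbb{Z}/2}$ and the localization, then compare a spanning set in the source with the free basis of Theorem~\ref{z2free} in the target---is exactly right.

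One small imprecision is worth flagging. You assert both identities
\[
[K]\diamond[N]=\tfrac{1}{\langle K,N\rangle}[K\oplus N],\qquad [N]\diamond[K]=\tfrac{1}{\langle N,K\rangle}[K\oplus N]
\]
in the quotient, for $K$ acyclic. The second follows as you say: the Hall product $[N]\diamond[K]$ sums over conflations $K\rightarrowtail X\twoheadrightarrow N$, which have the acyclic term as \emph{kernel}, so $[X]=[K\oplus N]$ is literally one of the generators of $I_{\mathbb{Z}/2}$. The first, however, involves conflations $N\rightarrowtail X\twoheadrightarrow K$ with the acyclic term as \emph{cokernel}, and the identity $[X]=[K\oplus N]$ is not among the defining relations. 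Fortunately you do not need the left-hand formula: the d\'evissage of Theorem~\ref{z2free} only uses conflations with acyclic kernel, and for those the relation $[L]=[K\oplus M]$ together with the right-action computation gives $[L]=c\,[M]\diamond[K]$ for a nonzero scalar $c$, which is all that is required to place every $[M]$ in the right $\mathbb{T}_{\mathbb{Z}/2,ac}$-orbit of its chosen representative $[E_{\overline M}]$. (Similarly, a careful count shows $\Ext^2_{\mathcal{C}_{\mathbb{Z}/2}(\mathcal{E})}(N,K)$ need not vanish even in the hereditary case, so the constant in $[N]\diamond[K]$ is $\frac{|\Ext^1(N,K)|}{|\Hom(N,K)|}$ rather than $\langle N,K\rangle^{-1}$; again harmless, since only its nonvanishing is used.) With these cosmetic fixes your argument goes through.
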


\begin{corollary} \label{j+-}
Assume that $\mathcal{E}$ is hereditary and has enough projectives.  There is an embedding of algebras 
$$
J_+^{e}: \mathcal{H}_{tw}^{e}(\mathcal{E}) \hookrightarrow \mathcal{SDH}_{\mathbb{Z}/2,tw}(\mathcal{E})
$$
defined by
$$
 [A] \longmapsto [\xymatrix{0 \ar@<0.5ex>[r] & A \ar@<0.5ex>[l]}] , \quad
 K_{\alpha} \longmapsto K_\alpha,
$$
where $A \in \mathcal{E}, \alpha \in K_0(\mathcal{E}).$
By composing $J_+^{e}$ and the involution $*$, 
we also have an embedding 
$$
J_{-}^{e}: \mathcal{H}_{tw}^{e}(\mathcal{E}) \hookrightarrow \mathcal{SDH}_{\mathbb{Z}/2,tw}(\mathcal{E})
$$
defined by 
$$
 [A] \longmapsto [\xymatrix{A \ar@<0.5ex>[r] & 0 \ar@<0.5ex>[l]}], \quad
 K_{\alpha} \longmapsto K_\alpha^*.
$$
\end{corollary}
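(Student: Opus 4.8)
The plan is to verify that $J^{e}_{+}$ respects the defining relations of $\mathcal{H}^{e}_{tw}(\mathcal{E})$ -- the twisted Hall product among the $[A]$, the relation $K_{\alpha}*K_{\beta}=K_{\alpha+\beta}$, and the commutation relation $K_{\alpha}*[B]=\sqrt{(\alpha,B)}\,[B]*K_{\alpha}$ -- and then to read off injectivity from the freeness of $\mathcal{SDH}_{\mathbb{Z}/2,tw}(\mathcal{E})$ over its quantum torus (Theorem~\ref{z2free}). Throughout write $A^{+}$ for the stalk complex with component $A$ in odd degree, so $J^{e}_{+}([A])=[A^{+}]$, and $A^{-}=(A^{+})^{\ast}$ for the even one. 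First I would note that the image of $J^{e}_{+}$ really lies in $\widetilde{\mathcal{E}}$: the stalk complexes obviously do, and each contractible complex $K_{A}$ does, being an extension of $A^{-}$ by $A^{+}$. Since $K_{0}(\mathcal{E})=K_{0}(\mathcal{P})$ one may pick the representatives in $\alpha=\bar{A}-\bar{B}$ to be projective, so that every contractible complex arising below has projective components and lies in $\widetilde{\mathcal{P}}$; with this choice the independence of $K_{\alpha}=[K_{A}]*[K_{B}]^{-1}$ of the presentation of $\alpha$ is checked exactly as in \cite{B}.

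For the relations among the $[A]$: a conflation $B^{+}\rightarrowtail E\twoheadrightarrow A^{+}$ in $\widetilde{\mathcal{E}}$ forces $E^{0}=0$ (inspect its degree-$0$ part), hence $E=C^{+}$ for a conflation $B\rightarrowtail C\twoheadrightarrow A$ of $\mathcal{E}$; this identifies $\Ext^{1}_{\widetilde{\mathcal{E}}}(A^{+},B^{+})$ with $\Ext^{1}_{\mathcal{E}}(A,B)$ compatibly with middle terms, $\Hom_{\widetilde{\mathcal{E}}}(A^{+},B^{+})$ with $\Hom_{\mathcal{E}}(A,B)$, and yields $\langle A^{+},B^{+}\rangle_{\cw}=\sqrt{\langle A,B\rangle_{\mathcal{E}}}$. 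Feeding this into the Ringel--Hall formula of Theorem~\ref{z2hermult} and into the definition of the twist gives $[A^{+}]*[B^{+}]=\sqrt{\langle A,B\rangle_{\mathcal{E}}}\sum_{C}\frac{|\Ext^{1}_{\mathcal{E}}(A,B)_{C}|}{|\Hom_{\mathcal{E}}(A,B)|}[C^{+}]$, which is precisely the twisted Hall product, so $J^{e}_{+}$ is multiplicative on $\mathcal{H}_{tw}(\mathcal{E})$. For the torus relation, the product of two acyclic classes in $\mathcal{SDH}_{\mathbb{Z}/2,tw}(\mathcal{E})$ carries the inverse Euler factor inherited from $\mathbb{T}_{\mathbb{Z}/2,ac}(\mathcal{E})$ together with the $\cw$-twist, and these cancel because $\langle\cdot,\cdot\rangle_{\cw}$ agrees with $\langle\cdot,\cdot\rangle$ on acyclic complexes (the lemma preceding Theorem~\ref{z2assocfree}); thus $[K_{A}]*[K_{B}]=[K_{A\oplus B}]=[K_{B}]*[K_{A}]$, so $\alpha\mapsto K_{\alpha}$ is a group homomorphism into the units and $K_{\alpha}*K_{\beta}=K_{\alpha+\beta}$.

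The main point is the commutation relation, which I would reduce to the case $\alpha=\bar{A}$ with $A$ projective. Since $[K_{A}]$ is an acyclic class, its product with an arbitrary complex class in $\mathcal{SDH}_{\mathbb{Z}/2,tw}(\mathcal{E})$ is the $\cw$-twist of the corresponding product in $\mathcal{SDH}_{\mathbb{Z}/2}(\mathcal{E})$, which is given by the bimodule structure; hence
\[
[K_{A}]*[B^{+}]=\frac{\langle K_{A},B^{+}\rangle_{\cw}}{\langle K_{A},B^{+}\rangle}\cdot\frac{\langle B^{+},K_{A}\rangle}{\langle B^{+},K_{A}\rangle_{\cw}}\;[B^{+}]*[K_{A}].
\]
Here $\langle K_{A},B^{+}\rangle_{\cw}=\sqrt{\langle A,B\rangle_{\mathcal{E}}}$ and $\langle B^{+},K_{A}\rangle_{\cw}=\sqrt{\langle B,A\rangle_{\mathcal{E}}}$. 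For the honest $\mathbb{Z}/2$-graded Euler forms one computes -- either directly from conflations, or via the isomorphisms~(\ref{piext}) and~(\ref{extcones}) (reading $K_{A}$ as $\pi$ of a cone of an identity) together with Lemma~\ref{z2projcool} -- that $\Hom_{\mathcal{C}_{\mathbb{Z}/2}(\mathcal{E})}(K_{A},B^{+})=0$ and all its higher $\Ext$-groups vanish, so $\langle K_{A},B^{+}\rangle=1$, whereas $\Hom_{\mathcal{C}_{\mathbb{Z}/2}(\mathcal{E})}(B^{+},K_{A})\cong\Hom_{\mathcal{E}}(B,A)$, $\Ext^{1}_{\mathcal{C}_{\mathbb{Z}/2}(\mathcal{E})}(B^{+},K_{A})\cong\Ext^{1}_{\mathcal{E}}(B,A)$ and higher $\Ext$-groups vanish, so $\langle B^{+},K_{A}\rangle=\langle B,A\rangle_{\mathcal{E}}$. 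Substituting, the coefficient becomes $\sqrt{\langle A,B\rangle_{\mathcal{E}}\langle B,A\rangle_{\mathcal{E}}}=\sqrt{(\bar{A},B)}$; combined with the torus relation and the bilinearity of $(\cdot,\cdot)$ this gives $K_{\alpha}*[B^{+}]=\sqrt{(\alpha,B)}\,[B^{+}]*K_{\alpha}$ for all $\alpha\in K_{0}(\mathcal{E})$. At this stage $J^{e}_{+}$ is a well-defined algebra homomorphism.

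Injectivity is then immediate: the $K_{\alpha}$ are linearly independent in the quantum torus (by~(\ref{k0k0}) they span one of its two $K_{0}(\mathcal{E})$-summands), the classes of the odd stalks $A^{+}$, $A\in\Iso(\mathcal{E})$, are pairwise distinct and form part of a basis of $\mathcal{SDH}_{\mathbb{Z}/2,tw}(\mathcal{E})$ over the torus (Theorem~\ref{z2free}), so the images $K_{\alpha}*[A^{+}]$ of the natural basis $\{K_{\alpha}*[A]\}$ of $\mathcal{H}^{e}_{tw}(\mathcal{E})$ are linearly independent. For $J^{e}_{-}$, the involution $\ast$ of Section~2.2 induces an algebra automorphism of $\mathcal{SDH}_{\mathbb{Z}/2,tw}(\mathcal{E})$ -- it interchanges the two degrees, preserves acyclicity and the form $\langle\cdot,\cdot\rangle_{\cw}$, and changes isomorphism classes only through the harmless sign of the differential -- carrying $A^{+}\mapsto A^{-}$ and $K_{A}\mapsto K^{\ast}_{A}$, so $J^{e}_{-}={\ast}\circ J^{e}_{+}$ is again an embedding with the stated formulas. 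The one genuinely technical step is the bookkeeping of the $\mathbb{Z}/2$-graded Euler forms in the commutation relation; everything else is formal or a direct application of Theorem~\ref{z2hermult}. (Alternatively, for $\mathcal{E}=\mathcal{P}_{\mathcal{A}}$ one may obtain $J^{e}_{+}$ by transporting Bridgeland's embedding of \cite{B} through the identification of $\mathcal{SDH}_{\mathbb{Z}/2,tw}(\mathcal{P}_{\mathcal{A}})$ with his Hall algebra.)
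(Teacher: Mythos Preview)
Your proof is correct and fills in the details the paper leaves implicit: the paper states this as a corollary of Theorem~\ref{z2hermult} (and implicitly of the freeness Theorem~\ref{z2free}) without further argument, and your verification of the three defining relations of $\mathcal{H}^{e}_{tw}(\mathcal{E})$ together with the freeness-based injectivity is exactly the natural way to unpack that. The computations of $\langle K_A,B^+\rangle=1$ and $\langle B^+,K_A\rangle=\langle B,A\rangle_{\mathcal{E}}$ are right, and your use of the coincidence of $\langle\cdot,\cdot\rangle_{\cw}$ with the Euler form on acyclics is the intended mechanism; one small remark is that this lemma sits just before the \emph{definition} of the twisted algebra rather than before Theorem~\ref{z2assocfree}, but the content is the same.
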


Let now $\mathcal{A}$ be a hereditary abelian category satisfying conditions (C1) and (C2) and having enough projectives. By Proposition \ref{abprojetilde}, we have $\widetilde{\mathcal{A}} = \mathcal{C}_{\mathbb{Z}/2}(\mathcal{A});$ similarly, we have $\widetilde{\mathcal{P}} = \mathcal{C}_{\mathbb{Z}/2}(\mathcal{P}).$ By Corollary \ref{sdh2pequiv}, we have an isomorphism 
\begin{equation} \label{dh(a)}
I: \mathcal{H}_{tw}(\mathcal{C}_{\mathbb{Z}/2}(\mathcal{P}))[[K]^{-1}| H^{\bullet}(K) = 0] \overset\sim\to \mathcal{SDH}_{\mathbb{Z}/2,tw}(\mathcal{A}).
\end{equation}
The algebra on the left hand side is the algebra $\mathcal{DH(A)}$ from Bridgeland's work \cite{B}. It has a set of generators $\left\{E_A, F_A, K_{\alpha}, K_{\alpha}^* | A \in \Iso(\mathcal{A}), \alpha \in K_0(\mathcal{A})\right\},$ cf. \cite{B}.

\begin{proposition}
\begin{itemize}
\item[(i)] The isomorphism 
$$F: \mathcal{SDH}_{\mathbb{Z}/2,tw}(\mathcal{A}) \overset\sim\to \mathcal{DH(A)},$$ 
inverse to $I,$ is defined on the generators in the following way:
\begin{equation}
[\xymatrix{0 \ar@<0.5ex>[r] & A \ar@<0.5ex>[l]}] \mapsto E_A, \quad [\xymatrix{A \ar@<0.5ex>[r] & 0 \ar@<0.5ex>[l]}] \mapsto F_A, \quad [K_\alpha] \mapsto [K_\alpha], \quad [K_\alpha^*] \mapsto [K_\alpha^*].
\end{equation}
\item[(ii)]
The multiplication map 
$$
m: a \otimes b \longmapsto J_{+}^{e}(a) * J_{-}^{e}(b)
$$
defines  an isomorphism of vector spaces 
$$
m: \mathcal{H}_{tw}^{e}(\mathcal{A}) \otimes_{\mathbb{C}} \mathcal{H}_{tw}^{e}(\mathcal{A}) 
     \overset\sim\to \mathcal{SDH}_{\mathbb{Z}/2,tw}(\mathcal{A}).
$$
\end{itemize}
\end{proposition}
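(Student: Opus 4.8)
The plan is to prove~(i) by evaluating the inverse isomorphism $F$ of Corollary~\ref{sdh2pequiv} (which follows the recipe of Proposition~\ref{sdhpequiv}) on the four families of generators, and then to deduce~(ii) from Bridgeland's work together with~(i). For the torus generators the computation will be essentially tautological: $\mathcal{DH}(\mathcal{A})$ is by construction the localization $\mathcal{H}_{tw}(\mathcal{C}_{\mathbb{Z}/2}(\mathcal{P}))[[K]^{-1}]$ in which $K_\alpha=[K_P]*[K_Q]^{-1}$ and $K_\alpha^*=[K_P^*]*[K_Q^*]^{-1}$ for $\alpha=[P]-[Q]$, and since $I$ is induced by the inclusion $\mathcal{P}\hookrightarrow\mathcal{A}$ and is the identity on classes of complexes with projective components, $F$ returns the symbols $[K_\alpha]$, $[K_\alpha^*]$. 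For the degree-one stalk complex $\xymatrix{0 \ar@<0.5ex>[r] & A \ar@<0.5ex>[l]}$ I would fix a (finite, by heredity) projective presentation $0\to P^{-1}\overset{f}\to P^0\to A\to 0$ and observe that $P_A:=\xymatrix{P^{-1} \ar@<0.5ex>[r]^{f} & P^0 \ar@<0.5ex>[l]^{0}}$ carries a deflation quasi-isomorphism onto $\xymatrix{0 \ar@<0.5ex>[r] & A \ar@<0.5ex>[l]}$ whose kernel is the contractible complex $K_{P^{-1}}$, with projective components; hence $0\to K_{P^{-1}}\to P_A\overset{\qis}{\twoheadrightarrow}\xymatrix{0 \ar@<0.5ex>[r] & A \ar@<0.5ex>[l]}\to 0$ is a $\mathcal{C}_{\mathbb{Z}/2}(\mathcal{P})$-resolution of the form used in Corollary~\ref{arbproj}. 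Plugging it into the formula for $F$ gives, up to the normalizing scalar $|\Hom(K_{P^{-1}},\xymatrix{0 \ar@<0.5ex>[r] & A \ar@<0.5ex>[l]})|^{-1}$ and the $\langle\cdot,\cdot\rangle_{\cw}$-twist, the element $[K_{P^{-1}}]^{-1}\diamond[P_A]$, which is exactly Bridgeland's generator $E_A$ — a normalized product of the inverse of the class of an acyclic complex with the class of a minimal projective resolution; the case of $F_A$ is symmetric, using $\xymatrix{A \ar@<0.5ex>[r] & 0 \ar@<0.5ex>[l]}$ and the $*$-dual presentation. Since by Theorem~\ref{z2free} and the hereditary analogue of Proposition~\ref{genset} (Lemma~\ref{herderdirstalk} reduces every object of $\widetilde{\mathcal{A}}$ to a direct sum of stalk complexes) these four families generate $\mathcal{SDH}_{\mathbb{Z}/2,tw}(\mathcal{A})$, $F$ is determined by these values, which proves~(i).

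For~(ii) the quickest route is via~(i): $F$ is an algebra isomorphism $\mathcal{SDH}_{\mathbb{Z}/2,tw}(\mathcal{A})\overset\sim\to\mathcal{DH}(\mathcal{A})$, and comparing the formulas of~(i) with Corollary~\ref{j+-} shows that $F\circ J_+^{e}$ and $F\circ J_-^{e}$ are precisely the two embeddings of $\mathcal{H}_{tw}^{e}(\mathcal{A})$ onto the positive and negative parts of $\mathcal{DH}(\mathcal{A})$ constructed in \cite{B}. Hence $F\circ m$ coincides with Bridgeland's multiplication map $\mathcal{H}_{tw}^{e}(\mathcal{A})\otimes\mathcal{H}_{tw}^{e}(\mathcal{A})\to\mathcal{DH}(\mathcal{A})$, which he proved (and Yanagida, in the reduced version) to be a linear isomorphism; as $F$ is bijective, so is $m$. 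Alternatively, I would give a self-contained triangularity argument. Both sides carry bases of matching shape: $([A]K_\alpha)\otimes([B]K_\beta)$ on the left, and by Theorem~\ref{z2free}, Lemma~\ref{herderdirstalk} and isomorphism~(\ref{k0k0}) the elements $[C_{A,B}]*K_\alpha*K_\beta^*$ on the right, where $C_{A,B}=\xymatrix{B \ar@<0.5ex>[r]^{0} & A \ar@<0.5ex>[l]^{0}}$. Moving the torus factors past the stalks by the commutation relations, $m(([A]K_\alpha)\otimes([B]K_\beta))$ equals an invertible scalar times $([\xymatrix{0 \ar@<0.5ex>[r] & A \ar@<0.5ex>[l]}]\diamond[\xymatrix{B \ar@<0.5ex>[r] & 0 \ar@<0.5ex>[l]}])*K_\alpha*K_\beta^*$. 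By Theorem~\ref{z2hermult}, together with the easy computation that $\Hom_{\mathcal{C}_{\mathbb{Z}/2}(\mathcal{A})}(\xymatrix{0 \ar@<0.5ex>[r] & A \ar@<0.5ex>[l]},\xymatrix{B \ar@<0.5ex>[r] & 0 \ar@<0.5ex>[l]})=0$ and $\Ext^1_{\mathcal{C}_{\mathbb{Z}/2}(\mathcal{A})}(\xymatrix{0 \ar@<0.5ex>[r] & A \ar@<0.5ex>[l]},\xymatrix{B \ar@<0.5ex>[r] & 0 \ar@<0.5ex>[l]})\cong\Hom_{\mathcal{A}}(A,B)$ with the extension labelled by $g$ having middle term $\xymatrix{B \ar@<0.5ex>[r]^{0} & A \ar@<0.5ex>[l]^{g}}$, this product is $\sum_{g\in\Hom_{\mathcal{A}}(A,B)}[\xymatrix{B \ar@<0.5ex>[r]^{0} & A \ar@<0.5ex>[l]^{g}}]*K_\alpha*K_\beta^*$; each summand is quasi-isomorphic to $C_{\ker g,\,\operatorname{coker} g}$, and the $g=0$ summand is $C_{A,B}$ itself.

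Ordering the basis by the length of the homology (or, in general, by a partial order under which $\ker g$ and $\operatorname{coker} g$ are strictly smaller than $A$ and $B$ whenever $g\neq 0$), the displayed formula shows that $m$ is upper triangular with invertible diagonal entries, hence an isomorphism. I expect the main obstacle to be, in the reduction route, matching Bridgeland's exact normalizations of $E_A$, $F_A$, $K_\alpha$, $K_\alpha^*$ with ours — this is handled inside part~(i), the delicate point being that the twist coefficient from $\langle\cdot,\cdot\rangle_{\cw}$ together with the factor $|\Hom(K_{P^{-1}},-)|^{-1}$ reproduces his structure constants and is independent of the chosen presentation (which follows, as in Proposition~\ref{welldef}/\ref{sdhpequiv}, from the well-definedness of $F$). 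In the direct route the delicate point is the torus bookkeeping: after rewriting $[\xymatrix{B \ar@<0.5ex>[r]^{0} & A \ar@<0.5ex>[l]^{g}}]$ as $t_g*[C_{\ker g,\,\operatorname{coker} g}]$ with $t_g\in\mathbb{T}_{\mathbb{Z}/2,ac}(\mathcal{A})$, one must check that each $t_g*K_\alpha*K_\beta^*$ is again one of the basis torus elements, so that the off-diagonal contributions land in the span of strictly smaller basis vectors and cannot cancel the (nonzero) diagonal coefficients coming from the twists and the commutation relations.
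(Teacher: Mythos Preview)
Your proposal is correct and follows the same route as the paper. The paper's proof is extremely terse: for (i) it simply says ``check by hand that $F$ and $I$ are inverse as maps'' (since $I$ is already known to be an isomorphism from Corollary~\ref{sdh2pequiv}, one only needs the set-theoretic inverse on generators), and for (ii) it cites \cite[Lemmas~4.6,~4.7]{B} directly. Your treatment of (i) carries out exactly this check in the forward direction, using the explicit formula for $F$ from Proposition~\ref{sdhpequiv}/Corollary~\ref{sdh2pequiv} on a two-term projective resolution, and your first route for (ii) is precisely the paper's reduction to Bridgeland via the isomorphism $F$.

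Your second, self-contained triangularity argument for (ii) is a genuine addition not present in the paper. It is essentially a reworking of Bridgeland's own argument (his Lemmas~4.6--4.7 proceed by the same $\Ext^1$ computation and the same filtration by the size of $\ker g$, $\operatorname{coker} g$) transported into $\mathcal{SDH}_{\mathbb{Z}/2,tw}(\mathcal{A})$ rather than $\mathcal{DH}(\mathcal{A})$; the payoff is that it avoids the detour through $F$ and makes the argument internal to the semi-derived picture. One small caution in that route: when you write the off-diagonal terms as $t_g * [C_{\ker g,\operatorname{coker} g}]$, the torus element $t_g$ depends on the image of $g$ (via the conflation $K_{\operatorname{im} g}\rightarrowtail\cdot\twoheadrightarrow C_{\ker g,\operatorname{coker} g}$ on one side and its shifted version on the other), so you should phrase the triangularity over the full basis $\{[C_{A',B'}]*K_{\gamma}*K_{\delta}^*\}$ indexed by $\Iso(\mathcal{A})^2\times K_0(\mathcal{A})^2$, not just over the slice with fixed $(\alpha,\beta)$; with that bookkeeping the argument goes through.
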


Point (i) can be easily checked by hand (it is enough to show that $F$ and $I$ are inverse to each other as maps). Point (ii) follows from {\cite[Lemmas~4.6, 4.7]{B}.

Combining Yanagida's theorem \cite[Theorem~1.26]{Y} with the isomorphism (\ref{dh(a)}), we get another point of view on the semi-derived Hall algebras.

\begin{theorem} \label{drinf}
The algebra $\mathcal{SDH}_{\mathbb{Z}/2,tw}(\mathcal{A})$ is isomorphic to the Drinfeld double of the bialgebra $\mathcal{H}_{tw}^{e}(\mathcal{A}).$
\end{theorem}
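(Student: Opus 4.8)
The plan is to combine the isomorphism already established in~(\ref{dh(a)}) with Yanagida's identification of Bridgeland's algebra as a Drinfeld double. Since (\ref{dh(a)}) gives a $\mathbb{Q}$-algebra isomorphism $I\colon \mathcal{DH}(\mathcal{A}) \overset\sim\to \mathcal{SDH}_{\mathbb{Z}/2,tw}(\mathcal{A})$ (its inverse $F$ being described explicitly on generators in the preceding Proposition), it suffices to transport Yanagida's result through $I$. Thus the proof is essentially a one-line deduction: $\mathcal{SDH}_{\mathbb{Z}/2,tw}(\mathcal{A}) \cong \mathcal{DH}(\mathcal{A}) \cong D(\mathcal{H}_{tw}^{e}(\mathcal{A}))$, where the second isomorphism is \cite[Theorem~1.26]{Y}.

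\emph{First} I would recall precisely what Yanagida proves: for a hereditary abelian category $\mathcal{A}$ with enough projectives satisfying our finiteness assumptions, Bridgeland's reduced Hall algebra $\mathcal{DH}_{red}(\mathcal{A})$ is the \emph{reduced} Drinfeld double of the bialgebra $\mathcal{H}^{e}_{tw}(\mathcal{A})$, and the non-reduced $\mathcal{DH}(\mathcal{A})$ is the full Drinfeld double $D(\mathcal{H}^{e}_{tw}(\mathcal{A}))$. \emph{Second}, I would note that the isomorphism~(\ref{dh(a)}) identifies the two copies $J^{e}_{\pm}\colon \mathcal{H}^{e}_{tw}(\mathcal{A}) \hookrightarrow \mathcal{SDH}_{\mathbb{Z}/2,tw}(\mathcal{A})$ of Corollary~\ref{j+-} with Bridgeland's two embeddings of $\mathcal{H}^{e}_{tw}(\mathcal{A})$ into $\mathcal{DH}(\mathcal{A})$ (sending $[A]$ to the two stalk complexes $\xymatrix{0 \ar@<0.5ex>[r] & A \ar@<0.5ex>[l]}$ and $\xymatrix{A \ar@<0.5ex>[r] & 0 \ar@<0.5ex>[l]}$, and $K_\alpha$ to $K_\alpha$, resp.\ $K_\alpha^*$). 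This is immediate from the explicit description of $F$ on generators. Since the multiplication map $m\colon \mathcal{H}^{e}_{tw}(\mathcal{A}) \otimes \mathcal{H}^{e}_{tw}(\mathcal{A}) \to \mathcal{SDH}_{\mathbb{Z}/2,tw}(\mathcal{A})$ of point (ii) of the preceding Proposition is a vector-space isomorphism with the triangular-decomposition form required of a Drinfeld double, the algebra structure on the target is the unique Drinfeld-double structure compatible with the two sub-bialgebras, and this is exactly what Yanagida's theorem identifies.

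\emph{The main point to be careful about} is matching conventions: one must check that the bialgebra structure on $\mathcal{H}^{e}_{tw}(\mathcal{A})$ used by Yanagida (coproduct, pairing, choice of sign/square-root normalizations in the twist $\langle\cdot,\cdot\rangle_{\cw}$) agrees, under $I$, with the structure induced on the image of $J^{e}_+$ inside $\mathcal{SDH}_{\mathbb{Z}/2,tw}(\mathcal{A})$. This is a routine but slightly delicate bookkeeping task; it is handled once and for all by the fact that~(\ref{dh(a)}) is an isomorphism of \emph{twisted} algebras, so the twist $\langle\cdot,\cdot\rangle_{\cw}$ on our side corresponds precisely to Bridgeland's twist on $\mathcal{DH}(\mathcal{A})$, and Yanagida's computation was carried out relative to exactly that twist. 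Having pinned this down, the theorem follows formally, and no further computation is needed; in particular, one obtains as a byproduct that the reduced algebra $\mathcal{SDH}_{\mathbb{Z}/2,tw,red}(\mathcal{A})$ of~(\ref{reduction}) is the reduced Drinfeld double.
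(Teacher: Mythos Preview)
Your proposal is correct and follows exactly the paper's approach: the paper does not give a detailed proof but simply states (in the sentence immediately preceding the theorem) that the result follows by combining Yanagida's theorem \cite[Theorem~1.26]{Y} with the isomorphism~(\ref{dh(a)}). Your elaboration on matching the embeddings $J^{e}_{\pm}$ and the twist conventions is more detailed than what the paper provides, but the underlying argument is identical.
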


This fact combined with Theorem \ref{z2sdhequivtw} yields a new proof of the following theorem of Cramer.

\begin{theorem} [{\cite[Theorem~1]{C}}]
Suppose for two hereditary abelian categories $\mathcal{A'}, \mathcal{A}$ satisfying conditions (C1) and (C2) and having enough projectives, an exact functor $F: \mathcal{A'} \to \mathcal{A}$ induces an equivalence of bounded derived categories
$$F: \mathcal{D}^b(\mathcal{A'}) \overset\sim\to \mathcal{D}^b(\mathcal{A}).$$
Then $F$ induces an algebra isomorphism of the Drinfeld doubles of the bialgebras $\mathcal{H}_{tw}^{e}(\mathcal{A'})$ and $\mathcal{H}_{tw}^{e}(\mathcal{A}).$ 
\end{theorem}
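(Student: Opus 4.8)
The plan is to reduce the statement to two results already established: the derived invariance of the twisted $\mathbb{Z}/2$-graded semi-derived Hall algebra (Theorem \ref{z2sdhequivtw}) and its identification with a Drinfeld double (Theorem \ref{drinf}). First I would verify that $F$ satisfies the hypotheses of Theorem \ref{equivsdh2}, hence of Theorem \ref{z2sdhequivtw}. Both $\mathcal{A}'$ and $\mathcal{A}$ are assumed to satisfy (C1) and (C2); since a hereditary abelian category with enough projectives has projective resolutions of length at most one, each object of $\mathcal{A}'$ and of $\mathcal{A}$ has a finite projective resolution, so condition (C4') holds for both categories. An exact functor between abelian categories is exact for the canonical exact structures, so $F$ is an exact functor between exact categories satisfying (C1), (C2), (C4') and inducing an equivalence of bounded derived categories, exactly as required.

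Now I would chain together three isomorphisms. By Theorem \ref{z2sdhequivtw}, $F$ induces an isomorphism of algebras $F \colon \mathcal{SDH}_{\mathbb{Z}/2,tw}(\mathcal{A}') \overset\sim\to \mathcal{SDH}_{\mathbb{Z}/2,tw}(\mathcal{A})$. By Theorem \ref{drinf} applied to $\mathcal{A}'$ and separately to $\mathcal{A}$, the left-hand side is isomorphic to the Drinfeld double of the bialgebra $\mathcal{H}_{tw}^{e}(\mathcal{A}')$ and the right-hand side to the Drinfeld double of $\mathcal{H}_{tw}^{e}(\mathcal{A})$. Composing these three isomorphisms produces an algebra isomorphism between the two Drinfeld doubles, which is the assertion of the theorem.

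The one point that deserves to be spelled out --- and which I would include for the sake of the word ``induces'' --- is that this composite is genuinely the isomorphism induced by $F$, i.e. that it restricts, through the two Hall subalgebra embeddings $J_{\pm}^{e}$ of Corollary \ref{j+-}, to the isomorphism $\mathcal{H}_{tw}^{e}(\mathcal{A}') \overset\sim\to \mathcal{H}_{tw}^{e}(\mathcal{A})$ coming from $F$. This reduces to the observations that $F$ carries a stalk complex concentrated in a fixed degree to the stalk complex on its image in the same degree (immediate, since $F$ acts componentwise on $\mathbb{Z}/2$-graded complexes), and that $F$ sends $K_{\alpha} \mapsto K_{F(\alpha)}$ and $K_{\alpha}^{*} \mapsto K_{F(\alpha)}^{*}$ (which follows from the compatibility of $F$ with the isomorphism (\ref{k0k0}) already exploited in the proof of Theorem \ref{z2sdhequivtw}). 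I expect no serious difficulty: the only real obstacle is bookkeeping --- reconciling the two presentations of the Drinfeld double and tracking compatibility with the Hall subalgebras --- and no homological input beyond Theorems \ref{drinf} and \ref{z2sdhequivtw} is needed.
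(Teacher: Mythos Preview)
Your proposal is correct and follows exactly the approach the paper indicates: combine Theorem \ref{drinf} (identifying $\mathcal{SDH}_{\mathbb{Z}/2,tw}$ with the Drinfeld double) with Theorem \ref{z2sdhequivtw} (derived invariance of $\mathcal{SDH}_{\mathbb{Z}/2,tw}$) after checking that hereditarity plus enough projectives yields (C4'). Your additional paragraph on compatibility with the embeddings $J_{\pm}^{e}$ goes beyond what the paper writes out, but is in the same spirit and requires no new ideas.
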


In fact, Cramer proved a more general result: he did not assume that $\mathcal{A'}$ and $\mathcal{A}$ have enough projectives, and the derived equivalence did not have to be induced by an exact functor of abelian categories. By Happel-Reiten-Schofield result \cite[Theorem~1]{HRS} and our Theorem \ref{z2tiltequiv} on tilting invariance, we recover many cases of Cramer's more general statement.

Now consider an acyclic quiver $Q,$ a field $k = \mathbb{F}_q$ with $q$ elements, and the corresponding quantum group $U_{\sqrt{q}}(\mathfrak{g}).$ Thanks to Bridgeland's theorem \cite[Theorem~4.9]{B}, we have the following corollary of Theorem \ref{drinf}.

\begin{proposition} \label{uqgsdhkq}
There is an injective homomorphism of algebras
$$R': U_{\sqrt{q}}(\mathfrak{g}) \hookrightarrow \mathcal{SDH}_{\mathbb{Z}/2,tw,red}(\rep_k Q),$$
defined on generators by
$$R'(E_i) = (q - 1)^{-1} \cdot [\xymatrix{0 \ar@<0.5ex>[r] & S_i \ar@<0.5ex>[l]}], \quad R'(F_i) = (-\sqrt{q})(q - 1)^{-1} \cdot [\xymatrix{S_i \ar@<0.5ex>[r] & 0 \ar@<0.5ex>[l]}],$$
$$R'(K_i) = [K_{S_i}], \quad R'(K_i^{-1}) = [K_{S_i}^*].$$
The map $R'$ is an isomorphism precisely when the graph underlying $Q$ is a simply-laced Dynkin diagram.
\end{proposition}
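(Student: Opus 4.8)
The plan is to obtain $R'$ by transporting Bridgeland's embedding through the isomorphism~(\ref{dh(a)}). First I would record that, for any acyclic quiver $Q$, the category $\mathcal{A}=\rep_k Q$ is a hereditary abelian category with enough projectives satisfying (C1), (C2) and (C4'), so that the whole content of Sections~9.1--9.6 applies to it; in particular, by Proposition~\ref{abprojetilde} one has $\widetilde{\mathcal{A}}=\mathcal{C}_{\mathbb{Z}/2}(\mathcal{A})$ and $\widetilde{\mathcal{P}}=\mathcal{C}_{\mathbb{Z}/2}(\mathcal{P})$, and by Corollary~\ref{sdh2pequiv} together with~(\ref{dh(a)}) there is an algebra isomorphism $F=I^{-1}\colon \mathcal{SDH}_{\mathbb{Z}/2,tw}(\mathcal{A})\overset\sim\to\mathcal{DH}(\mathcal{A})$ onto Bridgeland's algebra. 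Since, again by Corollary~\ref{sdh2pequiv}, this isomorphism is compatible with the reduction of Section~9.2 (equivalently, with Theorem~\ref{drinf} and Yanagida's description of $\mathcal{DH}_{red}$ as the reduced Drinfeld double), it descends to an isomorphism $F_{red}\colon \mathcal{SDH}_{\mathbb{Z}/2,tw,red}(\mathcal{A})\overset\sim\to\mathcal{DH}_{red}(\mathcal{A})$.

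Next I would invoke Bridgeland's theorem \cite[Theorem~4.9]{B}: there is an injective algebra homomorphism $R_B\colon U_{\sqrt{q}}(\mathfrak{g})\hookrightarrow\mathcal{DH}_{red}(\rep_k Q)$, an isomorphism exactly when the graph underlying $Q$ is a simply-laced Dynkin diagram, whose values on the Chevalley generators are the elements $(q-1)^{-1}E_{S_i}$, $(-\sqrt{q})(q-1)^{-1}F_{S_i}$, $K_{\alpha_i}$ and $K_{\alpha_i}^{*}$ attached to the simple modules $S_i$. I would then define $R':=F_{red}^{-1}\circ R_B$. As the composite of an injective homomorphism with an isomorphism, $R'$ is an injective algebra homomorphism, and it is an isomorphism precisely when $R_B$ is, i.e.\ precisely when $Q$ is simply-laced Dynkin; this already gives both assertions of the statement apart from the explicit formulas.

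Finally I would compute $R'$ on the four families of generators by feeding Bridgeland's elements through $F_{red}^{-1}=I_{red}$. By part~(i) of the Proposition preceding Theorem~\ref{drinf}, $I$ sends $E_{S_i}$ to the class $[\xymatrix{0 \ar@<0.5ex>[r] & S_i \ar@<0.5ex>[l]}]$ of the $\mathbb{Z}/2$-graded stalk complex with $S_i$ in odd degree, $F_{S_i}$ to the class $[\xymatrix{S_i \ar@<0.5ex>[r] & 0 \ar@<0.5ex>[l]}]$ of the stalk complex with $S_i$ in even degree, and the torus elements $K_{\alpha_i}$, $K_{\alpha_i}^{*}$ to $[K_{S_i}]$ and $[K_{S_i}^{*}]$ respectively; substituting these into the formulas for $R_B$ yields exactly the formulas claimed for $R'$. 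The only step needing genuine care --- and the closest thing to an obstacle here --- is the bookkeeping of the scalar prefactors: one has to line up the normalization of \cite[Theorem~4.9]{B} and Bridgeland's definition of $E_M$ and $F_M$ (as products of inverses of classes of acyclic complexes of projectives with the classes of minimal projective resolutions of $M$) against part~(i) of the Proposition above, so that no spurious power of $q$ or sign slips in. Once this routine matching is done, the proof is complete.
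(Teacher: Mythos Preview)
Your proposal is correct and follows exactly the route the paper intends: the paper presents Proposition~\ref{uqgsdhkq} as an immediate corollary of Bridgeland's \cite[Theorem~4.9]{B} transported through the isomorphism~(\ref{dh(a)}) (equivalently Theorem~\ref{drinf}), with the explicit formulas read off from part~(i) of the Proposition preceding Theorem~\ref{drinf}. Your write-up simply makes this deduction explicit, and your caution about matching the scalar normalizations is the only genuine point of care, just as in the paper.
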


\subsection{Reflection functors and the braid group action on $U_{\sqrt{q}}(\mathfrak{g})$}

A {\it source} of a quiver $Q$ is a vertex without incoming arrows, a {\it sink} is a vertex without outgoing arrows. Let $i$ be a sink and let $Q'$ be the quiver obtained from $Q$ by reversing all arrows starting in $i.$ Let us denote by $S_j$ and $S_j'$ the simple objects concentrated at a vertex $j$ in $\rep_k (Q)$ and $\rep_k (Q'),$ respectively. Let $\tau^{-} (S_i)$ be the cokernel of the natural monomorphism
$$S_i =  P_i \hookrightarrow \bigoplus\limits_{j \to i} P_j.$$
Then 
$$T := \bigoplus\limits_{j \neq i} P_j \oplus \tau^{-} (S_i)$$
is a tilting object in $\rep_k (Q),$ and the indecomposable objects of the category $\Fac T$ of all quotients of finite direct sums of copies of $T$ are all the indecomposables of $\rep_k (Q),$ except for $S_i.$ In \cite{SV}, this category is denoted by $\rep_{adm} (Q).$ Define $\mathcal{T}' = \rep_{adm} (Q')$ in a similar way, i.e. as the full exact subcategory containing all the indecomposables of $\rep_k (Q'),$ except for $S_i'.$ There is an equivalence of categories 
$$s_i: \rep_{adm} (Q) \overset\sim\to \rep_{adm} (Q'),$$
cf. \cite{SV}. The functors $s_i$ are called {\it reflection functors}, since $s_i$ induces the action of a simple reflection in the Weyl group of $Q$ on the lattice $\mathbb{Z}^n$ which can be identified with both Grothendieck groups $K_0(\rep_k (Q)$ and $K_0(\rep_k (Q')).$ For different $i,$ these functors  generate the action of the whole Weyl group on the lattice. 
We have a diagram of functors
\begin{equation} \label{bgpfact}
\rep_k (Q) \hookleftarrow \Fac T \overset\sim\to \mathcal{T}' \hookrightarrow \rep_k (Q').
\end{equation}
All of them induce derived equivalences, and we can finally define an equivalence of the bounded derived categories
$$s_i^d: \mathcal{D}^b(\rep_k (Q)) \overset\sim\to \mathcal{D}^b(\rep_k (Q'))$$
to be their composition (from left to right). These functors are the derived versions of the {\it Bernstein-Gelfand-Ponomarev reflection functors} \cite{BGP}. By computing the effect of the functors (\ref{bgpfact}) in the semi-derived Hall algebra we obtain the following theorem.

\begin{theorem} \label{bgp}
There is a unique algebra isomorphism 
$$t_i: \mathcal{SDH}_{\mathbb{Z}/2,tw,red}(\rep_k (Q)) \overset\sim\to \mathcal{SDH}_{\mathbb{Z}/2,tw,red}(\rep_k (Q')),$$
satisfying
$$* \circ t_i = t_i \circ *;$$
\begin{equation} \label{bgpsi}
t_i([\xymatrix{S_i \ar@<0.5ex>[r] & 0 \ar@<0.5ex>[l]}]) = q^{-1/2} \cdot [\xymatrix{0 \ar@<0.5ex>[r] & S_i' \ar@<0.5ex>[l]}] * K_{S_i'}^*;
\end{equation}
$$t_i([\xymatrix{A \ar@<0.5ex>[r] & 0 \ar@<0.5ex>[l]}]) = [\xymatrix{s_i(A) \ar@<0.5ex>[r] & 0 \ar@<0.5ex>[l]}], \quad \forall A \in \Fac T;$$
$$t_i(K_\alpha) = K_{s_i \alpha}. $$
This isomorphism is induced by the derived equivalence $s_i^d.$
\end{theorem}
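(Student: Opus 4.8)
The plan is to build $t_i$ as the composite of the three isomorphisms of $\mathbb{Z}/2$-graded semi-derived Hall algebras coming from the three functors in diagram \eqref{bgpfact}, and then to verify that this composite has the four asserted properties. First I would observe that each arrow in \eqref{bgpfact} is an exact functor between exact categories satisfying (C1), (C2), (C4') and inducing a derived equivalence: the inclusions $\Fac T \hookrightarrow \rep_k(Q)$ and $\mathcal{T}' \hookrightarrow \rep_k(Q')$ are inclusions of the additive hull of a tilting object (in the sense of Section 8, with the roles of $\mathcal{E}$ and $\add T$ adjusted to $\Fac T$ and the ambient category), and the middle equivalence $s_i$ is literally an equivalence of exact categories. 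By Theorem \ref{z2sdhequivtw} (and Corollary \ref{z2tiltequiv}) each of these induces an isomorphism of the reduced twisted $\mathbb{Z}/2$-graded semi-derived Hall algebras, compatible with the involution $*$ because $*$ is defined functorially from the shift on complexes and every functor in sight commutes with taking $\mathbb{Z}/2$-graded complexes. Composing them (from left to right, with the inclusions read in the appropriate direction as isomorphisms onto the tilting-subcategory algebras) gives an algebra isomorphism $t_i$ with $*\circ t_i = t_i\circ *$.

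Next I would pin down the effect of $t_i$ on the distinguished elements. For $A\in\Fac T$ the stalk complex $\xymatrix{A \ar@<0.5ex>[r] & 0 \ar@<0.5ex>[l]}$ lies in the image of $\pi$ applied to $\add T$-complexes, so it is a basis element of $\mathcal{SDH}_{\mathbb{Z}/2,tw,red}$ indexed by its quasi-isomorphism class; the tilting isomorphisms send it to the basis element indexed by the image class, which by the explicit description of the derived equivalence is exactly $\xymatrix{s_i(A) \ar@<0.5ex>[r] & 0 \ar@<0.5ex>[l]}$, and similarly $K_\alpha \mapsto K_{s_i\alpha}$ because the induced map on $K_0$ is the reflection $s_i$ and the reduction identifies $K_\alpha$ with $[K_A]*[K_B]^{-1}$ functorially. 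The one element not covered by this is $\xymatrix{S_i \ar@<0.5ex>[r] & 0 \ar@<0.5ex>[l]}$, since $S_i\notin\Fac T$; here I would compute directly in $\mathcal{D}^b$. The object $S_i[?]$ (a stalk, concentrated in one degree) is carried by $s_i^d$ to the shift of the stalk at $S_i'$ in the other degree, so after $\pi$ it lands in the quasi-isomorphism class of $\xymatrix{0 \ar@<0.5ex>[r] & S_i' \ar@<0.5ex>[l]}$ up to an invertible element of the quantum torus. That torus factor is determined by the change in $K_0$-class: $S_i$ and $\tau^-(S_i)$ differ by the class of $\bigoplus_{j\to i}P_j$, equivalently $[\xymatrix{S_i \ar@<0.5ex>[r] & 0 \ar@<0.5ex>[l]}]$ and $[\xymatrix{0 \ar@<0.5ex>[r] & S_i' \ar@<0.5ex>[l]}]$ have classes in $K_0'(\widetilde{\mathcal E})$ differing by the image of $[K_{S_i'}^*]$-type acyclics, which after the reduction \eqref{reduction} produces precisely the factor $q^{-1/2}K_{S_i'}^*$ in \eqref{bgpsi}. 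The scalar $q^{-1/2}$ comes out of matching the $\cw$-twist on the two sides, using Lemma \ref{z2hermult} for the relevant Hall products and the computation of the Euler form on these stalk/acyclic pairs as in Lemma \ref{eulerform}; this is the step I expect to require the most care.

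Finally, for uniqueness I would argue that the listed data determine $t_i$. By Proposition \ref{genset}–style reasoning (its $\mathbb{Z}/2$ analogue), $\mathcal{SDH}_{\mathbb{Z}/2,tw,red}(\rep_k(Q))$ is generated by the classes of stalk complexes $\xymatrix{A \ar@<0.5ex>[r] & 0 \ar@<0.5ex>[l]}$ and $\xymatrix{0 \ar@<0.5ex>[r] & A \ar@<0.5ex>[l]}$ together with the $K_\alpha, K_\alpha^*$; the prescriptions on $\Fac T$ plus the formula \eqref{bgpsi} plus $*$-equivariance (which converts the $\xymatrix{A \ar@<0.5ex>[r] & 0 \ar@<0.5ex>[l]}$ data into the $\xymatrix{0 \ar@<0.5ex>[r] & A \ar@<0.5ex>[l]}$ data) plus $t_i(K_\alpha)=K_{s_i\alpha}$ specify $t_i$ on all generators, and since $\Fac T$ contains every indecomposable of $\rep_k(Q)$ except $S_i$, the only indecomposable stalk left to handle is exactly the one governed by \eqref{bgpsi}. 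Hence any algebra isomorphism with these properties coincides with the one we constructed, which completes the proof.
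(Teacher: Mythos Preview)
Your approach is essentially the same as the paper's: both construct $t_i$ from the derived equivalence $s_i^d$ obtained by composing the functors in \eqref{bgpfact}, observe that the properties other than \eqref{bgpsi} are immediate, and then isolate \eqref{bgpsi} as the one computation requiring work.

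The only noteworthy difference is in how \eqref{bgpsi} is computed. The paper does not argue abstractly via $K_0'$-classes and twist-matching; instead it writes down two explicit conflations in $\widetilde{\mathcal{E}}$. From the triangle $S_i \to \bigoplus_{j\to i} P_j \to \tau^-(S_i) \to \Sigma S_i$ one gets a conflation in $\mathcal{C}_{\mathbb{Z}/2}(\rep_k Q)$ with acyclic cokernel $K_{\tau^-(S_i)}$, expressing $[\xymatrix{S_i \ar@<0.5ex>[r] & 0 \ar@<0.5ex>[l]}]$ in terms of the class of the two-term complex $(\bigoplus_{j\to i} P_j \to \tau^-(S_i))$, which lies in $\Fac T$. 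On the $Q'$ side the image of this two-term complex under $s_i$ participates in a conflation with acyclic kernel $K_{s_i(\bigoplus P_j)}$ and cokernel $[\xymatrix{0 \ar@<0.5ex>[r] & S_i' \ar@<0.5ex>[l]}]$. The scalar $q^{-1/2}$ and the factor $K_{S_i'}^*$ then drop out of the defining relations for $[K\oplus M]$ and the $\cw$-twist. Your plan to extract the torus factor from ``the change in $K_0$-class'' is morally the same, but you should be aware that the freeness statement only tells you the torus factor is unique once you fix a representative of the quasi-isomorphism class; the paper's explicit conflations are what actually pin down that factor, so this is the step where your sketch should be made concrete.
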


\begin{proof}
Only equation (\ref{bgpsi}) is not straightforward. To prove this, note that we have a triangle 
$$S_i  \hookrightarrow \bigoplus\limits_{j \to i} P_j \to \tau^{-} (S_i) \to \Sigma S_i$$
in the derived category $\mathcal{D}^b(\rep_k (Q)) \overset\sim\to \mathcal{D}^b(\rep_k (Q')).$ The first three objects here belong to the category $\rep_k (Q),$ while the images of the last three under $s_i^d$ belong to $\rep_k (Q').$ We thus have conflations 
$$\xymatrix{{(S_i} \ar@<0.5ex>[r] & {0)\quad} \ar@<0.5ex>[l]  \ar@{>->}[r]  & \quad(\bigoplus\limits_{j \to i} P_j \ar@<0.5ex>[r] & \tau^{-} (S_i))\quad \ar@<0.5ex>[l]^{0}  \ar@{->>}[r]  & \quad{K_{\tau^{-} (S_i)}}},$$
$$\xymatrix{{K_{s_i(\bigoplus\limits_{j \to i} P_j)}\quad} \ar@{>->}[r]  & \quad(s_i(\bigoplus\limits_{j \to i} P_j) \ar@<0.5ex>[r] & s_i(\tau^{-} (S_i)))\quad \ar@<0.5ex>[l]^{0} \ar@{->>}[r] & \quad(0 \ar@<0.5ex>[r] & S_i') \ar@<0.5ex>[l]}$$
in $\rep_k (Q)$ and $\rep_k (Q'),$ respectively. Formula (\ref{bgpsi}) follows by simple calculations.
\end{proof}

Sevenhant-Van den Bergh \cite{SV} consider the {\it reduced Drinfeld double} $U(Q)$ of the algebra $\mathcal{H}_{tw}^{e} (\rep_k (Q)),$ where reduction is given by imposing (\ref{reduction}). By our results, the algebra $U(Q)$ is isomorphic to $\mathcal{SDH}_{\mathbb{Z}/2,tw,red}(\rep_k (Q)).$ Therefore, Theorem~9.1 of \cite{SV} is a corollary of the above theorem.

As shown in \cite{SV} and \cite{XY}, by combining the isomorphisms $t_i$ with an appropriate Fourier transform, one gets an induced action of the braid group of the quiver $Q$ on the quantum group $U_{\sqrt{q}}(\mathfrak{g}),$ identified with a subalgebra of $\mathcal{SDH}_{\mathbb{Z}/2,tw,red}(\rep_k (Q))$ by Proposition \ref{uqgsdhkq}. This Fourier transform sends $S_j$ to $S_j'$  \cite{XY}, and its action on the $\mathbb{Z}/2-$graded semi-derived Hall algebras is naturally induced. As shown on \cite{SV} and \cite{XY}, this braid group action coincides with the one defined by Lusztig by different methods \cite{L}. 

\end{document}